\newtheorem{theorem}{Theorem}[section]
\newtheorem{lemma}[theorem]{Lemma}
\newtheorem{cor}[theorem]{Corollary}
\newtheorem{prop}[theorem]{Proposition}
\newtheorem{defn}[theorem]{Definition}
\newtheorem{ques}[theorem]{Question}
\newtheorem{conj}[theorem]{Conjecture}
\newtheorem{exam}[theorem]{Example}
\newtheorem{rem}[theorem]{Remark}
\newtheorem{principle}[theorem]{Principle}
\newtheorem{claim}[theorem]{Claim}
\def\proj{{\Bbb R}P^2} %projective plane
\def\torus{T^2} %torus
\def\sphere{S^2} %torus
\def\Klein{{\Bbb K}} %Klein bottle %maybe use \frak K
\def\Moe{\frak M}
\def\nbhd{neighborhood}
\newenvironment{proof}[1][Proof]{\textbf{#1.} }
{\hfill\rule{0.5em}{0.5em}\medskip}
\newenvironment{proof*}[1][Proof]{\textbf{#1.} }{}
\def\Kerekjarto{Ker\'ekj\'art\'o} %Introduced as it is hard to
\def\epsilon{\varepsilon}
\begin{document}

%%%%%%%%%%%%%%%%%%%%%%%OLD TITLES
\iffalse
%\title{Foliations on non-metric manifolds III: \\
%tame behaviours under small fundamental groups}

\iffalse
\title{Foliated tameness of non-metric
%manifolds III:
surfaces\\
%under
with small fundamental groups} \fi

\title{Foliated phase-transitions of
surfaces\\
by inflating the fundamental group}

\title{Phase-transitions of
foliated surfaces\\
by
%inflating
warming the fundamental group}

\title{Ebullition in
foliated surfaces\\
from
%inflating
warming the fundamental group}

\title{Ebullition of
foliated surfaces\\
when
%inflating
warming the fundamental group}

\title{Ebullition of
foliated surfaces\\
by
%inflating
warming the fundamental group}

\title{Boiling
foliated surfaces\\
by heating the fundamental group}

\title{Ebullition in
foliated surfaces\\
by
%inflating
warming the fundamental group}

\title{Ebullition in
foliated surfaces\\
by heating the fundamental group}

\title{Foliated surfaces: thermodynamics\\
 versus gravitational collapse}

\title{Foliated surfaces: ebullition\\
 versus gravitational collapse}
\fi

\title{Ebullition in foliated surfaces\\
 versus gravitational collapse}

%%%%%%%%%%%%%%%%%%%%%%%%%%%
%%%%%%%%%%%%%%%%FINAL TITLE
%%%%%%%%%%%%%%%%%%%%%%%%%%%%

\title{Ebullition in foliated surfaces\\
 versus gravitational clumping}

\author{Alexandre Gabard}
\maketitle

\newbox\quotation
\setbox\quotation\vtop{\hsize 8cm \noindent

\footnotesize {\it ``Beweisen hei{\ss}t, den Gedankengang auf
den Kopf stellen.''}
%
%\noindent
Oswald Teichm\"uller, 1939, in {\it Extremale quasikonforme
Abbildungen und quadratische Differentiale}.

%\smallskip
%{\it Ich musste die ver\"ucktesten Kurvenscharen integrieren.}

%\noindent Oswald Teichm\"uller writing to his Ph. D. student.

%\smallskip
%{\it Is there a life without a metric?}

%\noindent Misha Gromov in Spaces and questions, 1999.
}

\hfill{\hbox{\copy\quotation}}

\medskip

%%%%%%%%%%%%%%%%%%%%%%%%%%%EVEN SHORTER ABSTRACT FOR ARXIV
\newbox\abstract
\setbox\abstract\vtop{\hsize 12.2cm \noindent

\footnotesize \noindent\textsc{Abstract.} For surfaces, we
brush a reasonably sharp picture of the influence of the
fundamental group upon the  complexity of foliated-dynamics. A
metaphor emerges with phase-changes through the
solid-liquid-gaseous states. Groups of ranks $0\le r\le 1$ are
frozen with intransitivity reigning ubiquitously. When $2\le r
\le 3$, the marmalade starts its ebullition in the liquid
phase, with both regimes (intransitive or not) intermingled
after the detailed topology. Whenever $r\ge 4$, we reach the
gaseous-volatile phase, where any finitely-connected metric
surface is transitively foliated. The game extends
non-metrically, as putting to the fridge a frozen
configuration keeps it frozen. Gromov asked: {\it Is there a
life without a metric?}, yes surely but maybe only a cold
eternal one is worth living. The picture is pondered by a
scenario of gravitational collapse at the microscopic scale
(due to Baillif) dictating the foliated morphology when it
comes to surfaces of the Pr\"ufer type (like those of R.\,L.
Moore or Calabi-Rosenlicht).}

\centerline{\hbox{\copy\abstract}}

{\small \tableofcontents}

%\normalsize

\section{Introduction}\label{sec1}

First the absence of non-metrical nomenclature in our title is
somewhat misleading and intended not to discourage potential
readers cultivating a broader perspective. Some modest working
experience seems to indicate that non-metric manifolds are not
studied in autarchy from the metric ones, but rather as an
excrescence of them. Several
%metrical
truths
%permeate through
transcend the metrical barrier with more ease than our
psychological apprehension. Much of this plasticity originates
from the metrical impulse ({\it big bang}), and proving a
universal statement oft requires working out its metric
version first (reminding somehow what Cherry calls the
vertical structure of mathematics).
%In practice
By the way the jargon ``non-metric manifolds'' (like
non-Hausdorff manifolds, etc.) is often futile (at least
%inconvenient),
cumbersome), whenever causing an artificial subdivision of
statements holding true in
%the unified ``locally Euclidean''
a unified setting.
%As a consequence in most of our statement we shall
%not insist that the manifold is non-metric, for the statement
%is true as well metrically.
\iffalse Thus we shall omit the adjective ``non-metric''
whenever the omission does not lead to a faulty statement. Now
let us come
 concretely to our subject.
\fi
%
Trying to interpret some (existential?)
%slogan
incantation (of D. Hilbert) ``{\it Wir m\"ussen wissen. Wir
werden wissen.}'' the ultimate verdict would probably involve
an
%%%%%%incarnation
infiltration
%incantation
of
%such
non-metric manifolds into the real world (assuming its
existence, of course) via some geometric modelling (like
perhaps, quantum gravity of strings
%in ebullition
or cytoplasmic
%vibratory modes
vibrations of living
%objects).
beings). Needless-to-say we have
%not the
%%%sightliest
%slightliest
no serious idea  on how to work this out concretely (yet see
Section~\ref{Gravitation:sec} for some toy examples). At any
rate mathematically, it
%may seem quite
may look
%asymmetrical
%anomalous (odd at least)
puzzling that the continuum and the non-denumerable are well
tolerated in the small, yet not
%much so
very popular in the large.

This is surely a
%slightly
too severe caricature, as non-metric manifolds enjoy a
respectable theory
%(maybe even
(if not a drastic
%certain
{\it renouvellement des mat\'eriaux} in R.
Thom's prose) originating
%in
with the
%well-known
%key contributions
%and
seminal discoveries of:

$\bullet$ Cantor 1883\footnote{For references regarding the
following chain of items, see the bibliographies of
\cite{BGG1}, \cite{GabGa_2011}.}, Hausdorff 1915, Vietoris
1921, Alexandroff 1924: {\it long ray} and {\it long line}
(the first found non-metric manifolds, yet maybe not the
simplest to visualize),

$\bullet$ Pr\"ufer--Rad\'o 1922--1925, R.\,L. Moore
1930--1942, Calabi-Rosenlicht 1953: construction of perfectly
geometric non-metric surfaces, whose prototype is the
so-called {\it Pr\"ufer surface} discovered near the end of
1922,

$\bullet$ H. Kneser and son M. Kneser: classification of
Hausdorff $1$-manifolds into 4 species (or 7 in the bordered
case) (1958),
%plus many insightful studies including
%smoothings and
real-analytic structures on the long 1-manifolds (1960), and a
%strange
%unique-leaved foliation of codimension-one
%with a
%unique leaf
%foliation  of a
3-manifold foliated by a {\it unique}
%surface
2D-leaf (1960, 1962),

$\bullet$ M.\,E. Rudin 1974, Zenor 1975: first set-theoretical
independence result involving the concept of {\it perfect
normality} (question of Alexandroff-Wilder),

$\bullet$ Nyikos:
%%%1976--$\infty$
bagpipe structures 1984, smoothings of 1-manifolds 1989, long
cytoplasmic expansions
 of surfaces hybridizing
%at the
%borderline of
%the
Cantor and Pr\"ufer
%constructions
1990,

$\bullet$ Cannon 1969 \cite{Cannon_1969}: extension of Jordan
and Schoenflies to non-metric surfaces and an almost empty,
yet not completely nihilist, study of quasi-conformal
structures \`a la Gr\"otzsch 1928-Lavrentieff 1929-Ahlfors
1935-Teichm\"uller 1938,

$\bullet$ Gauld: independence result for powers, 125
equivalent criterions for the metrizability of a manifold,
phagocytosis principle \`a la Morton Brown: any countable
subset of a manifold is contained in a cell ($\approx$ chart
$\approx{\Bbb R}^n$),

$\bullet$ Baillif: homotopical aspects,

\noindent More modest recent contributions includes:

$\bullet$ Baillif-Gabard-Gauld 2008 \cite{BGG1}: foliated
rigidity in some long manifolds with a cylindrical structure
``squat$\times$long ray'',

$\bullet$ Gabard-Gauld 2010 \cite{GaGa2010}: re-exposition of
the Jordan-Schoenflies aspects of Cannon 1969,

$\bullet$ Gabard-Gauld 2011 \cite{GabGa_2011}: elementary
study of dynamical flows on surfaces mostly, yet with many
loose ends.

\iffalse Besides there are many brilliant expository sources
like Spivak, 1970, Milnor, 1970, etc. whereas several articles
by Nyikos provide a more serious
%historiography and
genealogy of the subject. \fi

The present paper is essentially a foliated-dual to the latter
article \cite{GabGa_2011}. Whereas in the flow-case {\it
dichotomy} (every Jordan curve separates) is---since
Poincar\'e-Bendixson---a clear-cut barrier
%%%impeding
to {\it transitivity} (dense trajectory), the foliated case
presents a more subtle landscape modulated by the ``size'' of
the fundamental group and its obstructive influence upon
foliated-transitivity (existence of a dense leaf). Precisely
when we navigate at low temperatures, say  $\pi_1$ of
low-ranks ($0\le r\le 3$), then when $0\le r\le 1$ the
situation is completely frozen (intransitive). As the rank
increases to $2$ or even $3$ the marmalade starts its
ebullition in the liquid phase (with pockets of intransitivity
still resisting, yet under progressively rarefying
circumstances controlled by the topology, cf.
Figure~\ref{monolith:fig}). Finally as the rank reaches values
$\ge 4$ then we live in the volatile-gaseous
%phase,
regime, where {\it any metric surface is transitive}.
Non-metric extensions take the following form:
%essentially
{\it frozen-intransitive
%%situation
configurations remain frozen} when imbedded into the cosmic
freezer
%glacial arena
of non-metric manifolds, whereas of
course the
%converse phenomenology does not hold,
reverse engineering foils, as putting something liquid or
gaseous in the non-metrical fridge may well create a frozen
lollypop. This happens for instance to the long plane ${\Bbb
L}^2$, which punctured as often as you please, still remains
intransitive, e.g. by the foliated rigidity previously
mentioned \cite{BGG1}.

The above metaphoric trichotomy (3 phases delineated by the
rank $r$ of $\pi_1$) quantifies somehow the well-known
principle that simple topology impedes complicated dynamics
both for {\it flows} (continuous ${\Bbb R}$-actions) as for
{\it foliations} (geometric structures
%locally
microscopically modelled
%over
after the
%partition
slicing of a number-space ${\Bbb R}^n$ into parallel
$p$-planes). Of course the range of the principle is
%admittedly rather modest for its validity must be
%confessed to be rather
%modestly
primarily two-dimensional. For instance $S^3\times S^3$ admits
a {\it minimal} (=all orbits dense) smooth flow (probably
rather chaotic) furnished by a non-constructive Baire type
argument of Fathi-Herman (1977).
%Already
In an earlier paper \cite[p.\,5]{GabGa_2011},
%one of us
we advanced  the naive
%wild
speculation that positive curvature
%could
obstructs the presence of a minimal flow on a closed manifold.
If true,
%this would have a rather gigantic
%double
the impact is rather gigantic: first all spheres
%would
lack
minimal flows (Gottschalk conjecture of 1958, still open) and
$S^3\times S^3$
%would
lack positive curvature (a still older
question of Heinz Hopf from the 1930's).
%, cf. e.g., Berger's Panoramic view of
%Riemannian geometry, 2002).

Back to the
%the
more down-to-earth two-dimensionality, the
prototype for the above principle is the Poincar\'e-Bendixson
theory,
%which is
primarily based on the Jordan separation theorem. The latter
holds not merely in the plane ${\Bbb R}^2$, but in any planar
(schlichtartig) surface.
%Furthermore
In fact Jordan separation holds true
%more generally in the non-metric
%case for
non-metrically in simply-connected surfaces (see Gabard-Gauld
2010 \cite{GaGa2010}, and also R.\,J. Cannon 1969
\cite{Cannon_1969}).
%which is maybe the first place where this
%fact appears in print).
%(to our knowledge).
To reach the ultimate generality one can
%merely
%take
adopt Jordan separation as an ``axiom''
%by
%defining
specifying the class of {\it dichotomic} surfaces and derive
the following (via the classical Poincar\'e-Bendixson
trapping-bag argument):

\begin{lemma}\label{Poinc-Bendix:flows} A dichotomic surface
%cannot support
%lack a transitive flow
is flow-intransitive
%(i.e., having
%(at least one dense
%trajectory).
%orbit).
(no dense orbit).
\end{lemma}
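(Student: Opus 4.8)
The plan is to transplant the classical Poincar\'e--Bendixson trapping-bag argument into the dichotomic setting, letting the dichotomy axiom do the work that the planar Jordan curve theorem does in the Euclidean case. Suppose, for a contradiction, that a continuous flow $\phi$ on the dichotomic surface $S$ possesses a dense orbit $O=\{\phi_t(p):t\in{\Bbb R}\}$. A rest point or a periodic orbit is never dense in a surface, so $O$ is a genuine line: $t\mapsto\phi_t(p)$ is injective. A first reduction is forced by the non-metric horizon, where a dense {\it full} orbit need not harbour a dense {\it half}-orbit: writing $O_{\pm}$ for the two closed half-orbits through $p$, one has $\overline{O_+}\cup\overline{O_-}=\overline{O}=S$, so either $\overline{O_+}$ has nonempty interior $U$ (whence $O_+$ is dense in the open set $U$), or $\overline{O_+}$ is nowhere dense, forcing $\overline{O_-}=S$. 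Either way --- after reversing time if necessary --- a forward half-orbit is dense in some nonempty open $U\subseteq S$; discarding a compact initial arc (nowhere dense in the surface $U$) we may assume that its starting point $q$ lies in $U$. Write $\gamma=\{\phi_t(q):t\ge 0\}$ for this half-orbit, dense in $U$.

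Now I run the bag. Choose a transversal arc $\Sigma$ through $q$, sitting inside $U$ and everywhere transverse to $\phi$; density of $\gamma$ in $U$ forces $\gamma$ to cross $\Sigma$ again, so let $\tau>0$ be the first-return time and set $C=\phi_{[0,\tau]}(q)\cup\sigma$, where $\sigma\subseteq\Sigma$ is the sub-arc joining $q=\phi_0(q)$ to $\phi_\tau(q)$. Injectivity of $O$ together with ``first return'' ensure that $\sigma$ meets the orbit-arc only at its two endpoints, so $C$ is an embedded circle. By the dichotomy axiom $C$ separates $S$; being separating, $C$ is two-sided, hence carries an annular neighbourhood, and a short connectedness check (any complementary region abuts $C$, so meets one of the two half-annuli) then shows $S\setminus C$ has exactly two components $A^{+},A^{-}$, each meeting $C$ from one side. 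The half-orbit $\gamma$ cannot recross $C$ after time $\tau$: it cannot cross the orbit-arc $\phi_{[0,\tau]}(q)$ by injectivity, nor the chord $\sigma$ because $\phi$ pierces $\Sigma$ in one fixed transverse sense, constant along $\sigma$; hence $\gamma\subseteq C\cup A^{+}$, say, so $\gamma$ avoids $A^{-}$. But picking an interior point $w$ of $\sigma\subseteq U$, openness of $U$ places a whole disc about $w$ inside $U$, and that disc meets the $A^{-}$-side of $C$; thus $A^{-}\cap U$ is a nonempty open subset of $U$ that is disjoint from $\gamma$ --- contradicting the density of $\gamma$ in $U$.

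The crux, and the only genuinely substantial step, is the separation itself: checking that $C$ is a true ``bag'', i.e. that its complement splits and that the half-orbit is imprisoned on one side. In the plane this is the Jordan curve theorem; here it is precisely the dichotomy hypothesis, but one still has to verify that $C$ is two-sided and that both complementary pieces are touched by the transversal $\sigma$ (the annular-neighbourhood bookkeeping above), and, upstream, that in the absence of second countability one has legitimately descended from ``dense orbit'' to ``dense half-orbit in an open set''. Everything else --- flow-boxes, the first-return construction, transversality --- is routine local differential topology and survives verbatim into the non-metric world precisely because it is purely local.
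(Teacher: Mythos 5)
Your proof is correct and follows exactly the route the paper intends: the paper offers no written proof of this lemma beyond invoking ``the classical Poincar\'e--Bendixson trapping-bag argument'', and its proof of the foliated analogue (\ref{Poinc-Bendixson_many}) is precisely your first-return Jordan curve plus dichotomy trap. Your extra care with the non-metric reduction (a dense full orbit yielding a half-orbit dense in some open set) and with the two-sidedness of the separating circle fills in details the paper leaves implicit, and does so correctly.
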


This applies for instance to the {\it doubled Pr\"ufer
surface}\footnote{
%For this surface, cf. also
%Figure~\ref{Train:fig}.
In our opinion, the best thing to do is (following e.g.,
Nyikos 1984 \cite{Nyikos84}) to define first the {\it bordered
Pr\"ufer surface} $P$ through a purely geometric process (e.g.
like in \cite{Gabard_2008} and the references therein esp.
R.\,L. Moore (1942), and Bredon's book), and then deduce
various versions via the operation of collaring, doubling or
folding; yielding resp. the classical Pr\"ufer surface $P_{\rm
collar}$ (appearing first in print in Rad\'o 1925
\cite{Rado_1925}, yet discovered accidentally near the end of
1922 by H. Pr\"ufer), the Calabi-Rosenlicht surface $2P$ (1953
\cite{Calabi-Rosenlicht_1953}), and the Moore surface
$M=P_{\rm folded}$ (1942 in print, yet discovered earlier
$\approx 1930$, cf. the historiography in \cite{GabGa_2011}).
As pointed out by Daniel Asimov, the drawback of our
terminology is that it
 boosts Pr\"ufer's credit vs. Calabi-Rosenlicht
contribution. Yet we feel that the bordered viewpoint is very
convenient, reducing to a single (geometric) process the
generating mode of all those manifolds. Maybe Pr\"ufer's short
life justifies anyway some little
%credit's
distortion.} $2P$, considered in Calabi-Rosenlicht 1953
\cite{Calabi-Rosenlicht_1953} (cf. also Figure~\ref{Train:fig}
for an intuitive picture and \cite[5.5]{GabGa_2011} for a
%detailed
proof of $2P$'s dichotomy). The same
%conclusion (of
intransitivity as (\ref{Poinc-Bendix:flows})
%)
is
%usurped
faulty when it comes to foliations. Indeed a
%classical
noteworthy example of Dubois-Violette 1949 \cite[p.\,897,
Point 4.]{Dubois-Violette_1949}, smoothly rediscovered in
Franks 1976 \cite{Franks_1976}, or Rosenberg 1983
\cite[p.\,29, V.\,Rem.\,2)]{Rosenberg_1983}, foliates the
thrice-punctured plane
%${\Bbb R}^2-\{3pts\}$
${\Bbb R}^2_{3*}$ by dense leaves.
%The example can be described has a closed
This is manufactured from a foliated disc with two thorns
singularities glued with a replica
%of itself
%via
after an irrational
%angled
rotation (Figure~\ref{Dubois:fig}). Alternatively it can be
regarded as the quotient of
%a Kronecker irrationally foliated
Kronecker's irrational winding of the torus divided by the
(hyper)-elliptic involution.
%However

\begin{figure}[h]
\centering
    \epsfig{figure=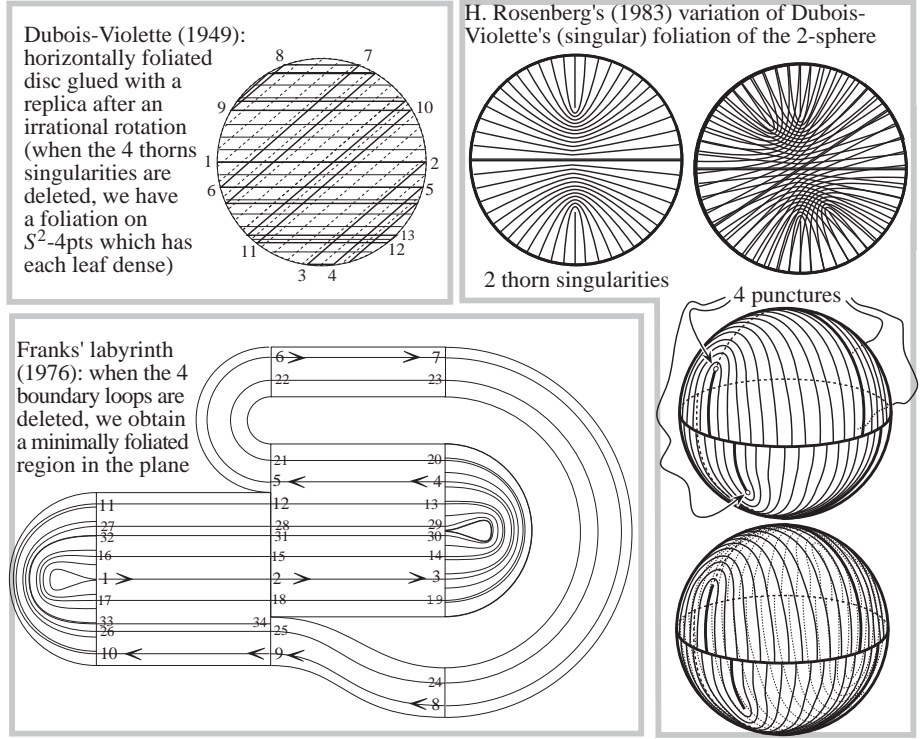,width=122mm}
  \caption{\label{Dubois:fig}
  %Artist view of
  Some labyrinths in quadruply-connected domains of the plane}
%\vskip-5pt\penalty0
\end{figure}

As recently observed by D. Gauld (in
BGG2~\cite[Prop.\,3.1]{BGG2}), the stronger
simple-connectivity impedes a dense leaf. Indeed a
%slight
localized perturbation of the foliation (akin to the closing
lemma paradigm)
%one can
creates a closed leaf (cf. Figure~\ref{David's_trick},
Case~1), which
%would
bounds a disc by Schoenflies (an absurdity
%being
as it is foliated). Here we use the universal (non-metric)
version of Schoenflies presented in \cite{GaGa2010} (also
implicit in Cannon 1969 \cite{Cannon_1969}).
%This is merely an outline of the
%precise argument which contains in fact another case (compare
%Figure~\ref{David's_trick} below).
In the light of this remark of Gauld, our immediate motivation
was two-fold:

(1) adapt the
 Haefliger-Reeb  theory (1957
\cite{Haefliger_Reeb_1957}) describing foliated structures on
the plane ${\Bbb R}^2$ to any  simply-connected (non-metric)
surfaces by a purely formal repetition of their arguments,

(2)
%trying to
exploit this general theory to deduce a somewhat rather
special result, saying that {\it surfaces whose (fundamental)
groups $\pi_1$ are infinite cyclic ${\Bbb Z}$ also lack
transitive foliations}. This should have involved the
universal cover, yet  a more Poincar\'e-Bendixson like method
turned out to be more efficient.
%(i.e., admitting at least one dense
%leaf).

Since the torus or punctured torus (with $\pi_1={\Bbb Z}^2$,
resp. $F_2$ free of rank 2) admit {\it minimal} foliations
%(i.e.,
(all leaves dense), this
%result (2)
%%would be
%gives a sharp extension of \cite[Prop.\,3.1]{BGG2} exhibiting
%
exhibits ${\Bbb Z}$ as the largest possible (fundamental)
group impeding
%the possibility of
a transitive
foliation.

During the process of aping non-metrically Haefliger-Reeb
(especially the issue that a leaf in a foliated plane divides)
we
%%derived
encountered a separation theorem generalizing the separation
by a Jordan curve (embedded circle). Specifically any
%closed proper
hypersurface which is closed as a point-set (a {\it divisor}
for short) in a
 simply-connected manifold
 (of arbitrary dimension) separates the
 %ambient
 manifold (\ref{Riemann-separation}). This
%is perhaps best
can be deduced from a trick \`a la Riemann attaching to any
divisor $H$ in a manifold a
%two-fold
double (unramified) cover polarized along $H$. Intuitively,
this covering
 %can be thought of
 %as
consists of electrically charged particles, switching their
charge signs whenever the cross the hypersurface.
 %%%It should be noticed that
({\it Warning:} This
 does not reproves the classical Jordan curve theorem
 %in the
 %Euclidean plane, yet just fail to do so,
 as our
hypersurfaces verify a local flatness condition, not a priori
known for
%an
%arbitrary
``wild'' Jordan curves in the plane, but true a posteriori via
Schoenflies.)

Then questions enchained quite naturally leading to a slightly
broader perspective which we shall now try to review. Of
course all results must be fairly classical in the metric case
(albeit as yet we were not very assiduous in locating
references). Indeed even at the metric level our exposition
contains some lacunae (maybe the most acute one being our
inaptitude to check the foliated-intransitivity of the
twice-punctured Klein bottle!), and we
%would very much
%appreciate comments from the experts in order
hope to manufacture a sharper version in the near future
(after some editorial duties).

\subsection{Overview
%of results
and methods}

%Methodologically,
Methodically, we can distinguish two trends
%which
relying either on the Schoenflies bounding disk property or on
the weaker Jordan separation. From the
%%%%dynamical
%foliated
qualitative viewpoint, the former forbids any recurrence to a
foliated chart whereas the second permits only
%very ordered
moderate recurrences
%provided one restricts attention to
for {\it oriented} foliations. Coupled with the
%%%%two-fold
double cover
%trick
induced by a non-orientable foliation
(\ref{orienting:2-fold-covering}) this allows
%in some instances
%sometimes
in some favorable situations to draw general conclusions
regarding {\it all} foliations.

\iffalse
(A) Either one uses the full strength of simple-connectivity
(and the Schoenflies theorem) or one

(B) Uses only the weaker dichotomy (i.e.
separation by Jordan curves) which albeit weaker still restricts
the foliated dynamics in the case of oriented foliations which
behave similar to flows, and to which we may apply the
Poincar\'e-Bendixson trapping argument.
\fi

%Along
The first method (mostly suggested by Gauld) gives the
following
%results in the
%(non-metrical)
repetition of Haefliger-Reeb's
%spirit:
results:
%(cross-references refer to
%the main-body of the text):

(1) {\it In a (non-metric) simply-connected surface, each leaf
appears at most once in a fixed foliated chart}
(\ref{Alex_separation})\footnote{Cross-reference to the
main-body of the text.}. Like in the metric case, this issue
is the
%corner-stone
pillar of a non-metric Haefliger-Reeb theory.
%As a
%consequence,
Consequently, the leaf-space is a (non-Hausdorff)
$1$-manifold
%, in particular any
and leaves are closed as point-sets (but of course open as
manifolds). The complete absence of recurrence forbids
%transitive foliations.
transitivity. Any leaf is locally flat and thus using
Riemann's covering trick (\ref{Rieman-polarized-cover}) it
divides the surface (\ref{Alex_separation}(d)). As a corollary
the leaf-space is a simply-connected $1$-manifold.

Via the second method based on the
weaker Jordan separation, we get the following results
using the Poincar\'e-Bendixson method:

(2) {\it In a dichotomic surface, an oriented foliation cannot
have a dense leaf, nor can a finite union of leaves be dense}
(\ref{Poinc-Bendixson_many}).  This follows by examination of
the returns of a leaf to a foliated chart, which occur in a
orderly fashion. Since foliations of  simply-connected
manifolds are orientable (\ref{orienting:2-fold-covering}),
this reproves the intransitivity of such surfaces (without
Schoenflies).
%From this
Besides,
%%%we deduce that
{\it surfaces with infinite cyclic group lack transitive
foliations} (\ref{infinite-cyclic-group}). Our proof uses
%an ad hoc
Jordan
separation
%result for
in pseudo-cylinders, namely {\it orientable surfaces with
$\pi_1={\Bbb Z}$ are dichotomic}
(\ref{dichotomy:orient_plus_inf_cycl}). (This
%separation
fails without orientability as shown
%prompted
by the M\"obius band.)
%
%Along the way of (2) we have the following conjecture:
When
%combined
married with Riemann's trick of branched coverings (familiar
in complex function theory, yet pleasant to see at work in the
foliated context), the Poincar\'e-Bendixson method gains
%even
more swing. For instance,
%shows that
{\it
 dichotomic
surfaces with $\pi_1$ free of rank $2$ lack  transitive
foliations} (\ref{dichotomic-free-of-rank-2:prop}). This
%illustrates
%prompts
shows the sharpness of Dubois-Violette's example:
%which shows that
%namely that 3
%%%three
$3$ is the minimal number of punctures in the plane
%are enough to build
to manufacture a transitive foliation (labyrinth). To complete
the picture we also notice  a non-orientable version: {\it a
non-orientable surfaces with $\pi_1$ free of rank $2$ is
foliated-intransitive}
(\ref{rank_two_non-orientable:intransitive}), plus some
sporadic obstructions in rank $3$
(\ref{intransitivity:new-obstruction}).
%Basically,
(Here is missing the issue with the Klein bottle twice
punctured, that we already confessed!)

All these results are first established metrically (with a
%special
pivotal reliance on \Kerekjarto's cylindrical ends
%theorem
(\ref{Kerkjarto:end}), as a recipe to
%permitting to
compactify
%%%%%compactificating
metric surfaces of finite-connectivity). The non-metrical
boosting involves a Lindel\"of exhaustion with calibrated
fundamental groups (\ref{calibrated-exhaustions:lemma}) (yet
another application of Schoenflies) amounting to fill in the
holes of a $\pi_1$-epimorphic subregion to adjust its group to
that of the ambient surface. The logical flattening of
%the
%full
the details frequently sidetracked us into purely topological
%lengthy
considerations, that were ultimately collected in the first
section.
%which was subsequently slightly re-lifted to give a
%presentable shape.
%Still
Despite the abundance of details,
%(some of which being still
%not completely understood by the writer),
the underlying
%phenomenology
%%%%idea
metabolism remains rather
%clear-cut:
basic:

\begin{principle}[Freudo-Lindel\"ofian Anschauung transfer--FLAT]
What\-soever you are able to see of a non-metric manifold
(which is a sort of
%elusive
quantum-plasma in ebullition) it is
%essentially
(in first approximation) its Lindel\"of subregions (those
truly accessible to the ``Anschauung'') which govern both the
qualitative ``analysis situs'' (Jordan, Schoenflies,
orientability, dichotomy, fundamental group, etc.) as well as
the foliated (or dynamical) destiny of the whole.
\end{principle}

%Sometimes
Little corrections are required when a truly non-metric
phenomenology is prompted by a particular
%geometric structure in the
manifold. Yet this is really a second strata of sophistication
not affecting
%very much
tremendously the generic
%validity
value of the first principle.

Beside those geometrical
%/combinatorial
methods, we have also
%should not forget
a point-set obstruction:

(3) A transitive one-dimensional foliation (abridged
$1$-foliation) of an $n$-manifold $M^n$ with $n\ge 2$ implies
{\it separability}\footnote{Existence of a countable dense
subset.} of the underlying $M^n$ (\ref{separability}). In fact
more is true:
%in the sense that
any chaotic behaviour of a one-dimensional leaf is caused
 by one of its metrical short end, whereas long
sides of leaves (being  sequentially-compact) are always
%``tamely''
``decently'' {\it properly} embedded (the leaf-topology
matches with the relative topology) (\ref{long-semi-leaf}).

The above results reflects so-to-speak
%so-called
qualitative
%aspects
features of foliations on some classes of topologically
%conditioned
particularized surfaces. A dual aspect is the {\it
quantitative theory}, asking for a
%(complete
classification of foliations (on a fixed manifold). This game,
which is almost always hopeless in the metric realm (e.g.,
${\Bbb R}^2$ is hard yet well-understood, $S^3$ hopeless),
turns out to be
%sometimes
%miraculously easier
much easier on some special non-metric surfaces like, e.g.,
the long plane ${\Bbb L}^2$ \cite{BGG1}. Here and on some
related surfaces one
%observes
experiments a rigidity in the large, imposing an asymptotic
leaves pattern
%leaving only
with freedom
%some free
left only on certain metric subregions, viz.  squares
transversally foliated along two opposite sides and
tangentially on the remaining
%sides.
two. By the theorem of \Kerekjarto-Whitney
\cite{Kerekjarto_1925}, \cite{Whitney33} creating for oriented
foliations compatible flows (valid only in the metric case),
%we showed that
such a square permits (up to homeomorphism) a unique foliated
extension of its boundary data. It followed in \cite{BGG1}
that ${\Bbb L}^2$ tolerates only 2 foliations up to
homeomorphism. (This is to be contrasted with the menagerie of
foliations grooving the plane ${\Bbb R}^2$.)
%Of course

A plain consequence of this rigidity is the intransitivity of
thrice-punctured long-plane
%${\Bbb L}^2-\{3pts\}$
${\Bbb L}^2_{3*}$ despite its group, $F_3$ (free of rank 3)
(\ref{puncturing}), is one
%tolerating
susceptible of complicated foliated dynamics (recall
Dubois-Violette). Hence, albeit the fundamental group has much
to say, it does not control completely the
%foliated dynamics
situation, which depends ultimately upon
%the
some finer  granularity
%of the
(encoded in the geometry of the manifold). For an even simpler
example, the bagpipe $\Lambda_{0,4}$ with orientable bag of
genus 0 with 4 contours and pipes
%homeomorphic to
modelled after the long cylinder $S^1\times {\Bbb L}_{\ge 0}$,
is a (dichotomic) surface with $\pi_1=F_3$, yet intransitive.
In fact $\Lambda_{0,4}$ cannot even be foliated  \cite{BGG1},
because any pipe acts as a black hole aspirating leaves in a
purely vertical fashion or creating many horizontal circle
leaves $S^{1}\times \{\alpha\}$. Thus an appropriate surgery
reduces one to the
%classic
compact Euler-Poincar\'e obstruction. Both examples cited are
%in fact
trivial,
%since
inasmuch as their intransitivity also  derives from their
non-separability (via (3) above).

The transitivity decision problem becomes more
%pernicious
perfidious if one
%asks
wonders about the transitivity of the separable, $M_{3*}$,
thrice-punctured Moore surface. (Recall that the Moore surface
is the folded Pr\"ufer surface, cf. Figure~\ref{Train:fig} for
an intuitive picture.) Albeit there is no universal algebraic
obstruction (recall again Dubois-Violette), we experiment a
geometric one related to the
%finer
granularity of the Moore
surface (\ref{Baillif:adapted_by_Gabard}). Indeed an argument
of M. Baillif (Buenos Aires era, near 2008--2009,
%published
under press in \cite{BGG2} and reproduced below
(\ref{Baillif:nano-black-holes})) the ``thorns'' of the Moore
surface (i.e., the folded images of the boundaries of
Pr\"ufer)
 acts as a ``continuum'' series of miniature
 black holes inveigling {\it almost all}
leaves. Precisely, {\it almost all thorns (all but at most
countably many exceptions)  are semi-leaves of any foliated
Moore surface} (\ref{Baillif:nano-black-holes}). This
%gives
implements a scenario of gravitational collapse at the
microscopic scale
%of
%nano-black holes,
%(quantum fluctuations)
in sharp contrast---but somehow dual---to the macroscopic
scale at which lives the (super-massive) black hole
%idealized
%by a
sitting at the long end of a Cantor cylinder $S^1\times {\Bbb
L}_+$ \cite{BGG1}. All these examples imaginatively suggest to
contemplate foliated structures (like lignite distributions)
merely a mean of evidencing the magneto-gravitational
($\approx$geometric, since Newton-Euler vs.
Leibniz-Euler(!)-Riemann-Einstein\footnote{Compare, e.g., the
historiography in Speiser 1927 \cite{Speiser_1927}.})
%salient
%features or
anomalies of the underlying manifold.

The above results
%are still rather fragmentary,
%for instance they
do not
%answer the question if
tell whether the  doubled Pr\"ufer surface $2P$ accepts a
labyrinth (=transitive foliation). Yet, it probably does in
view of the toy:

\begin{exam} (Dubois-Violette Pr\"uferized) \label{Dubois}
{\rm Taking Dubois-Violette's foliated disc
(Fig.\,\ref{Dubois:fig}, Rosenberg's version) and Pr\"uferize
the 2 leaved-arcs ending to the $2$ punctures to get a
bordered surface $\wp$, which glued with a copy after an
irrational twist, yields a separable surface $2\wp$
transitively foliated, which is non-metric, dichotomic, etc.
and very resemblant to $2P$.}
\end{exam}

It is not impossible (and indeed highly probable)
%, albeit we
%do not want to get our hands too dirty)
that this Pr\"uferized
surface $2\wp$ is homeomorphic to $2P$ (just ``magnify'' some
bridges). Yet this foliation
%would not be
is not minimal, and it is natural to wonder if $2P$ is
minimally foliated (more about this soon!). For flows, an easy
argument of propagation \cite{GabGa_2011} showed that
minimality forces metrisability, raising some hope to classify
all flow-minimal surfaces. Presumably not so with foliated
surfaces, compare \cite{BGG2} for some minimally foliated
non-metric surfaces. The simplest example, depicted on
Figure~\ref{Venn} below, involves a punctured torus minimally
foliated \`a la Kronecker with one of the two leaves ending to
the puncture elongated up to reach the length of Cantor's long
ray). This Pinocchio expansion near the puncture exploit a
construction of Nyikos (cf. \cite{BGG2} for more details and
the original reference).
%lying at the border line of the
%Cantor and Pr\"ufer constructions.

Now what about $2P$ being minimally foliated? Since the
fundamental group is extremely voluminous (free on a continuum
of generators),  the rank is big,
%melting
pushing the surface in the very gaseous-volatile regime where
transitivity mutates in minimality. However the real answer is
quick and easy thanks to the gravitational clumping
%%%phraseology of Penrose p.322, means masse, gros bloc, groupe
%%%bouquet d'arbre, etc... maybe clustering
%%%means also grappe and groupe massif
of Baillif, to the effect that a (finally violent)
condensation of diffuse gas must occur along the ``bridges''
(i.e. the images of the
%contours
boundaries of $P$ in $2P$ via the canonical inclusion
$P\hookrightarrow 2P$).
% are responsible of a
%special clustering,
Indeed arguing as for the Moore surface
(\ref{Baillif:nano-black-holes}), Baillif's method shows that
in any foliation of $2P$ {\it almost all} bridges are leaves
(as above this means all but countably many exceptions). Thus
we have with $2P$ (or better its Dubois-Violette model $2\wp$
(\ref{Dubois}))
%, where $\wp$ is Dubois-Violette's
%disc Pr\"uferized along 2 arcs going to the punctures)
%an
%example of
a surface which is transitive, yet not minimal. (The author
does not know if such an example exist metrically.)
%Surely or maybe not. It is
%certainly trivial, yet )

\subsection{Questions and
%possible
ramifications}

%%%%%%NOW SOLVED HENCE CENSURED
\iffalse In view of the aforementioned obstruction
(\ref{dichotomic-free-of-rank-2:prop}), any twice-punctured
simply-connected surface is intransitive under a foliation;
e.g., the twice-punctured Moore surface, $M_{2*}$. If we allow
three punctures in Moore $M$, then there is no algebraic
obstruction to transitivity, yet it is not
%completely
%trivial
easy to construct a transitive foliation. \fi

\iffalse (A similar question arise if we puncture once the
M\"obius band, then $\pi_1\approx F_2$ but non-dichotomic
since non-orientable, and we cannot apply our obstruction
(\ref{dichotomic-free-of-rank-2:prop}) to transitivity.) \fi

Here we mention a short list of questions which are probably
not structurally hard, but rather unsolved due to the
incompetence of the writer.

(1) {\it Some metrical missing links}. As just noticed what is
the simplest example of a metric surface which is transitively
foliated but not minimally. Also is the twice punctures Klein
bottle $\Klein_{2*}$ foliated-intransitive? This is actually
the only missing case to complete our picture
(Figure~\ref{monolith:fig}) classifying finitely-connected
surfaces according to their foliated-transitivity. Which
metric surfaces can be {\it biminimally foliated} (i.e. so
that each semi-leaves are dense)? Cf.
(\ref{biminimal-impeded-by-puncture}) for a partial answer.

(2) {\it Any pseudo-Moore surface foliates?}
In the case of flows, the
%engulfing
{\it phagocytosis lemma} (saying that any countable subset of
a manifold is contained in a chart)
%had
found a nice application to what we called in
\cite{GabGa_2011} the  {\it pseudo-Moore problem} (no
non-singular flow on a non-metric, simply-connected, separable
surface). Such surfaces are referred to as {\it pseudo-Moore},
with the Moore surface being the simplest prototype. In the
foliated case, it is not obvious to guess an applied avatar,
except for the over-optimistic option that all pseudo-Moore
surfaces foliate.
%Naively speaking
Recall that for flows, the Moore surface had no brush, and
this
%extended to
turned out to be the
%destiny
fate of any pseudo-Moore surface \cite{GabGa_2011}. But now
the Moore surface foliates, thus should we expect that any
pseudo-Moore surface foliates? We believe the answer is
negative, in view of Nyikos long cytoplasmic expansions (cf.
the discussion following Question
(\ref{corona-sun:question})).

(3) {\it Euler obstruction in the $\omega$-bounded case.}
Another
%%%serious
frustrating problem is what happens to the
%fundamental
Euler-Poincar\'e obstruction? Specifically we conjecture that
$\omega$-bounded\footnote{A space is  {\it $\omega$-bounded}
if countable subsets have compact closures. In the
manifold-case, this amounts to  Lindel\"of subregions having
compact closures. This concept is a non-metric avatar of
compactness, especially acute for surfaces in view of Nyikos'
bagpipe theorem.} surfaces with $\chi<0$ lack foliations
(independently of any
%specific knowledge
specification of the pipes).
%This would be an extension of the Euler-Poincar\'e
%obstruction in the compact case.
In the case of flows, it was
%rather
comparatively easy to show \cite{Gab_2011_Hairiness} that a
non-vanishing $\chi\neq 0$ obstructs
%%%the existence of a
non-singular flows (non-metric hairy-ball theorem).

(4) {\it Freeness of the fundamental group of
curves(=non-Hausdorff $1$-manifolds) and the Haefliger
twistor.}
%Still another question is
%How to
Can somebody
%formalize properly
prove the (hypothetical) Lemma~\ref{twistor} below, which
seems to be folklore since Haefliger 1955
\cite{Haefliger_1955}, and which could play a crucial r\^ole
in showing that all (non-Hausdorff) $1$-manifolds have a free
fundamental group. Prior to this we
%give a proof of
show the $\pi_1$-freeness
%of the $\pi_1$
of all open
%(non-compact,
(Hausdorff) surfaces by reduction to the metric case
(\ref{freeness-for-open-surfaces:prop}). (Hausdorffness is of
course essential, as seen by picturing flying-saucers, e.g.,
$S^1\times (\text{line with two origins})$ with $\pi_1={\Bbb
Z}^2$.)

In guise of provisory conclusion, we
%make the following
diagnostic that our
%present
understanding of foliated structures (especially on surfaces)
is slightly less sharp than the corresponding one for
dynamical flows, where deeper paradigms entered effectively
into the arena (like phagocytosis or the Euler-Poincar\'e
obstruction).
%which had a clear-cut
%incarnation.
%avatar.

\section{Topological
%preliminaries
preparations}

This section collects
%some
purely topological
%basic
results, independent of (yet related to) our foliated
investigations. The reader can skip
%this section
it
%and
referring to individual results later if necessary.
%(As the
%writer was rather short-sighted i)
%Alternatively one can ignore foliated questions and
%concentrate purely on the present section.
The Leitfaden below
is supposed to help
%not getting lost into
navigating through the menagerie of
%foundational
details.

\begin{figure}[h]
%\centering
    \hskip-20pt\epsfig{figure=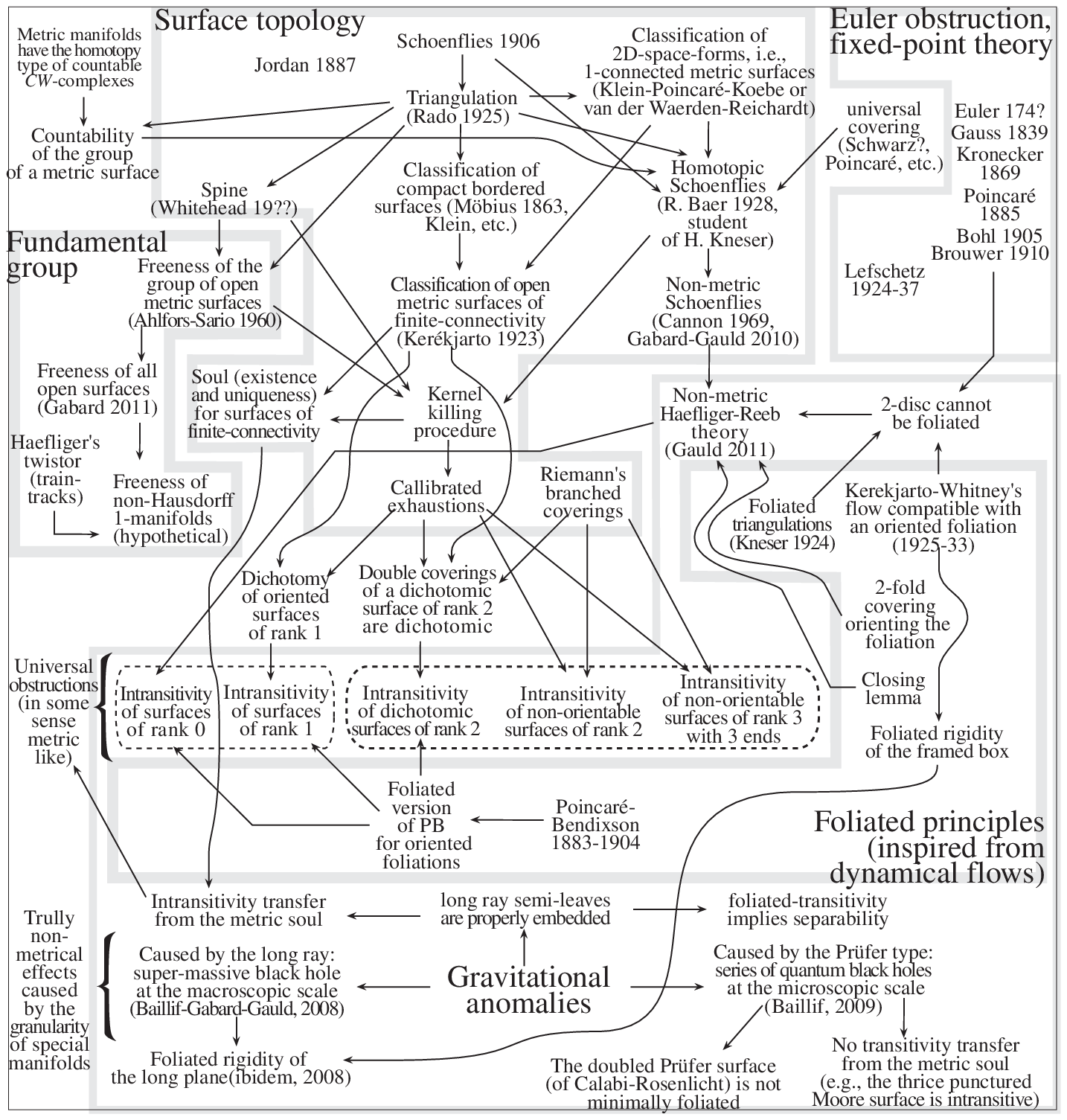,width=139.2mm}
  %\caption{\label{diagr}
  %Artist view of
  %Jordan separation in a simply-connected surface via a
  %Schoenflies trapping}
%\vskip-5pt\penalty0
\end{figure}

\subsection{Foundations (Leibniz, Euler, Gauss, Listing,
M\"obius, Riemann,  Klein, Dyck,
%%Poincar\'e,
Schoenflies,
\Kerekjarto, Rad\'o)}

We first recall without proofs (but cross-references) the key
results in the topology of the plane and surfaces. A pillar of
the theory is the following theorem often attributed to
Schoenflies (1906), albeit there are serious
function-theoretical competitors building over the Riemann
mapping theorem and conformal representation (including Osgood
1900--1913, Carath\'eodory 1912, Hilbert-Courant, etc.), not
to mention the early attempt in M\"obius 1863
\cite{Moebius_1863}:

\begin{theorem} (Schoenflies 1906)\label{Schoenflies:thm-plane}
Any Jordan curve (embedded circle) in the plane ${\Bbb R}^2$
bounds a disc.
\end{theorem}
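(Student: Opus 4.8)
Taking the Jordan curve theorem as known, write $\Omega$ for the bounded component of ${\Bbb R}^2\setminus J$, so that $J=\partial\Omega$; the assertion ``$J$ bounds a disc'' amounts to producing a homeomorphism $\overline{{\Bbb D}}\to\overline\Omega$ carrying the unit circle onto $J$ (and, applying the same to the exterior component after one-point-compactifying and gluing the two discs along $J$ via a coned circle homeomorphism, one even gets an ambient homeomorphism of ${\Bbb R}^2$ normalising $J$). The route I would take is conformal: fix, by the Riemann mapping theorem, a biholomorphism $\varphi\colon{\Bbb D}\to\Omega$; everything then reduces to showing $\varphi$ extends to a homeomorphism of the closures, which is Carath\'eodory's boundary-extension theorem — and this is where all the genuine work lies.

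For \emph{continuity} of the extension I would run the classical length--area estimate: letting $\ell(\rho)$ be the length of the $\varphi$-image of the arc $\{|z|=\rho\}$ cut off by a fixed small angular sector at a boundary point, Cauchy--Schwarz yields $\int_0^1 \ell(\rho)^2\,d\rho/\rho\le C\cdot\mathrm{Area}(\Omega)<\infty$, whereas $\int_0^1 d\rho/\rho$ diverges, so $\ell(\rho)\to0$ along some sequence $\rho\to1$; hence the crosscut images shrink, radial limits $\varphi(e^{i\theta})$ exist for all $\theta$, and the extended map $\overline{{\Bbb D}}\to\overline\Omega$ is continuous. The only property of $J$ used is that $\partial\Omega$ is locally connected, which is automatic since $J\cong S^1$ — precisely Carath\'eodory's criterion.

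For \emph{injectivity} on the boundary circle, the more topological half, suppose $\varphi(e^{i\theta_1})=\varphi(e^{i\theta_2})=p$ with $e^{i\theta_1}\ne e^{i\theta_2}$. The two radii together with $p$ form a Jordan arc $\gamma\subset\Omega\cup\{p\}$ which, by the Jordan curve theorem, splits $\Omega$ into two open pieces; comparing the boundaries of these pieces with $J$ forces one of the two open arcs of $\partial{\Bbb D}$ determined by $e^{i\theta_1},e^{i\theta_2}$ to be mapped constantly to $p$, contradicting non-constancy of the holomorphic $\varphi$ (open mapping theorem). Granting injectivity, $\varphi$ is a continuous injection from the compact $\overline{{\Bbb D}}$, hence a homeomorphism onto its compact image, which is closed, contains $\Omega$, and has topological boundary inside $J$, so it equals $\overline\Omega$ and sends $\partial{\Bbb D}$ onto $J$. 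The two emphasised items — the crosscut/length--area estimate and the boundary-injectivity argument — are the real obstacles; the rest is bookkeeping.

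As a function-theory-free alternative one could reprise Schoenflies's own polygonal method: trap $J$ between polygonal Jordan curves closing in on it from outside and from inside, regard each region between consecutive polygons as a topological annulus (Jordan curve theorem again), build piecewise-linear homeomorphisms of these shells onto standard round annuli agreeing along the shared polygonal circles, and concatenate; the difficulty migrates to proving the infinite concatenation converges to a \emph{homeomorphism} up to the limiting circle, i.e.\ that the shells can be taken to shrink uniformly in diameter — the same analytic nerve as the length--area estimate in combinatorial garb. For the present paper I would simply take the conformal route and cite Carath\'eodory, since what is actually needed later is the stronger non-metric Schoenflies of \cite{GaGa2010}.
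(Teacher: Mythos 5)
The paper does not actually prove this statement: its ``proof'' is the single line ``Compare e.g.\ Siebenmann 2005 \cite{Siebenmann_2005}'', i.e.\ a pointer to Siebenmann's survey of the (purely topological) Osgood--Schoenflies proof. Your proposal therefore supplies genuine content where the paper supplies a citation, and the route you choose -- Riemann mapping plus Carath\'eodory extension -- is precisely one of the ``function-theoretical competitors building over the Riemann mapping theorem and conformal representation'' that the surrounding text of the paper alludes to. Your outline of that route is correct: the length--area estimate giving shrinking crosscuts, local connectivity of $J$ as the hypothesis of Carath\'eodory's criterion, and the crosscut argument for boundary injectivity are exactly the standard skeleton. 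Two small points of precision. First, the length--area computation as you state it only produces a \emph{sequence} $\rho_n\to1$ with $\ell(\rho_n)\to0$; to get continuity of the extension at every boundary point one must upgrade this to uniform smallness of the crosscut \emph{diameters}, and this is where local connectivity of $\partial\Omega$ is actually consumed (small crosscut endpoints on $J$ bound small subarcs of $J$, hence small Jordan subdomains). Second, in the injectivity step the contradiction at the end is not quite the open mapping theorem: from ``$\varphi$ has constant boundary value $p$ on a nondegenerate arc of $\partial{\Bbb D}$'' one concludes $\varphi\equiv p$ via Schwarz reflection (or the identity theorem applied to the reflected extension), not directly from openness. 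With those two repairs your sketch is a complete and standard proof; the paper's deferred topological proof buys independence from complex analysis (relevant to its later combinatorial developments), while yours buys brevity at the cost of the Riemann mapping theorem, which is harmless here since the statement is only used as a metric input to the non-metric Schoenflies of \cite{GaGa2010}.
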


%Using it, Rad\'o (1925) arrived at the:
\begin{proof}
Compare e.g. Siebenmann 2005 \cite{Siebenmann_2005}.
\end{proof}

\begin{lemma} (e.g., Weyl, 1913)\label{Weyl:triangulation}
A triangulated surface   is metric.
\end{lemma}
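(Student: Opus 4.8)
The plan is to show that the simplicial complex $K$ underlying a triangulation $S\cong |K|$ is \emph{locally finite}, to deduce from connectedness that $K$ is \emph{countable}, and then to metrize $|K|$ by an elementary argument. We may treat each connected component of $S$ separately, so assume $S$, hence $K$, connected; write $|K|$ for the geometric realization with its weak (coherent) topology, which by the very definition of a triangulation is the topology of $S$. \textbf{Step 1 (local finiteness).} This is where the manifold hypothesis enters, via the fact that the surface $S$ is locally compact. Suppose a vertex $v$ lay in infinitely many (distinct) simplices $\sigma_1,\sigma_2,\dots$ of $K$. Pick a compact neighbourhood $N$ of $v$ in $S$. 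Since any neighbourhood of $v$ meets the open simplex $\mathrm{int}\,\sigma_i$ (slide a little off $v$ along $\sigma_i$), choose $x_i\in \mathrm{int}\,\sigma_i\cap \mathrm{int}\,N$ for each $i$. The set $A=\{x_i:i\ge 1\}$ is infinite and discrete (each $x_i$ has the $|K|$-open neighbourhood $\mathrm{int}\,\sigma_i$ meeting $A$ only in $x_i$), and it is \emph{closed} in the weak topology: for any simplex $\tau$ of $K$ the trace $A\cap\bar\tau$ equals $\{x_i:\sigma_i\text{ is a face of }\tau\}$, a finite set. But then $A$ is a closed subset of $|K|$ contained in the compact set $N$, hence compact, hence — being discrete — finite, a contradiction. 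Therefore every vertex, and so every simplex, of $K$ lies in only finitely many simplices.

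\textbf{Step 2 (countability).} By Step 1 the $1$-skeleton of $K$ is a connected, locally finite graph, so a breadth-first exhaustion from any vertex has finite level sets and $K$ has only countably many vertices; since each simplex is a finite vertex set and each vertex meets only finitely many simplices, $K$ has only countably many simplices. \textbf{Step 3 (metrization).} A countable, locally finite simplicial complex carries the PL path-metric — metrize each closed simplex in the standard way and set $d(x,y)$ to be the infimum of lengths of piecewise-linear paths — and for such complexes this metric induces exactly the weak topology; alternatively $|K|$ is locally compact, Hausdorff and second countable, hence metrizable by Urysohn's theorem (indeed it even embeds PL in $\mathbb{R}^{5}$). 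Either way $|K|$, and with it $S$, is metrizable; and in the disconnected case $S$ is a topological sum of metrizable spaces, which is again metrizable (declare the distance between distinct components to be $1$).

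I expect the only genuinely delicate point to be the closedness of $A$ in Step 1: that is the one place where the combinatorics of the triangulation really bites, since one must argue inside the coherent topology of $|K|$ rather than within a single Euclidean chart, and one must invoke that $S$ is a \emph{manifold} (hence locally compact) rather than just a triangulated space. The same mechanism, read contrapositively, explains why a genuinely non-metric surface such as the long plane $\mathbb{L}^2$ admits no triangulation at all. Everything else (the graph-exhaustion in Step 2, the soft metrization in Step 3) is routine.
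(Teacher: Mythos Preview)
Your proof is correct and follows the same strategy as the paper's one-line argument (aggregate simplices by adjacency to see the surface is a countable union of compacta, hence Lindel\"of, hence metric via Urysohn); your Step~1 simply supplies the local-finiteness justification that the paper takes for granted, and your Step~3 is the same Urysohn appeal.
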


\begin{proof}
Aggregating simplices
%of the triangulation
by adjacency, the surface is expressible as a countable union
of compacta, hence Lindel\"of, so metric (Urysohn, 1925).
\end{proof}

\begin{theorem} (Rad\'o, 1925)\label{Rado:triangulation}
Any metric surface can be triangulated.
\end{theorem}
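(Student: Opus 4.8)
The plan is to reproduce Rad\'o's original argument, which bootstraps the global triangulation out of the purely planar Schoenflies theorem (\ref{Schoenflies:thm-plane}) via a chart-by-chart induction. First I would record that a metric surface $S$, being a connected metrizable manifold, is second countable, hence $\sigma$-compact; thus it admits a countable cover by \emph{parametric discs}, i.e.\ open sets $\Omega_1,\Omega_2,\dots$ with homeomorphisms $h_i\colon\Omega_i\to{\Bbb R}^2$, which---after the routine shrinking---may be taken so that the concentric sub-discs $\Omega_i':=h_i^{-1}(\{\vert z\vert<1\})$ already cover $S$ and $\overline{\Omega_i'}=h_i^{-1}(\{\vert z\vert\le1\})$ is a compactum inside $\Omega_i$. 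Setting $S_n:=\overline{\Omega_1'}\cup\dots\cup\overline{\Omega_n'}$ exhibits $S=\bigcup_n S_n$ as an increasing exhaustion by compacta, each $S_n\subset\Omega_1\cup\dots\cup\Omega_n$.

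I would then build, by induction on $n$, a triangulation $T_n$ of an open \nbhd\ of $S_n$ in $S$, with $T_{n+1}$ extending $T_n$ over that \nbhd. For the passage from $T_n$ to $T_{n+1}$ one works inside the single plane $\Omega_{n+1}\cong{\Bbb R}^2$: there the triangulated set already in hand meets $\Omega_{n+1}$ in a region whose frontier is a locally finite union of (a priori curvilinear) Jordan arcs and Jordan curves; invoking (\ref{Schoenflies:thm-plane}) one straightens these to polygons by an ambient homeomorphism of the plane, after which the still-untriangulated compact remainder of $\overline{\Omega_{n+1}'}$ is a finite union of polygonal planar regions triangulated by elementary PL geometry, the two triangulations matching along a common subdivision of their shared frontier. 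Since every compactum of $S$ lies in some $S_n$, the nested $T_n$ amalgamate into a triangulation of all of $S$.

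The hard part is precisely the straightening-and-matching step just glossed over, and it is the reason Rad\'o's proof is genuinely long rather than a one-liner: one must tame the (merely topological) frontier of the already-triangulated part into a polygon \emph{without undoing} the triangulation already fixed on its far side, and then certify that the old curvilinear triangles and the new polygonal ones agree after a common refinement. This is the full working load carried by (\ref{Schoenflies:thm-plane}): Schoenflies is what makes a Jordan curve absorbable into a polygon inside a thin collar (and, by doubling across it, likewise for a Jordan arc), and iterated it yields the combinatorial fact that two triangulations of a closed disc which subdivide the boundary circle in combinatorially the same way admit a common refinement---the bookkeeping that drives the gluing. Organizing the collars to be locally finite and tracking which frontier edges are already ``frozen'' so that no step spoils an earlier one is laborious, but uses nothing past second countability, planar PL topology and (\ref{Schoenflies:thm-plane}). (A heavier alternative smooths $S$ first---classical in dimension two---and appeals to Cairns--Whitehead triangulability of smooth manifolds, but in this elementary context Rad\'o's direct route is the right one.)
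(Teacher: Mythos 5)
Your outline is a faithful reconstruction of the classical Schoenflies-based route of Rad\'o (as presented in Ahlfors--Sario), which is exactly the proof the paper points to: it gives no details itself, merely citing Rad\'o 1925 and Ahlfors--Sario for this argument (and Moise for a Schoenflies-free alternative). The exhaustion by parametric discs, the inductive straightening of frontiers via Schoenflies, and the common-refinement matching are all correctly identified, including the honest flagging of where the real labor lies.
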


\begin{proof} The classical proof relies in principle on
Schoenflies (\ref{Schoenflies:thm-plane}), cf.
 Rad\'o 1925 \cite{Rado_1925} or
Ahlfors-Sario 1960 \cite[pp.\,105--110]{Ahlfors-Sario_1960}.
For a proof
%avoiding
circumventing Schoenflies compare Moise 1977
\cite[p.\,60]{Moise_1977}.
\end{proof}

When specialized to compact (bordered) surfaces one gets using
some combinatorial tricks the following seminal classification
theorem (initiated by pre-Morse theoretical considerations in
M\"obius 1863 \cite{Moebius_1863}, extended to the
non-orientable case in Klein's writings (partly motivated by
real algebraic curves and his paradigm of the ``Galois-Riemann
Verschmelzung'', plus apparently
 some  slight helping-hand from L. Schl\"afli). We can also
 mention  Klein's students
like Weichold 1883, and von Dyck 1888 (plus some earlier
works) . Later the combinatorial
%perspective
machine is
%strengthened
purified
%epured
in Dehn-Heegaard 1907 and
%in the US the ultimate
%polishings by
Brahana 1923.

\begin{theorem} (M\"obius 1860--63, Jordan, Klein, Dyck 1888,
Dehn-Heegaard, Brahana+Rad\'o)
\label{Moebius-Klein-classification} A compact bordered
surface is classified by
%the value of
the Euler characteristic
$\chi$, the number of contours and the indicatrix
(=orientability character). When orientable the surface is a
sphere with $g$ handles $\Sigma_g$, and otherwise it is
homeomorphic to the sphere with $g\ge 1$ cross-caps, denoted
$N_g=S^2_{gc}$.
\end{theorem}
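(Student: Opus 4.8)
The plan is to run the classical combinatorial algorithm, powered by triangulability. Since a compact (bordered) surface is Lindel\"of, hence metric (Urysohn, as in the proof of \ref{Weyl:triangulation}), Rad\'o's theorem (\ref{Rado:triangulation}) endows it with a finite triangulation; we may assume the surface connected, treating components separately. Gluing the triangles one at a time along a spanning tree of the dual graph assembles the surface as a single polygon $P$: a $2k$-gon in the closed case, and more generally a polygon with some edges identified in pairs plus $b$ further free edges representing the $b$ contours. This encodes the surface as an edge-word $w$ in which every interior letter occurs exactly twice (as $xx$, as $xx^{-1}$, or in separated positions), while each of the $b$ boundary letters occurs once.

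The main work --- and the only genuinely delicate bookkeeping --- is the normalization of $w$ by cut-and-paste. First one pushes the $b$ boundary arcs to become mutually adjacent, each bounding a little ``hole'' that is then capped with a disc; this reduces matters to the closed case at the cost of remembering $b$ and the corresponding change in $\chi$. On the resulting closed word one applies, in order: (i) cancellation of adjacent inverse pairs $xx^{-1}$ (unless $w$ has length $2$); (ii) sliding any pair of equally-oriented occurrences $x\dots x$ together into a cross-cap block $xx$; (iii) collecting any pair of oppositely-oriented occurrences into a commutator block $aba^{-1}b^{-1}$; and (iv) Dyck's trick $aba^{-1}b^{-1}cc\simeq aabbcc$, trading a handle for two cross-caps whenever a cross-cap is already present. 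Iterating drives $w$ to one of the two normal forms $a_1b_1a_1^{-1}b_1^{-1}\cdots a_gb_ga_g^{-1}b_g^{-1}$ ($=\Sigma_g$) or $a_1a_1\cdots a_ga_g$ ($=N_g$, $g\ge 1$); re-puncturing the $b$ discs yields the bordered models. I expect the fiddly part to be controlling the boundary letters and the interior letters simultaneously, so that steps (i)--(iv) never disturb the already-normalized tail --- the classical remedy being to fix once and for all the ordering (boundary arcs first, then handles/cross-caps) and to check that each move preserves the partially reduced form.

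It remains to see that $(\chi,\, b,\, \text{indicatrix})$ is a complete set of invariants. That each is a homeomorphism invariant is clear: $b$ is the number of boundary circles, orientability is intrinsic, and $\chi$ is computable from any triangulation and independent of it (invariance of simplicial homology, or more elementarily invariance of the alternating vertex-edge-face count under subdivision and under the moves (i)--(iv)). Conversely, from the normal form one reads off $\chi(\Sigma_g \text{ with } b \text{ holes})=2-2g-b$ and $\chi(N_g \text{ with } b \text{ holes})=2-g-b$, while orientability separates the two families (a surviving $xx$ block occurs precisely in the non-orientable case); hence $\chi$ and $b$ pin down $g$, and the triple determines the surface up to homeomorphism.
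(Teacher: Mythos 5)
Your proposal is correct and follows exactly the route the paper itself takes: triangulate via Rad\'o (after noting compactness forces metrizability) and then run the Dehn--Heegaard/Brahana cut-and-paste reduction to the two normal forms, with the boundary circles capped off and bookkept separately --- the paper merely delegates these combinatorial details to Massey's book rather than writing them out. Your closing verification that $(\chi, b, \text{indicatrix})$ is a complete invariant system matches the standard argument and needs no further comment.
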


\begin{proof} It is probably fair to qualify all early proofs
as semi-intuitive
%in the sense that
inasmuch as they required some geometric `structuration' lying
%somewhat
beyond the naked topological manifolds (those were
%in full
%generality
perhaps first defined in print in \Kerekjarto{
}1923~\cite{Kerekjarto_1923}, though the idea
%was of course in
%the air much longer ago,
is much older, e.g., Riemann 1. March\footnote{Compare, e.g.,
Speiser 1927 \cite[p.\,107-8]{Speiser_1927}} 1853--1854,
Betti, Poincar\'e 1895, Tietze 1907, Brouwer, Weyl 1913,
etc.). Thus
%formally
one first triangulates the surface with Rad\'o
(\ref{Rado:triangulation}) and then apply the combinatorial
reduction to a normal form \`a la Dehn-Heegaard, say or
alternatively do M\"obius-Morse theory. For a modern book
form, cf. e.g., Massey 1967 \cite{Massey_1967}.
\end{proof}

We now list several consequences, starting with the following,
%(which was
historically perhaps first proved via the uniformization
theorem (Klein, Poincar\'e, Koebe 1882--1907). Recall also
%(to avoid a fatal vicious circle!)
an alternative proof via triangulations and the combinatorial
%scheme
device of van der Waerden-Reichardt (ref. as in
\cite{GaGa2010} (arXiv version)):

\begin{prop}\label{uniformization} A metric simply-connected surface is either $S^2$
or the plane.
\end{prop}

\begin{proof} Cf. also  Ahlfors-Sario \cite{Ahlfors-Sario_1960}
and Massey \cite{Massey_1967}
%(outline
(in the exercise).
\end{proof}

This in turn implies first the metric-case of the following:

\begin{lemma}[Homotopic Schoenflies]
%(Schoenflies-
%(Baer 1928-Cannon 1969)
(Baer 1928, Cannon 1969) \label{Schoenflies-Baer} A
\hbox{null-}\\homotopic Jordan curve in a surface (metric or
not) bounds a disc. In particular any Jordan curve in a
simply-connected surface bounds a disc, which is unique
whenever the surface is open (equivalently not the sphere).
\end{lemma}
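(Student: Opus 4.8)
The plan is to deduce the statement from its metric special case, and then to delete the metrizability hypothesis by a Lindel\"of-exhaustion (FLAT) argument; the metric case itself I would reduce, through the universal cover, to the plane-Schoenflies theorem (\ref{Schoenflies:thm-plane}) together with uniformization (\ref{uniformization}). \textbf{Step 1 (metric case).} Let $S$ be metric and $J\subset S$ a null-homotopic Jordan curve. First I would observe that $J$ is two-sided: a one-sided simple closed curve is non-zero in $H_1(S;\mathbb{Z}/2)$, hence cannot be null-homotopic; so $J$ has an annular \nbhd{} and $S\smallsetminus J$ has at most two components. Then pass to the universal cover $p\colon\widetilde S\to S$, again metric since $\pi_1$ of a metric surface is countable, whence $\widetilde S$ is $S^2$ or $\mathbb{R}^2$ by (\ref{uniformization}). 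Since $J$ is null-homotopic, the restriction of $p$ over any component of $p^{-1}(J)$ is a trivial covering of a circle, so each component of $p^{-1}(J)$ is a Jordan curve mapped homeomorphically onto $J$. I would then select an \emph{innermost} such component $\widetilde J$: the closed disc $\widetilde D$ it bounds in $\widetilde S$ (by (\ref{Schoenflies:thm-plane}), plus its classical $S^2$-avatar obtained by deleting a point off $p^{-1}(J)$) contains no other component of $p^{-1}(J)$; this is legitimate because two distinct components are disjoint Jordan curves in the plane/sphere, hence nested or unlinked.

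\textbf{Step 2 (push the disc down).} The crux is that $p$ is injective on $\widetilde D$. I would argue that no non-trivial deck transformation $g$ can satisfy $g\widetilde D^{\circ}\cap\widetilde D^{\circ}\neq\varnothing$: since deck transformations are fixed-point-free, $g\widetilde J\neq\widetilde J$ (otherwise $g$ would fix $\widetilde J$ pointwise, using injectivity of $p|_{\widetilde J}$, or map the disc $\widetilde D$ to itself, against Brouwer), so $g\widetilde J$ is disjoint from $\widetilde J$; innermost-ness excludes $g\widetilde J\subset\widetilde D^{\circ}$, leaving $g\widetilde D\supsetneq\widetilde D$, and iterating $g^{-1}$ produces a strictly nested tower of discs whose intersection is a non-empty compact $g$-invariant set — impossible for a fixed-point-free planar homeomorphism. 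Consequently $\widetilde D^{\circ}$ is a connected component of $p^{-1}(S\smallsetminus J)$ (note $p(\widetilde D^{\circ})\cap J=\varnothing$, once more by innermost-ness), so $p|_{\widetilde D^{\circ}}$ is a covering onto a component $C$ of $S\smallsetminus J$ with trivial deck group, i.e.\ a homeomorphism; gluing in the homeomorphism $p|_{\widetilde J}\colon\widetilde J\to J$, whose image is disjoint from $C$, I get that $p|_{\widetilde D}$ is a continuous injection of a compactum, hence a homeomorphism onto $C\cup J$, which is therefore a disc bounded by $J$. This is essentially Baer's 1928 route.

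\textbf{Step 3 (non-metric, simply-connected case, and uniqueness).} If $S$ is simply connected, every Jordan curve is null-homotopic, so only metrizability has to be dropped. Here I would invoke the FLAT principle: $J$ is compact, hence lies in a Lindel\"of (so metric) subregion, which using Schoenflies I enlarge to a \emph{simply-connected} Lindel\"of subregion $W$ by filling holes along a calibrated exhaustion (\ref{calibrated-exhaustions:lemma}) — each relevant null-homotopy in $S$ has compact image and so can be absorbed into the calibration. Then $W\subseteq S$ is metric, simply connected and contains $J$, and Step 1 (or directly \cite{GaGa2010}, resp.\ Cannon 1969 \cite{Cannon_1969}) supplies the disc. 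For uniqueness when $S$ is open: if $J$ bounded discs $D_1\neq D_2$, each $D_i\smallsetminus J$ is connected and clopen in $S\smallsetminus J$, hence a full component, so $S\smallsetminus J$ has exactly two components and $S=D_1\cup_J D_2\cong S^2$ — excluded. I expect the main obstacle to sit in Step 2: the planar bookkeeping behind the ``innermost'' choice and the exclusion of $g\widetilde D\supsetneq\widetilde D$ (Jordan-curve nesting plus the fixed-point argument); the covering-space and compactness parts are routine.
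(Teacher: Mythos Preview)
Your overall route matches the paper's: metric case via the universal cover plus Baer's innermost-disc argument, then a Lindel\"of reduction for the non-metric case. Your Step~2 spells out in detail what the paper merely cites (Baer 1928, \cite{GaGa2010}); one simplification there is that once $g^{-1}\widetilde D\subsetneq\widetilde D$, Brouwer's fixed-point theorem for the disc already gives a fixed point of $g^{-1}$, so the nested-tower and plane-translation considerations are unnecessary.

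There is, however, a genuine issue in Step~3. You invoke the calibrated exhaustion (\ref{calibrated-exhaustions:lemma}) to enlarge a Lindel\"of subregion to a simply-connected one, but that lemma rests on the kernel-killing procedure (\ref{killing:kernel}), whose proof in turn uses precisely the present lemma (\ref{Schoenflies-Baer}) to cap null-homotopic Jordan curves by discs. So as written, Step~3 is circular. The paper's reduction avoids this entirely and is also simpler: cover the \emph{range of the null-homotopy} (a compactum) by finitely many charts to obtain a Lindel\"of, hence metric, subregion $L$; the curve $J$ is then null-homotopic in $L$, and Steps~1--2 apply to the pair $(L,J)$. No calibration is needed, and this handles the general null-homotopic case (not just the simply-connected one, which your Step~3 restricts to) in one stroke.
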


\begin{proof}
Via passage to the universal covering (still metric by
Poincar\'e-Volterra and the countability of the $\pi_1$
ensured by Rad\'o's triangulation (\ref{Rado:triangulation}),
or alternatively just lift the triangulation and use Weyl
(\ref{Weyl:triangulation})), we may apply in view of
(\ref{uniformization}) the classic Schoenflies theorem
(\ref{Schoenflies:thm-plane}). An argument of R. Baer, 1928
(compare e.g., \cite{GaGa2010}), shows that the bounding disc
for the lifted Jordan curve is homeomorphically projected down
in the original surface.
%as well as the first hand
%sources quoted therein esp. Epstein 1966, Cannon, 1969

The non-metric case reduces
%trivially
to the metric one, by covering the range of a null-homotopy by
a Lindel\"of subregion (as observed in Cannon 1969
\cite{Cannon_1969}).
\end{proof}

\subsection{Other gadgets: freeness of $\pi_1$
(Ahlfors-Sario) and Whitehead's spine}

%Yet to be written or killed. Yet the main difficulty is to
%find a ref. for the spine (a discussion is perhaps in Massey
%\cite{Massey_1967})

\begin{lemma}\label{Ahlfors-Sario:freeness:lemma} The
fundamental group of an open metric surface is free on
countably many generators.
\end{lemma}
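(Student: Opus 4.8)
The plan is to reduce the statement to the elementary fact that the fundamental group of a connected graph (a $1$-dimensional CW-complex) is free, of rank at most the cardinality of its edge set. First some bookkeeping: an open metric surface $S$, being metrizable, is Lindel\"of, so by Rad\'o (\ref{Rado:triangulation}) it carries a triangulation $K$, which is necessarily locally finite and \emph{countable} (a triangulated surface is Lindel\"of, by the argument of (\ref{Weyl:triangulation}) run in reverse). We may assume $S$, hence $K$, connected, the components being handled separately. Thus everything reduces to exhibiting $S$ up to homotopy as a countable graph.

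The crucial geometric input is that a \emph{non-compact} triangulated $2$-manifold $|K|\cong S$ (with or without boundary) collapses — after subdivision if one insists on staying PL — onto a subcomplex $L\subseteq K$ of dimension $\le 1$, i.e. onto a \emph{spine}, which is a graph. This is Whitehead's spine theorem (1939); for surfaces one may instead argue by hand, choosing a proper Morse function (equivalently a handle decomposition) on $S$ and using non-compactness to cancel, or to slide off towards the ends, every handle of index $2$, so that $S$ deformation retracts onto a regular neighborhood of a $1$-complex. Since a collapse is in particular a deformation retraction, $\pi_1(S)\cong\pi_1(L)$.

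Finally, $L$ being a connected graph, the contraction of a spanning tree identifies $L$ up to homotopy with a wedge of circles, one circle per edge of $L$ outside the tree; hence $\pi_1(L)$ is free on that edge set. As $L$ is a subcomplex of the countable complex $K$ it has at most countably many edges, so $\pi_1(S)$ is free on at most countably many generators — finitely many precisely when $S$ has finitely generated $\pi_1$, and genuinely countably infinitely many for surfaces of infinite genus or with infinitely many ends (already visible for the plane with a closed discrete infinite set of punctures).

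I expect the only real obstacle to be the second step, the existence of the $1$-dimensional spine. Short of citing Whitehead, the self-contained alternative goes through the classification of compact bordered surfaces (\ref{Moebius-Klein-classification}): exhaust $S$ by an increasing sequence of compact bordered subsurfaces $\Sigma_1\subseteq\Sigma_2\subseteq\cdots$ put in ``canonical position'', meaning no component of $\overline{S\setminus\Sigma_i}$ is compact. Each $\Sigma_i$ then collapses to a finite graph and has finitely generated free $\pi_1$; the canonical-position condition is exactly what forces $\pi_1(\Sigma_i)$ to sit inside $\pi_1(\Sigma_{i+1})$ as a \emph{free factor} (the complementary pieces being non-compact, none is a disc, so nothing gets killed, and the extra generators come from the new handles and holes). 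An increasing union of free groups along free-factor inclusions is free, with basis the union of the successive bases, and that union is countable. Verifying the canonical-position exhaustion exists, and checking the free-factor claim carefully, is where the work concentrates; I would present the Whitehead-spine route as the main line and relegate this to a remark.
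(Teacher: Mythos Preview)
Your argument is correct. The paper does not actually prove this lemma: its entire ``proof'' consists of the citation ``Cf.\ Ahlfors--Sario 1960 \cite[\S 44A., p.\,102]{Ahlfors-Sario_1960} or Massey 1967 \cite{Massey_1967}.'' Your spine-based route is precisely the standard one found in those references, and indeed the paper records the key ingredient --- that an open metric surface deformation-retracts onto a subgraph of the $1$-skeleton of any triangulation --- separately as the very next statement, Lemma~\ref{Whitehead-spine} (again proved only by citation). So you have effectively unpacked what the paper leaves as a black box; the exhaustion-by-compact-bordered-subsurfaces alternative you sketch is also sound and is closer in spirit to how Ahlfors--Sario organize things.
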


\begin{proof} Cf.
Ahlfors-Sario 1960 \cite[\S 44A., p.\,102]{Ahlfors-Sario_1960}
or Massey
%%%'s book
1967 \cite{Massey_1967}.
\end{proof}

Using Whitehead's spine we get the stronger assertion:

\begin{lemma} \label{Whitehead-spine}
Any open metric surface retracts by deformation onto a
subgraph of the $1$-skeleton of any of its triangulation. In
particular it is homotopy equivalent to a (countable) graph.
\end{lemma}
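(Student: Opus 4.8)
The plan is to trade the topological statement for a purely combinatorial one: to produce a simplicial \emph{collapse} of a triangulation of $S$ onto a $1$-dimensional subcomplex. Triangulate $S$ by Rad\'o (\ref{Rado:triangulation}); being metric, $S$ is Lindel\"of, so this triangulation $T$ has only countably many simplices (and, as a surface triangulation, is locally finite). If we show $T$ collapses onto a subcomplex $\Gamma\subseteq T^{(1)}$, then, collapses being deformation retractions, $S$ deformation-retracts onto $\Gamma$, and $\Gamma$ is automatically a countable graph, whence also $S\simeq\Gamma$. In dimension $2$ this collapsibility is exactly the content of Whitehead's spine theorem (J.\,H.\,C. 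Whitehead, \emph{Simplicial spaces, nuclei and $m$-groups}, 1939: an open combinatorial $n$-manifold collapses onto an $(n-1)$-subcomplex), so one legitimate route is simply to cite it; but for a surface I would rather spell out the elementary argument. Its finite prototype: a \emph{connected compact} triangulated surface with non-empty boundary collapses onto a $1$-subcomplex of its own $1$-skeleton, with no subdivision needed, because any boundary edge $e$ lies in a unique $2$-simplex $\tau$ --- and in no further simplex besides $\tau$ and $e$ --- so $(e,\tau)$ is an elementary collapse, after which the surviving $2$-simplices still span a compact bordered surface (not a closed one, being a proper subcomplex of the connected $S$), hence furnish another free boundary edge; iterate finitely often until no $2$-simplex remains.

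For the genuine, non-compact $S$ I would organise an \emph{infinite} collapse along an exhaustion. Write $S=\bigcup_{n\ge 1}K_n$ with each $K_n$ a connected compact bordered subsurface that is a full subcomplex of $T$, with $K_n\subset\mathrm{int}\,K_{n+1}$ and no complementary component of $K_n$ compact (a ``regular'' exhaustion; it exists since a metric surface is $\sigma$-compact, so one thickens a compact exhaustion up to subcomplexes). Set $L_n:=\overline{K_{n+1}\setminus K_n}$, the $n$-th shell: a compact bordered surface meeting $K_n$ precisely along the graph $\partial K_n$, every component of which touches the outer boundary $\partial K_{n+1}$. Then I would collapse $K_1$ onto a $1$-subcomplex $\Gamma_0$, and each shell $L_n$ onto $\partial K_n\cup\Gamma_n$ with $\Gamma_n$ a $1$-subcomplex, at each step discarding only free edges on the outer boundary and never touching the frozen interface $\partial K_n$. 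Assembling $\Gamma:=\Gamma_0\cup\bigcup_n\Gamma_n\subseteq T^{(1)}$ and reparametrising time so that the collapse of $L_n$ runs over $[1-2^{-n},\,1-2^{-n-1}]$, the collapses of $K_1,L_1,L_2,\dots$ should glue into a deformation retraction of $S$ onto $\Gamma$, continuous even at time $1$ because local finiteness of $T$ together with properness of the exhaustion forces every compact subset of $S\times[0,1]$ to meet only finitely many shells.

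The step I expect to be the real obstacle is the one just glossed over: arranging the shell collapses so that they simultaneously (i) leave every interface $\partial K_n$ untouched, (ii) still drive each shell down to dimension $1$, and (iii) fit together into one globally continuous deformation retraction. The naive ``eat-from-the-outer-boundary'' recipe can stall --- e.g. at a triangulated disc whose entire boundary is a frozen interface circle, which does not collapse rel its boundary --- so one must in fact coordinate the collapse across consecutive shells, which is precisely the bookkeeping that Whitehead's regular-neighbourhood/spine machinery is designed to carry out cleanly. Hence, should the elementary route get unwieldy, I would fall back on quoting Whitehead's 1939 theorem directly, retaining only the countability remark and the compact prototype (where no subdivision of $T$ is forced) as the visible, and entirely routine, content.
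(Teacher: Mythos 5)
The paper does not actually prove this lemma: its ``proof'' is a one-line pointer to Whitehead 1939 and Massey's book. Your fallback option --- quote Whitehead's spine theorem and keep only the countability remark --- therefore coincides with what the paper does, and to that extent your proposal is acceptable. Your elementary route goes beyond the paper, and it is worth saying precisely where it breaks, because the true obstacle is not quite where you place it. The case you worry about (a shell component that is a disc with its entire boundary frozen) cannot occur: regularity of the exhaustion forces every component of $L_n$ to reach $\partial K_{n+1}$, and a compact bordered surface with at least one free boundary circle does collapse onto the union of its remaining boundary with a graph.

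The genuine gaps are these. First, an ordering problem: the interface you must freeze when collapsing $L_{n+1}$ is $\partial K_{n+1}$, which is exactly the \emph{outer} boundary of $L_n$ --- the part your stage-$n$ collapse eats first --- so after stage $n$ that circle is no longer present in the surviving complex and the stage-$(n+1)$ collapse ``rel $\partial K_{n+1}$'' has nothing to be rel to. Second, and more fundamentally, these shell moves are not elementary collapses of the triangulation $T$ of $S$ at all: an edge of $\partial K_{n+1}$ is a free face of $L_n$ viewed in isolation, but in $S$ it also bounds a triangle of $L_{n+1}$, and indeed a boundaryless open surface has \emph{no} free edges whatsoever, so a literal simplicial collapse of $T$ cannot even begin. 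The correct elementary repair is to abandon the global collapse, choose nested spines $\Gamma_n\subset\Gamma_{n+1}$ of the $K_n$ together with deformation retractions $K_n\to\Gamma_n$ compatible up to homotopy, and assemble them by a telescope/infinite-composition argument into a deformation retraction of $S=\bigcup K_n$ onto $\Gamma=\bigcup\Gamma_n$; that bookkeeping is precisely Whitehead's regular-neighbourhood machinery, which is why the paper declines to reproduce it and simply cites \cite{Massey_1967}.
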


\begin{proof} The theory of the spine originates in Whitehead
1939, cf. also Massey's book 1967 \cite{Massey_1967} for a
discussion.
\end{proof}

\subsection{Indicatrix and orientability (Gauss, Listing,
M\"obius, Klein, Schl\"afli, etc.)}

Those classical notions (originating with the discovery
%($\approx
(circa 1860) of the {\it M\"obius band} involving a
%well-known
well-documented($\pm$)
%minor
question of priority between
%two direct friends, viz.
%well-known
close colleagues, namely Gauss and Listing) is clearly
independent of a metric and makes sense for all manifolds.
Several viewpoints are possible (combinatorial vs. naked
TOP-manifolds). A first ``naked'' aspect is to define the {\it
indicatrix} (or even better the {\it orientation covering}):

\begin{lemma}\label{indicatrix-orient-covering} Given a manifold $M$, one can propagate ``local
orientations'' around loops to obtain a morphism $\pi_1(M)\to
\{\pm 1\}$ (called the indicatrix). The latter is in fact just
the monodromy of $M_{\circlearrowleft}\to M$ the
%$2$-fold
double orientation
%covering of $M$
cover given by de-doubling points by their two possible local
orientations.
%about the point.
Being purely local, the construction  works for all locally
Euclidean spaces even without Hausdorff proviso, and being
%purely
perfectly intrinsic it
%prompts
has the following:

(Naturality) If $L\subset M$ is a subregion of the manifold
$M$, then its orientation covering $L_{\circlearrowleft}\to L$
is just the restriction of that of $M$ to $L$.
\end{lemma}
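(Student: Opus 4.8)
The plan is to make ``local orientation'' precise, to build $M_{\circlearrowleft}$ by hand as a set-with-topology, to check it is a genuine double covering, and then to read off both the indicatrix and the naturality clause as near-tautologies from elementary covering-space theory.

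First I would fix, for each $x\in M$, the two-element set $O_x$ of \emph{local orientations at $x$} --- a generator of the infinite cyclic group $H_n(M,M\setminus\{x\})$, equivalently an orientation of any Euclidean chart about $x$, the two descriptions being matched by excision. Put $M_{\circlearrowleft}=\{(x,o):x\in M,\ o\in O_x\}$ with projection $p(x,o)=x$, which is automatically two-to-one. To topologize it, observe that a chart $\varphi\colon U\xrightarrow{\ \approx\ }\mathbb R^n$ carries a preferred orientation; transporting it pointwise yields a section $s_U^{+}\colon U\to p^{-1}(U)$, and the reversed chart gives $s_U^{-}$. Declare each $p^{-1}(U)=s_U^{+}(U)\sqcup s_U^{-}(U)$ open with the two maps $s_U^{\pm}$ homeomorphisms onto their images. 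Consistency on overlaps holds because on $U\cap V$ the transition map either preserves or reverses the local orientation in a locally constant way (measured by the induced sign on local homology at each point), so the two sheets over $U$ agree with those over $V$ up to that locally constant swap; hence the locally defined topologies glue. Then $p$ is a surjective local homeomorphism, evenly covered over every chart --- a double covering. Since charts are honest Euclidean spaces, nothing here uses the Hausdorff axiom, and $M_{\circlearrowleft}$ is again locally Euclidean of the same dimension (and Hausdorff when $M$ is, though this is not needed).

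Next I would read off the indicatrix. For a loop $\gamma$ at $x$ and any $o\in O_x$, path-lifting gives a unique $\tilde\gamma$ with $\tilde\gamma(0)=(x,o)$ and then $\tilde\gamma(1)=(x,\varepsilon(\gamma)\cdot o)$ for a well-defined sign $\varepsilon(\gamma)\in\{\pm1\}$ (changing $o$ applies the deck involution to the whole lift and leaves $\varepsilon(\gamma)$ unchanged; path- and homotopy-lifting are legitimate here since each chart-piece of $M_{\circlearrowleft}$ is Hausdorff even if $M$ is not, and paths stay within finitely many charts). Homotopy-lifting makes $\varepsilon$ factor through $\pi_1(M,x)$, and $\varepsilon(\gamma\cdot\delta)=\varepsilon(\gamma)\,\varepsilon(\delta)$ because a lift of a product is the product of lifts while $\{\pm1\}$ acts multiplicatively on fibres. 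By construction this homomorphism $\pi_1(M,x)\to\{\pm1\}$ is exactly the monodromy of $M_{\circlearrowleft}\to M$, as asserted.

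Naturality is then immediate from the local nature of everything. For a subregion $L\subset M$ and $x\in L$, excision gives $H_n(L,L\setminus\{x\})\cong H_n(M,M\setminus\{x\})$, so $O_x$ is literally the same set whether computed in $L$ or in $M$; hence $L_{\circlearrowleft}=p^{-1}(L)$ as sets. Charts of $L$ are charts of $M$, so the sheet decomposition defining the topology of $L_{\circlearrowleft}$ is the restriction of the one on $M_{\circlearrowleft}$; thus $L_{\circlearrowleft}\to L$ is $p$ restricted over $L$, and the indicatrix of $L$ is $\pi_1(L)\to\pi_1(M)\xrightarrow{\,\varepsilon\,}\{\pm1\}$. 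The only step wanting genuine care is the well-definedness of the sheet-by-sheet topology --- that the orientation-comparison cocycle on chart overlaps is locally constant and cocycle-compatible --- so I expect that bookkeeping, rather than any conceptual hurdle, to be the main obstacle.
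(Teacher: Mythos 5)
Your construction is correct and is precisely the standard one the paper has in mind: its own ``proof'' is a one-line deferral to Dold, whose orientation covering is built exactly as you do, from local homology classes in $H_n(M,M\setminus\{x\})$ topologized via chart sections, with the indicatrix read off as the monodromy and naturality following from excision. You have simply supplied the bookkeeping (local constancy of the orientation-comparison on chart overlaps, path lifting without Hausdorffness) that the paper outsources.
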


\begin{proof} It boils down to define ``local orientations'',
cf. e.g. Dold's Algebraic Topology.
\end{proof}

A manifold is said to be {\it orientable} if its indicatrix is
trivial (equivalently, if its orientation covering is
trivial).
%Thus clearly:

\begin{lemma}\label{orientability:heredity}
$\bullet$ (Heredity) Any subregion of an orientable manifold
is orientable.

\noindent $\bullet$ (Transfer) A manifold all of whose
Lindel\"of subregions are orientable is orientable.
\end{lemma}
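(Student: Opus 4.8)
The plan is to route everything through the orientation double covering $M_{\circlearrowleft}\to M$ of \ref{indicatrix-orient-covering}, exploiting its naturality clause: for a subregion $L\subset M$ the covering $L_{\circlearrowleft}\to L$ is the restriction of $M_{\circlearrowleft}\to M$. Recall (also from \ref{indicatrix-orient-covering}) that a connected manifold is orientable exactly when this $2$-fold covering is trivial, equivalently when it admits a global continuous section (a coherent choice of local orientations), equivalently when its indicatrix $\pi_1\to\{\pm1\}$ vanishes. One may assume the ambient manifold connected throughout, since a component is open (manifolds being locally connected) hence a subregion, and the Lindel\"of subregions of a component are exactly the Lindel\"of subregions of the whole manifold that happen to lie in it; so the disconnected case reduces componentwise to the connected one.

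Heredity is then one line: if $M$ is orientable, choose a global section $s\colon M\to M_{\circlearrowleft}$; by naturality $s|_L\colon L\to M_{\circlearrowleft}|_L=L_{\circlearrowleft}$ is a section of the orientation covering of $L$, so $L$ is orientable. (Equally, the indicatrix of $L$ factors as $\pi_1(L)\to\pi_1(M)\to\{\pm1\}$ and so is trivial; from this vantage connectedness of $L$ plays no r\^ole, and the statement reads the same whether or not ``subregion'' is required connected, the disjoint-union version being immediate.)

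For Transfer I would verify that the indicatrix $\pi_1(M)\to\{\pm1\}$ kills every loop. Let $\gamma\colon S^1\to M$ be a loop, with compact connected image $K=\gamma(S^1)$. Cover $K$ by finitely many Euclidean charts; their union $U$ is open and, as a finite union of second-countable opens, second countable, hence Lindel\"of. Since $K$ is connected it lies in a single connected component $L$ of $U$; this $L$ is open (local connectedness again) and, being open in $U$, second countable --- thus a Lindel\"of subregion of $M$ containing $K$. By hypothesis $L$ is orientable, so by naturality the loop $\gamma$, which factors through $L$, lifts to a \emph{loop} in $L_{\circlearrowleft}=M_{\circlearrowleft}|_L\subset M_{\circlearrowleft}$; that is, $\gamma$ lies in the kernel of the indicatrix of $M$. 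As $\gamma$ was arbitrary, $M$ is orientable.

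There is no genuine obstacle here; the single step worth isolating is the remark that a Lindel\"of manifold is second countable (so that open subsets, and components of open subsets, remain Lindel\"of), which is what permits the compact image of a loop to be housed inside a Lindel\"of subregion. Everything else is bookkeeping with the covering --- in particular one should note that lifting $\gamma$ within $L_{\circlearrowleft}$ computes the same monodromy as lifting it within $M_{\circlearrowleft}$, precisely because $L_{\circlearrowleft}$ is the restriction of $M_{\circlearrowleft}$, which is exactly the naturality statement of \ref{indicatrix-orient-covering}.
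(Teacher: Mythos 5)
Your proof is correct and follows essentially the same route as the paper's: heredity via restriction of the (trivial) orientation covering, and transfer by housing a compact loop/path inside a Lindel\"of subregion and invoking the naturality of $M_{\circlearrowleft}$ — the paper merely phrases the transfer step in contrapositive form (a connecting path upstairs witnessing non-orientability of some Lindel\"of subregion), while you argue directly that every loop has trivial monodromy. No gaps.
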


\begin{proof} The hereditary claim reduces to the fact
that  triviality of a covering is preserved by restricting to
subregions of the base. The transfer claim requires a little
argument. If not orientable the given manifold, $M$, has a
non-trivial orientation-covering, which is therefore
connected. Thus there is a path in $M_{\circlearrowleft}$
connecting the two points lying above the (arbitrarily) fixed
basepoint of $M$. This path, being compact,  is contained in
some Lindel\"of subregion, which up to taking a Lindel\"of
exhaustion of the base $M$ can be assumed to be the inverse
image of a Lindel\"of subregion $L$ of $M$.
%But then
By naturality (\ref{indicatrix-orient-covering}) $L$ is
non-orientable, violating the assumption.
\end{proof}

Another common definition of orientability (of a manifold of
any dimensionality) is that any embedded circle has a trivial
tubular neighbourhood. Yet we probably want to exclude wild
knots. Several respectable theories (PL, DIFF, etc.) explain
how to tame
%the
wildness, yet as we are primarily concerned with the 2D-case
there is an intrinsic weapon namely Schoenflies
(\ref{Schoenflies:thm-plane}) and a resulting tubular
neighbourhood theory (cf. e.g., Siebenmann 2005
\cite{Siebenmann_2005}) permitting to circumvent any
specialisation to such
%ancillary tamifying
structures (whose existence is rather
%%questionable
weak in the non-metric context and as we know even for compact
manifolds not universally available as soon as the dimension
is $\ge 4$).

\begin{lemma}\label{orientability-in-terms-of-Jordan-curves}
A surface is orientable iff any Jordan curve has a trivial
tubular neighbourhood. In particular puncturing finitely many
points in a surface does not affect the indicatrix.
\end{lemma}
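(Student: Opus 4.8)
The plan is to prove both directions of the equivalence and then harvest the puncturing corollary. One direction is immediate: if a surface $S$ is orientable, then by heredity (\ref{orientability:heredity}) every subregion is orientable, and in particular an annular tubular \nbhd{} $N$ of any embedded circle $C\subset S$ is orientable; an orientable annulus (being an open subsurface of the plane, or directly by \ref{uniformization}-type reasoning applied to its double cover) cannot be a M\"obius band, so $N\cong S^1\times(-1,1)$ is the trivial (untwisted) tube. The subtlety to be careful about is that ``tubular \nbhd'' must first be shown to exist as an honest $I$-bundle over $C$; this is exactly where I would invoke the Schoenflies-based tubular \nbhd{} theory alluded to in the paragraph preceding the lemma (Siebenmann 2005), which tames the potential wildness of $C$ in dimension $2$. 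Once we know $N$ is an $I$-bundle over a circle, it is one of the only two such bundles, and orientability of $N$ forces the trivial one.

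For the converse I would argue contrapositively: suppose $S$ is non-orientable. First reduce to the metric/Lindel\"of case using the Transfer half of (\ref{orientability:heredity}): non-orientability of $S$ is detected on some Lindel\"of (hence metric, by Urysohn) subregion $L$, and by naturality of the orientation covering it suffices to produce inside $L$ a Jordan curve with nontrivial tube. So we may assume $S$ itself is metric and non-orientable. Then the orientation covering $S_{\circlearrowleft}\to S$ is a connected nontrivial double cover; pick a loop $\gamma$ in $S$ whose class maps to $-1$ under the indicatrix homomorphism (\ref{indicatrix-orient-covering}). The standard move is to homotope and then perturb $\gamma$ to be an \emph{embedded} circle $C$ still carrying a reversing local orientation along it — concretely, a generic smooth (or PL, after triangulating via \ref{Rado:triangulation}) representative of an orientation-reversing class can be surgered at its self-intersections to an embedded orientation-reversing circle, using that removing a small crossing either splits a class into two or leaves a shorter orientation-reversing one, and iterating. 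A tubular \nbhd{} of such a $C$ restricts the orientation covering to a nontrivial double cover of the annulus/M\"obius \nbhd, forcing it to be the M\"obius band, i.e.\ the tube is twisted.

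The main obstacle, as usual in the non-metric 2D setting, is the passage from ``there is an orientation-reversing \emph{loop}'' to ``there is an orientation-reversing \emph{embedded circle with a genuine tubular \nbhd}''. In the smooth or PL metric category this is routine general position plus crossing-surgery; here I must make sure the reduction to the metric Lindel\"of subregion (via \ref{orientability:heredity} Transfer and naturality) is done \emph{before} invoking any triangulation or smoothing, since those are guaranteed only metrically (\ref{Rado:triangulation}), and then re-globalize at the end purely on the level of the statement (the curve $C$ we produced lives in $L\subset S$, so it is a Jordan curve in $S$ with twisted tube, contradicting the hypothesis).

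Finally, for the ``in particular'' clause: if $S'=S\setminus\{p_1,\dots,p_k\}$ is $S$ with finitely many points deleted, then $S'$ is a subregion of $S$ and conversely every embedded circle of $S$ missing the $p_i$ sits in $S'$, while every embedded circle of $S'$ sits in $S$; moreover any embedded circle can be pushed off a given finite set by a small ambient isotopy supported in disjoint charts around the $p_i$ (charts exist by the phagocytosis/Schoenflies tubular theory), and an ambient isotopy does not change whether a tube is twisted. Hence the family of twisted-tube circles is the same for $S$ and $S'$ up to isotopy, so by the equivalence just proved $S$ and $S'$ have the same indicatrix, i.e.\ puncturing finitely many points preserves orientability. $\square$
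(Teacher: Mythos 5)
Your proof is correct and follows the same overall skeleton as the paper's: the forward direction is identical (heredity (\ref{orientability:heredity}) plus the dichotomy trivial-versus-M\"obius for $\mathbb{R}$-bundles over the circle), and the converse is likewise reduced to the metric case by detecting (non-)orientability on a Lindel\"of subregion and re-globalizing via the Transfer clause of (\ref{orientability:heredity}) together with naturality of the orientation cover. Where you genuinely diverge is in the metric kernel. The paper argues the positive direction directly: triangulate by Rad\'o (\ref{Rado:triangulation}), subdivide until simplices lie in charts, and use the hypothesis on combinatorial Jordan loops to orient the simplices coherently --- a sketch it explicitly leaves as an exercise. You instead prove the contrapositive: from an orientation-reversing loop you extract an \emph{embedded} one-sided circle by resolving self-intersections (at each crossing the loop splits as a product of two shorter loops, and multiplicativity of the indicatrix keeps one of them orientation-reversing), and then read off the twisted tube. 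Your route is the more standard and, as written, the more complete one; its only cost is the appeal to general position / PL approximation, which is available exactly because you performed the Lindel\"of reduction first, as you correctly insist. For the puncturing clause the paper is more economical --- non-orientability is witnessed by a M\"obius band, which can be chosen to miss the finitely many punctures --- whereas your ambient-isotopy argument pushing circles off the punctures proves the same thing with slightly more machinery; both are fine, and both rely on the same intrinsic fact that the twisted-or-trivial character of a tube is determined by the circle itself.
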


\begin{proof} $[\Rightarrow]$ Let $J$ be a Jordan curve in the
surface $M$, and let $T$ be its tubular \nbhd, which is an
${\Bbb R}$-bundle over the circle $S^1$. By
%the elementary
classical bundle theory there is only two such bundles: the
trivial one and a twisted one (the open M\"obius band). The
latter option is precluded by heredity
(\ref{orientability:heredity}).

$[\Leftarrow]$ The converse looks more tricky, and we are only
able to perform a reduction to the metric case. Let $L$ be a
Lindel\"of subregion of $M$. Then clearly the assumption of
triviality of Jordan \nbhd{s} holds in $L$ as well, thus by
the metric case of the lemma, $L$ is orientable. By transfer
(\ref{orientability:heredity}) it follows that $M$ is
orientable.

{\it Metric case (outline).} The proof in the metric case
works maybe as follows: fix a triangulation and subdivide
barycentrically until all 2-simplexes lye in charts. Then
local orientations takes a more down-to-earth interpretation
as the borders of those simplices. Now the Jordan triviality
assumption specialized to combinatorial loops ensure that
there is a coherent way to orient simplices in the
combinatorial sense, implying the topological sense of
(\ref{indicatrix-orient-covering}). (Exercise: find a
reference where this is properly done, e.g. M\"obius 1865,
Weyl 1913, etc.)

For the last clause, just observe that if the original surface
is non-orientable, then it contains a M\"obius band and the
punctures can be performed outside of it.
\end{proof}

\subsection{Dichotomy (Leibniz,
%Kaestner,
K\"astner, Bolzano, Jordan, Veblen)}

After a long series of precursors (and successors), Jordan
(1887) showed that any embedded circle in the plane
disconnects the plane in two components. This motivates the
following jargon (borrowed from O. H\'ajek \cite{Hajek_1968}):

\begin{defn} {\rm A surface is {\it dichotomic} if any Jordan
curve (=embedded circle) divides the surface.
%In the metric
%case,
More common
%also referred to as
synonyms are {\it planar} (or {\it schlichtartig}), yet both
%terminologies
sound too restrictive when it comes to allow non-metric
surfaces.}
\end{defn}

Using homology and  the five lemma, one can show (cf.
\cite[5.3, 5.4]{GabGa_2011}):

\begin{lemma}\label{dicho:hered-transfer}

$\bullet$ (Heredity) Any subregion of a dichotomic surface is
dichotomic.

\noindent $\bullet$ (Transfer) A surface all of whose
Lindel\"of subregions are dichotomic is dichotomic.
\end{lemma}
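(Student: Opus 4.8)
The plan is to isolate a single homological criterion for a Jordan curve to separate its ambient surface, and then to transport that criterion along inclusions. Fix a surface $S$ and a Jordan curve $J\subset S$; in every situation we meet, $J$ lies inside a dichotomic surface (for heredity $S=M$ itself; for transfer $J$, being compact, lies in a Lindel\"of subregion of $M$, dichotomic by hypothesis), so $J$ is two-sided --- a one-sided curve, being the core of a M\"obius band, never separates, since the complement of such a core contains the annulus $\mathrm{Mob}\setminus J$ onto whose boundary circle all the rest is attached. Pick then an annular tubular \nbhd{} $N\cong S^1\times(-1,1)$ of $J$ (Schoenflies-based tubular-\nbhd{} theory, as recalled above; it may be taken inside any prescribed open set containing $J$), so that $N\setminus J=A_+\sqcup A_-$ splits into two annuli. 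The Mayer--Vietoris sequence of $\{S\setminus J,\ N\}$ with ${\Bbb Z}/2$-coefficients, read off by a routine diagram chase (five lemma), shows that $S\setminus J$ has one or two components, with two --- i.e. $J$ separates $S$ --- exactly when the connecting homomorphism
\[
\partial_S\colon H_1(S;{\Bbb Z}/2)\longrightarrow \widetilde H_0(N\setminus J;{\Bbb Z}/2)\cong{\Bbb Z}/2
\]
vanishes; equivalently, when $A_+$ and $A_-$ lie in distinct components of $S\setminus J$. Crucially, the target of $\partial_S$ involves only $N$, not $S$.

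For heredity, let $U\subseteq M$ be a subregion with $M$ dichotomic and $J\subset U$; take $N\subset U$. Naturality of Mayer--Vietoris for $(U,U\setminus J,N)\hookrightarrow(M,M\setminus J,N)$ gives a commuting square which is the identity on the $\widetilde H_0(N\setminus J)$ vertices, whence $\partial_U=\partial_M\circ\bigl(H_1(U;{\Bbb Z}/2)\to H_1(M;{\Bbb Z}/2)\bigr)$. As $M$ is dichotomic, $\partial_M=0$, so $\partial_U=0$ and $J$ separates $U$.

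For transfer, let $M$ be a surface all of whose Lindel\"of subregions are dichotomic, and $J\subset M$. Since $J$ is compact it lies in a finite union of charts; the component of that union containing $J$ is a connected open second-countable --- hence Lindel\"of --- subregion $L_0\supseteq J$. Fix $N\subset L_0$. The Lindel\"of subregions $L\supseteq N$ form a directed family (the connected union of two is again one) covering $M$, and ${\Bbb Z}/2$-homology commutes with such filtered colimits of open sets, so $H_1(M;{\Bbb Z}/2)=\varinjlim_L H_1(L;{\Bbb Z}/2)$ and $\partial_M=\varinjlim_L\partial_L$ by naturality. Each $L$ is dichotomic, so $\partial_L=0$; hence $\partial_M=0$ and $J$ separates $M$. (Colimit-free variant: were $M\setminus J$ connected, join $A_+$ to $A_-$ by a path in $M\setminus J$ and close it up across $J$ through $N$, producing a loop meeting $J$ transversally in a single point; this loop together with $J$ and $N$ sits in a Lindel\"of subregion $L$, and deleting that single point leaves a connected arc in $L\setminus J$ from $A_+$ to $A_-$, contradicting that the two-sided separating curve $J$ has $A_\pm$ in distinct components of $L\setminus J$.)

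The main obstacle is the transfer statement: nothing local forbids a curve from splitting every Lindel\"of piece while leaving $M$ connected, the two halves reuniting far away. This is defeated precisely because non-separation is certified by a \emph{compact} cycle --- packaged either homologically (compatibility of $\partial$ with the directed colimit $M=\bigcup_L L$) or geometrically (one transverse loop). The only other points needing care are the reduction to two-sided curves and the fact, invoked twice, that a compact subset of a possibly non-metric surface always lies in a second-countable subregion.
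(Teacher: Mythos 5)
Your proof is correct, and it follows exactly the route the paper itself indicates (the paper gives no proof here, merely the hint ``homology and the five lemma'' with a citation to \cite{GabGa_2011}): a Schoenflies tubular neighbourhood of $J$, a homological separation criterion localized on that tube, and naturality under inclusions, with compactness of $J$ (or the directed colimit over Lindel\"of subregions) handling the transfer direction. The only points deserving the care you already gave them are the reduction to two-sided curves and the fact that a compactum lies in a Lindel\"of subregion, both of which the paper also relies on elsewhere.
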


\begin{lemma}\label{dicho-implies-orientable}
A dichotomic surface is orientable.
\end{lemma}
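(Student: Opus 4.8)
The plan is to prove the contrapositive: a non-orientable surface contains a Jordan curve that fails to separate, hence is not dichotomic. By the transfer and heredity principles (\ref{orientability:heredity}, \ref{dicho:hered-transfer}) it suffices to treat the metric case, since a non-orientable surface contains a non-orientable Lindel\"of subregion $L$, and if we produce a non-separating Jordan curve in $L$ it remains a non-separating (indeed it stays non-null-homologous by heredity considerations) Jordan curve in the ambient surface; conversely, non-dichotomy is detected on a Lindel\"of subregion. So first I would reduce to $M$ metric and non-orientable.

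Next, in the metric non-orientable case, I would invoke the structural input already assembled in the excerpt. By (\ref{orientability-in-terms-of-Jordan-curves}) non-orientability means some Jordan curve $J$ has a twisted (M\"obius) tubular neighbourhood $T$. The core circle $J$ of an open M\"obius band does \emph{not} separate $T$: its complement $T\setminus J$ is connected (it deformation retracts onto the boundary circle, which double-covers $J$). The task is then to upgrade this \emph{local} non-separation to \emph{global} non-separation in $M$. The clean way is a homological argument: $[J]\in H_1(M;{\Bbb Z}/2)$ is nonzero, because the mod-$2$ self-intersection number of $J$ with itself is $1$ (the normal bundle is non-trivial, equivalently the Euler class mod $2$ is the generator), and a separating Jordan curve is a mod-$2$ boundary, hence null-homologous with zero self-intersection. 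Therefore $J$ cannot separate $M$, and $M$ is not dichotomic. Alternatively, and perhaps more in the spirit of the paper's combinatorial toolbox (Rad\'o \ref{Rado:triangulation}, the classification \ref{Moebius-Klein-classification}), one can triangulate, pass to a compact non-orientable subsurface containing $J$ as a two-sided-avoiding core, and read off non-separation directly from the normal form $N_g=S^2_{gc}$, where the cross-cap cores are manifestly non-separating.

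The step I expect to be the main obstacle is precisely this local-to-global promotion, keeping everything within the non-metric generality the paper insists on. One must make sure that the Jordan curve $J$ found inside a Lindel\"of subregion $L$ is still non-separating when viewed in all of $M$: a priori a curve could separate the big manifold while failing to separate a small piece, so the implication runs the convenient direction (non-separation in $L$ does \emph{not} automatically give non-separation in $M$). The remedy is to run the argument the other way: assume $M$ non-orientable, hence (by transfer \ref{orientability:heredity}) it has a non-orientable Lindel\"of subregion $L_0$; then take a Lindel\"of exhaustion and note that non-orientability, once it appears, persists in all larger Lindel\"of pieces, so we may choose $L$ large enough that the M\"obius neighbourhood $T\subset L$ and moreover $L$ is connected and open; a curve that separates $M$ would, by heredity of dichotomy (\ref{dicho:hered-transfer}), force $L$ to be dichotomic too, contradicting the metric case applied to $L$. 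Thus the whole difficulty is funnelled into the \emph{metric} assertion ``non-orientable $\Rightarrow$ some Jordan curve doesn't separate,'' which is then dispatched by either the mod-$2$ self-intersection computation or the classification-of-surfaces normal form as sketched above. I would present the homological version as the main line and mention the combinatorial one as a remark.
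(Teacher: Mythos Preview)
Your argument is correct, but the paper's proof is a three-line affair that sidesteps all of your machinery. It applies heredity of dichotomy (\ref{dicho:hered-transfer}) not to a large Lindel\"of subregion but directly to the tubular neighbourhood $T$ of an arbitrary Jordan curve $J$: since $T\subset M$ is a subregion of a dichotomic surface, $T$ is itself dichotomic; but the core of an open M\"obius band does not separate it, so $T$ must be the trivial bundle. By the criterion (\ref{orientability-in-terms-of-Jordan-curves}) this forces $M$ orientable. No metric reduction, no mod-$2$ intersection numbers, no classification of surfaces.

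Two smaller remarks on your write-up. First, the parenthetical ``it stays non-null-homologous by heredity considerations'' is not justified as stated: the map $H_1(L;{\Bbb Z}/2)\to H_1(M;{\Bbb Z}/2)$ need not be injective, so a nonzero class can die upstairs. Second, the ``main obstacle'' you anticipate is illusory: if $J$ lies in an open connected subregion $L$ with $L\setminus J$ connected, then $M\setminus J$ is already connected (reroute any path in $M$ through $L\setminus J$ near its intersections with the compact set $J$), so non-separation does promote from $L$ to $M$ directly. Your heredity-of-dichotomy remedy is valid regardless, but once you notice that heredity can be applied to the two-dimensional tube $T$ itself rather than to a curve inside a larger region, the entire contrapositive detour becomes unnecessary.
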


\begin{proof} In view of the orientability criterion
%in terms of
%Jordan curves
(\ref{orientability-in-terms-of-Jordan-curves}),
let $J$ be a Jordan curve in the surface $M$, and let $T$ be a
tube around it. By heredity of dichotomy
(\ref{dicho:hered-transfer}) the latter is dichotomic, hence
cannot be the M\"obius band.
\end{proof}

%%%%%%OLD PROOF
\iffalse
\begin{proof} By definition, orientability of a manifold
(any dimensionality) means that any embedded circle has a
trivial tubular neighbourhood. So let $J$ be a Jordan curve in
$M^2$, and $T$ be its tubular neighbourhood, which is an
${\Bbb R}$-bundle over $J$. Since any (open) subregion of a
dichotomic surface is dichotomic
%%(cf. \cite{GabGa_2011})
(\ref{dicho:hered-transfer}), the tube $T$ is dichotomic.
%, hence divided by $J$
Hence $T$ cannot be the unique twisted bundle (M\"obius band)
and so is the trivial bundle.
\end{proof}
\fi

%Saxo

%Then notice the:
While the converse of (\ref{dicho-implies-orientable}) is not
true (e.g., torus),
%yet becomes so
it is sometimes:

\begin{lemma}\label{dichotomy:orient_plus_inf_cycl}
An orientable surface with infinite cyclic group is
dichotomic.
\end{lemma}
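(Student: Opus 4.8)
The plan is to settle the metric case by the classification of finite-connectivity surfaces, then bootstrap to the non-metric setting via the calibrated-exhaustion device.

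\emph{Metric case.} Suppose $M$ is metric, orientable, with $\pi_1(M)\cong\mathbb{Z}$. Since $\mathbb{Z}$ is finitely generated, $M$ has finite connectivity, so Ker\'ekj\'art\'o's cylindrical ends (\ref{Kerkjarto:end}) present $M$, after truncating its finitely many ends, as a compact bordered surface $\bar M$, orientable by heredity (\ref{orientability:heredity}), with $\pi_1(\bar M)\cong\pi_1(M)\cong\mathbb{Z}$. No closed orientable surface has infinite cyclic group (\ref{Moebius-Klein-classification}), so $\bar M$ has non-empty boundary; then $\pi_1(\bar M)$ is free (\ref{Ahlfors-Sario:freeness:lemma}), of rank $1$, and (\ref{Moebius-Klein-classification}) leaves only the sphere with two holes, i.e.\ the closed annulus. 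Hence $M\approx S^1\times\mathbb{R}$, an open subsurface of $S^2$. Now any Jordan curve $J\subset M\subset S^2$ bounds two open discs of $S^2$ by Schoenflies (\ref{Schoenflies:thm-plane}); intersecting these discs with $M$ writes $M\setminus J$ as a disjoint union of two open pieces, both non-empty since $J$, lying in the orientable $M$, is two-sided (\ref{orientability-in-terms-of-Jordan-curves}) and so has local collar points on either disc. Thus $M$ is dichotomic.

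\emph{Non-metric boost.} In general I would invoke the transfer principle for dichotomy (\ref{dicho:hered-transfer}): it suffices to check that every Lindel\"of subregion of $M$ is dichotomic. The naive route stalls, because a Lindel\"of subregion need not carry the group $\mathbb{Z}$; this is exactly what the calibrated-exhaustion lemma (\ref{calibrated-exhaustions:lemma}) is built to cure. Given a Lindel\"of $L\subset M$, enlarge it to a Lindel\"of $L'\supseteq L$ whose inclusion induces an isomorphism $\pi_1(L')\cong\pi_1(M)\cong\mathbb{Z}$ --- first absorbing generators to make $\pi_1(L')\to\pi_1(M)$ epimorphic, then filling the superfluous holes by the discs they bound in $M$ (homotopic Schoenflies (\ref{Schoenflies-Baer})). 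Being metric, orientable (\ref{orientability:heredity}) and of infinite cyclic group, $L'$ is dichotomic by the metric case; hence so is its subregion $L$ (heredity (\ref{dicho:hered-transfer})), and transfer (\ref{dicho:hered-transfer}) promotes this to $M$ itself.

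\emph{Expected obstacle.} The crux is the identification $M\approx S^1\times\mathbb{R}$ in the metric case: it rests on having the finite-connectivity classification in hand (Ker\'ekj\'art\'o ends (\ref{Kerkjarto:end}) together with the compact classification (\ref{Moebius-Klein-classification})), and it is precisely here --- not in the utterly painless topology of the annulus --- that orientability is irreplaceable, the open M\"obius band (whose core circle fails to separate) showing the hypothesis cannot be dropped. On the non-metric flank the touchy point is the hole-filling step of (\ref{calibrated-exhaustions:lemma}): the plugging discs must genuinely sit inside $M$, which is guaranteed by the universal Schoenflies (\ref{Schoenflies-Baer}) exactly because the loops being killed are null-homotopic in $M$.
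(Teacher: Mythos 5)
Your proof is correct, but it is not the argument the paper gives as its primary proof of this lemma. There the proof is homological: for a non-null-homotopic Jordan curve $J$ one takes a trivialized tube $T\approx S^1\times\mathbb{R}$ (this is where orientability enters), writes the exact homology sequences of the pairs $(\Sigma,\Sigma-J)$ and $(T,T-J)$, compares them via excision and an embedding of the tube pair into the sphere, and extracts that $H_0(\Sigma-J)$ has rank $2$ from two applications of the five lemma, the hypothesis $\pi_1=\mathbb{Z}$ entering only through $H_2(\Sigma)=0$ and the rank bookkeeping. Your route --- settle the metric case by identifying the surface as an open annulus via Ker\'ekj\'art\'o (\ref{Kerkjarto:end}), then boost by calibrated exhaustions (\ref{calibrated-exhaustions:lemma}) together with heredity and transfer of dichotomy (\ref{dicho:hered-transfer}) --- is, however, precisely the paper's own \emph{second} proof, recorded in Remark~\ref{new-proof}. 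The trade-off is clear: the homological proof is self-contained modulo standard algebraic topology and needs neither the classification of finitely-connected surfaces nor the calibration machinery (which itself rests on Schoenflies, freeness of $\pi_1$ and Whitehead spines), whereas your geometric route makes visible exactly where orientability is irreplaceable (excluding the M\"obius band) and transfers mechanically to the non-metric setting. There is no circularity in your dependence chain, and your justification that the open annulus is dichotomic (two-sidedness of $J$ putting points of $M$ in both Schoenflies discs of $S^2-J$) is sound.
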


%We differ the proof to later.

\def\Si{\Sigma}

\begin{proof}
%[Proof of Lemma~\ref{dichotomy:orient_plus_inf_cycl}]
(The following argument is homological, so rather algebraic;
for a more
%another
geometric proof using Schoenflies, see
Remark~\ref{new-proof}.) Let $J$ be a Jordan curve in the
surface $\Si$. We can assume that $J$ is not null-homotopic,
since otherwise Jordan separation is obvious as $J$ bounds a
disc (\ref{Schoenflies-Baer}). We fix $T$ a tubular
neighbourhood of $J$, which is trivial, i.e. $T\approx {\Bbb
S}^1\times {\Bbb R}$ (since $\Si$ is orientable).
%Since we want
To show that $\Si-J$ is disconnected
%it is natural to
we examine the homology exact sequence of the pair $(\Si,
\Si-J)$ written down as the third line of the diagram below.
Just above it we have the sequence of the tube pair $(T,T-J)$,
which we embed as the complement of the poles of the
$2$-sphere (denoted $S$) while mapping $J$ to the equator.
This gives us the first line which is the sequence of the pair
$(S,S-J)$. By naturality all squares are commutative.

\iffalse
\def\ziehen{\hskip-10pt}
$$
\begin{matrix}
&H_2(T)\ziehen     &\rightarrow\ziehen &H_2(T,T-J)\ziehen
&\rightarrow\ziehen& H_1(T-J)\ziehen   &\rightarrow\ziehen
&H_1(T)\ziehen     &\rightarrow\ziehen &H_1(T,T-J)\ziehen
&\rightarrow\ziehen& H_0(T-J)\ziehen   &\rightarrow\ziehen
&H_0(T)\ziehen     &\rightarrow\ziehen & H_0(T,T-J)=0  \cr

&\downarrow & &\downarrow & &\downarrow& &\downarrow&
&\downarrow & &\downarrow & &\downarrow & &\downarrow \cr

&H_2(\Si)\ziehen    &\rightarrow\ziehen &H_2(\Si,\Si-J)\ziehen
&\rightarrow\ziehen & H_1(\Si-J)\ziehen &\rightarrow\ziehen
&H_1(\Si)\ziehen    &\rightarrow\ziehen &H_1(\Si,\Si-J)\ziehen
&\rightarrow\ziehen & H_0(\Si-J)\ziehen &\rightarrow\ziehen
&H_0(\Si)\ziehen    &\rightarrow\ziehen & H_0(\Si,\Si-J)=0
\end{matrix}
$$
\fi

\begin{figure}[h]
%\centering
    \hskip-22pt\epsfig{figure=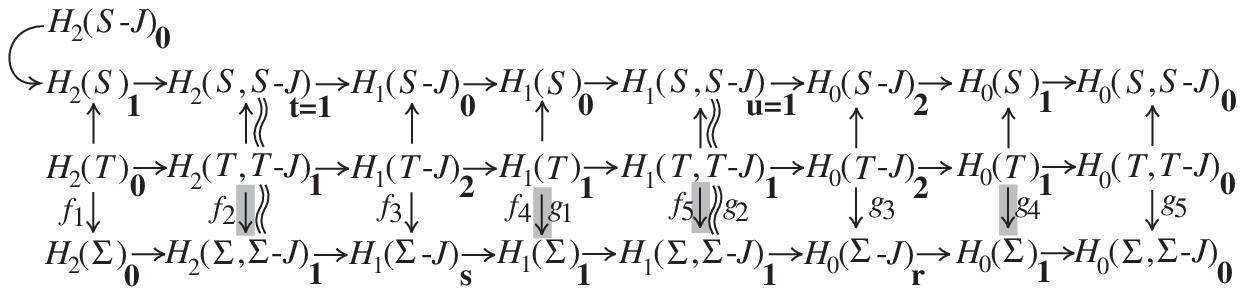,width=135mm}
  %\caption{\label{diagr}
  %Artist view of
  %Jordan separation in a simply-connected surface via a
  %Schoenflies trapping}
\vskip-5pt\penalty0
\end{figure}
%Further we have
\noindent The excision isomorphisms are denoted by
%the
vertical equivalence symbols. Boldface ``{\bf 0}''
%zero
symbols indicate trivial groups, while other bold indices
indicate the rank of the corresponding abelian group. Looking
at the first line we find that $t=1$ and $u=1$, which values
propagates downstairs by the excision isomorphisms. Next the
group $H_2(\Si)=0$ is trivial, because $\Si$ is an open
$2$-manifold and the postulated fundamental group ${\Bbb Z}$
does not occur among the list of closed surfaces. (We used
implicitly the vanishing of the top-dimensional homology $H_n$
of Hausdorff $n$-manifolds, compare e.g. Samelson 1965
\cite{Samelson_1965-homology}.) Thus  by exactness of the
bottom line, we have $1-s+1-1+r-1=0$, i.e. $r=s$, provided all
ranks are finite. For this we apply the {\it five lemma}
saying that if the diagram of abelian groups has exact rows
and each square is commutative:
\def\ziehen{\hskip-6pt}
$$
\begin{matrix}
&C_1\ziehen &\rightarrow\ziehen&C_2\ziehen
&\rightarrow\ziehen& C_3\ziehen &\rightarrow\ziehen&
C_4\ziehen &\rightarrow\ziehen& C_5 \cr

&\quad\;\;\downarrow  f_1 &&\quad\;\;\downarrow  f_2 &
&\quad\;\;\downarrow f_3 & &\quad\;\;\downarrow f_4 &
&\quad\downarrow f_5 \cr

&D_1\ziehen &\rightarrow\ziehen &D_2\ziehen
&\rightarrow\ziehen & D_3\ziehen &\rightarrow\ziehen&
D_4\ziehen &
 \rightarrow\ziehen& D_5
\end{matrix}
$$
\vskip-10pt \noindent Then

(1) if $f_2$ and $f_4$ are onto and $f_5$ injective, then
$f_3$ is onto.

(2) if $f_2$ and $f_4$ are injective and $f_1$ is onto, then
$f_3$ is injective.

%%%Part (1) applied to our $f_i$ shows that $f_3$ is onto, so
%%%%$s\le 2$.
\noindent Part (1) does not apply to the $f_i$ (we do not know
$f_4$ to be onto), but it applies to the $g_i$ showing that
$g_3$ is onto, so $r\le 2$. Since the group indexed by $s$ is
squeezed in an exact sequence with zeros extremities, it has
finite rank as well.  Now part (2)
%of the five lemma
applies to the $f_i$ (but not to the $g_i$!), thus $f_3$ is
injective and $s\ge 2$,
%%%%%(an equality indeed),
so $r\ge 2$ as we knew $r=s$. This completes the proof.
%%%
\iffalse
\begin{rem}
 {\rm In fact above we used that $f_4$ is onto since $J$
 is not null-homotopic, to get the finiteness of $s$. However}
\end{rem}
\fi
\end{proof}

%%%%%CENSURED
\iffalse
\begin{rem}
 {\rm Maybe pushing the argument forward, one can show that
 if $J$ is not null-homotopic then it has to represent the
 generator of $H_1(\Si)$. This amounts to show that $f_4$ is onto.
 This follows if $f_3$ is onto and $g_3$ injective. The latter
 follows as $g_3$ has free abelian source and range. But the former
 ontoness of $f_3$ looks not obvious... }
\end{rem}
\fi

%Saxoo

\subsection{Riemann's branched coverings}

The following
%well-known
mechanism originating in complex
function theory (Riemann's Thesis 1851) will later find a
pleasant application to foliated structures:

\begin{lemma}\label{Riemann:branched-cover}
Given a finite $d$-sheeted covering $\Sigma\to F_{n*}$ of a
punctured surface $F$, there is a canonical recipe to fill
over the punctures to deduce a branched covering $\Sigma^{*}
\to F$ whose total space is a surface. Moreover if the
(unpunctured) surface $F$ is compact, then so is $\Sigma^*$,
and their Euler characteristics are related by the so-called
Riemann-Hurwitz formula:
\begin{equation}\label{Riemann-Hurwitz:gnal-case}
\chi(\Sigma^*)=d\chi(S^2)-\deg(R),
\end{equation}
where $\deg(R)$ is the ramification counted with multiplicity.
Further orientability of $F$ transfers to $\Sigma^{*}$.
\end{lemma}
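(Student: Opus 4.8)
The plan is to construct $\Sigma^*$ locally over the punctures and then verify the Euler-characteristic bookkeeping. First I would work near a single puncture $p$ of $F$: choosing a small coordinate disc $D$ around $p$ with $D^* = D\setminus\{p\}$, the restricted covering $\Sigma|_{D^*} \to D^*$ is a finite covering of an annulus, hence (by the classification of coverings of $S^1$, or equivalently the subgroup structure of $\pi_1(D^*)\cong\mathbb Z$) a disjoint union of cyclic coverings $z\mapsto z^{e_i}$ of respective degrees $e_1,\dots,e_k$ with $\sum_i e_i = d$. Over each such punctured-disc sheet I glue in one extra point, so that the $i$-th piece becomes an honest disc mapping to $D$ by $z\mapsto z^{e_i}$; this is the ``canonical recipe''. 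Doing this over every puncture and taking the union with $\Sigma$ produces $\Sigma^*$, which is manifestly a surface (the only points to check are the new ones, and there the local model is the standard branched cover of a disc, which is again a disc). A branch point with local degree $e_i$ contributes ramification $e_i - 1$, so $\deg(R) = \sum (e_i-1)$ summed over all added points.

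Next I would establish compactness of $\Sigma^*$ when $F$ is compact. Here $F = F_{n*} \cup \{p_1,\dots,p_n\}$ and $\Sigma^*$ is covered by the open sets $\Sigma$ (which, being a finite cover of the non-compact $F_{n*}$, need not be compact) together with finitely many of the small branched-disc charts around the new points; a cleaner route is to note that $\Sigma^* \to F$ is a proper map with finite fibres onto a compact space — properness holds because away from the branch points it is a covering map restricted over a compact base minus finitely many points, and near the branch points the local model $z\mapsto z^{e}$ on a closed disc is proper — and a space admitting a proper map to a compact Hausdorff space is compact. (One does need $\Sigma^*$ Hausdorff, which follows since it is a surface by construction.)

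For the Riemann–Hurwitz formula I would compute $\chi$ by a CW or triangulation argument. Triangulate $F$ so that all branch points $p_1,\dots,p_n$ are vertices, say with $V$ vertices, $E$ edges, $T$ triangles, so $\chi(F) = V - E + T = \chi(S^2) = 2$ (as $F$ is a compact surface — here I am using that the statement as written takes $F$ to have $\chi(S^2)$, i.e.\ that $F$ is a sphere; more generally one would write $\chi(F)$ in place of $\chi(S^2)$). Pulling this triangulation back along $\Sigma^* \to F$: every triangle and every edge not meeting a branch point lifts to exactly $d$ copies, and every non-branch vertex to $d$ copies, while a branch vertex $p$ lifts to only $k = d - \sum_i(e_i-1)$ copies (the number of points we added over $p$, since the $i$-th piece wraps $e_i$-to-one). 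Hence $\chi(\Sigma^*) = dV - \sum_{\text{branch pts}}(e_i - 1)\text{-correction} - dE + dT = d\,\chi(F) - \sum(e_i-1) = d\,\chi(S^2) - \deg(R)$, which is \eqref{Riemann-Hurwitz:gnal-case}. Finally, orientability transfers because an orientation of $F$ pulls back via the local holomorphic models $z\mapsto z^{e_i}$ (which are orientation-preserving) to a consistent orientation on $\Sigma^* $ minus the branch points, and orientability of a surface is unaffected by removing finitely many points (\ref{orientability-in-terms-of-Jordan-curves}).

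\textbf{Main obstacle.} The routine part is the local branched-disc construction; the step demanding the most care is the combinatorial Euler-characteristic count — specifically tracking how many lifts each branch vertex acquires and checking that the $e_i$-to-one wrapping is correctly accounted for, i.e.\ that the deficiency at each branch point is exactly $\sum_i(e_i-1) = \deg(R)$ locally. A secondary subtlety is making the compactness argument airtight (properness near the branch points), but this is cleanly handled by the closed-disc local model.
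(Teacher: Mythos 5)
Your proposal follows essentially the same route as the paper: classify the covering over a pierced neighbourhood of each puncture via $\pi_1\cong{\Bbb Z}$ as a union of cyclic models $z\mapsto z^{e_i}$, fill in one point per sheet-cycle, and obtain Riemann--Hurwitz by triangulating $F$ with the punctures as vertices and counting lifted simplices with the ramification correction. Your added properness argument for compactness and your observation that the general formula should read $d\,\chi(F)$ rather than $d\,\chi(S^2)$ (the statement as printed only matches the sphere case actually used later) are both correct refinements of what the paper leaves implicit.
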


\begin{proof} If we look at a ``pierced neighbourhood''
$U$ of a puncture $p\in F-F_{n*}$ topologically like ${\Bbb
C}^*$ (punctured complex plane) we obtain a covering
$p^{-1}(U)\to U$. Since $\pi_1(U)$ is ${\Bbb Z}$, the
coverings of $U$ are completely classified,
%by the
%list of
being the mappings $z\mapsto z^k$ (from ${\Bbb C}^*$ to
itself) for some integer $k\ge 1$. So there is a natural way
to fill over the punctures (Riemann's trick) to obtain
$\Sigma^*\to F$ a branched covering of degree $d$ whose total
space  $\Sigma^{*}$ is a surface.

The Riemann-Hurwitz formula follows by a Euler characteristic
counting. Triangulate $F$
%(assumed metric)
so that punctures are vertices, and lift simplices to
$\Sigma^{*}$ and count the alternating sum of those, which
behaves multiplicatively up to the correction effected by
ramification.

The assertion regarding orientability can be checked
combinatorially, or by noticing that puncturing does not
affect orientability. Hence $F$ orientable implies $F_{n*}$
orientable (\ref{orientability:heredity}), and in turn the
covering $\Sigma$ is orientable, and finally $\Sigma^*$ is
orientable. (Little exercises.)
\end{proof}

\subsection{Dichotomic coverings (via branched coverings)}

The following specialization of (\ref{Riemann:branched-cover})
will
%later
be useful
%in understanding
for the sharpness of Dubois-Violette's labyrinths (i.e., 3 is
the minimal number of punctures required in the plane to
construct a transitive foliation):

\begin{lemma}\label{dicho-covering-is-dicho}
The total space of a
%%%$2$-fold
double covering $p\colon \Sigma\to M$ of a dichotomic surface
$M$ with $\pi_1(M)=F_2$ is itself dichotomic.
\end{lemma}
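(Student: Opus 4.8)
The plan is to exploit the hypothesis $\pi_1(M)=F_2$ in tandem with the classification of the double covers of $M$ and the Riemann--Hurwitz machinery of Lemma~\ref{Riemann:branched-cover}. First I would observe that, since $M$ is dichotomic it is orientable (\ref{dicho-implies-orientable}), and being an open surface with $\pi_1$ free of rank $2$ it is, up to homeomorphism, the thrice-punctured sphere $S^2_{3*}$ (or equivalently the twice-punctured plane); indeed a dichotomic ($=$ planar) surface embeds in $S^2$, its fundamental group $F_2$ forces exactly three ends, and \Kerekjarto's end-theory (\ref{Kerkjarto:end}) compactifies it to $S^2$ with three punctures. So without loss of generality $M=S^2_{3*}$ and the double cover $p\colon\Sigma\to M$ is one of the finitely many index-$2$ subgroups of $F_2$; in each case $\Sigma$ is again an \emph{open} surface with finitely-generated free $\pi_1$, hence planar provided it embeds in the sphere.

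The key step is to fill in the punctures: apply Lemma~\ref{Riemann:branched-cover} to the covering $\Sigma\to S^2_{3*}$ to obtain a branched double cover $\Sigma^*\to S^2$ with $\Sigma^*$ a \emph{closed} orientable surface, and compute its genus via Riemann--Hurwitz $\chi(\Sigma^*)=2\chi(S^2)-\deg(R)=4-\deg(R)$. Over each of the three punctures the local covering is classified by $z\mapsto z^k$ with $k\in\{1,2\}$ (degree $2$), contributing ramification $1$ when $k=2$ and $0$ when $k=1$; since the number of ramified punctures is even (a parity/monodromy constraint forced by the covering being connected of degree $2$), we get $\deg(R)\in\{0,2\}$, hence $\chi(\Sigma^*)\in\{2,4\}$, but $\chi$ of a closed orientable surface is at most $2$, so $\chi(\Sigma^*)=2$ and $\Sigma^*\cong S^2$. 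Therefore $\Sigma$, being $S^2$ with finitely many punctures removed (the preimages of the three punctures of $M$), embeds in $S^2$, and an open subsurface of $S^2$ is dichotomic by heredity (\ref{dicho:hered-transfer}); hence $\Sigma$ is dichotomic.

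The main obstacle I anticipate is the reduction in the first paragraph: rigorously identifying a dichotomic open surface with $\pi_1=F_2$ as $S^2_{3*}$ requires knowing that a dichotomic surface (metric, say, then transferred) sits inside $S^2$ with the right end structure, which leans on \Kerekjarto's cylindrical-end compactification and on the freeness results (\ref{Ahlfors-Sario:freeness:lemma}); one must also be careful that the non-metric case is handled by the transfer principle (\ref{dicho:hered-transfer}), i.e. by checking dichotomy of $\Sigma$ on each Lindel\"of subregion, which pulls back to a Lindel\"of (hence metric, hence $S^2_{3*}$-type after calibrating $\pi_1$) subregion of $M$ downstairs. A secondary subtlety is the parity argument pinning $\deg(R)$ to be even: this is exactly the statement that the monodromy $\pi_1(S^2_{3*})\to S_2=\mathbb{Z}/2$ sends the product of the three puncture-loops to the identity (since that product is null-homotopic in $S^2_{3*}$), so an odd number of the loops cannot be sent to the transposition --- a short but essential observation. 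Everything else is the routine Euler-characteristic bookkeeping of Lemma~\ref{Riemann:branched-cover}.
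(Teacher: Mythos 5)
Your proposal is correct and follows essentially the same route as the paper: reduce to the metric case where \Kerekjarto{ }identifies $M$ with $S^2_{3*}$, fill in the punctures via Riemann's branched-covering trick, use Riemann--Hurwitz to force $\Sigma^*\approx S^2$, conclude by heredity of dichotomy, and handle the non-metric case by a $\pi_1$-calibrated Lindel\"of exhaustion plus the heredity/transfer lemma. The only cosmetic difference is that your parity-of-ramification argument is not needed: the paper simply notes $\deg(R)\le 3$, so $\chi(\Sigma^*)=4-\deg(R)\ge 1$, which together with $\chi(\Sigma^*)=2-2g$ already forces $g=0$.
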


\begin{rem}{\rm The result is sharp as
%exemplified
shown by the standard branched covering $T^2\to S^2$ ramified
at $4$ points: divide the torus by the (holomorphic)
involution $z\mapsto-z$, or, rotate by $180^0$ a Euclidean
model of the torus in revolution.}
\end{rem}

\begin{proof} We
%shall
first establish the metric case and then
%explain how to
boost the result beyond the metrical barrier via the
%standard
usual exhaustion method. The metric case
 involves the trick of
 %Riemann
 branched coverings (\ref{Riemann:branched-cover}).
%familiar in Riemann surfaces theory (which is pleasant
%to see at work in the foliated context, as we shall see later).

{\bf Metric case.} By (a special case (\ref{Kerekjarto:dicho})
of) \Kerekjarto's classification (\ref{Kerkjarto:end}), $M$ is
homeomorphic to
%$S^2-\{3pt\}$
$S^2_{3*}$ (sphere with $3$ punctures), and we compactify $M$
to the sphere $S^2$ by adding 3 points.
%(compare
%(\ref{Kerekjarto:dicho}) below for more details).
%
\iffalse
If we look at a ``neighbourhood'' $U$ of a puncture
$p\in S^2-M$ topologically like ${\Bbb C}^*$ (punctured
complex plane) we obtain a covering $p^{-1}(U)\to U$. Since
the $\pi_1(U)$ is ${\Bbb Z}$ its coverings are completely
classified,
%by the
%list of
being the mappings $z\mapsto z^k$ (from ${\Bbb C}^*$ to
itself) for some integer $k\ge 1$. So there is a natural way
to \fi By filling over the punctures (Riemann's trick) we
obtain $\Sigma^*\to S^2$ a branched covering of degree 2. The
space $\Sigma^{*}$ is a surface which is compact, borderless
and orientable ($M$ being dichotomic, hence orientable
(\ref{dicho-implies-orientable}), thus so
%is $M$ as well as
is $\Sigma^{*}$). By Riemann-Hurwitz
%(Euler characteristic
%counting)
we have
\begin{equation}\label{Riemann-Hurwitz}
\chi(\Sigma^*)=2\chi(S^2)-\deg(R),
\end{equation}
where $\deg(R)$ is the ramification counted with multiplicity.
Since the degree of the map is  2 there is only simple
ramification so that $\deg(R)$ is just the cardinality of the
branched points. In our situation, $\deg(R)\le 3$ and since
$\chi(\Sigma^*)=2-2g$ where $g$ is the genus, we deduce that
$g=0$. By classification (\ref{Moebius-Klein-classification})
$\Sigma^*$ is the sphere, which is dichotomic by the Jordan
curve theorem. Thus $\Sigma$ is dichotomic as well by heredity
(\ref{dicho:hered-transfer}).
%q.e.d.

{\bf Non-metric case.} We choose an exhaustion
$M=\bigcup_{\alpha<\omega_1} M_{\alpha}$ by Lindel\"of
subregions $M_{\alpha}$ and we may arrange
%that
$\pi_1(M_{\alpha}) \approx F_2$. Such a ``calibration'' of the
fundamental group is
%explained
justified in
%the
%technical
Lemma~\ref{calibrated-exhaustions:lemma} below. Since $M$ is
dichotomic, the $M_\alpha$ are also dichotomic
%(for this
%heredity cf. \cite[5.3]{GabGa_2011})
(\ref{dicho:hered-transfer}). Thus $\Sigma_{\alpha}
:=p^{-1}(M_{\alpha})\to M_{\alpha}$ is dichotomic as well by
the metric case. \iffalse Now recall from
\cite[5.4]{GabGa_2011} that dichotomy is
%super
anti-hereditary  in the sense that a surface all of whose
Lindel\"of subregions are dichotomic is dichotomic. \fi Now
given $L$ a Lindel\"of subregion of $\Sigma$, there is some
$\alpha$ such that $L\subset \Sigma_{\alpha}$. By heredity $L$
is dichotomic, and the
%Lindel\"ofian
transfer~(\ref{dicho:hered-transfer}) completes the proof.
\end{proof}

\begin{rem}\label{new-proof} {\rm This argument reproves
(\ref{dichotomy:orient_plus_inf_cycl}), i.e. {\it dichotomy of
orientable surfaces
%$M$
with infinite cyclic group}. Let us carry out this simple
exercise.

\smallskip
\begin{proof} [Another proof of~\ref{dichotomy:orient_plus_inf_cycl}] By
(\ref{calibrated-exhaustions:lemma}) we have an exhaustion
$M=\bigcup_{\alpha<\omega_1} M_{\alpha}$ by Lindel\"of (hence
metric) subregions with $\pi_1(M_\alpha)\approx{\Bbb Z}$.
Since $M$ is orientable, so are the $M_\alpha$ which are
therefore open cylinders (again by an appropriate special case
of \Kerekjarto~(\ref{Kerkjarto:end})), hence in particular
dichotomic. By
%super-heredity
transfer
%\cite[5.4]{GabGa_2011}
(\ref{dicho:hered-transfer}) it is enough to show that any
Lindel\"of subregion $L$ of $M$ is dichotomic, and so is the
case by heredity
%\cite[5.3]{GabGa_2011}
(\ref{dicho:hered-transfer}) because $L$ is contained in some
$M_\alpha$, which is dichotomic.
\end{proof}
}
\end{rem}

%Exhaustion with calibrated
\subsection{Calibrating the fundamental group (Cannon)}

We now check the pivotal lemma about exhaustions respecting
the fundamental group (which is yet another consequence of
Schoenflies going back to R.\,J. Cannon 1969
\cite[p.\,98]{Cannon_1969}, ``fill in the holes'' argument).
First we show a kernel killing procedure.
%Beware that
{\it Warning:} our clumsy(?) proof uses beside Schoenflies,
some other gadgets like the freeness of the fundamental group
of open metric surfaces ({\ref{Ahlfors-Sario:freeness:lemma}})
%(cf. e.g., Ahlfors-Sario
%\cite{Ahlfors-Sario_1960}),
plus the stronger theory of J.\,H.\,C. Whitehead's spine
(\ref{Whitehead-spine}) telling that such surfaces retract by
deformation to a countable graph---referred to as `the' {\it
spine}.

\begin{lemma}\label{killing:kernel} (Kernel killing procedure)
Given  a Lindel\"of subregion $L$ in a surface $M$ so that the
natural map $\pi_1(L)\to \pi_1(M)$ is epimorphic,  there is a
larger Lindel\"of subregion $L'\supset L$ such that
$\pi_1(L')\to \pi_1(M)$ is isomorphic.
\end{lemma}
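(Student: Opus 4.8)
I want to enlarge $L$ to a Lindelöf subregion $L'$ with $\pi_1(L')\to\pi_1(M)$ an isomorphism. Since $L$ is Lindelöf, hence metric (Weyl/Urysohn via \ref{Weyl:triangulation}), it is an open metric surface, so its fundamental group is free (\ref{Ahlfors-Sario:freeness:lemma}), and even better it deformation retracts onto its spine $S$, a countable graph (\ref{Whitehead-spine}). The kernel $K:=\ker(\pi_1(L)\to\pi_1(M))$ is a (countable) subgroup of a free group; pick a countable generating set $\{[\gamma_i]\}_{i\in I}$ for $K$, realized by loops $\gamma_i$ in $L$ based at a fixed basepoint $x_0\in L$. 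Each $\gamma_i$ is null-homotopic in $M$, so there is a singular disc $f_i\colon D^2\to M$ with $f_i|_{\partial D^2}=\gamma_i$. The idea is to absorb all these capping discs into $L$ one at a time, transfinitely if need be, but in fact only countably often.

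First I would handle a single $\gamma$. The image $f(D^2)$ is compact, so $L\cup f(D^2)$ is contained in a Lindelöf subregion $L_1$ of $M$ (take a Lindelöf exhaustion of $M$ and pick a stage swallowing the compact set $f(D^2)$, then union with $L$; the result is open, connected, and Lindelöf). Now in $L_1$ the loop $\gamma$ is null-homotopic, so the map $\pi_1(L,x_0)\to\pi_1(L_1,x_0)$ kills $[\gamma]$; meanwhile it is still epimorphic onto $\pi_1(M)$ because $\pi_1(L)\twoheadrightarrow\pi_1(M)$ already factors through $\pi_1(L_1)$ (the composite $\pi_1(L)\to\pi_1(L_1)\to\pi_1(M)$ is the original epimorphism, forcing the second arrow onto). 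Iterating over all $i\in I$ — a countable process — produces an increasing chain $L=L_0\subset L_1\subset L_2\subset\cdots$ of Lindelöf subregions, and I set $L':=\bigcup_n L_n$. A countable increasing union of Lindelöf subregions is Lindelöf, and it is open and connected, so $L'$ is a legitimate Lindelöf subregion.

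It remains to see $\pi_1(L')\to\pi_1(M)$ is an isomorphism. Surjectivity is inherited from $\pi_1(L)\to\pi_1(M)$, which factors through $\pi_1(L')$. For injectivity, let $\alpha$ be a loop in $L'$ null-homotopic in $M$; since $L'$ is metric its group is free of countable rank, and I must show $[\alpha]=1$ in $\pi_1(L')$. Here is the subtle point I expect to be the main obstacle: I have only killed the subgroup $K$ coming from $\pi_1(L)$, but a priori $\pi_1(L')$ could contain new elements (born at later stages $L_n$) that die in $M$. The fix is that $\pi_1(L)\to\pi_1(L')$ is itself surjective: by construction each $L_n$ is obtained from $L_{n-1}$ by adjoining a $2$-disc along a loop, which by van Kampen only adds relations and (together with the connecting tube) does not add generators not already coming from $L_{n-1}$ — more carefully, $L_n$ deformation retracts onto $L_{n-1}\cup(\text{arc}\cup\text{disc})$, and the disc contributes nothing to $\pi_1$, while the arc from $x_0$ into $f_i(D^2)$ can be absorbed, so $\pi_1(L_{n-1})\twoheadrightarrow\pi_1(L_n)$; passing to the limit $\pi_1(L)\twoheadrightarrow\pi_1(L')$. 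Hence any $[\alpha]\in\pi_1(L')$ lifts to some $[\beta]\in\pi_1(L)$ with $[\beta]\in\ker(\pi_1(L)\to\pi_1(M))=K$, and $K$ was generated by the very loops $\gamma_i$ we capped off, each of which is trivial in $\pi_1(L')$; therefore $[\beta]=1$ in $\pi_1(L')$, i.e. $[\alpha]=1$. This gives injectivity and completes the proof. The one place where care (and Schoenflies, via the Baer/Schoenflies machinery of \ref{Schoenflies-Baer}) genuinely enters is ensuring that the subregions we build are honest open subsurfaces and that "Lindelöf $\Rightarrow$ metric $\Rightarrow$ free $\pi_1$ with a spine" is available at every stage; the transfinite bookkeeping collapses to a countable induction precisely because $K$ is countably generated.
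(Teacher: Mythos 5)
The decisive step of your argument --- the claim that $\pi_1(L_{n-1})\to\pi_1(L_n)$ is onto because ``$L_n$ is obtained from $L_{n-1}$ by adjoining a $2$-disc along a loop'' --- is false as stated, and it is precisely the difficulty this lemma exists to overcome. Your $L_n$ is not a CW-attachment of a $2$-cell: it is an open subregion of $M$ containing $L_{n-1}\cup f_i(D^2)$, where $f_i(D^2)$ is merely the \emph{image} of a singular (generally non-injective) null-homotopy, and on top of that you allow $L_n$ to be a whole exhaustion stage of $M$ swallowing that compactum. An open neighbourhood of such an image can pick up arbitrary topology of the ambient surface --- the singular disc may sweep across a handle, its neighbourhood may enclose new complementary components --- so $L_n$ may acquire generators absent from $\pi_1(L_{n-1})$. (This is the ``parasite handle or connectivity'' the paper explicitly warns about.) Consequently $\pi_1(L)\to\pi_1(L')$ need not be onto, and your injectivity argument collapses: a loop of $L'$ dying in $M$ need not come from the subgroup $K\le\pi_1(L)$ whose generators you capped off, so it may survive in $\pi_1(L')$. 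Even the groupoid form of van Kampen only rules out new generators when the added piece is simply connected \emph{and} its overlap with $L_{n-1}$ is connected; neither holds for a neighbourhood of a singular image.

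There are two ways to close the gap. The paper's route is geometric: it kills only the \emph{primitive} elements of the kernel, which by (\ref{Jordan-representant}) are represented by embedded Jordan curves; being null-homotopic in $M$, such a curve bounds an honest embedded disc by the homotopic Schoenflies theorem (\ref{Schoenflies-Baer}), and aggregating that embedded disc (whose intersection with $L$ is connected, by a first-exit argument along $\partial D$) makes $\pi_1$ a quotient of $\pi_1(L)$, plugging one hole without creating new ones; since the kernel is a subgroup of a free group it is free and generated by its primitive elements, so this suffices. This controlled version is also what is used later, e.g.\ in the proof of (\ref{Kerkjarto:end}), where one must kill finitely many primitives while keeping a compact bordered subsurface. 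Alternatively, your naive absorption of singular discs can be repaired by dovetailing: do not stop after one pass over $\ker(\pi_1(L)\to\pi_1(M))$, but at each later stage also enumerate and kill the (countable) kernels of all previously constructed $\pi_1(L_n)\to\pi_1(M)$; after $\omega$ steps any loop of $L'=\bigcup_n L_n$ dying in $M$ lies in some $L_n$, belongs to the kernel there, and is killed at a later finite stage, while Lindel\"ofness survives the countable union. As written, your single-pass argument proves neither of these and does not go through.
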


\begin{proof}
If the natural morphism $j\colon\pi_1(L)\to \pi_1(M)$ is not
injective, then for any element of the kernel we have a
shrinking homotopy whose compact range may be covered by
finitely many charts which aggregated to $L$ gives some
$L^{*}$. Since $L$ is metric, its group $\pi_1(L)$ is
countable, and we need only iterate countably many times the
procedure, thereby conserving Lindel\"ofness for the enlarged
$L^{*}$. It may seem that $\pi_1(L^{*})\to\pi_1(M)$ is now
isomorphic. However when killing an element of the kernel may
well accidentally create a
%new
parasite ``handle'' or ``connectivity'', jeopardizing the
%whole process.
desideratum.
%So the
%game looks like an endless cowboys
%mutinery
%fighting and it is not clear that the process terminates.
%
\iffalse Maybe the difficulty is the same as the one
encountered by Mathieu in the bridge paper and which he could
ultimately solve thanks to the Schoenflies technology. \fi
%%%%%%%%%
The trick is to
%using
take advantage of some geometric topology \`a la Schoenflies,
to kill (or better plumb) the holes in a
%clever
surgical way, without generating new ones by inadvertence.
%%%%%%Imagine
Suppose first that
%any
an element in $\ker j$ is represented by a Jordan curve, which
being null-homotopic in $M$ bounds a disc in $M$
(\ref{Schoenflies-Baer}), which aggregated to $L$
%that disc which
kill one holes without creating new ones.
%This looks fine
%except that it is not true that
Unfortunately, not all element of the $\pi_1$ of a surface are
representable by  Jordan curves (e.g., the
%double of the
generator squared
%of
in the group of a punctured plane is not
Jordan-representable). By carefully selecting who to kill in
the kernel, namely the primitive elements
%and not their multiples
(yet not their proper powers),
% in which
%case there is an hope to find
as the former admit Jordan representants (cf.
(\ref{Jordan-representant}) below) completes the
%kernel
%killing
procedure. As countably many discs are aggregated
Lindel\"ofness is preserved.
%Observe
Also
%that
killing the primitive elements of the kernel
suffices to kill the whole kernel. Indeed the latter is a
subgroup of $\pi_1(L)$ which is known to be free when $L$ is
an open surface, hence free as well and therefore generated by
its primitive elements. Of course assuming $L$ open is not
expensive, since otherwise $L$ is compact, hence clopen, so
identic to $M$ and the lemma is trivially true.
\end{proof}

\begin{lemma}\label{calibrated-exhaustions:lemma}
A surface $M$ with
%free fundamental group
%$F_r$ of finite rank $r$,
finitely (or countably) generated fundamental group has an
exhaustion  by Lindel\"of
%open sets
subregions $M_\alpha$ such that the morphisms
$\pi_1(M_{\alpha})\to \pi_1(M)$ induced-by-inclusions are
isomorphic for all $\alpha$.
\end{lemma}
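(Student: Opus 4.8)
The plan is to build the exhaustion by a transfinite back-and-forth (zig-zag) process, interleaving two moves: enlarging a Lindel\"of subregion to make it $\pi_1$-epimorphic onto $M$, and then applying the kernel-killing procedure (Lemma~\ref{killing:kernel}) to make it $\pi_1$-isomorphic. First I would fix a Lindel\"of exhaustion $M=\bigcup_{\alpha<\omega_1} N_\alpha$ of the surface (possible since every manifold is the increasing union of an $\omega_1$-chain of Lindel\"of subregions), with the $N_\alpha$ open, increasing, and continuous at limits (i.e. $N_\lambda=\bigcup_{\alpha<\lambda}N_\alpha$). The $N_\alpha$ are metric by Urysohn. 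They need not be $\pi_1$-surjective onto $M$, but a finite (or countable) generating set of $\pi_1(M)$ is represented by loops, each of which has compact image; throwing in finitely many charts covering those images produces a single Lindel\"of $\pi_1$-epimorphic subregion $L_0$, and after replacing $N_\alpha$ by $N_\alpha\cup L_0$ we may assume every $N_\alpha$ surjects onto $\pi_1(M)$.

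Next I would define $M_\alpha$ recursively. Set $M_0:=$ the subregion obtained by applying Lemma~\ref{killing:kernel} to $L_0$, so $\pi_1(M_0)\to\pi_1(M)$ is an isomorphism. Given $M_\alpha$ with $\pi_1(M_\alpha)\xrightarrow{\sim}\pi_1(M)$, put $M_{\alpha+1}:=$ the result of applying Lemma~\ref{killing:kernel} to the Lindel\"of subregion $M_\alpha\cup N_{\alpha+1}$ (which is still $\pi_1$-epimorphic onto $M$, since $M_\alpha$ already was); this gives $\pi_1(M_{\alpha+1})\xrightarrow{\sim}\pi_1(M)$ and $M_{\alpha+1}\supset M_\alpha\cup N_{\alpha+1}$. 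At a limit ordinal $\lambda$, set $M_\lambda:=\bigcup_{\alpha<\lambda}M_\alpha$. Since $\lambda<\omega_1$ is countable and each $M_\alpha$ is Lindel\"of, $M_\lambda$ is Lindel\"of; and because $M_\lambda\supset N_\lambda$ and $N_\lambda$ already surjects onto $\pi_1(M)$, $M_\lambda$ is $\pi_1$-epimorphic, so we may further replace it by the output of Lemma~\ref{killing:kernel} to restore the isomorphism. Because $M_\alpha\supset N_\alpha$ for all $\alpha$, the $M_\alpha$ exhaust $M$, and by construction $\pi_1(M_\alpha)\to\pi_1(M)$ is an isomorphism at every stage.

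The main obstacle — and the reason the proof is not purely formal — is controlling what happens at limit stages, specifically verifying that $\pi_1(M_\lambda)\to\pi_1(M)$ stays injective (or can be cheaply repaired to be). Surjectivity is automatic once some $N_\beta\subset M_\lambda$ is already surjective; the subtlety is that even when every $M_\alpha$ ($\alpha<\lambda$) injects, the colimit $\pi_1(M_\lambda)=\varinjlim_{\alpha<\lambda}\pi_1(M_\alpha)$ might a priori acquire new elements if the bonding maps $\pi_1(M_\alpha)\to\pi_1(M_{\alpha+1})$ are not themselves isomorphisms. I would resolve this by arranging the bonding maps to be \emph{isomorphisms}: since all the maps $\pi_1(M_\alpha)\to\pi_1(M)$ are isomorphisms and factor through $\pi_1(M_{\alpha+1})$, each $\pi_1(M_\alpha)\to\pi_1(M_{\alpha+1})$ is injective, and it is surjective because any loop in $M_{\alpha+1}$ is, in $\pi_1(M)$, the image of a loop in $M_\alpha$, whence (using injectivity of $\pi_1(M_\alpha)\to\pi_1(M)$) already homotopic in $M_{\alpha+1}$ to a loop in $M_\alpha$. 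Thus the tower $(\pi_1(M_\alpha))_{\alpha<\lambda}$ is a chain of isomorphisms, its colimit is canonically $\pi_1(M_0)\cong\pi_1(M)$, and the map to $\pi_1(M)$ is an isomorphism with no repair needed at limits. A minor cleanup is that $M_\alpha\cup N_{\alpha+1}$ (or a finite union of charts with $L$) is again a \emph{subregion}, i.e. open and connected — connectedness is arranged by joining the added pieces to $L$ with arcs inside $M$, whose compact images are absorbed into the Lindel\"of enlargement.
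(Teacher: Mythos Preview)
Your proof is correct and follows essentially the same approach as the paper's: build an initial $\pi_1$-epimorphic Lindel\"of subregion from representing loops, apply the kernel-killing lemma to get $M_0$, then iterate by enlarging and kernel-killing, with a transfinite induction handling limit ordinals. Your version is in fact more careful than the paper's sketch---you use a guiding Lindel\"of exhaustion $(N_\alpha)$ to guarantee that the $M_\alpha$ actually cover $M$, and you observe (correctly, via the factorization $\pi_1(M_\alpha)\to\pi_1(M_{\alpha+1})\to\pi_1(M)$ with both outer maps isomorphisms) that the bonding maps are already isomorphisms, so the limit stage needs no repair---whereas the paper simply applies kernel-killing again at limits without comment.
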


\begin{proof}
Choose
%$a_1, \dots ,a_r$
a finite (or countable)
%basis
generating system of $\pi_1(M)$, and representing loops $c_i$.
%with $c_i\in a_i$.
Each $c_i\colon [0,1] \to M$ is a continuous map with
$c_i(0)=c_i(1)= \star$ the basepoint of $M$. Cover randomly
the range of the $c_i$ by charts to get a Lindel\"of
%connected
%open set
subregion $L_0$. By construction $\pi_1(L_0)\to \pi_1(M)$ is
epimorphic, and by kernel killing (\ref{killing:kernel}) we
find $M_0:=L_0'$ with the required properties of
Lindel\"ofness and incompressibility. Then aggregate randomly
countably many new charts to get $L_1\supset M_0$ and again
%by
kernel killing $\pi_1(L_1)\to \pi_1(M)$ gives $M_1$.
%that remains to be clarified.
Transfinite induction
%this
completes the proof by defining $M_{\lambda}$ to be a kernel
killing enlargement of $L_{\lambda}=\bigcup_{\alpha<\lambda}
M_{\alpha}$ whenever $\lambda$ is a limit ordinal.
\end{proof}

%Hence to
%%complete
%finish the above proof we need, alas, another lemma:

\begin{lemma}\label{Jordan-representant} In the
fundamental group of
an (open) metric surface $M$ any primitive element
%(which is
(i.e., not a proper power)
%can be represented
is representable by a Jordan curve.
\end{lemma}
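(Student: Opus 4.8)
The strategy is to contract the problem to a \emph{compact} bordered surface, where one can invoke the classical theory of simple closed curves, and then carry the resulting Jordan curve back into $M$.

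Since $M$ is open and metric, $\pi_1(M)$ is free (Lemma~\ref{Ahlfors-Sario:freeness:lemma}) and $M$ deformation‑retracts onto its spine, a locally finite graph $G$ (Lemma~\ref{Whitehead-spine}). Given a primitive $g\in\pi_1(M)$, pick a loop $c$ representing it and homotope it into $G$; its image is compact, hence lies in a finite connected subgraph $G_0\subset G$, and a regular neighbourhood $N\subset M$ of $G_0$ is a compact connected bordered subsurface carrying $c$. Whenever a component of $\partial N$ is compressible we cut $N$ along the disc supplied by Schoenflies (Lemma~\ref{Schoenflies-Baer}); after finitely many such compressions $N$ is incompressible, so $\pi_1(N)\to\pi_1(M)$ is injective. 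Because this map is injective, the class $g$ (which lies in its image) is \emph{still not a proper power} when read in $\pi_1(N)$; and $\partial N\ne\varnothing$, since a closed $N$ would be clopen, forcing the open surface $M$ to be closed. It therefore suffices to exhibit a Jordan curve of $N$ in the conjugacy class of $g$: the same curve then works in $M$.

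On the compact side, the shape of $N$ is read off from the classification (Theorem~\ref{Moebius-Klein-classification}): $N$ is a sphere with handles or cross‑caps carrying $b\ge 1$ contours, and $\pi_1(N)$ is free of finite rank. Realise the conjugacy class of $g$ as a cyclically reduced (tight) immersed loop $\gamma$ on the spine of $N$, fatten it to a smooth closed curve immersed in $N$ with finitely many transverse double points, and reduce the double points by innermost‑bigon surgeries, combined with Nielsen/Whitehead simplifications of the underlying cyclic word. The target is the surface‑topological statement that a conjugacy class of $\pi_1(N)$ which is not a proper power is carried by an embedded circle; granting it, the simplification terminates at a Jordan curve, which proves the lemma.

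The main obstacle is precisely that last surface‑topological statement — the genuinely geometric passage from algebraic primitivity to an \emph{embedded} representative. The condition ``not a proper power'' is rather weak on its own, so the reduction cannot be a mere homotopy argument: one must exploit the full freedom of ambient moves and the structure of $N$ provided by Theorem~\ref{Moebius-Klein-classification} to resolve self‑intersections while making sure no new one is created (the same ``surgery without collateral damage'' spirit already met in Lemma~\ref{killing:kernel}, where killing a hole could parasitically create another). Concretely I would try to push $g$, by an automorphism of $\pi_1(N)$ that is realised by a homeomorphism of $N$, into a short normal form from which the Jordan curve is directly visible; identifying which automorphisms are so realised — a Dehn--Nielsen--Baer type input for bordered surfaces — is the delicate ingredient on which the whole argument turns, and is where I expect most of the effort to concentrate.
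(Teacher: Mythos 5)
Your reduction to a compact, $\pi_1$-injective bordered subsurface $N$ is sound as far as it goes (and is in the spirit of the paper's kernel-killing procedure), but the proof then stops exactly where the lemma's content lies: you explicitly \emph{grant} that a conjugacy class of $\pi_1(N)$ which is not a proper power is carried by an embedded circle. That granted statement is not a routine endgame that bigon surgeries or realized automorphisms will deliver --- it is false as stated. On the once-punctured torus (with $\pi_1=F(a,b)$) the class $a^2b^2$ is not a proper power, yet it is not realizable by a simple closed curve: the simple closed curves there are, up to conjugacy and inversion, the trivial class, the basis elements, and the boundary class $[a,b]$, and $a^2b^2$ is none of these (already its image $(2,2)$ in $H_1$ rules out the first two options, and it is not null-homologous). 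Likewise $ab^{-1}$ on a pair of pants with boundary classes $a$, $b$, $(ab)^{-1}$ is even a basis element, yet is carried by no embedded circle. So the terminal step of your argument would fail, and no Dehn--Nielsen--Baer input can rescue it, since the obstruction is an honest positive geometric self-intersection number.

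For comparison, the paper's own proof is shorter but leans on the very same unestablished passage: it pushes the loop into Whitehead's spine $\Gamma$ and asserts that, by primitivity, the loop is homotopic to a simple loop of the graph (``an edge in the bouquet of circles after collapsing a maximal tree''), an embedded circle in $\Gamma\subset M$ being a fortiori a Jordan curve in $M$. With ``primitive'' meaning only ``not a proper power'' this assertion is subject to the same counterexamples. You have therefore correctly isolated the crux, but neither your argument nor the paper's closes it. Note that in the only application made of this lemma, the kernel-killing procedure (\ref{killing:kernel}), the difficulty can be sidestepped: there one may kill the kernel by filling in the null-homotopic \emph{boundary contours} of a compact (or Lindel\"of) incompressible neighbourhood, \`a la Cannon's ``fill in the holes'', and those contours are Jordan curves by construction, so no representability statement for general non-power elements is needed.
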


\begin{proof}
Our argument is not very intrinsic relying on
%%some
combinatorial methods. (Is there  an argument via the
universal covering?)
%%%%%%RADO NOT NEEDED
\iffalse With Rad\'o (1925) (\ref{Rado:triangulation}), first
triangulate the surface. \iffalse (Rad\'o 1925
\cite{Rado_1925}, also Ahlfors-Sario
\cite[p.\,105--110]{Ahlfors-Sario_1960}, or Moise
\cite[p.\,60]{Moise_1977}). \fi\fi
%Then by simplicial
%approximation find a simplicial representant of the homotopy
%class of the given loop. Finally
Via Whitehead's
%%%theory of the
spine
(\ref{Whitehead-spine}),
%to show that
$M$ retracts by deformation $M\to \Gamma$ to a countable graph
$\Gamma$. By the primitivity assumption, the loop pushed in
the spine
%graph
is homotopic to a simple loop (imagine an edge in the bouquet
of circles resulting by collapse of a maximal tree), hence
representable by a Jordan curve in the graph, so a fortiori in
the surface $M$.
%
\iffalse Alternatively argue with the universal covering which
is the plane. The given element $\gamma\in \pi_1(M)$ induces a
deck-translation of $\widetilde{M}$. \fi
%
%
%
%
\end{proof}

\subsection{Puncturing and cross-capping (Cro-Magnon, von Dyck)}

This section gives  algebraic arguments
%corroborating
for two intuitively obvious issues:
%(details can safely be
%avoided
% without lost of understanding):

\begin{lemma}\label{puncturing}
Puncturing an open surface adds one free generator to
%%its
the fundamental group.
\end{lemma}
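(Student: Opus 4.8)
The plan is to compute the effect of removing a point from an open surface $M$ on $\pi_1$ by a Mayer--Vietoris / van Kampen argument, after first reducing to the metric case via the transfer philosophy embodied in the \emph{FLAT} principle. First I would observe that if $p\in M$ is the point to be punctured, I may pick a chart $U\cong\mathbb R^2$ around $p$; then $M=(M\setminus\{p\})\cup U$, with $U\cap(M\setminus\{p\})=U\setminus\{p\}\simeq S^1$. Since $M$ and $M\setminus\{p\}$ are connected open surfaces, I would invoke the freeness of $\pi_1$ of open surfaces: in the metric case this is Lemma~\ref{Ahlfors-Sario:freeness:lemma}, and for the non-metric case one reduces to it (as the paper does elsewhere, e.g.\ in the announced Proposition~\ref{freeness-for-open-surfaces:prop}) by exhausting a null-homotopy or a generating set by Lindel\"of subregions. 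So both $\pi_1(M)$ and $\pi_1(M\setminus\{p\})$ are free groups, and it suffices to show the rank goes up by exactly one.

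The key step is van Kampen applied to the decomposition above. Since $U$ is simply connected and $U\setminus\{p\}$ has $\pi_1\cong\mathbb Z$ generated by a small loop $\gamma$ around $p$, van Kampen gives
\[
\pi_1(M)\;\cong\;\pi_1(M\setminus\{p\})\big/\langle\!\langle \iota_*(\gamma)\rangle\!\rangle,
\]
i.e.\ $\pi_1(M)$ is $\pi_1(M\setminus\{p\})$ with the normal closure of the class of the small loop killed. Conversely, $\pi_1(M\setminus\{p\})$ is obtained from $\pi_1(M)$ by (HNN or free-product) adjunction corresponding to drilling out the puncture. The cleanest way to read off the rank is to use Whitehead's spine (Lemma~\ref{Whitehead-spine}) in the metric case: $M$ deformation retracts to a countable graph $\Gamma$, hence $\pi_1(M)$ is free on (say) a set $B$; removing a point $p\in M$, one can isotope $p$ into the ``2-cell part'' away from $\Gamma$, and then $M\setminus\{p\}$ retracts to $\Gamma\vee S^1$ (wedge a circle at the boundary of a small disc around $p$, then absorb the rest), so $\pi_1(M\setminus\{p\})$ is free on $B\sqcup\{\gamma\}$ — exactly one new free generator. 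I would then transfer to the general (non-metric) case: a null-homotopy in $M$ of a word in the old generators has Lindel\"of range, so lies in a metric subregion, showing the old generators remain independent in $M\setminus\{p\}$ and that $[\gamma]$ is not expressible in terms of them (it has nontrivial winding / linking with $p$ detected already inside a metric chart-plus-exhaustion).

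The main obstacle I anticipate is precisely the parasitic-handle issue already flagged in the proof of Lemma~\ref{killing:kernel}: naively ``adding a disc'' or ``removing a point'' can accidentally change connectivity, so I must be careful that the spine argument really does add \emph{exactly one} loop and not inadvertently more — this is handled by choosing the retraction so that $p$ sits in the interior of a single 2-cell (a chart disjoint from $\Gamma$), which is possible since $M$ is an open surface and the complement of the spine is an open subset into which any point can be pushed by a small ambient isotopy. A secondary, purely bookkeeping obstacle is the passage to the countably-generated / non-metric setting, where I cannot triangulate $M$ globally; there the argument is the standard one of covering the relevant compact data (a single null-homotopy, or the image of a finite sub-family of generating loops) by a Lindel\"of subregion, applying the metric case there, and concluding by the fact that any element of $\pi_1(M\setminus\{p\})$ and any relation among finitely many of its generators is witnessed in such a subregion.
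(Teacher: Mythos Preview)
Your approach is sound but takes a genuinely different route from the paper. The paper argues homologically: the long exact sequence of the pair $(S,S_*)$, compared via excision with that of a chart pair $(U,U_*)$, gives (using $H_2(S)=0$ since $S$ is open) a short exact sequence $0\to{\Bbb Z}\to H_1(S_*)\to H_1(S)\to 0$, so the rank of $H_1$ jumps by one; freeness of $\pi_1$ (Prop.~\ref{freeness-for-open-surfaces:prop}) then finishes. This works uniformly without a metric, since singular homology and the freeness proposition need none. Your spine route is more geometric and exhibits the new generator explicitly as the small loop around $p$; openness enters via the existence of a spine (Lemma~\ref{Whitehead-spine}) rather than via $H_2=0$. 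The cost is that the claim ``$M\setminus\{p\}$ retracts to $\Gamma\vee S^1$'' needs more than ``absorb the rest'' --- you must actually check that the Whitehead collapsing sequence for $M$ can be modified near the $2$-simplex containing $p$ so as to leave exactly one extra circle --- and you then need a separate Lindel\"of transfer for the non-metric case, whereas the homological argument handles both at once. Note also that your van Kampen display by itself does not yield the rank jump: from $\pi_1(M)\cong\pi_1(M\setminus\{p\})/\langle\!\langle\gamma\rangle\!\rangle$ one still needs $\gamma\neq 1$ in $\pi_1(M\setminus\{p\})$ (this is precisely where openness matters, and fails e.g.\ for $M=S^2$), and that is exactly what the spine computation supplies --- so the spine is doing all the real work and van Kampen is largely redundant.
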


\begin{proof}
%[Proof of \ref{puncturing}]
If $S$ is open, any puncture increases by one the rank of the
$H_1$. This follows e.g. by writing the exact sequences of the
pairs $(S,S_{*}=S-1pt)$ and $(U,U_{*})$, where $U$ is a chart
containing the puncture:
%
\iffalse {\small
$$
H_2(S)\to H_2(S,S_{*})\to H_1(S_*) \to H_1(S)\to
H_1(S,S_{*})\to H_0(S_*) \to H_0(S)\to H_0(S,S_{*})=0
$$}\fi
%
%
\begin{figure}[h]
%\centering
    \hskip-5pt\epsfig{figure=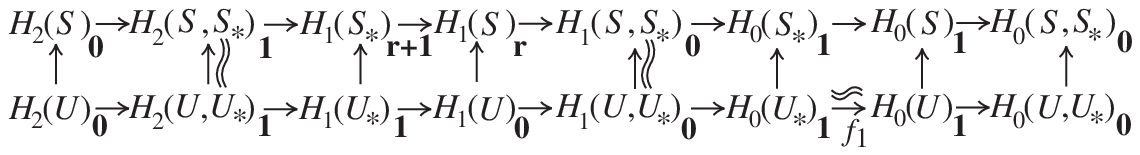,width=122mm}
  %\caption{\label{diagr}
  %Artist view of
  %Jordan separation in a simply-connected surface via a
  %Schoenflies trapping}
\vskip-10pt\penalty0
\end{figure}

\noindent Hence if $\pi_1(S)$ is free of rank $r$, then
$\pi_1(S_*)$ being free by
({\ref{freeness-for-open-surfaces:prop}}) is free of rank
$r+1$. In particular if $S$ is $1$-connected and open, then
$\pi_1(S-k\, pts)$ is $F_k$, free of rank $k$.
%
%
%%%%%%CENSURED
\iffalse {\small
%
One could hope that Seifert-van Kampen give a more direct
proof of the assertion: \iffalse that a single puncture in an
open surface increases the rank of the $\pi_1$ by one unit.
More precisely\fi   if $S$ is an open surface with
$\pi_1(S)=F_r$, then the once punctured surface $S_*$ has
$\pi_1(S_{*})=F_{r+1}$. Choose $U$ a chart about the puncture;
then $S=S_*\cup U$ and $S_*\cap U=U_*$.
%Hence
By Seifert-van Kampen $\pi_1(S)\approx
\pi_1(S_*)\ast_{\pi_1(U_*)} \pi_1(U)$ (amalgamated product),
from where the assertion
%is
looks more-or-less clear. However
%that
this argument does not capture the assumption that $S$ is
open. This explains why we opted for the first homological
argument.
%
}\fi
\end{proof}

Likewise cross-capping has the same impact on the fundamental
group:
%as puncturing:

\begin{lemma}\label{cross-capping} Cross-capping an
open surface adds one free generator to
%%%its
the fundamental group.
\end{lemma}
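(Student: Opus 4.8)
The plan is to mimic exactly the homological argument just given for puncturing (Lemma~\ref{puncturing}), replacing the chart $U$ around the puncture with a M\"obius band $N$ around the cross-cap. Recall that cross-capping an open surface $S$ means removing an open disc and gluing in a M\"obius band; equivalently, one removes a closed disc $D\subset S$ and attaches $N$ along $\partial N=\partial D$, producing $S'=S\#N_1$. I would first reduce to a homological count: it suffices to show that $H_1(S')$ has rank one more than $H_1(S)$ \emph{and} is still free abelian, since $\pi_1(S')$ is free (by \ref{freeness-for-open-surfaces:prop}, $S'$ being open and non-orientable but still a surface) and a free group is determined by the rank of its abelianization.

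First I would write down the Mayer--Vietoris (or the long exact sequences of the pairs $(S', S'-C)$ and $(N, N-C)$, where $C$ is the core circle of the glued M\"obius band), exactly as in the diagram \verb|diag2.eps| used for puncturing. The decomposition is $S'=(S-\mathrm{int}\,D)\cup N$ with intersection an annular collar $A\simeq S^1$. Since $S-\mathrm{int}\,D$ is homotopy equivalent to $S$ (removing an open disc from an open surface does not change the homotopy type, by general position / the spine argument \ref{Whitehead-spine}), and $N\simeq S^1$, Mayer--Vietoris gives
\[
H_1(S^1)\xrightarrow{(i_*,j_*)} H_1(S)\oplus H_1(S^1)\to H_1(S')\to \widetilde H_0(S^1)=0.
\]
The right-hand map onto $\widetilde H_0$ vanishes since everything is connected, so $H_1(S')$ is the cokernel of $(i_*,j_*)$. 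The class of the collar circle $A$ maps into $H_1(N)=\mathbb Z$ as \emph{twice} a generator (the boundary of a M\"obius band is homotopic to the core traversed twice), hence the image of $(i_*,j_*)$ inside the $H_1(S^1)=\mathbb Z$ summand is $2\mathbb Z$, contributing a $\mathbb Z/2$ to the cokernel --- except that $A$ also bounds in $S-\mathrm{int}\,D$ (it bounds the removed disc's complement... no: it bounds $D$ itself, hence is null-homologous in $S$), so $i_*[A]=0$ in $H_1(S)$. Therefore the cokernel is $H_1(S)\oplus\bigl(\mathbb Z/2\mathbb Z\bigr)$.

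Here is where care is needed, and where I expect the main subtlety: $H_1(S')$ is then $H_1(S)\oplus\mathbb Z/2$, which is \emph{not} free abelian and has the \emph{same rank} as $H_1(S)$ --- seemingly contradicting the claimed statement. The resolution is that over $\mathbb Z$ the rank does not increase, but the \emph{minimal number of generators}, equivalently $\dim_{\mathbb Z/2} H_1(S';\mathbb Z/2)$, does increase by one; and the correct invariant controlling the free group $\pi_1(S')$ is precisely this. So the hard part of the write-up is to phrase "adds one free generator" correctly: I would argue that $\pi_1(S')$ is free (Lemma~\ref{freeness-for-open-surfaces:prop}), compute $H_1(S';\mathbb Z/2)$ has dimension one greater than $H_1(S;\mathbb Z/2)$ from the same exact sequence with $\mathbb Z/2$ coefficients (where now the map $A\mapsto 2\cdot(\text{gen})$ becomes the zero map, so no relation is imposed and a genuine new $\mathbb Z/2$ summand appears), and conclude that the rank of the free group $\pi_1(S')$ exceeds that of $\pi_1(S)$ by exactly $1$. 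Alternatively --- and this is cleaner --- I would invoke the classification of compact surfaces (\ref{Moebius-Klein-classification}) on an exhausting compact bordered piece: adding a cross-cap changes $\chi$ by $-1$ just as puncturing does, and for an open surface $\chi=1-\mathrm{rank}\,\pi_1$, so the rank goes up by one. I would present the $\chi$-computation as the main line and relegate the Mayer--Vietoris bookkeeping to a parenthetical, since the Euler characteristic argument sidesteps the $\mathbb Z/2$-torsion confusion entirely and parallels the proof of Lemma~\ref{puncturing} most transparently.
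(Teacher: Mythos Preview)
Your Mayer--Vietoris computation rests on the claim that ``$S-\mathrm{int}\,D$ is homotopy equivalent to $S$ (removing an open disc from an open surface does not change the homotopy type)''. This is false: take $S=\mathbb{R}^2$, then $S-\mathrm{int}\,D$ deformation-retracts onto the boundary circle $\partial D$ and has $\pi_1=\mathbb{Z}$, not $0$. Removing a disc from an open surface is homotopically the \emph{same} as puncturing (Lemma~\ref{puncturing}), not a no-op. This is precisely the source of your spurious $\mathbb{Z}/2$ torsion: in the correct sequence
\[
0\to H_1(A)\xrightarrow{\ (j_*,i_*)\ } H_1(N)\oplus H_1(S-\mathrm{int}\,D)\to H_1(S')\to 0
\]
the class $[A]=[\partial D]$ lands on $(2,\,\text{generator})$ (it hits twice the core of $N$, but a \emph{primitive} element in $H_1(S-\mathrm{int}\,D)$, namely the new generator created by the puncture). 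Since $(2,1)$ is a primitive vector, the cokernel is torsion-free of rank $\mathrm{rk}\,H_1(S-\mathrm{int}\,D)=\mathrm{rk}\,H_1(S)+1$, exactly as wanted. The paper's proof does precisely this: it keeps $M_\star=M-D$ honestly in the sequence, reads off $\mathrm{rk}\,H_1(M_c)=\mathrm{rk}\,H_1(M_\star)$, and then invokes Lemma~\ref{puncturing} to get the extra $+1$.

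Your closing $\chi$-argument, on the other hand, is correct and self-contained: cross-capping drops $\chi$ by one, and for an open surface $\chi=1-b_1$ with $\pi_1$ free (\ref{freeness-for-open-surfaces:prop}), so the rank increases by one. Had you led with that you would have been done; the detour through $\mathbb{Z}/2$-coefficients is an attempt to patch a computation that was broken by the wrong homotopy-equivalence claim, not by any genuine torsion phenomenon.
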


\begin{proof} Recall
the cross-capping operation (von Dyck, 1888) of a surface $M$
amounting to identify diametrically opposite points at the
border of a embedded compact disc $D\subset M$. Denote $M_c$
the cross-capped surface. Choose $U$ a \nbhd{ }of the
cross-cap, which is homeomorphic to an (open) M\"obius band.
We have $M_c=U \cup M_{\star}$, where $M_{\star}=M-D$. The
Mayer-Vietoris sequence: {\small
$$
\underbrace{H_2(M_c)}_{0}\to \underbrace{H_1(U \cap
M_{\star})}_{\Bbb Z}\to H_1(U)\oplus H_1( M_{\star})\to
H_1(M_c) \to H_0(U \cap M_{\star}) \to H_0(U) \oplus H_0(
M_{\star}),
$$}
\!\!whose last arrow is injective, truncates as $H_1(M_c)\to
0$. The first group is trivial since $M_c$ is open. It follows
that the rank of $H_1(M_c)$ equals that of $H_1(M_{\star})$,
which is one more than that
%the one
of $H_1(M)$ (puncturing a compact disc amounts to puncture a
point and use (\ref{puncturing})). The claim now follows by
the freeness of the $\pi_1$
(\ref{freeness-for-open-surfaces:prop}).
\end{proof}

\subsection{Deleting a closed long ray
(indicatrix and $\pi_1$-invariance)}

The {\it closed long ray} ${\Bbb L}_{\ge 0}$ is the unique
bordered non-metric (Hausdorff) 1-manifold.

\begin{lemma}\label{deleting:a_closed_long_ray} Given a closed long ray $L$ embedded in a surface
$M$, the surface $M-L$ slitted along $L$ has the same $\pi_1$.
In fact the natural morphism $\pi_1(M-L)\to \pi_1(M)$ is
isomorphic. Further the orientability character (indicatrix)
of $M$ and $M-L$
%coincide.
are the same. Finally the same holds for the ends-number.
\end{lemma}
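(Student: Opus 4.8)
The plan is to establish everything for the open surface $M\setminus L$; the genuinely slit surface $\widehat M$ deformation-retracts, by collapsing a collar of its (long-line) boundary, onto a copy of its interior $M\setminus L$, hence shares with it the $\pi_1$, the indicatrix and the ends, so it suffices to treat $M\setminus L$. The whole argument rests on one soft remark: being order-isomorphic to $[0,\omega_1)$, the ray $L$ is $\omega$-bounded, so a compact subset of $L$ is bounded, i.e.\ lies in an initial arc $L([0,\alpha])$ with $\alpha<\omega_1$. In particular a convergent sequence in $L$ sits in such a (metrizable) $L([0,\alpha])$, whence $L$ is closed in the first-countable space $M$ and $M\setminus L$ is an honest open subsurface; that it is connected will drop out of the $\pi_0$-version of the push-off below.

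First I would prove surjectivity of the inclusion-induced $\pi_1(M\setminus L)\to\pi_1(M)$. Let $\gamma$ be a loop based off $L$. Being compact, $\gamma$ meets $L$ inside a bounded arc $L([0,\alpha])$. Since any arc in a surface is tame — this is where planar Schoenflies (\ref{Schoenflies:thm-plane}) and the tubular-neighbourhood technology it generates enter — and since $L$ is closed, the compact arc $L([0,\alpha+1])$ has a disc neighbourhood $V$ in which $V\cap L$ is a half-open arc issuing from the interior point $p:=L(0)$ and properly embedded (running out to the frontier of $V$); such a pair being standard, $(V,V\cap L)\cong(\mathbb R^2,\{(x,0):x\ge 0\})$. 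Now $\mathbb R^2$ slit along a ray is connected and simply connected (it is a half-plane), so — $\mathbb R^2$ itself being simply connected — every arc of $\gamma$ lying in $V$ is homotopic rel its (ray-avoiding) endpoints to an arc missing $V\cap L$. Carrying out these finitely many modifications, all supported in $V$, homotopes $\gamma$ rel basepoint off $L$; the same manoeuvre with paths in place of loops shows $M\setminus L$ is connected.

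For injectivity the identical machine runs one dimension up. A based homotopy $H\colon[0,1]^2\to M$ between two loops of $M\setminus L$ meets $L$ in a bounded arc; choosing as above a disc $V\cong\mathbb R^2$ with $V\cap L=:\rho$ a standard ray and with $H^{-1}(L)$ well inside $H^{-1}(V)$, let $S:=\overline{H^{-1}(V)}$, a compact surface with boundary, so that $H(\partial S)\subseteq V\setminus\rho$. Since $V\cong\mathbb R^2$ is contractible, $H|_S$ is homotopic rel $\partial S$ to \emph{any} map $S\to V$ with the same boundary values (straight-line homotopy); and since $V\setminus\rho\cong\mathbb R^2$ is an absolute retract, the boundary values $H|_{\partial S}$ do extend to a map $S\to V\setminus\rho$. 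Splicing this modified $H$ back into the square yields a homotopy inside $M\setminus L$, whence $\pi_1(M\setminus L)\xrightarrow{\ \sim\ }\pi_1(M)$.

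The indicatrix claim is then automatic: by naturality of the orientation covering (\ref{indicatrix-orient-covering}) the indicatrix of $M\setminus L$ is the composite $\pi_1(M\setminus L)\to\pi_1(M)\to\{\pm 1\}$, and as the first arrow is now an isomorphism the two monodromies vanish together, i.e.\ $M\setminus L$ is orientable iff $M$ is. For the ends I would argue in the same key: fix a standard neighbourhood $N$ of $L$ — a long strip $\mathbb L_{\ge 0}\times\mathbb R$ along the long tail, capped by a disc about $p$ — and check from this model that $N$ and $N\setminus L$ are each connected with a single end (the one escaping along the long tail), identified in $M$ with one and the same end; since $M$ and $M\setminus L$ agree outside $N$, their end spaces are then canonically matched. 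The only genuine obstacle anywhere is not a topological ingredient (nothing beyond planar Schoenflies and the contractibility of $\mathbb R^2$ and of $\mathbb R^2$ slit along a ray is needed) but the bookkeeping that corrals $\gamma$, $H$ and the end data into the model disc $V$, resp.\ the model neighbourhood $N$, and recognises $V\cap L$ as a standard ray — i.e.\ the descent to a drawable metric picture is where the work actually sits.
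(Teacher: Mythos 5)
Your treatment of the fundamental group and of the indicatrix is correct and runs on the same engine as the paper's proof, namely that any compactum (a loop, a null-homotopy, a one-sided Jordan curve) meets $L$ only in a bounded, hence compact and tamely embedded, initial arc $L([0,\alpha])$. The implementation differs: the paper performs an ambient ``finger move'' --- a homeomorphism supported in a tube around a compact sub-arc, isotopic to the identity, pushing the offending compactum off $L$ --- whereas you do local surgery on the maps themselves, replacing sub-arcs (resp.\ sub-discs) of $\gamma$ (resp.\ of $H$) using the contractibility, and the absolute-retract property, of a chart slit along a standard ray. Both versions share the same unargued standard-position step (that the tube or chart can be shrunk, using closedness of $L$ and boundedness of $V\cap L$, so as to meet $L$ in a single arc in standard position), so no complaint there. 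Your derivation of the indicatrix clause as a formal corollary of the $\pi_1$-isomorphism via naturality (\ref{indicatrix-orient-covering}) is actually cleaner than the paper's separate finger-move argument on a one-sided Jordan curve.

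The ends clause, however, contains a genuine gap. You invoke a ``standard neighbourhood $N$'' of the \emph{entire} ray $L$, of the form $({\Bbb L}_{\ge 0}\times{\Bbb R})\cup(\mbox{disc})$. This is a global tubular-neighbourhood statement for a non-Lindel\"of submanifold, and it is false in general. Take $M={\Bbb L}^2$ and $L=\{(x,x):x\ge 0\}$ the half-diagonal: by the pressing-down (Fodor) lemma, every open set containing $L$ contains a tail square $(\alpha,\omega_1)\times(\alpha,\omega_1)$, and such a square cannot sit inside a capped long strip --- its uncountably many pairwise disjoint closed long rays $[\,\alpha+1,\omega_1)\times\{t\}$ would each have to end up, eventually, on a horizontal line ${\Bbb L}\times\{r_t\}$ of the strip at pairwise distinct real levels $r_t$ depending injectively and ``continuously along increasing $\omega$-sequences'' on $t$, contradicting the eventual constancy of continuous real-valued functions on $[0,\omega_1)$. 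So the ends comparison must be run, like the $\pi_1$ comparison, directly from the definition via compacta (a compactum $K$ meets $L$ in a bounded arc, and one compares the non-relatively-compact components of $M-K'$ and of $(M-L)-K'$ for the enlarged compactum $K'=K\cup L([0,\alpha])$), not via a global model neighbourhood. In fairness, the paper's own proof is silent on the ends clause altogether, so you are attempting something it does not; but the route you chose does not go through. The same caveat applies to your opening reduction by collapsing a collar of the (long-line) boundary of the genuinely slit surface --- collars of non-Lindel\"of boundary components are not free --- though this is moot, since the paper only ever uses the open complement $M-L$.
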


\begin{proof} If $M$ is orientable, then so is $M-L$ by (\ref{orientability:heredity}).
Conversely if $M$ is not orientable, there is a one-sided
Jordan curve $J$ in $M$
(\ref{orientability-in-terms-of-Jordan-curves}). We can find a
tube neighbourhood $T$ for a sub-arc $A\approx [0,1]$ of $L$
such that $A\supset J \cap L$. $T$ is homeomorphic to a
rectangle and we have a homeomorphism of triads $(T,L\cap T,
A)\approx ([-1,2]\times[-1,1], [0,2]\times \{0\},[0,1]\times
\{0\})$. Using a self-homeomorphism of the rectangle which is
the identity on the boundary and  pushing the arc outside
itself (and outside $L\cap T$),
%we see that
its extension to $M$ (by the identity outside $T$) yields a
``finger move''  pushing $J$ (plus its M\"obius tubular
\nbhd{} $N$) outside $L$ (enlarge $A$ if necessary so that
$A\supset N \cap L$). So the configuration $(N,J)$ is pushed
into $M-L$ showing its non-orientability.

The assertion regarding the $\pi_1$ is proved by the same
``finger move'' trick. Indeed given a loop in $M$ we may push
it into $M-L$ (noticing that the finger move homeomorphism is
isotopic to the identity). Thus the natural morphism
$\pi_1(M-L)\to \pi_1(M)$ is onto. To get its injectivity,
assume that $[c]$ is in the kernel. So $c$ is loop in $M-L$
which is null-homotopic in $M$. Since the range of the
homotopy is a compactum $K$ we can find a subarc $A$ of $L$
large enough as to contain $K\cap L$, and a finger move push
this outside $L$ and produce a null-homotopy for $c$ ranging
through $M-L$.
\end{proof}

%\section{Freeness of the fundamental group of
%very low-dimensional manifolds}

%V

\subsection{Finitely-connected surfaces, cylinder ends (\Kerekjarto)}

With
%%out worrying about
loose conventions, we could define the {\it connectivity} of a
surface as the rank of its $H_1$ with integer coefficients.
%(if you want
%(to follow the classical Riemann-Betti convention augment this
%by one unit).
This conflicts slightly with the classical Riemann-Betti
convention, where simple-connectivity really corresponds to
rank zero (not one!). So eventually  just
%speak about the {\it
%rank} as being the first Betti number.
employ:

\begin{defn} {\rm The {\it rank} of a surface (metric or not)
is its first Betti number, i.e. the rank of the first
(singular) homology group $H_1(M,{\Bbb Z})$. When finite,
%we
say the surface to be {\it of  finite-connectivity}.}
\end{defn}

The following trick of \Kerekjarto{ }1923
\cite{Kerekjarto_1923} (only a baby case of his more general
classification of all open metric surfaces)
%(maybe also in
%Richards, 1963)
is quite foundational (equivalent to the classification of
$1$-connected metric surfaces) and pivotal subsequently:
%, yet a useful
%recasting of it:

\begin{theorem} (\Kerekjarto{ }1923)\label{Kerkjarto:end}
A metric surface of finite-connectivity is homeomorphic to a
finitely-punctured closed surface. The latter closed model is
%unambiguously
uniquely defined, and consequently
%any metric
%surface of finite-connectivity
%such
finitely-connected surfaces are classified by the connectivity
(=rank of $\pi_1$), the number of ends $\varepsilon$ and the
indicatrix.
%(orientability or not).
 In particular, open metric surfaces of finite-connectivity
possess an end neighbourhood homeomorphic to a punctured
plane.
\end{theorem}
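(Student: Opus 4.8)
The plan is to prove Kerékjártó's theorem by first triangulating the surface with Radó (\ref{Rado:triangulation}), and then running a combinatorial/geometric exhaustion argument to isolate the ends and show each is a punctured-disc end. The statement bundles together three assertions: (a) a metric surface of finite connectivity is a finitely-punctured closed surface; (b) uniqueness of the closed model; (c) the classification by rank, ends-number $\varepsilon$, and indicatrix; plus the ``in particular'' clause about open surfaces having a punctured-plane end neighbourhood (which falls out of (a) once one knows a puncture in a closed surface has a punctured-disc neighbourhood).

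\emph{First} I would reduce to the \emph{compact bordered} case. Triangulate $M$ (Radó, \ref{Rado:triangulation}); since $M$ is metric it is Lindelöf, hence has a countable exhaustion $M = \bigcup_n K_n$ by compact subsurfaces with $K_n \subset \mathrm{int}\,K_{n+1}$, each a compact bordered surface obtained as a union of closed 2-simplices (after barycentric subdivision so the pieces are nicely embedded). The finite-connectivity hypothesis $\mathrm{rk}\,H_1(M) = r < \infty$ means the inclusions $H_1(K_n) \to H_1(M)$ stabilize: for $n$ large, no new homology is born beyond stage $n$, and in fact one can choose the $K_n$ so that $H_1(K_n) \to H_1(M)$ is an isomorphism for all large $n$ (this is exactly the ``calibration'' idea of \ref{calibrated-exhaustions:lemma}, specialized to the metric situation — fill in the holes of $K_n$ using null-homotopic Jordan curves bounding discs by Schoenflies \ref{Schoenflies-Baer}). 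Having done this, the difference $M - \mathrm{int}\,K_n$ is a (possibly non-compact but exhaustible) surface all of whose loops die in $M$; I claim each of its finitely many connected components is a half-open annulus $S^1 \times [0,\infty)$. \emph{Here is where the real work lies.} One analyzes a component $E$ of $M - \mathrm{int}\,K_n$: its boundary circle $J$ is null-homotopic in $M$ hence bounds a disc $D$ by homotopic Schoenflies (\ref{Schoenflies-Baer}); $E$ itself has trivial $H_1$ (homology cannot escape past the stabilized stage, using Mayer--Vietoris with $E \cup K_n = M$), and by the classification of compact bordered surfaces (\ref{Moebius-Klein-classification}) applied to $E \cap K_{n+k}$ and a proper-exhaustion argument, $E$ has exactly one end and is simply-connected-at-infinity in the appropriate sense, forcing $E \cong S^1 \times [0,\infty)$. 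Capping off each such end with a disc produces a compact borderless surface $\widehat M$ with $M \cong \widehat M$ minus $\varepsilon$ points, where $\varepsilon$ is the number of ends.

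\emph{Then} uniqueness (b): if $\widehat M_{k*} \cong \widehat M'_{k'*}$, then $k = k'$ (ends are a topological invariant) and $\widehat M \cong \widehat M'$ since a puncture admits a collar $S^1 \times [0,1)$ which can be filled back canonically; and two fillings of the same surface over the same puncture-set agree by the disc-filling uniqueness in \ref{Schoenflies-Baer} (valid as the compact surface with boundary is not the sphere). The classification (c) is then immediate from \ref{Moebius-Klein-classification}: $\widehat M$ is determined by its Euler characteristic and indicatrix, and $\chi(\widehat M) = \chi(M) + \varepsilon$; re-expressing $\chi$ in terms of the rank (using $\chi = 1 - r$ for open surfaces, which also needs the freeness input \ref{freeness-for-open-surfaces:prop}/\ref{Whitehead-spine}, or just $b_0 - b_1 + b_2$ with $b_2 = 0$) gives that $(r, \varepsilon, \text{indicatrix})$ determines $\widehat M_{\varepsilon*}$ up to homeomorphism. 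Finally the ``in particular'': an open finitely-connected metric surface has $\varepsilon \ge 1$ end, and the punctured-disc neighbourhood of any one puncture of $\widehat M$ is homeomorphic to a punctured plane.

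\emph{The main obstacle} I anticipate is the middle step — showing a component of the exhaustion-complement with killed $\pi_1$ is genuinely a \emph{product} annulus rather than something wilder (an infinite-genus or infinitely-punctured end that happens to carry no persistent homology). This is really where finite-connectivity has to be used decisively, and it is the part that, done carefully, is equivalent to (a special case of) the full classification of open metric surfaces; cleanest is to argue that $E$, being an increasing union of compact bordered surfaces each with one ``new'' boundary circle and stable (trivial) homology, must have all intermediate genera and extra boundary circles vanish by \ref{Moebius-Klein-classification}, so $E$ is exhausted by annuli $S^1 \times [0, t_k]$ with compatible identifications, hence is $S^1 \times [0,\infty)$. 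I would present this as the technical heart and keep the rest as bookkeeping with the cited classification and Schoenflies results.
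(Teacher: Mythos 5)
Your overall strategy --- an incompressible compact core, analysis of the complementary components as annular ends, capping off, and Euler-characteristic bookkeeping for uniqueness --- is the same as the paper's. But the middle step as you state it contains two genuine errors. First, the boundary circles of the calibrated core are in general \emph{not} null-homotopic in $M$: for the punctured plane $\mathbb{R}^2_*$ the core is an annulus whose boundary circles generate $\pi_1(M)\cong\mathbb{Z}$, so the appeal to homotopic Schoenflies (\ref{Schoenflies-Baer}) to produce a bounding disc $D$ fails outright, as does the assertion that all loops of $M-\mathrm{int}\,K_n$ ``die in $M$'' (they die only up to homotopy \emph{into} $K_n$). Second, and inconsistently with your own conclusion $E\cong S^1\times[0,\infty)$, a residual component $E$ does not have trivial $H_1$: the Mayer--Vietoris rank count gives $\sum_i \mathrm{rk}\,H_1(\overline{\varepsilon_i})=n$ (the number of contours of the core), and the decisive point is that each summand is at least $1$ --- because a bordered surface with a single compact contour and vanishing $H_1$ is compact (\cite[Lemma~10]{GaGa2010}) --- hence exactly $1$. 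It is only after capping each end with a disc that one obtains a simply-connected open surface, which is $\mathbb{R}^2$ by (\ref{uniformization}); the end is then the complement of a disc in a plane, i.e.\ a half-open annulus.

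You also pass too quickly over the fact that $M-\mathrm{int}\,K_n$ has exactly one component per contour of the core, each component having a single contour. This is not automatic: two contours could a priori be joined by an arc outside the core. The paper rules this out by an intersection-number argument --- such an outer connection, closed up by an inner arc in the core, yields a loop whose mod~$2$ intersection number with each of the two contours is $1$, hence a loop not homotopable into the core, contradicting surjectivity of $\pi_1(K_n)\to\pi_1(M)$. With these three repairs your argument coincides with the paper's proof; the uniqueness clause and the ``in particular'' clause you handle correctly.
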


\def\ende{\varepsilon}
\def\rk{{\rm rk}}

\begin{proof} (Via the classification of $1$-connected
surfaces (\ref{uniformization}), and some dirty
tricks.---Hausdorff would say: {\it Ich mache Komplexe mit
Komplexen!}) If the surface $M$ is compact there is nothing to
prove (\ref{Moebius-Klein-classification}).
%the surface.
Otherwise,  $\pi_1(M)$ is free
(\ref{Ahlfors-Sario:freeness:lemma}), and  $H_1$ has finite
rank. Fix a finite generating system $a_1,\dots,a_r $ of
$\pi_1(M)$. By regular \nbhd{ }theory,  any compactum in a
PL-manifold is contained in a finite bordered sub-manifold.
(This goes back to Whitehead, cf. e.g. Rourke-Sanderson as
quoted in Nyikos \cite{Nyikos84}.) This applies to metric
surfaces by Rad\'o's
%(1925)'s
triangulations (\ref{Rado:triangulation}). Representing the
$a_i$ by loops $c_i$, we may cover the ranges of the $c_i$ by
a compact bordered subsurface $W\subset M$. By construction
$\varphi\colon\pi_1(W)\to \pi_1(M)$ is epimorphic. Using the
kernel killing procedure (\ref{killing:kernel}), one can
arrange $\varphi$ to be isomorphic (controlling compactness as
we only need to kill finitely many primitive elements of the
kernel). (Following Nyikos \cite{Nyikos84}, we could say that
$W$ is a bag for $M$.)

\begin{claim}\label{claim:incompressible-implies-countour_equal-ends} Let $W$ have $n$ contours (=boundary components), then
%it is
%easy to
we claim (and prove clumsily below) that $M-W$ has also $n$
components $\ende_i$, whose closures $\overline{\ende_i}$ are
non-compact bordered surfaces with one contour.
\end{claim}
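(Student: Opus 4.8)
The plan is to analyze the configuration $M - W$, where $W \subset M$ is the compact bordered subsurface with $n$ contours chosen so that $\pi_1(W) \to \pi_1(M)$ is an isomorphism. First I would observe that $M - W$ is an open surface (its frontier in $M$ consists of the $n$ boundary circles of $W$, which we may assume are bicollared by Schoenflies-type tubular \nbhd{} theory, so that $W$ is a \emph{clean} submanifold). Taking the closure $\overline{M-W}$ inside $M$ produces a (possibly disconnected) bordered surface whose boundary is exactly $\partial W$: each contour of $W$ bounds $W$ on one side and $\overline{M-W}$ on the other. The total boundary of $\overline{M-W}$ therefore has $n$ circle components, so it remains to see that these $n$ circles lie in $n$ distinct components of $\overline{M-W}$, each such component being non-compact with a single boundary circle.

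The heart of the matter is the bound $b_0(\overline{M-W}) = n$ on the number of components. Let me denote the components $E_1, \dots, E_k$ and let $n_j \ge 1$ be the number of boundary circles of $E_j$ (each $n_j \ge 1$ since $M$ is connected and $W \ne M$), so $\sum_j n_j = n$. I would run a Mayer--Vietoris (or the exact sequence of the pair) argument for the decomposition $M = W \cup \bigcup_j E_j$ glued along the $n$ disjoint circles. Since $\pi_1(W) \to \pi_1(M)$ is an isomorphism and both groups are free of finite rank, the inclusion induces an isomorphism on $H_1$ as well, so in particular the connecting maps force each $E_j$ to have $H_1(E_j)$ of rank $n_j - 1$ (this is the homological signature of a planar surface with $n_j$ holes): indeed, gluing a piece $E_j$ with $n_j$ boundary circles back onto $W$ must not change $H_1$, and by van Kampen/Mayer--Vietoris the ``new'' rank contributed is $\mathrm{rk}\,H_1(E_j) - (n_j - 1)$, which must vanish. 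If some $E_j$ were compact, it would be a closed-up piece with $\chi(E_j) = 2 - 2g_j - n_j$ and rank $2g_j + n_j - 1$; matched against rank $n_j - 1$ this gives $g_j = 0$, i.e. $E_j$ is a planar surface, but a compact planar surface with $n_j$ holes glued along \emph{all} its boundary to $W$ would either disconnect $M$ trivially or, when $n_j = 1$, cap off a boundary circle of $W$, which is impossible since then $W$ was not a maximal-enough bag — more cleanly, a compact $E_j$ with $n_j$ contours all glued to $W$ would make $M$ compact, contradicting that $M$ is open. Hence every $E_j$ is non-compact.

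To finish, I would show $n_j = 1$ for every $j$. Suppose $E_j$ had $n_j \ge 2$ boundary circles; then inside $W \cup E_j$ one could find a loop running out through one boundary circle of $E_j$ and back through another, giving a class in $\pi_1(M)$ not killed by homotoping into $W$ — more precisely, such an $E_j$ would make $\pi_1(W) \to \pi_1(W \cup E_j)$ fail to be onto (a path in $E_j$ joining two distinct contours, completed by an arc in $W$, is not homotopic rel endpoints into $W$), contradicting that $\pi_1(W) \to \pi_1(M)$ is already onto via $W$ alone. [Alternatively: the rank count $\mathrm{rk}\,H_1(E_j) = n_j - 1$ together with non-compactness and $\pi_1(M) = \pi_1(W)$ forces, after reattaching, that the $n_j - 1$ extra loops are already visible in $W$, which is absurd unless $n_j = 1$.] Therefore $k = n$ and each $\overline{E_j} = \overline{\ende_j}$ is a non-compact bordered surface with exactly one contour, as claimed.

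The main obstacle I anticipate is the bookkeeping in the Mayer--Vietoris / van Kampen step: one must carefully track that ``$\pi_1(W) \to \pi_1(M)$ isomorphic'' propagates to the statement that each complementary piece is ``incompressible from the inside'' with the right number of boundary circles, and phrase the argument so it does not secretly assume what is being proved (e.g. that $M-W$ is connected). Handling the degenerate possibilities — a component $E_j$ that is a disc (capping a contour), an annulus, or a compact piece — requires invoking Schoenflies (\ref{Schoenflies-Baer}) to rule out the disc case and the openness of $M$ to rule out compact components, so the argument is genuinely using the hypotheses accumulated earlier rather than being purely formal.
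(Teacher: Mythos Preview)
Your overall strategy---using the isomorphism $\varphi\colon\pi_1(W)\to\pi_1(M)$ to constrain the complementary pieces via loop arguments---is exactly the paper's approach, and your argument for $n_j=1$ (a path in $E_j$ joining two distinct contours, closed up through $W$, yields a loop not homotopic into $W$) is the paper's argument, which it makes precise via the mod~2 intersection number of that loop with the contour.

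There is, however, a genuine gap in your non-compactness step. The claim ``a compact $E_j$ with $n_j$ contours all glued to $W$ would make $M$ compact'' is simply false when $k>1$: one $E_j$ may be compact while the others are not, and $M$ remains open. Your fallback---the rank identity $\mathrm{rk}\,H_1(E_j)=n_j-1$---is also not established: the global isomorphism $H_1(W)\cong H_1(M)$ constrains the \emph{sum} of contributions from all pieces in Mayer--Vietoris, but does not by itself pin down each $\mathrm{rk}\,H_1(E_j)$ separately before you know $n_j=1$. (Indeed the paper runs its Mayer--Vietoris rank count only \emph{after} the Claim, once each piece is known to have a single contour.)

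The fix is to reorder: first prove $n_j=1$ for every $j$ (hence $k=n$) via your loop argument, sharpened by the mod~2 intersection pairing so that the obstruction is visible. Then treat non-compactness piece by piece: a compact $\overline{\ende_i}$ with one contour is either a disc (and then the contour, non-trivial in $\pi_1(W)$, becomes null-homotopic in $M$, violating injectivity of $\varphi$) or has positive genus or a cross-cap (and then a loop in $\overline{\ende_i}$ with non-zero intersection number against some curve cannot be homotoped into $W$, violating surjectivity). This is precisely how the paper handles it, and once reordered your argument goes through.
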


\noindent{\bf Proof of Claim.} Indeed since $W$ is a bordered
surface, each contour of $\partial W$ has a collar and
therefore is two-sided in $M$. Choose the collar-sides lying
outside $W$. If two contours get connected outside $W$ in $M$,
then we can construct (by aggregating an outer connection with
an inner connection inside $W$) a loop in $M$ whose
intersection number (in homology mod 2) with both contours is
$1$, and therefore which cannot be homotoped into $W$
(violating the surjectivity of $\varphi$). Thus $M-W$ has at
least $n$ components (and of course cannot have more). Further
each residual piece $\overline{\ende_i}$ cannot be compact,
otherwise by classification
(\ref{Moebius-Klein-classification}) jointly with Seifert-van
Kampen some alteration at the $\pi_1$-level would be detected.
For instance if $\overline{\ende_i}$ is a disc, some loop in
$W$ trivializes in $M$, violating the injectivity of
$\varphi$, and if $\overline{\ende_i}$ is a complicated
pretzel with one contour, then again by looking at an
appropriate intersection number (with a fixed curve) gives a
loop in $M$ which cannot be homotoped to $W$.

\smallskip

\noindent{\bf Back to the proof of \ref{Kerkjarto:end}.} When
capping-off $\overline{\ende_i}$ by a disc we obtain the open
surface $\overline{\ende_i}_{cap}$, which punctured is
homeomorphic to $\ende_i$. Thus by (\ref{puncturing}), the
rank $\rk H_1(\ende_i)= \rk H_1(\overline{\ende_i}_{cap})+1$.
Aggregating just one piece, say $\overline{\varepsilon}$, of
the decomposition $M=W\cup (\bigcup_{i=1}^n
\overline{\ende_i}_{cap})$,  the Mayer-Vietoris sequence
{\small
$$
\underbrace{H_2(W\cup \overline{\ende})}_{0}\to
\underbrace{H_1(W \cap \overline{\ende})}_{\Bbb Z}\to
H_1(W)\oplus H_1( \overline{\ende})\to H_1(W\cup
\overline{\ende}) \to H_0(W \cap \overline{\ende}) \to H_0(W)
\oplus H_0( \overline{\ende}),
$$}
\!\!whose the last arrow is injective,  truncates as
$H_1(W\cup \overline{\ende})\to 0$. So $\rk H_1(W\cup
\overline{\ende})= \rk H_1(W) + \rk H_1(\overline{\ende})-1$.
Hence by induction, $\rk H_1(W\cup
\bigcup_{i}\overline{\ende_{i}})= \rk H_1(W) + \sum_{i=1}^n\rk
H_1(\overline{\ende_{i}})-n$.
%But
By construction $\rk H_1(W)= \rk H_1(M)$,
%hence
so $\sum_{i=1}^n\rk H_1(\overline{\ende_{i}})=n$. Since a
bordered surface with $H_1=0$ having a unique compact contour
is
%the disc
compact  (cf. \cite[Lemma~10]{GaGa2010}) it follows from
(\ref{claim:incompressible-implies-countour_equal-ends}) that
$\rk H_1(\overline{\ende_{i}})\neq 0$. Hence $\rk
H_1(\overline{\ende_{i}})=1$ for all $i$, and $\rk
H_1(\overline{\ende_i}_{cap})=0$. Thus by freeness
(\ref{Ahlfors-Sario:freeness:lemma}),
$\pi_1(\overline{\ende_i}_{cap})=0$, and by the classification
(\ref{uniformization}) it follows that
$\overline{\ende_i}_{cap}\approx{\Bbb R}^2$. It remains now
only to compactify each end $\varepsilon_i$ by adding the
point at infinity giving us the searched closed surface. This
shows the first clause.

The third (last) clause is a trivial consequence of the first.

Finally, the second clause follows from the classification of
closed surfaces (\ref{Moebius-Klein-classification}). Indeed
if $M$ has $n$ ends it is---by the first clause---homeomorphic
to
%$\Si_{g,n*}$ (the sphere with $g$ handles and $n$ punctures)
%if orientable and to $N_{g,n*}$ the sphere with $g$ cross-caps
%and $n$ punctures, if non-orientable.
$F_{n*}$, a closed model $F$ affected by $n$ punctures.
 Comparing the
characteristic of $M$ with that of ``its'' closed model $F$,
we have the following; e.g., remove from $F$ small discs about
the $n$ punctures to get a bordered surface $W$, to which $M$
retracts by deformation (hence $\chi(M)=\chi(W)$), and which
has lost $n$ $2$-simplices
%%%less than
w.r.t. $F$ (hence $\chi(W)=\chi(F)-n$):
\begin{equation}
\chi(M)=1-b_1+b_2=\chi(F)-n.
%=\begin{cases} 2-2g-n  &
%\text{(orientable case)}\cr
% 2-g-n & \text{(non-orientable case)}
% \end{cases}
\end{equation}
%%%%%%%%%%%%%%%%%%COMMENT SAXO (=WRITER)
%%%%%%%This looks obvious combinatorially,
%%%%%%%yet it can be proved by hand by induction
%%%%%%%via puncturings and starting
%%%%%%%with a single puncture and distinguishing
%%%%%%%the orientable case (4g-gon model)
%Since $M$ is open $b_2=0$ and $\pi_1$ is free so that $b_1$ is
%the connectivity.
Now assuming (an unfortunate)
%a visual
general collapse of our optical
%senses,
systems, with our brain-memories only able to remind from $M$
its numerical invariants of connectivity $b_1$ (rank),
indicatrix and ends-number $n$,
%one of the two above
%formulae
the above formula (where $b_2=0$ as soon as $M$ is open)
determines
%%$g$, hence
$\chi(F)$ (of ``the'' compact model) uniquely. Since
puncturing finitely many points does not affect the indicatrix
of a surface
(\ref{orientability-in-terms-of-Jordan-curves})---though it
may well do so for a non-Hausdorff curve (e.g., {\it
lasso}!)---the indicatrix of $F$ is
%uniquely defined
prescribed by that of $M$. Thus by the compact classification
(\ref{Moebius-Klein-classification}) the topology of $F$ is
unambiguously determined, and so is the topological type of
$M$. (Recall that the group of self-homeomorphisms of a
manifold acts transitively over finite configurations for any
prescribed cardinality.)
\end{proof}

Here are two examples that will play a special  r\^ole in the
foliated sequel:

\begin{lemma}\label{Kerekjarto:dicho}
A dichotomic metric surface with $\pi_1=F_2$ is $S^2_{3*}$
(thrice punctured sphere).
\end{lemma}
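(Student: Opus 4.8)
The plan is to invoke the structure theorem of \Kerekjarto{ }(\ref{Kerkjarto:end}) together with the classification of compact surfaces (\ref{Moebius-Klein-classification}) and the puncturing lemma (\ref{puncturing}). Since $M$ is dichotomic it is orientable (\ref{dicho-implies-orientable}), and since $\pi_1(M)=F_2$ is free of finite rank $2$, the surface is open and of finite-connectivity. Hence by \Kerekjarto{ }(\ref{Kerkjarto:end}) it is homeomorphic to $F_{n*}$, a compact \emph{orientable} model $F$ with $n$ punctures, where $n=\varepsilon$ is the number of ends.

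First I would pin down the compact model. Writing $F=\Sigma_g$ (sphere with $g$ handles), the punctured surface $F_{n*}=\Sigma_{g,n*}$ is homotopy equivalent to a wedge of $2g+n-1$ circles when $n\ge 1$ (or to $\Sigma_g$ itself when $n=0$), so $\pi_1(M)$ is free of rank $2g+\max(n-1,0)$. Alternatively, and more in keeping with the excerpt, one applies (\ref{puncturing}) $n$ times starting from $\pi_1(\Sigma_g)$: since $M$ is open we need $n\ge 1$, and then $\mathrm{rk}\,\pi_1(M)=\mathrm{rk}\,\pi_1(\Sigma_g)+n$ once we puncture the first time off a surface that may be closed — care is needed because (\ref{puncturing}) is stated for \emph{open} surfaces, so the very first puncture of the closed $\Sigma_g$ must be handled separately (it turns $\Sigma_g$ into something homotopy equivalent to a wedge of $2g$ circles). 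Setting the rank equal to $2$ forces $2g+n-1=2$ with $n\ge 1$, i.e.\ $(g,n)\in\{(0,3),(1,1)\}$.

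Next I would rule out the torus-with-one-puncture case $(g,1)$ using dichotomy. The once-punctured torus $\Sigma_{1,1*}$ is \emph{not} dichotomic: it contains an embedded circle (a standard meridian or longitude of the torus, pushed off the puncture) whose complement in the torus is a cylinder, so in the once-punctured torus its complement is a connected (punctured cylinder), i.e.\ the curve does not separate. This contradicts the hypothesis that $M$ is dichotomic, so $(g,n)=(0,3)$ and $M\approx S^2_{3*}$. The one point deserving a little care is producing an honest embedded non-separating Jordan curve in $\Sigma_{1,1*}$ and checking non-separation cleanly; this is the only genuine (if minor) obstacle, and it is dispatched by an explicit picture of the torus as a square with sides identified, placing the puncture in the interior of the square and taking a horizontal mid-line as the curve. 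Conversely $S^2_{3*}$ visibly has $\pi_1=F_2$ and is dichotomic (heredity (\ref{dicho:hered-transfer}) from the sphere, which is dichotomic by the Jordan curve theorem), so the identification is exact, completing the proof.
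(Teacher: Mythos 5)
Your proposal is correct and follows essentially the same route as the paper: invoke \Kerekjarto{} to write $M\approx\Sigma_{g,n*}$ with $F$ orientable by (\ref{dicho-implies-orientable}), derive $2g+n=3$ (the paper via $\chi=1-b_1=2-2g-n$, you via the rank of a wedge of $2g+n-1$ circles — the same computation in different clothing), and exclude $(g,n)=(1,1)$ by dichotomy. Your explicit non-separating mid-line in the once-punctured torus merely fills in the detail the paper compresses into "precluded by dichotomy."
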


\begin{proof}
Having finite-connectivity, the surface $M$ is, by
\Kerekjarto{ }(\ref{Kerkjarto:end}), a punctured closed
surface $F_{n*}$. Since dichotomic implies orientable
(\ref{dicho-implies-orientable}), the closed model $F$ is
orientable
%(if not, $F$ contains a M\"obius band and puncture
%out of it, to transfer it in $M$),
(\ref{orientability-in-terms-of-Jordan-curves}) hence
$F\approx \Sigma_g$ (sphere with $g$ handles). Since
$\pi_1=F_2$ is not the group of a closed surface, $M$ is open,
and so $b_2=0$. Thus $\chi=1-b_1=2-2g-n$. Since $b_1=2$, we
have $2g+n=3$ implying (as $g,n\ge 0$) that $(g,n)=(0,3)$ or
$(1,1)$; the latter option being precluded by dichotomy.
\end{proof}

\begin{lemma}\label{Kerekjarto:non-orient}
A non-orientable metric surface with $\pi_1=F_2$ is
$\proj_{**}$ or $\Klein_*$ (twice-punctured projective plane
or once-punctured Klein bottle).
\end{lemma}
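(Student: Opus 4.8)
The plan is to run the very same argument as in Lemma~\ref{Kerekjarto:dicho}, with ``dichotomic'' (hence orientable) replaced by ``non-orientable''. Since $F_2$ is not the fundamental group of any closed surface, \Kerekjarto's theorem~(\ref{Kerkjarto:end}) exhibits $M$ as a finitely-punctured closed surface $F_{n*}$ that is necessarily open, so $b_2=0$ and $n\ge 1$. Because puncturing finitely many points leaves the indicatrix untouched~(\ref{orientability-in-terms-of-Jordan-curves}), the closed model $F$ is itself non-orientable, hence $F\approx N_g=S^2_{gc}$ for some $g\ge 1$.

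Then I would carry out the Euler-characteristic bookkeeping. On the one hand $\chi(M)=1-b_1=-1$ (openness kills $b_2$, and $b_1=2$ since the abelianization of $F_2$ is ${\Bbb Z}^2$); on the other hand $\chi(M)=\chi(F)-n=(2-g)-n$. Equating gives $g+n=3$, and together with $g\ge1$ and $n\ge1$ this forces $(g,n)=(1,2)$ or $(g,n)=(2,1)$, i.e. $M\approx\proj_{**}$ or $M\approx\Klein_*$. The two are non-homeomorphic (different ends-numbers), and by the classification clause of~(\ref{Kerkjarto:end})---connectivity, ends-number and indicatrix determine the surface---the resulting list is both exhaustive and irredundant.

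For completeness one should also confirm that both surfaces genuinely have $\pi_1=F_2$: the projective plane punctured once is a M\"obius band with $\pi_1={\Bbb Z}$, and a second puncture adds one free generator by~(\ref{puncturing}); meanwhile $\Klein_*$ is open with $\chi=-1$, hence $b_1=2$, and its group is free by~(\ref{Ahlfors-Sario:freeness:lemma}). I expect no serious obstacle beyond this numerology; the only point asking for a word of care is the exclusion of the closed solution $(g,n)=(3,0)$, which is immediate since $F_2$ is not a closed-surface group, so that $M$ is forced to be open.
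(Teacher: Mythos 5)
Your proposal is correct and follows essentially the same route as the paper: \Kerekjarto's theorem~(\ref{Kerkjarto:end}) to realize $M$ as $F_{n*}$ with $F$ non-orientable, then the Euler-characteristic count $g+n=3$ with $g,n\ge 1$. The only cosmetic difference is that the paper derives non-orientability of the closed model via heredity of orientability~(\ref{orientability:heredity}) rather than invariance of the indicatrix under puncturing, and omits your (harmless but unnecessary) closing sanity checks.
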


\begin{proof}
Being of finite-connectivity, the surface $M$ is, by
\Kerekjarto{ }(\ref{Kerkjarto:end}), a punctured closed
surface $F_{n*}$. Since orientability is hereditary to
subregions (\ref{orientability:heredity}), the closed model
$F$ is non-orientable, hence $F\approx S_{gc}$ (sphere with
$g\ge 1$ cross-caps). Since $\pi_1=F_2$ is not the group of a
closed surface, $M$ is open, and so $b_2=0$. Thus
$\chi=1-b_1=2-g-n$. Since $b_1=2$, we have $g+n=3$ implying
(as $g\ge 1$, $n\ge 1$) that $(g,n)=(1,2)$ or $(2,1)$.
\end{proof}

\subsection{The soul of a
non-metric finitely-connected surface
%of finite-connectivity
(\Kerekjarto, Nyikos)}

One can imagine that any surface of finite-connectivity has a
metric soul capturing its salient topological features and
outside which nothing more happens. This reminds
%certainly
the
%lovely
%philosophy
phraseology  ``{\it the garbage must cease}'' coined in Nyikos
1984 \cite{Nyikos84}.
%In the metric case this is
%essentially tautological yet the substance of \Kerekjarto's
%cylindrical ends theorem.
In the $\omega$-bounded case (which
implies finite-connectivity cf. e.g.,
\cite{Gab_2011_Hairiness}) the above
%philosophy
desideratum is a weak form of the bagpipe theorem of Nyikos
1984 \cite{Nyikos84}. Thus
%idea of
the present soul is
%%%essentially
merely a non-metric version of \Kerekjarto{ }cylindrical ends
theorem (\ref{Kerkjarto:end}) as well as an extension of
Nyikos' bagpipe (at any rate a typically Hungarian
%story).
endeavor.)

%We formalise the idea of capturing
As we are doing 2D-topology, the God-given recipe to capture
metrically  the whole topology is to impose
%some
``incompressibility'' at the fundamental group level:

\begin{defn}\label{soul:def} A soul
$S$ for a (non-metric) surface $M$ of
finite-connectivity is a metric
%subsurface
subregion $S\subset M$ such that the morphism induced by
inclusion $\varphi\colon\pi_1(S)\to \pi_1(M)$ is isomorphic.
%(By a subsurface we mean here an open
%connected subset, so that $S$  is likewise a borderless
%surface.)
\end{defn}

{\it Existence} of a soul is immediate from the kernel killing
procedure (\ref{killing:kernel}), and the interesting issue is
%the question of its
{\it uniqueness} (up to homeomorphism):
% We show
%uniqueness via the:

\begin{theorem}\label{soul:uniqueness} Let $M$ be a (non-metric)
surface of finite-connectivity. Then the three characteristic
invariants $(\chi, \varepsilon, a)$ (viz. Euler
%-Poincar\'e
characteristic, number of ends and indicatrix $a=0,1$ whether
orientable or not) of a soul are uniquely defined by the whole
surface $M$,
%%%matching
coinciding with
%the corresponding
%invariants
those of $M$. Consequently:

{\rm (a)} The topological type of a soul is uniquely defined
and
%called
referred to as the soul of the finitely-connected surface $M$
(apply \Kerekjarto{ }{\rm (\ref{Kerkjarto:end})}).

{\rm (b)} Any finitely-connected surface has a finite number
of ends (an issue not completely obvious a priori).
\end{theorem}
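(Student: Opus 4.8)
The plan is to reduce everything to the metric case via the kernel-killing technology already established. Given a non-metric finitely-connected surface $M$, the kernel-killing procedure (\ref{killing:kernel}) applied to a suitable Lindel\"of subregion whose fundamental group surjects onto $\pi_1(M)$ produces a soul $S$, i.e. a metric subregion with $\varphi\colon\pi_1(S)\xrightarrow{\ \approx\ }\pi_1(M)$. Since $S$ is metric (Lindel\"of) and $\pi_1(S)\cong\pi_1(M)$ has finite rank, $S$ is an open metric surface of finite-connectivity, so \Kerekjarto{ }(\ref{Kerkjarto:end}) applies: $S$ has a well-defined triple $(\chi, \varepsilon, a)$ and is determined up to homeomorphism by it. So part (a) and (b) will follow immediately once we show the triple $(\chi, \varepsilon, a)$ of \emph{any} soul equals the corresponding invariant of the ambient $M$ — that is the content we must establish, and it simultaneously shows all souls are mutually homeomorphic.

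For the indicatrix $a$ the argument is easy: a soul $S\subset M$ is a subregion with $\pi_1(S)\cong\pi_1(M)$, so the indicatrix morphism $\pi_1(M)\to\{\pm1\}$ pulls back to that of $S$ by naturality (\ref{indicatrix-orient-covering}); hence $S$ is orientable iff $M$ is (if $M$ is non-orientable it contains a one-sided Jordan curve, and by $\pi_1$-surjectivity this curve is freely homotopic into $S$, where a finger-move of the type used in \ref{deleting:a_closed_long_ray} pushes a one-sided curve into $S$). For the Euler characteristic (equivalently the rank $b_1$, since $b_2=0$ for open surfaces and the sphere case is trivial as it has no proper soul), the isomorphism $\pi_1(S)\cong\pi_1(M)$ gives $b_1(S)=b_1(M)$ by abelianization; combined with orientability-matching this pins down $\chi(S)$ via $\chi=1-b_1$, and then the closed model $F$ of $S$ has $\chi(F)=\chi(S)+\varepsilon$, so everything reduces to computing $\varepsilon$.

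The real work is the ends-number, and here I expect the main obstacle. The claim is that a soul $S$ has exactly as many ends as $M$ — in particular that $M$ has finitely many ends, which the statement flags as not obvious a priori. The plan is to mimic the proof of Claim~\ref{claim:incompressible-implies-countour_equal-ends} inside \Kerekjarto's theorem: take a compact bordered ``bag'' $W\subset S$ with $\pi_1(W)\xrightarrow{\approx}\pi_1(S)\cong\pi_1(M)$ (regular-neighbourhood theory in the metric $S$), so that $W$ is a bag for $M$ too. Then each contour of $\partial W$ is two-sided in $M$ with a collar pointing outward; using that $\pi_1(W)\twoheadrightarrow\pi_1(M)$ one shows via mod-2 intersection numbers that two distinct contours cannot be joined by an arc in $M-W$ (such a joining arc closed up inside $W$ would give a loop with intersection number $1$ against a contour, hence not homotopic into $W$, contradicting surjectivity). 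Thus the components of $M-W$ are in bijection with the $n=|\partial W|$ contours — this already forces $M$ to have at most $n$ ends. The delicate direction is the lower bound: each residual piece $\overline{\ende_i}$ of $M-W$ is an unbounded bordered surface with a single contour, and one must rule out it being relatively compact (which would collapse an end) — here injectivity of $\pi_1(W)\to\pi_1(M)$ does the job exactly as in the metric proof, since a relatively compact piece would be a compact bordered surface with one contour and capping it would either kill a loop of $W$ in $M$ or, if it carries genus, furnish via an intersection number a loop of $M$ not homotopic into $W$. Counting as in \ref{Kerkjarto:end}, $\varepsilon(M)=n=\varepsilon(S)$. The one point requiring care that has no metric precedent is that the pieces $\overline{\ende_i}$ need not be metric, so ``compactness'' must be argued topologically rather than via triangulation; but since a single bordered surface with empty-or-single contour and prescribed $\pi_1$ behaviour is controlled by the elementary lemmas already quoted, this is a second-order nuisance rather than a genuine difficulty — consistent with the FLAT principle that the Lindel\"of soul governs the whole. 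Assembling: $(\chi,\varepsilon,a)$ of every soul coincides with that of $M$, hence all souls share one triple, hence (by \Kerekjarto) one homeomorphism type, proving (a), and $\varepsilon(M)=n<\infty$ proves (b).
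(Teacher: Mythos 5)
Your reduction of $\chi$ and the indicatrix to the $\pi_1$-isomorphism of the soul is exactly the paper's argument, and the overall architecture for the ends (a compact bag $W$ in the soul that is incompressible in $M$, residual pieces in bijection with the contours, non-compactness of each piece via injectivity of $\pi_1(W)\to\pi_1(M)$) is also the one the paper uses. The gap is in the inequality $\varepsilon(M)\le n$. You deduce it from ``the components of $M-W$ are in bijection with the $n$ contours'', but the ends-number is a supremum over \emph{all} compacta $K$, not just over $K=W$: a priori a larger compactum sitting inside one residual piece $P_i$ could leave several non-relatively-compact components there, so that $P_i$ alone contributes more than one end (exactly the phenomenon of the letter ``Y'' in the paper's example, where enlarging the compactum increases the count). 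To exclude this you must show each $P_i$ is itself one-ended, and in the non-metric setting this is genuinely the crux, not a ``second-order nuisance''. The paper does it in two steps you omit: (i) a Mayer--Vietoris computation $\chi(M)=\chi(B)+\sum_i\chi(P_i)$ forcing $\chi(P_i)=0$, whence the filled pipe $P_{i,filled}$ is an open \emph{simply-connected} surface; and (ii) the enclosing trick (\ref{enclosing-trick:lemma}): any compactum of an open simply-connected surface lies in the interior of a Schoenflies disc, whose complement is connected, so at most one non-relatively-compact component survives per pipe. Step (ii) cannot be replaced by ``counting as in (\ref{Kerkjarto:end})'': there the one-endedness of each residual piece came from the classification (\ref{uniformization}) identifying $\overline{\varepsilon_i}_{cap}$ with ${\Bbb R}^2$, which is unavailable here since a non-metric simply-connected open surface need not be the plane (long plane, Pr\"ufer, \dots). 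Incidentally you also have the difficulty inverted: the ``delicate direction'' is not the non-compactness of the pieces (that is the easy lower bound $\varepsilon(M)\ge n$, handled exactly as in Claim~\ref{claim:incompressible-implies-countour_equal-ends}) but precisely this upper bound.
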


\begin{proof} If $M$ is compact this adds nothing new to the
classical classification (\ref{Moebius-Klein-classification}).
So assume $M$ open and then $b_2=0$ (by vanishing of the
top-dimensional homology, cf. e.g., Samelson
\cite{Samelson_1965-homology}), so that $\chi=1-b_1$. Hence
the knowledge of $\chi$ is equivalent to that of the
connectivity $b_1$. Hence the matching of $\chi$ is immediate
from the {\it soul}-definition (\ref{soul:def}).

The equality of the indicatrix (telling us orientability) is
evident as well. For instance one can use the canonical
group-morphism $\mu_M\colon\pi_1(M) \to \{\pm 1\}$ obtained by
propagating local orientation around loops
(\ref{indicatrix-orient-covering}). Orientability ($a=0$)
amounts to the triviality of $\mu_M$. Since we naturally have
$\mu_M \varphi = \mu_S$,  equality of the indicatrix follows
since $\varphi$ of (\ref{soul:def}) is isomorphic.

\def\endus{ends-number}

It remains only to check the equality of the ends-number. We
recall its:

\begin{defn} {\rm The {\it\endus}{ }of a space $X$ is the maximal
cardinality of non-relatively-compact residual components of a
compactum $K$ of $X$:
%. In formula:
$$
\varepsilon(X)=\sup_{K cpct\subset X } {\rm card} \{ C\in
\pi_0(X-K) \colon \overline{C} \textrm{ is non-compact} \}
$$
}
\end{defn}

{\small
---{\it Example: } Consider a letter ``Y'' with 3
branches going to infinity. Choose as compactum a point right
below the branching, we count 2 residual components, but
enlarging it we get 3 residual components (and never more!).
The space has 3 ends.

}

\smallskip

{\bf Step~1 (Deriving from a soul a weak bag-pipe
decomposition).} Given a soul $S$ of $M$ (hence of
finite-connectivity), we know that it is homeomorphic to
$F_{n*}$ a finitely $n$ times punctured closed surface $F$
(\ref{Kerkjarto:end}). Thus there is a compact bordered
subsurface $B \subset S$ obtained by removing from $F$ the
interior of little discs centered at the punctures. We call
$B$ a bag. It is
%%clearly homotopy equivalent to
a retract-by-deformation of the soul $S$, thus having the same
$\chi$, a number of contours equal to $n$
%(=\endus{ }or
%puncture number)
and the same indicatrix.

If we remove the interior of the bag $B$ from $S$ we
%%clearly
have $n$ residual components. Thus removing ${\rm int} B$ from
$M$ gives $k\le n$ components $P_1, \dots, P_k$ which are
bordered surfaces with $d_i\ge 1$ contours. Note that $\sum_i
d_i=n$.

First we claim that $k=n$ and that all $P_i$ are non-compact,
for otherwise
%two contours of $B$ are connected outside $B$,
%and
%it must then be easy to show
%that there is a pant subregion of $M$,
%then
arguing as in
Claim~\ref{claim:incompressible-implies-countour_equal-ends}
violates the isomorphy of $\varphi'\colon \pi_1(B)\to
\pi_1(M)$ (incompressibility condition). It follows that
$d_i=1$ for all $i$ (all $P_i$  have a single contour).

Next using the Mayer-Vietoris sequence we have
%(as if all
%manifolds were finite-combinatorial)
the following additivity relation
%(since
(intuitively  the overlapping occurs along circles not
contributing to $\chi$):
\begin{equation}\label{Mayer-Vietoris:char}
\chi(M)=\chi(B)+\textstyle\sum_{i=1}^n \chi(P_i).
\end{equation}
Since each $P_i$ is bordered (and connected), $b_2(P_i)=0$,
and so $\chi(P_i)=1-b_1(P_i)\le 1$. If $b_1(P_i)=0$, then
%since
as $P_i$ has a single contour it follows that $P_i$ is compact
(cf. \cite{GaGa2010}, Lemma 10), an absurdity.
%
%OLD ARGUMENT CENSURED
%
\iffalse and therefore the disc (by
 classification (\ref{Moebius-Klein-classification})). But
then the corresponding contour $\Gamma_i$ of $B$ is
trivialized in $\pi_1(M)$, violating the incompressibility,
excepted when $B$ is a disc, in which case $M$ would be the
sphere (which was ruled out at the start). \fi
Hence
$\chi(P_i)\le 0$, and since $\chi(M)=\chi(B)$ it follows from
\eqref{Mayer-Vietoris:char} that $\chi(P_i)=0$ for all $i$.
Thus
%%%%%$b_1(P_i)=1$, and
the filled $P_i$, denoted $P_{i,filled}$ (defined by gluing a
disc
%%%or collapsing
to its unique contour),
%%%$\Gamma_i$ to a point),
is a
$1$-connected surface (as its $\chi=\chi(P_i)+1=1$, so its
$b_1=0$, and as $\pi_1$ is free
(\ref{freeness-for-open-surfaces:prop})
%%%%it follows that
its $\pi_1=0$). (In Nyikos' jargon the $P_i$ now truly deserve
the name of {\it pipes}, yet not necessarily {\it long pipes}
which terminology might
%suggest
be reserved to the $\omega$-bounded case).

{\bf Step 2 (Computing the \endus)} Since $M$ contains the bag
$B$ (as a compactum)
%having
leaving $n$ residual (non-relatively-compact) components (cf.
Step~1), we have $\varepsilon(M)\ge n$. Conversely given a
compactum $K\subset M$, we may decompose it according to the
bagpipe decomposition $M=B\cup \bigcup_{i=1}^n P_i$ to obtain
a fragmentation $K=K_B\cup \bigcup_{i=1}^n K_i$, where
$K_B=K\cap B$ and $K_i=K\cap  P_i$ which are all compacta
(recall the bag and the pipes to be bordered hence closed as
point-sets). Regarding each $K_i$ in the filled pipe
$P_{i,filled}$,
%and
we can trace  a Jordan curve $J_i$ containing $K_i$ in its
interior, cf. (\ref{enclosing-trick:lemma}) below. Since the
interior $U_i$ of $J_i$ is homeomorphic to an open-cell (or
the plane) which is one-ended $U_i-K_i$ has exactly one
component which is not relatively-compact. Reconstructing the
manifold $M$ from its bagpipe structure it follows that $M-K$
has {\it at most} $n$ components, which are not
relatively-compact. (Some ``percolation'' of the connectedness
may of course occur within the bag.) This shows that
$\varepsilon(M)\le n$, completing the proof.
\end{proof}

\begin{lemma}\label{enclosing-trick:lemma}
Any compactum $K$ of an open simply-connected surface $M$ can
be ``enclosed'' in a Jordan curve $J$, in the sense that the
bounding disc for $J$ (given by Schoenflies {\rm
(\ref{Schoenflies-Baer})}) contains $K$ in its interior.
\end{lemma}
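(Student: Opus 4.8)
The plan is to swallow $K$ inside an auxiliary euclidean chart and then invoke the classical planar Schoenflies theorem. First, by compactness, cover $K$ by finitely many charts $U_1,\dots,U_m$, each homeomorphic to $\mathbb{R}^2$; joining them by finitely many arcs (and covering those arcs by finitely many further charts) one obtains a connected open Lindel\"of subregion $W_0\subset M$ with $K\subset W_0$. Since $\pi_1(M)=0$, the inclusion-induced map $\pi_1(W_0)\to\pi_1(M)$ is trivially epimorphic, so the kernel-killing procedure (\ref{killing:kernel}) (equivalently, the calibrated-exhaustion lemma (\ref{calibrated-exhaustions:lemma})) yields an enlargement $W\supset W_0$, still Lindel\"of, with $\pi_1(W)\to\pi_1(M)$ an isomorphism, hence $\pi_1(W)=0$. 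Being Lindel\"of, $W$ is metric, so by uniformization (\ref{uniformization}) $W$ is homeomorphic to $S^2$ or to the plane. The sphere is excluded: a copy of $S^2$ inside $M$ would be a compact, hence clopen, subregion of the connected open surface $M$, forcing $M=W=S^2$, against $M$ being open. Therefore $W\approx\mathbb{R}^2$.

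Fix a homeomorphism $h\colon W\to\mathbb{R}^2$. As $h(K)$ is compact, pick $R>0$ with $h(K)\subset\{\,|x|<R\,\}$ and set $J:=h^{-1}(\{\,|x|=R\,\})$, a Jordan curve lying in $W\subset M$. Inside the plane $W\approx\mathbb{R}^2$, the classical Jordan--Schoenflies theorem says that $J$ bounds the disc $D:=h^{-1}(\{\,|x|\le R\,\})$, whose interior $h^{-1}(\{\,|x|<R\,\})$ contains $K$. It remains to recognise $D$ as the disc that (\ref{Schoenflies-Baer}) attaches to $J$ within $M$: indeed $D$ is a disc embedded in $M$ with boundary $J$, and since $M$ is open and simply-connected the bounding disc of a Jordan curve is unique (\ref{Schoenflies-Baer}); thus $D$ is that very disc, and $K$ sits in its interior, as desired.

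The whole argument is essentially bookkeeping around results already in hand; the one point deserving attention is this last identification --- reconciling the disc produced planarly inside the chart $W$ with the one delivered by the non-metric Schoenflies theorem in the ambient surface $M$ --- and it is precisely the uniqueness clause of (\ref{Schoenflies-Baer}), available because $M$ is open, that closes the gap.
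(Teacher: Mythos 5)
Your argument is correct and follows essentially the same route as the paper: the paper invokes the calibrated exhaustion (\ref{calibrated-exhaustions:lemma}) to get a Lindel\"of, simply-connected, hence planar subregion containing $K$, then finishes with Heine--Borel, exactly as you do by building the subregion directly around $K$ via kernel killing. Your explicit reconciliation of the planar disc with the one furnished by (\ref{Schoenflies-Baer}) via its uniqueness clause is a small point the paper leaves implicit, and it is handled correctly.
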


\begin{proof}
%The proof remains to be worked out.
By
%the
calibration
%method
(\ref{calibrated-exhaustions:lemma}), we have an exhaustion of
$M$ by Lindel\"of subregions $M_{\alpha}$ with trivial groups
$\pi_1(M_{\alpha})\approx\pi_1(M)=0$. Thus $M_{\alpha}$ is
$S^2$ or ${\Bbb R}^2$ (\ref{uniformization}). In the sphere
case $M_\alpha$ is both closed and open, hence equal to $M$
(connectedness), violating the openness assumption. Thus
$M_{\alpha}$ is the plane for all $\alpha$. Since $K$ is
compact, there is $\beta<\omega_1$ such that $M_{\beta}\supset
K$. By Heine-Borel, $K$ is closed and bounded, hence contained
in a ball of large radius.
\end{proof}

\section{Algebraic distractions (Freiheitss\"atze)}

\subsection{Freeness of the fundamental group of open surfaces
(Ahlfors-Sario)}

It is well known that the fundamental group of an open metric
surface is free on countably many generators
(\ref{Ahlfors-Sario:freeness:lemma}).
%%%%%%%%
\iffalse (cf.
%e.g.
Ahlfors-Sario, 1960 \cite[\S 44A.,
p.\,102]{Ahlfors-Sario_1960} or Massey's book 1967
\cite{Massey_1967}). \fi
%One can also argue with
Using Whitehead's spine
%(as in the proof of
%(\ref{Jordan-representant}))
(\ref{Whitehead-spine}) we get the stronger assertion that
%any
%open metric
such surfaces
%is
are homotopy equivalent to a countable graph. In general, a
{\it non-metric} (Hausdorff) surface may well
%have
deliver a free fundamental group requiring uncountably many
generators, as for the doubled Pr\"ufer surface $2P$ (cf.
Calabi-Rosenlicht \cite[p.\,339--40]{Calabi-Rosenlicht_1953}
for a complicated(?) proof of the non-denumerability of
$\pi_1(2P)$ or Gabard 2008 \cite[Prop.\,3]{Gabard_2008} for an
easy computation via Seifert-van Kampen, which was suggested
by M. Baillif). It puzzled us, over a long period of time,
%wondering if
whether the fundamental group of an arbitrary (non-metric)
open surface is free (e.g., both
%Gabard, 2008
%in
\cite[p.\,272]{Gabard_2008} and Baillif 2011
\cite{Baillif_2011} raise this question), yet it
%seems to be
%perhaps
is probably a trivial exercise. The basic idea
%being
is that if there is a relation in the $\pi_1$ of the (big)
non-metric surface then, covering by charts the range of a
null-homotopy materializing this relation, we get a Lindel\"of
subregion where this relation holds already, violating the
freeness in the metric case. The trick looks theological,
 as it does {\it not} exhibit a basis for the
fundamental group.
Let us look if this naive idea can be completed to a serious
argument. \iffalse The following is rather loose in this
respect and should be seriously improved (if possible?).
%
Let us take a closer look. \fi

\begin{prop}\label{freeness-for-open-surfaces:prop} The
fundamental group of any open surface is a free group.
\end{prop}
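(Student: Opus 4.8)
The plan is to decouple freeness from metrizability, the result being classical in the metric case (Ahlfors--Sario, \ref{Ahlfors-Sario:freeness:lemma}). First I would dispose of the \emph{countably generated case}. If $\pi_1(M)$ has a countable generating system, represent it by loops, cover their (compact) ranges by charts to get a Lindel\"of---hence metric, by Urysohn---subregion $L_0\subset M$ with $\pi_1(L_0)\to\pi_1(M)$ epimorphic, then kernel-killing (\ref{killing:kernel}) enlarges $L_0$ to a Lindel\"of $L'_0$ with $\pi_1(L'_0)\to\pi_1(M)$ an isomorphism; since $M$ is open, $L'_0$ is an open metric surface, so $\pi_1(L'_0)$ is free (\ref{Ahlfors-Sario:freeness:lemma}) and therefore so is $\pi_1(M)$. (This is just \ref{calibrated-exhaustions:lemma} plus Ahlfors--Sario, and its hypothesis is exactly countable generation.) The whole difficulty is thus concentrated in the case where $\pi_1(M)$ is generated by no countable set---which does occur, e.g.\ $\pi_1(2P)$ for the doubled Pr\"ufer surface is uncountably generated.

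For that case one bootstraps transfinitely, and it pays to isolate the group-theoretic core. \emph{Group lemma:} if $G=\bigcup_{\alpha<\kappa}G_\alpha$ is a continuous well-ordered increasing union (so $G_\lambda=\bigcup_{\alpha<\lambda}G_\alpha$ at limit $\lambda$) in which every $G_\alpha$ is free and every $G_\alpha$ is a \emph{free factor} of $G_{\alpha+1}$, then $G$ is free. Indeed one builds bases $B_\alpha$ of $G_\alpha$ with $B_\beta\subseteq B_\alpha$ for $\beta<\alpha$: at a successor the free-factor hypothesis extends $B_\alpha$ to a basis of $G_{\alpha+1}$, and at a limit $B_\lambda:=\bigcup_{\beta<\lambda}B_\beta$ is still a basis of $G_\lambda$ because a would-be relation involves finitely many letters, hence lies in some $G_\beta$, which is free on $B_\beta$; then $\bigcup_\alpha B_\alpha$ is a basis of $G$. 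Granting this, the task becomes: exhibit $\pi_1(M)$ as such a chain, i.e.\ produce a continuous well-ordered exhaustion $M=\bigcup_{\alpha<\kappa}M_\alpha$ by open connected subsurfaces with every $\pi_1(M_\alpha)\hookrightarrow\pi_1(M_{\alpha+1})$ a free-factor inclusion. Then $\pi_1(M)=\varinjlim_\alpha\pi_1(M_\alpha)$ (loops and null-homotopies being compact), and the group lemma finishes.

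To construct the exhaustion, fix a well-ordering of a chart-cover of $M$ and set $M_0$ to be one chart (so $\pi_1(M_0)=1$). At a successor step I would adjoin to $M_\alpha$ the next still-uncovered chart and finitely many connecting charts, then \emph{calibrate} \`a la Cannon (\ref{killing:kernel}): maintain throughout the invariant that $\pi_1(M_\alpha)\to\pi_1(M)$ is injective---this alone forces every transition to be injective, a loop of $M_\alpha$ bounding in $M_{\alpha+1}\subseteq M$ already bounding in $M$---and kill the primitive elements of the resulting kernel by aggregating Schoenflies discs along their Jordan representatives (\ref{Jordan-representant}, \ref{Schoenflies-Baer}), the filling chosen so that the freshly created generators extend a basis, i.e.\ so that $\pi_1(M_\alpha)$ lands as a free factor of $\pi_1(M_{\alpha+1})$. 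At a limit $\lambda$ put $M_\lambda=\bigcup_{\alpha<\lambda}M_\alpha$: the injectivity invariant passes to the colimit and $\pi_1(M_\lambda)$ is free by the group lemma applied below $\lambda$, so the recursion continues. The point of phrasing it this way is that past the first uncountable-cofinality limit $M_\lambda$ is no longer Lindel\"of or metric, yet the only inputs one should need from then on---Schoenflies (valid non-metrically, \ref{Schoenflies-Baer}) and the Nielsen--Schreier theorem (subgroups of free groups are free, all cardinalities)---are indifferent to the metric; \Kerekjarto{ }(\ref{Kerkjarto:end}) and the spine (\ref{Whitehead-spine}) are wanted only on the metric stretches.

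\emph{Where the hard part lies.} The bookkeeping that keeps every transition a free-factor inclusion is the real obstacle, and it is genuinely necessary: the seductive one-line ``cover a single null-homotopy by charts'' argument merely shows that every finitely generated subgroup of $\pi_1(M)$ is free, and local freeness does not imply freeness (witness ${\Bbb Q}=\varinjlim{\Bbb Z}$). Worse still, an incompressible subsurface inclusion need \emph{not} be a free-factor inclusion: an annular neighbourhood of a commutator curve $[a,b]$ in a once-punctured torus realises $\langle[a,b]\rangle\cong{\Bbb Z}\hookrightarrow F_2$, and as $[a,b]$ is primitive in no free group of rank $\ge2$, such a stage would be a dead end. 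Hence one must never \emph{manufacture} a stuck loop: the chart-adjoining and disc-filling have to be arranged so that each enlargement is, at the level of $\pi_1$, a pure adjunction of new free generators. I expect this control---sustaining the free-factor property all the way through the non-metric limit stages---to absorb the bulk of the work, the transfinite apparatus itself being routine.
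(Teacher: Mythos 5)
Your strategy is genuinely different from the paper's, and the difference is exactly where your argument stops being a proof. The paper never attempts to exhibit a basis: it observes (Lemma \ref{non-free}) that non-freeness of a group is witnessed by a single finite datum---a non-empty reduced word with a \emph{pure} specialization collapsing to $1$---and that such a witness, being carried by finitely many loops and one null-homotopy, lives inside a Lindel\"of (hence metric) subregion $L$. The pulled-back specialization in $\pi_1(L)$ is still pure and its letters are still non-trivial (they map onto the $g_i\neq 1$), so $\pi_1(L)$ is already non-free, contradicting Ahlfors--Sario (\ref{Ahlfors-Sario:freeness:lemma}). The whole non-metric difficulty is thus absorbed into one localization step plus a purely group-theoretic lemma; no exhaustion, no calibration, no basis. (The paper itself calls the trick ``theological'' for precisely the reason you fear: it produces no basis.) Your warning that ``every finitely generated subgroup free'' does not imply ``free'' is correct in general but beside the point here: the paper does not localize subgroups, it localizes a relation.

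The gap in your proposal is the step you yourself flag: you never show how to make each $\pi_1(M_\alpha)\hookrightarrow\pi_1(M_{\alpha+1})$ a free-factor inclusion, and this is not bookkeeping---it is the entire content. Your own annulus-in-the-punctured-torus example shows that incompressibility (even in $M$) does not yield free factors, so genuinely new geometric control of the complementary regions is required; but the paper's tools for exerting such control---Whitehead's spine (\ref{Whitehead-spine}), Jordan representatives of primitive elements (\ref{Jordan-representant}), \Kerekjarto's classification (\ref{Kerkjarto:end})---are established only for \emph{metric} surfaces. Past the first limit stage of uncountable cofinality, $M_\lambda$ is non-metric, and from then on you have no kernel-killing mechanism, no Jordan representatives, and no structural description of $M_{\alpha+1}\setminus M_\alpha$; Schoenflies and Nielsen--Schreier, which you cite as the only metric-indifferent inputs, deliver free \emph{subgroups}, not free \emph{factors}. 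The group lemma on continuous chains of free factors is correct, the colimit identification $\pi_1(M)=\varinjlim_\alpha\pi_1(M_\alpha)$ is correct, and your countably generated case agrees with the paper's calibration lemma (\ref{calibrated-exhaustions:lemma}); but the uncountably generated case is where the theorem lives, and there the proposal is a program rather than a proof.
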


\begin{proof} Let $M$ be any open surface.
If $G:=\pi_1(M)$ is not free then there is a reduced non-empty
word $w=w(x_1,\dots,x_k)$ in some variables $x_i$ which purely
specialized to elements $g_i\in G$
%not
%all trivial
yields the equation $w(g_1,\dots,g_k)=1$
%holding
in $G$ (cf. Lemma~\ref{non-free} below and the definition
after it for the meaning of pureness). Choose $c_i$ some loops
representing the $g_i$. Cover the range of a null-homotopy
shrinking the concatenation $w(c_1,\dots,c_k)$ to the
basepoint by a finite number of charts to obtain a Lindel\"of
subregion $L$. Of course $L$ contains the $c_i$ (their ranges
to be accurate), and so the $c_i$ define elements in
$\pi_1(L)$, say $\gamma_i$. Of course the relation
$w(\gamma_1,\dots,\gamma_k)=1\in \pi_1(L)$ continues to hold
and the $\gamma_i$ are all non-trivial since they
 map to the $g_i\neq 1$ under the
 %natural
 morphism $\pi_1(L)\to
 \pi_1(M)$ induced-by-inclusion. Notice
 %also
 that
 the specialisation of $w$ via the assignment
 $x_i\to \gamma_i$ is pure. By the reverse
 implication of Lemma~\ref{non-free} we deduce that $\pi_1(L)$ is
 not free, violating the classical (metric) case of
 the proposition (\ref{Ahlfors-Sario:freeness:lemma}).
\end{proof}

The next lemma
%sounds like
%is almost the tautology
sounds tautological: {\it a group is not free iff there is a
relation},
%except that one must keep in mind
%%modulo that one can
%%%playing as usual with both
and just amounts to the interplay between the universal
%property
description of free groups with the more concrete model in
terms of words spelled in an alphabet:

\iffalse yet since it was formulated by the great Alexander
with very roosted algebraic reminiscences, it is not
impossible that it is quite false. \fi

\begin{lemma}\label{non-free}
A group $G$ is \emph{not} free if and only if there is
%some integer $k\ge 1$ and
a non-empty reduced word $w=w(x_1,\dots,x_k)$ in $k\ge 1$
letters $x_1, \dots, x_k$ and a pure specialization $x_i\to
g_i$ to elements $g_i\in G$
%not all $1\in G$ (neutral
%element)
(cf. definition below) such that $w(g_1, \dots, g_k)=1$.
\end{lemma}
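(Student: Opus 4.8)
The plan is to pit the synthetic (universal) description of a free group against its concrete word-model $F(X)$ --- reduced words in the alphabet $X\sqcup X^{-1}$ under juxtaposition-and-reduction. The lemma merely records that the failure of freeness is always \emph{witnessed} by a single honest relation, the qualifier ``pure'' (spelled out in the definition below) being the hygiene condition that discards the mirage-relations produced by clumsy spanning families; the paradigm to keep in mind is that $a^{2}$ and $a^{3}$ inside ${\Bbb Z}$ satisfy $x_{1}^{3}x_{2}^{-2}=1$ without ${\Bbb Z}$ ceasing to be free, and it is exactly such accidents that pureness must exclude.

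First I would handle $(\Leftarrow)$ by contraposition. Assume $G=F(X)$ is free and let $x_{i}\mapsto g_{i}$ be a pure specialization of the letters of a non-empty reduced word $w$. By the Nielsen--Schreier theorem the subgroup $H=\langle g_{1},\dots ,g_{k}\rangle$ is itself free, and pureness is tailored precisely so that inside a free group the $g_{i}$ must then constitute a \emph{basis} of $H$ (one verifies that a pure family cannot be carried by elementary Nielsen transformations to a shorter relator). A basis admits no non-empty reduced relation, so $w(g_{1},\dots ,g_{k})\neq 1$, contrary to hypothesis; hence $G$ is not free.

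For $(\Rightarrow)$, suppose $G$ is not free. I would pick an economical generating family of $G$ --- a minimal one, or a Nielsen-reduced refinement --- and let $\pi\colon F\twoheadrightarrow G$ be the tautological surjection from the free group on it. Were $\ker\pi$ trivial we would have $G\cong F$ free; since it is not, some non-empty reduced word $w$ over finitely many letters $x_{1},\dots ,x_{k}$ lies in $\ker\pi$, that is $w(g_{1},\dots ,g_{k})=1$ with $g_{i}=\pi(x_{i})$. What is left is to certify that this specialization is \emph{pure}; this rides on the economy built into the generating family, together with the (easily checked) remark --- reused in Proposition~\ref{freeness-for-open-surfaces:prop} --- that pureness is \emph{reflected} by homomorphisms, so that once the generators are trimmed no collapse of the relator can occur.

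The main obstacle is exactly this last certification: pinning down a version of ``pure'' robust enough that the forward direction really delivers a pure witness and not a spurious one. The stress-test is a group that is locally free without being free, such as the additive ${\Bbb Q}$: every finitely generated subgroup of it is cyclic, hence free, so no \emph{finite} relator ever betrays non-freeness. The definition below must be sturdy enough to reconcile this with the statement, and it is here --- just as in the cautious discussion preceding the lemma --- that the argument is most delicate; in the surface-theoretic application one must still check that descending the witness to a Lindel\"of subregion genuinely side-steps this pathology rather than concealing it.
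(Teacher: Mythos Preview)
Your $(\Leftarrow)$ rests on the assertion that a pure specialization $(g_1,\dots,g_k)$ must be a basis of the subgroup $H$ it generates. This fails already in $G=F(a)$: take $g_1=a$, $g_2=a^2$ and $w=x_1^2x_2^{-1}$. One checks the adjacency conditions in the definition of ``pure'' directly ($a\neq a^{-1}$ for the junction $x_1x_1$; $a\neq a^2$ for the junction $x_1x_2^{-1}$; nothing is trivial), yet $(a,a^2)$ is visibly not a basis of $\langle a\rangle\cong{\Bbb Z}$. Pureness only forbids \emph{single-letter} cancellation at the seams of $w$ after substitution; it is far weaker than Nielsen-reducedness of the tuple, and Nielsen--Schreier does not bridge that gap. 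The paper takes a different and more elementary line for this direction: fix a basis $X$ of $G$, write each $g_i$ as a reduced word $w_i$ over $X$, substitute into $w$ to form the long word $w(w_1,\dots,w_k)$, and argue that total collapse under free reduction would force two \emph{adjacent blocks} $w_i^{\pm1},w_j^{\pm1}$ to annihilate as units, which is exactly what pureness excludes. (The same example shows this block-cancellation step is itself delicate; but that is the paper's route, and it stays within word-combinatorics rather than appealing to Nielsen theory.)

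For $(\Rightarrow)$ the paper's device is sharper than an unspecified ``economical generating family'': it takes \emph{any} epimorphism $\varphi\colon F\to G$ from a free group and picks a \emph{minimal-length} nontrivial element of $\ker\varphi$. Minimality of length is precisely what certifies pureness, since any failure of an adjacency condition would let one shorten the relator while remaining in the kernel; no preliminary grooming of the generators is needed. Finally, your ${\Bbb Q}$ stress-test is not the obstacle you suggest: in $({\Bbb Q},+)$ the word $x_1x_2^{-2}$ with $x_1\mapsto 1$, $x_2\mapsto\tfrac12$ is pure and evaluates to $0$. Local freeness says every finitely generated subgroup is free, not that no pure relation can hold among finitely many elements of $G$; only the latter is what the lemma speaks about.
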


\begin{defn} {\rm A specialisation of a word $w$ in a group
$G$ is {\it pure} if whenever two letters $x,y$ of $w$ are
adjacent they do not specialize on $g\in G$ and $g^{-1}$, and
if $xy^{-1}$ or $x^{-1}y$ appears in the word $w$, $x$ and $y$
do not specialize on the same element $g\in G$. We also demand
that no letter of $w$ specialize to $1\in G$.}
\end{defn}

\begin{proof} [{\bf Proof}] [$\Rightarrow$]
%This is the easy implication.
If $G$ is not free, then $G$ is still the quotient of a free
group $\varphi\colon F \to G$ with non-trivial kernel
$\ker\varphi$. Pick a non-trivial element $1\neq x \in
\ker\varphi$. Let the set $X$ be a basis for the free group
$F$.
%We may assume that for all $y\in X$,
%$\varphi(y)\neq 1$ for otherwise we can remove this element
%without affecting the ontoness of $\varphi$.
As is well-known $X$ generates $F$ and
%so
$x\in F$ can be written as a non-empty reduced word
$w=w(x_1,\dots,x_k)$ involving finitely many $x_i\in X$. Let
$g_i=\varphi (x_i)$. As $\varphi(x)=1$, we have
$w(g_1,\dots,g_k)=1$.
%Notice that all $g_i$ are not $1$ by the way we have chosen $X$.
Furthermore pureness of the specialisation $x_i\to g_i$
follows, if we take care assuming
%that
the word $x$ to have minimal length among all those
non-trivial elements of the kernel $\ker\varphi$.

[$\Leftarrow$]
%This implication looks harder.
Assume that $G$ is free, say with basis $X\subset G$. Let
$w=w(x_1,\dots,x_k)$ be a non-empty reduced word in some
abstract symbols $x_i$ and let $x_i\to g_i\in G$ be a pure
specialization;
%to some $g_i\in G$ (not all $1$);
to show $w(g_1,\dots,g_k)\neq 1$. Each $g_i$ can be written as
a non-empty reduced word $w_i$ involving finitely many letters
of the alphabet $X$. Substitute these expressions in $w$ to
obtain the big word $w(w_1,\dots,w_k)$. \iffalse , a priori
not reduced. The game is over if we are able to show that this
word is non-empty after reduction. Since not all $g_i$ are
$1$, one at least say $g_j$ corresponds to a non-empty word
$w_j$. Since the specialization is pure it is clear that there
is no complete cancellation in the big word $W$ which is thus
non-empty after reduction. \fi If the latter collapses
completely under reduction then this forces two adjacent words
$w_i$, $w_j$ to cancel out, violating the pureness of the
specialization.
\end{proof}

\subsection{Freedom for non-Hausdorff 1-manifolds
by reduction to
%open
surfaces
%(Haefliger, Thurston, Penner)
}

Another ``mystical'' question
%in the same spirit
(also eluding us for a long time, and indeed still eluding us
slightly) is whether the fundamental group of a
(non-Hausdorff) $1$-manifold is always free. (For Hausdorff
$1$-manifolds, we have a classification in 4 specimens, which
all have trivial groups, except the circle.)
 Of
course the same Lindel\"of reduction as we just did for
Hausdorff surfaces is formally possible, yet not very
effective unless the Lindel\"of case is
%%%known.
settled.
%So is it easy to show that a
%Lindel\"of one-manifold has free fundamental group. If true
%then freeness holds in full generality.

Maybe
%a somewhat biased
the royal road (suggested by a discussion with A. Haefliger)
to this freeness
%problem
curiosity is a geometric
%approach
construction
%to this problem would
%be to
%showing that
exhibiting any (non-Hausdorff) $1$-manifold $M^1$
%is covered
as the base of a fibration of a Hausdorff surface $M^2$
%foliated
by real-lines (recovering via the leaf-space the given $M^1$).
%Then applying
The exact homotopy sequence of a fibration\footnote{This
foolhardy idea
%of using the exact homotopy sequence
was suggested
%to us
orally by Haefliger (circa 2006). One
%needs
has to convince that the classical proof does not use the
Hausdorffness of the base; compare Hopf-Eckmann,
Ehresmann-Feldbau, Steenrod, etc., yet a detailed redaction is
maybe desirable.}
%we have
reads (denoting by $F\approx {\Bbb R}$ the fibre):
\begin{equation}\label{homotopy-sequence}
\{1\}=\pi_1(F)\to \pi_1(M^2)\to \pi_1(M^1) \to \pi_0(F),
\end{equation}
from which we deduce the required freeness of $\pi_1(M^1)$ via
(\ref{freeness-for-open-surfaces:prop}). In the simplest case
where $M^1$ is a branching line or a line with two origins it
is clear how to construct such a ``thickened'' fibration
$M^2\to M^1$, essentially like for {\it train-tracks} (\`a la
Thurston-Penner), cf. Figure~\ref{Train:fig}.
%
%
%
%Of course
%
%
%
%
The above idea originates in Haefliger, 1955 \cite[p.\,8,
point {\bf 2.}]{Haefliger_1955}, where
%it is written
we read: {\small
\begin{quote}
On peut montrer que toute vari\'et\'e \`a une dimension avec
un nombre fini de bord et dont le groupe fondamental a un
nombre fini de g\'en\'erateurs\footnote{Of course we may
wonder if this proviso is really required. We believe it is
not.}, est l'espace des feuilles d'une structure feuillet\'ee,
et m\^eme la base d'une fibration par des droites d\'efinie
sur une vari\'et\'e s\'epar\'ee \`a deux dimensions.
\end{quote}}
\noindent It also reappears in Haefliger-Reeb 1957
\cite[p.\,125, last sentence]{Haefliger_Reeb_1957}, where
%near
%at the end of the paper they claim
it is asserted (again without proof) that any second-countable
$1$-connected (non-Hausdorff) \hbox{$1$-manifold} can be
realised as the leaf-space of a suitable foliation of the
plane.

\begin{figure}[h]
\centering
    \epsfig{figure=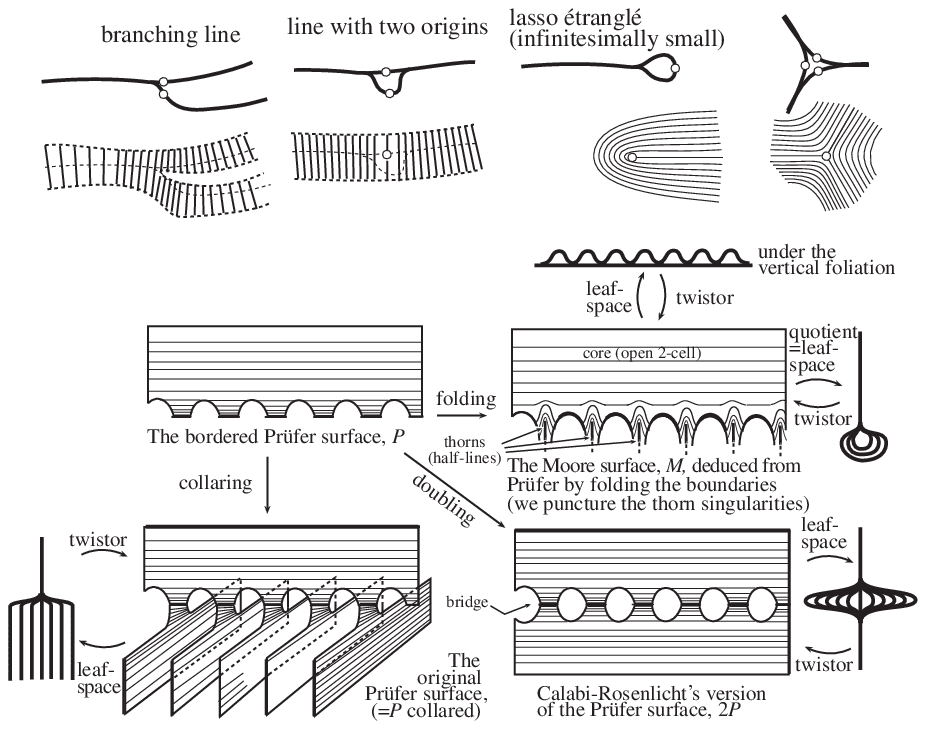,width=122mm}
  \caption{\label{Train:fig}
  %Artist view of
  The Haefliger
  %(Hausdorff)
  twistor of a
  (non-Hausdorff) 1-manifold}
\vskip-5pt\penalty0
\end{figure}

\subsection{Twistor or train-tracks (Haefliger, Thurston, Penner)}

Can somebody prove the following {\it hypothetical} lemma
(strongly inspired from Haefliger and  Thurston, Penner's
train-tracks):

\begin{lemma} \label{twistor} (Twistor trick or train-tracks.)
Any (non-Hausdorff) $1$-manifold can (non-canonically) be
materialized
%(more-or-less
as the base of a
%(locally trivial)
fibration $p\colon M^2\to
M^1$
%whose total space is
of a Hausdorff surface
%and whose fibre are
fibred by real-lines ${\Bbb R}$. In particular, the projection
$p$ induces an isomorphism on the $\pi_1$.
\end{lemma}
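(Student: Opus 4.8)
The idea is to build $M^2$ by a \emph{train-track} cut-and-paste and then read off the $\pi_1$-statement from the homotopy sequence \eqref{homotopy-sequence}. One may assume $M^1$ connected. Cover it by an atlas of charts $U_i\approx{\Bbb R}$ --- or, more economically, by its maximal \emph{Hausdorff} open subregions --- and over each $U_i$ take the trivial line-fibration $V_i:=U_i\times{\Bbb R}\to U_i$ with its vertical foliation $\{x\}\times{\Bbb R}$. The surface $M^2$ will be obtained by gluing the $V_i$ along fibre-preserving homeomorphisms $\Phi_{ij}$ covering the transition homeomorphisms $\tau_{ij}$ of $M^1$. The whole subtlety is that one must \emph{not} take $\Phi_{ij}(x,t)=(\tau_{ij}(x),t)$: that would merely reconstitute the product $M^1\times{\Bbb R}$, which is non-Hausdorff as soon as $M^1$ is. Instead each $\Phi_{ij}$ must \emph{shear the fibres}, carrying them off to infinity as $x$ approaches a point non-separated from a point of another chart. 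This pinching is exactly what one sees at a switch of a Thurston--Penner train-track, and it is what renders $M^2$ Hausdorff while retaining $M^1$ as its leaf-space.

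The local model to keep in mind is the punctured plane $P={\Bbb R}^2\setminus\{(0,0)\}$ foliated by the lines $\{y=c\}$ (for $c\neq 0$) together with the two half-lines $\{y=0,x>0\}$ and $\{y=0,x<0\}$: inspecting saturated neighbourhoods (just as for the Dubois--Violette pictures of Figure~\ref{Dubois:fig}) one checks that the leaf-space is the line with two origins, that $P\to(\text{leaf-space})$ is a genuine line-fibration, and --- reassuringly --- that $\pi_1(P)={\Bbb Z}=\pi_1(\text{line with two origins})$. Splicing such models into the trivial fibrations over the Hausdorff charts produces the desired sheared $\Phi_{ij}$; here one leans on Haefliger~\cite{Haefliger_1955} and Haefliger--Reeb~\cite{Haefliger_Reeb_1957} (where the planar case is treated) and on the train-track bookkeeping. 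That the resulting glued space is a bona fide Hausdorff surface is then checked point-by-point: it is locally ${\Bbb R}^2$ by construction, and two points lying over distinct but non-separated $x,y\in M^1$ at the ``same height'' are now separated in $M^2$ because the shear has carried one of the two local fibres off to infinity relative to the other.

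Granting that $p\colon M^2\to M^1$ is a fibration (with the homotopy lifting property, which over a $1$-dimensional base follows from the foliated charts) whose fibre $F\approx{\Bbb R}$ is connected and contractible, the exact homotopy sequence \eqref{homotopy-sequence}, i.e. $\pi_1(F)\to\pi_1(M^2)\to\pi_1(M^1)\to\pi_0(F)$, has $\pi_1(F)=\{1\}$ and $\pi_0(F)$ a point, so $p_{\ast}\colon\pi_1(M^2)\to\pi_1(M^1)$ is an isomorphism --- it being understood, as the footnote to \eqref{homotopy-sequence} urges, that the classical proof of that sequence nowhere uses Hausdorffness of the base (alternatively one establishes the covering-homotopy property for such ``thickened'' fibrations by hand). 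As a consistency check, $\pi_1(M^2)$ is then free by \ref{freeness-for-open-surfaces:prop}, matching the anticipated freeness of $\pi_1(M^1)$.

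The hard part is the \emph{global} coherence of the sheared transition maps: one must choose the $\Phi_{ij}$ so that the cocycle identity $\Phi_{jk}\circ\Phi_{ij}=\Phi_{ik}$ holds on triple overlaps \emph{while simultaneously} arranging the correct pinching at every non-separated configuration, with no second-countability hypothesis and allowing arbitrarily intricate branch loci (accumulating branches, or Pr\"ufer/``lasso''-type phenomena where continuum-many sheets share a common ray). Assembling this patching uniformly --- perhaps by first settling the statement for the maximal Hausdorff subregions together with their pairwise non-separation loci, and then gluing along the nerve of that cover --- is presumably the reason the lemma has lingered as folklore since Haefliger~1955.
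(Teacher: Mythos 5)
First, a point of comparison: the paper offers no proof of Lemma~\ref{twistor} at all. It is explicitly posed as a \emph{hypothetical} lemma (``Can somebody prove the following...''), left as folklore since Haefliger 1955, with only the pictures of Figure~\ref{Train:fig} and the Pr\"ufer/Moore examples as evidence. So your proposal cannot be measured against an existing argument; it must stand on its own. It does not, and you essentially say so yourself: your final paragraph concedes that the global coherence of the sheared transition maps is ``the hard part'' and is ``presumably the reason the lemma has lingered as folklore.'' That conceded step is not a technical loose end --- it \emph{is} the lemma. What you have actually supplied is (i) the correct local model (the slit strip / punctured plane fibring over the line with two origins, which is exactly the train-track picture of Figure~\ref{Train:fig}), and (ii) the routine deduction of the $\pi_1$-isomorphism from the homotopy sequence \eqref{homotopy-sequence} once a fibration exists --- both of which the paper already records. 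Everything in between is asserted, not proved.

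Concretely, the gap is this. To build $M^2$ you need a family $\Phi_{ij}$ of fibre-preserving homeomorphisms satisfying the cocycle identity on triple overlaps \emph{and} such that for \emph{every} pair of non-separated points $x,y$ of $M^1$ the relative shear between the corresponding local fibres is ``infinite.'' These are competing constraints: the shear needed to separate the fibres over $x$ from those over $y$ must coexist with the shears needed for all other non-separated pairs involving $x$, of which there may be continuum many accumulating in arbitrarily pathological ways (the paper's ``lasso \'etrangl\'e'' and everywhere-doubled line are exactly such examples, and they are handled there only by the ad hoc Pr\"ufer construction, not by a general patching scheme). Your suggested remedy --- ``first settle the statement for the maximal Hausdorff subregions and their pairwise non-separation loci, then glue along the nerve'' --- is a restatement of the problem, not a solution: the maximal Hausdorff subregions need not be charts, their non-separation loci can be closed sets with no countability properties, and no argument is given that a consistent choice of shears exists (Zorn, transfinite recursion, or otherwise). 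Two further asserted steps would also need proofs: that the glued space is Hausdorff (your ``point-by-point'' check only treats one pair of points in one pair of charts), and that $p$ has the homotopy lifting property over a non-Hausdorff base --- the paper's own footnote flags this as requiring ``a detailed redaction.'' In short, the proposal is a reasonable plan of attack, coinciding with the heuristic the paper already describes, but it does not close the open problem.
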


We hope the result is true, being implicitly used in
Haefliger-Reeb \cite{Haefliger_Reeb_1957} at least when the
manifold $M^1$ is $1$-connected and second-countable
(equivalently Lindel\"of, because manifolds are locally
second-countable). Another little piece of evidence is that a
(Morse theoretical) reverse engineering seems to hold
metrically: {\it any open metric surface can be fibred by
lines so that the quotient is a non-Hausdorff curve}
(\ref{Morse-Thom:surfaces}). Even if
%the train-track lemma
(\ref{twistor}) should work only in the Lindel\"of case, this
would be
%strong
punchy enough to settle the general freeness
%problem
question (in view of the Lindel\"of reduction trick used in
(\ref{freeness-for-open-surfaces:prop})).

\begin{exam}[Non-Lindel\"of twistors] {\rm It is worth noticing
that Pr\"ufer's construction (and its
%variations
derived products like Moore or Calabi-Rosenlicht) provides
twistors for several toy-examples of non-Lindel\"of
$1$-manifolds (cf. Figure~\ref{Train:fig}, bottom part). For
instance the horizontally foliated (classical) Pr\"ufer
surface
%%%is a twistor for
twistorize the line with continuously ($\frak c={\rm card
({\Bbb R})}$) many branches. (Hence there is at least no
%categorical
%%%%absolute
%%%structural
visceral
%obstruction to
incompatibility between the twistor desideratum
(\ref{twistor})
%in
%with
and the non-Lindel\"of context.) Likewise the leaf-space of
the horizontally-foliated doubled Pr\"ufer surface $2P$ is the
line with $\frak c$-many origins. The vertical foliation on
the Moore surface punctured along the folded points admits as
leaf-space (and therefore is a twistor for) the {\it
everywhere doubled line} (described in Baillif-Gabard 2008
\cite[\S 3]{BG_2008_PAMS}). Finally, the horizontally-foliated
Moore surface punctured at all thorns singularities is a
twistor for the ``{\it lasso \'etrangl\'e}'' with continuously
many (infinitesimal) loops. }
\end{exam}

We now
%recall
tabulate
%for convenience,
%some applications
formal consequences of this geometric construction
(\ref{twistor}):

\begin{cor} (Haefliger-Reeb 1957
{\rm \cite{Haefliger_Reeb_1957}}) All simply-connected
second-countable (non-Hausdorff) $1$-manifold occur as the
leaf-space of a suitable foliation of the plane. Relaxing
second-countability of $M^1$, the same is true for a suitably
foliated simply-connected surface.
\end{cor}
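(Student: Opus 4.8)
The plan is to reduce everything to the twistor construction (\ref{twistor}) and the uniformization of simply-connected surfaces (\ref{uniformization}). Let $M^1$ be a simply-connected (non-Hausdorff) $1$-manifold. By Lemma~\ref{twistor} there is a Hausdorff surface $M^2$ and a fibration $p\colon M^2\to M^1$ by real lines, with $p$ inducing an isomorphism $p_*\colon\pi_1(M^2)\to\pi_1(M^1)$ (equivalently, read this off the homotopy sequence (\ref{homotopy-sequence}), whose fibre term is trivial). Since $\pi_1(M^1)=1$ we get $\pi_1(M^2)=1$, i.e. $M^2$ is a simply-connected surface. The fibres of $p$ are by construction the leaves of a $1$-foliation of $M^2$, and the associated leaf-space is tautologically $M^1$ (the quotient topology transported by $p$ is that of the base). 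This already establishes the second, more general, assertion: $M^2$ is a suitably foliated simply-connected surface realizing $M^1$ as its leaf-space.

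For the first assertion, add the hypothesis that $M^1$ is second-countable (equivalently Lindel\"of, manifolds being locally second-countable). Then $M^2$ is second-countable too: cover $M^1$ by countably many open sets $U_j$ over which $p$ trivializes, $p^{-1}(U_j)\approx U_j\times{\Bbb R}$, with each $U_j$ second-countable; then $M^2=\bigcup_j p^{-1}(U_j)$ is a countable union of second-countable opens, hence second-countable, hence metrizable by Urysohn. By (\ref{uniformization}) a metric simply-connected surface is $S^2$ or the plane ${\Bbb R}^2$. The sphere is impossible: being compact, it would force each fibre $p^{-1}(x)$ (closed as a point-set, being the preimage of a point) to be compact, contradicting $p^{-1}(x)\approx{\Bbb R}$. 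Hence $M^2\approx{\Bbb R}^2$, and $p$ exhibits $M^1$ as the leaf-space of a foliation of the plane, as desired.

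The single genuine obstacle is Lemma~\ref{twistor} itself, the still unproved ``train-track/twistor'' statement; once it is granted, the corollary is purely formal. The one routine point to check en route is that the leaf-space of the foliation by the fibres of $p$ genuinely recovers $M^1$ with its given (possibly non-Hausdorff) topology, which is however part of the very meaning of ``fibration with base $M^1$''. Observe that no orientability, separability, or metrizability hypothesis on $M^1$ enters, and that the passage from the second-countable to the general case is exactly the Freudo-Lindel\"ofian mechanism: it is the second-countable stratum that decides whether the ambient foliated surface is the honest plane or merely a simply-connected surface.
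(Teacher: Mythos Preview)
Your proof is correct and follows essentially the same route as the paper's: invoke the twistor Lemma~\ref{twistor}, read off $\pi_1(M^2)=1$ from the homotopy sequence, and apply the classification (\ref{uniformization}) to identify $M^2$ with the plane in the second-countable case. The only cosmetic difference is that the paper obtains Lindel\"ofness of $M^2$ by citing Poincar\'e--Volterra, whereas you argue second-countability directly via a countable trivializing cover; your argument is in fact the more elementary one here. Your exclusion of $S^2$ (closed non-compact fibre) is the same as the paper's, and your parenthetical justification that $p^{-1}(x)$ is closed is sound since locally Euclidean spaces are $T_1$.
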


\begin{proof} Given such a $1$-manifold $M^1$, we consider its
twistor $\tau M^1\to M^1$ given by (\ref{twistor}). By the
isomorphism \eqref{homotopy-sequence}, the total space
$M^2:=\tau M^1$ is simply-connected and Lindel\"of (by the
general topology version of Poincar\'e-Volterra, cf. Bourbaki
or Guenot-Narasimhan as referenced in \cite{GabGa_2011}).
Since $M^2$ is non-compact (containing lines as closed
subsets), it is homeomorphic to ${\Bbb R}^2$ by classification
of 1-connected surfaces
%  (compare for
%instance the combinatorial proof of van der Waerden-Reichardt
%as referenced in \cite{GabGa_2011}).
(\ref{uniformization}).
\end{proof}

More generally we have the following (with relaxable
%optional
%omitable
parenthetical provisos):

\begin{cor}
Any (second-countable) $1$-manifold is the leaf-space of a
foliation by lines of a (metric) surface with the same
fundamental group.
\end{cor}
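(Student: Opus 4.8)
The plan is to feed $M^1$ straight into the twistor construction of Lemma~\ref{twistor}, which does essentially all the work. That lemma produces a Hausdorff surface $M^2$ together with a fibre bundle $p\colon M^2\to M^1$ whose fibres are copies of the line ${\Bbb R}$, and with $p$ inducing an isomorphism $\pi_1(M^2)\cong\pi_1(M^1)$. First I would observe that the partition of $M^2$ into the fibres of $p$ is a $1$-dimensional foliation: over any chart $U$ of the base the bundle trivialises as $p^{-1}(U)\approx U\times{\Bbb R}$, so locally $M^2$ is the flat slicing of a square into its vertical lines $\{u\}\times{\Bbb R}$. By construction the leaf-space of this foliation {\it is} the base $M^1$, and $M^2$ is connected (total space of a bundle over a connected base with connected fibre). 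Combined with the $\pi_1$-isomorphism this already yields the corollary in full (non-metric) generality, with the parenthetical adjectives dropped: $M^1$ occurs as the leaf-space of a foliation by lines of a surface $M^2$ having the same fundamental group.

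Next I would establish the metric refinement. Since a manifold is locally second-countable, a second-countable $M^1$ is Lindel\"of, so it is covered by countably many chart-domains $U_1,U_2,\dots$; each $p^{-1}(U_i)\approx U_i\times{\Bbb R}$ is second-countable, whence $M^2=\bigcup_i p^{-1}(U_i)$ is a countable union of second-countable open subsets, hence second-countable. Being a second-countable Hausdorff manifold, $M^2$ is then metrisable by Urysohn's metrisation theorem (compare the proof of Lemma~\ref{Weyl:triangulation}); equivalently one may quote the general-topology avatar of Poincar\'e--Volterra exactly as in the preceding corollary, the verification being immediate here because the fibres are connected and the local model is a product. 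Thus $M^2$ is a metric surface, and one need not invoke the classification of $1$-connected surfaces (as one did in the previous corollary to identify $M^2$ with ${\Bbb R}^2$).

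The genuine obstacle is not in this argument but in Lemma~\ref{twistor} itself, which we can so far only regard as folklore descending from Haefliger 1955 \cite{Haefliger_1955} and Haefliger--Reeb 1957 \cite{Haefliger_Reeb_1957}; granting it, the corollary costs nothing more. The one point I would take care to extract from~(\ref{twistor}) is genuine local triviality of the twistor $p$ (not merely submersivity): this is what simultaneously licenses the foliation-chart description above and the propagation of second-countability to $M^2$, and it is what makes the leaf-space literally recover $M^1$ rather than some further (possibly non-Hausdorff) quotient of it.
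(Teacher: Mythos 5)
Your proposal is correct and follows essentially the route the paper intends: the paper prints no proof for this corollary, but your argument is exactly the template of the preceding one (twistor of Lemma~\ref{twistor}, the exact sequence \eqref{homotopy-sequence} for the $\pi_1$-isomorphism, and a Lindel\"of/second-countability transfer to the total space), minus the appeal to the classification of $1$-connected surfaces, which is indeed not needed here. Your direct propagation of second-countability through the countably many trivialising charts, followed by Urysohn, is a clean substitute for the Poincar\'e--Volterra citation, and like the paper you correctly flag that everything hinges on the still-hypothetical Lemma~\ref{twistor}.
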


Finally regarding the  fundamental group structure  we have:

\begin{cor}
All
%(second-countable,
(non-Hausdorff) $1$-manifolds have free fundamental groups.
\end{cor}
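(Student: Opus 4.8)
The plan is to obtain the statement as an immediate corollary of the Twistor trick (Lemma~\ref{twistor}) together with the freeness of $\pi_1$ for open surfaces (Proposition~\ref{freeness-for-open-surfaces:prop}). Given a (non-Hausdorff) $1$-manifold $M^1$, apply Lemma~\ref{twistor} to produce a Hausdorff surface $M^2$ fibred by lines over $M^1$ via a projection $p\colon M^2\to M^1$ inducing an isomorphism $p_*\colon\pi_1(M^2)\to\pi_1(M^1)$ (this is the content of the exact homotopy sequence \eqref{homotopy-sequence}, the fibre $F\approx{\Bbb R}$ being $1$-connected). Since $M^2$ contains a fibre $F\approx{\Bbb R}$ as a closed — indeed properly embedded — subset, it cannot be compact, hence is an open surface; a fortiori it is borderless whenever $M^1$ is. By Proposition~\ref{freeness-for-open-surfaces:prop} the group $\pi_1(M^2)$ is free, and transporting freeness across the isomorphism $p_*$ shows $\pi_1(M^1)$ is free. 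This is the whole argument.

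The only genuine obstacle is that Lemma~\ref{twistor} is itself hypothetical, so the corollary is, strictly speaking, conditional on it. To make the conclusion rest on as little as possible, I would in fact invoke only the Lindel\"of (second-countable) case of the twistor — essentially the case already asserted by Haefliger 1955 and Haefliger-Reeb 1957 — and precede it by the Lindel\"of-reduction device used in Proposition~\ref{freeness-for-open-surfaces:prop}. Concretely: if $\pi_1(M^1)$ were not free, Lemma~\ref{non-free} furnishes a non-empty reduced word $w(x_1,\dots,x_k)$ and a pure specialisation $x_i\mapsto g_i\in\pi_1(M^1)$ with $w(g_1,\dots,g_k)=1$; choosing loops $c_i$ representing the $g_i$ and covering the range of a null-homotopy shrinking $w(c_1,\dots,c_k)$ by finitely many charts (which in a $1$-manifold are intervals) yields a Lindel\"of sub-$1$-manifold $L\subset M^1$ in which the same pure relation already holds among the images $\gamma_i\in\pi_1(L)$ of the $c_i$, each $\gamma_i\neq 1$ because it maps to $g_i\neq 1$ under $\pi_1(L)\to\pi_1(M^1)$. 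By the reverse implication of Lemma~\ref{non-free}, $\pi_1(L)$ is not free either, so it suffices to treat the second-countable case.

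For that case, apply the Lindel\"of twistor to $L$ to get an open \emph{metric} surface $\tau L$ fibred by lines over $L$ with $\pi_1(\tau L)\cong\pi_1(L)$ — metric because the total space of such a fibration over a Lindel\"of base is Lindel\"of by Poincar\'e-Volterra (the same point used in the preceding corollary on Haefliger-Reeb realisations) — and then $\pi_1(\tau L)$ is free by Ahlfors-Sario (Lemma~\ref{Ahlfors-Sario:freeness:lemma}), contradicting the non-freeness of $\pi_1(L)$. The bordered case (pieces of the closed long ray, etc.) is handled identically, or by first doubling $M^1$ along its boundary, an operation not introducing any relation in the fundamental group. Thus the sole difficulty is the twistor construction itself; granting it in \emph{any} form — even merely the Lindel\"of one — the freeness conclusion is purely formal.
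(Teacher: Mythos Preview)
Your proposal is correct and follows essentially the same route as the paper: apply the twistor (\ref{twistor}) to obtain $\pi_1(M^2)\cong\pi_1(M^1)$ via \eqref{homotopy-sequence}, invoke freeness for open surfaces (\ref{freeness-for-open-surfaces:prop}), and---since the twistor is hypothetical---fall back on the Lindel\"of case combined with the Lindel\"of reduction trick of (\ref{freeness-for-open-surfaces:prop}). Your write-up is more detailed (explicitly noting $M^2$ is open, invoking Poincar\'e-Volterra for metrizability, handling the bordered case), but the architecture is identical to the paper's.
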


\begin{proof}
Consider again the twistor $M^2\to M^1$ of the $1$-manifold.
By \eqref{homotopy-sequence} again, $\pi_1(M^2)$ is isomorphic
to $\pi_1(M^1)$, and the former is free by
(\ref{freeness-for-open-surfaces:prop}). In case the twistor
trick (\ref{twistor}) should hold only for Lindel\"of
$1$-manifolds, then first establish the corollary in that
case, and next extend universally by the Lindel\"of reduction
trick used in the proof of
(\ref{freeness-for-open-surfaces:prop}).
\end{proof}

\section{Foliated foundations}

Before penetrating truly to our main object, we recall some
classical facts for later references. Below, {\it
$1$-foliation}
%is just an abbreviation for
abbreviates ``one-dimensional foliation''.

\subsection{Orienting
%%%$2$-fold
double cover (Haefliger, Hector-Hirsch, etc.)}
\begin{prop}\label{orienting:2-fold-covering}
Given a $1$-foliation of a manifold, there is  a
%%%two-fold
double cover such that the lifted foliation is orientable. In
particular, any $1$-foliation of a simply-connected manifold
is orientable.
\end{prop}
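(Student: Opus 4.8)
The plan is to construct the desired cover as the monodromy cover attached to the "tangential orientation cocycle'' of the foliation, exactly parallel to the construction of the orientation cover $M_{\circlearrowleft}\to M$ of a manifold recalled in Lemma~\ref{indicatrix-orient-covering}. First I would fix a foliated atlas $\{(U_i,\varphi_i)\}$ with $\varphi_i\colon U_i\to\mathbb R^{n-1}\times\mathbb R$ sending plaques to the slices $\mathbb R^{n-1}\times\{t\}$; the last coordinate of each $\varphi_i$ equips the leaves meeting $U_i$ with a local orientation (a nowhere-zero section of the tangent line field, up to positive rescaling). On an overlap $U_i\cap U_j$ the two local leaf-orientations either agree or differ, giving a locally constant function $\epsilon_{ij}\colon U_i\cap U_j\to\{\pm1\}$, a $\check{\mathrm C}$ech $1$-cocycle with values in $\{\pm1\}$. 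This cocycle defines a principal $\{\pm1\}$-bundle, i.e. a double cover $p\colon\widetilde M\to M$; concretely a point of $\widetilde M$ over $x$ is a choice of local leaf-orientation at $x$. The pulled-back foliation on $\widetilde M$ (leaves are the components of $p^{-1}(\text{leaves})$) now carries a \emph{global} choice of leaf-orientation, namely the tautological one, so it is orientable. I should check this is genuinely a manifold double cover even without metrizability or Hausdorffness hypotheses beyond those already assumed — but this is purely local gluing of charts, just as in Lemma~\ref{indicatrix-orient-covering}, so no new difficulty arises.

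Equivalently, and perhaps cleaner to write, one can phrase it via monodromy: propagating a local leaf-orientation around a loop $\gamma$ in $M$ returns it possibly reversed, defining a homomorphism $\theta\colon\pi_1(M)\to\{\pm1\}$ (well-defined since $\{\pm1\}$ is abelian and the propagation is homotopy-invariant), and $\widetilde M\to M$ is the connected double cover corresponding to $\ker\theta$ when $\theta$ is onto, or the trivial disconnected double cover $M\sqcup M$ when $\theta$ is trivial. Either way the lifted foliation is orientable because $\theta$ pulls back to the trivial homomorphism on $\pi_1(\widetilde M)$ (resp.\ on each component).

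For the second clause, if $M$ is simply-connected then $\pi_1(M)=1$, so the homomorphism $\theta$ is automatically trivial, hence the foliation is already orientable (one may take the "cover'' to be $M$ itself). Alternatively, from the cocycle description: on a simply-connected base the class of the $\{\pm1\}$-cocycle $(\epsilon_{ij})$ in $H^1(M;\{\pm1\})=\operatorname{Hom}(\pi_1(M),\{\pm1\})=0$ vanishes, so $(\epsilon_{ij})$ is a coboundary and the local leaf-orientations can be adjusted to match globally.

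The only point needing mild care — the "main obstacle'', such as it is — is the honest construction of the local leaf-orientation from a foliated chart and the verification that the transition functions $\epsilon_{ij}$ are locally constant and satisfy the cocycle identity, so that the associated double cover is well-defined; and, since we work without any metric or even Hausdorff assumption, that the standard "associated covering of a $\{\pm1\}$-cocycle'' construction still produces a locally Euclidean space. Both are routine and entirely local, modelled verbatim on the orientation-cover construction of Lemma~\ref{indicatrix-orient-covering}; I would simply remark on this parallel rather than repeat the details. (One should also note that connectedness of $\widetilde M$ is not needed — "a double cover'' is allowed to be disconnected, which is exactly what happens when the foliation is already orientable.)
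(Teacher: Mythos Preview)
Your proposal is correct and follows essentially the same approach as the paper: the paper's proof is a two-sentence pointer to the literature (Haefliger 1962, Hector--Hirsch) noting that in the absence of smoothness one replaces the tangent line bundle by ``tricks with germs'' and that the construction, being purely local, carries over to non-metric manifolds without obstruction. Your write-up simply spells out this local construction explicitly via the $\{\pm1\}$-cocycle of leaf-orientations (and the equivalent monodromy homomorphism), which is exactly what those references do; so there is no genuine difference in strategy, only in level of detail.
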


\begin{proof} If no smoothness is postulated,
%one must
%employ
some tricks with germs act as a substitute to the tangent line
bundle of the foliation (cf. Haefliger 1962 \cite{Haefliger62}
or Hector-Hirsch 1981-83 \cite{HectorHirschA, HectorHirschB}).
Since the construction is purely local, there is no hindrance
in implementing it in the globalized world of non-metric
manifolds.
\end{proof}

\subsection{Compatible flows (\Kerekjarto, Whitney)}

In contrast the following paradigm is much more {\it
metric}\,-sensitive
%to the metric
(indeed false without one, as amply discussed in
\cite{GabGa_2011}):

\begin{theorem}\label{Kerek-Whitney:thm}
(\Kerekjarto{ }1925, Whitney 1933) Given an oriented
$1$-foliation of a metric manifold, there is a compatible
flow, whose trajectories are the leaves.
%%%of the foliation.
\end{theorem}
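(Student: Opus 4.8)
\medskip
\noindent\emph{Plan of proof.} The plan is to build the flow by stitching together the tautological local flows that any $1$-foliation carries inside its foliated charts, the only obstruction to a naive gluing being that these local flows run at mutually incompatible speeds. The real content is therefore (i) to synthesize, out of a partition of unity, a single globally defined leafwise ``clock'', and (ii) to reparametrize it so that it runs forever. This is the classical argument of Whitney (1933); since metrizability enters decisively only in the second part, it is exactly that part which is false without a metric (cf.\ \cite{GabGa_2011}).

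First I would fix a locally finite foliated atlas $\{U_i\approx V_i\times(0,1)\}$ (legitimate as metric manifolds are paracompact), with leaves locally the plaques $\{v\}\times(0,1)$, and use the orientation of the foliation to normalize the heights $\pi_i\colon U_i\to(0,1)$ so that each is increasing along the positively-oriented plaques. Choosing a subordinate partition of unity $\{\rho_i\}$, I would then attach to each leafwise path $\gamma$ (subdivided so that each piece lies in a single plaque) the Riemann--Stieltjes ``time''
$$
T(\gamma)=\sum_i\int \rho_i(\gamma(s))\,d\bigl(\pi_i\circ\gamma\bigr)(s),
$$
which makes sense because each $\pi_i\circ\gamma$ is piecewise monotone, which is additive under concatenation and invariant under monotone reparametrization, and which is $>0$ on every non-constant positively-oriented $\gamma$ (at every point some $\rho_i>0$ has $d\pi_i>0$ in the positive leaf direction). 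Hence for $x,y$ on a common leaf $L$ one obtains a strictly increasing clock $T(x,y)$ along $L$, well-defined when $L\approx{\Bbb R}$ and defined modulo the loop-period $p(L)>0$ when $L\approx S^1$.

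Next I would set $\Phi(x,t)$ to be the unique point lying at signed $T$-distance $t$ from $x$ along its leaf. By strict monotonicity and continuity of the Stieltjes densities, $\Phi$ is continuous, satisfies $\Phi(\cdot,0)=\mathrm{id}$ and the group law $\Phi(\Phi(x,s),t)=\Phi(x,s+t)$, and has as orbits exactly the leaves traversed positively, on some maximal open set $\mathcal{D}\subset M\times{\Bbb R}$ containing $M\times\{0\}$; continuity and the group law are local assertions, checked inside a single chart where $\Phi(x,t)=\phi^i_{s_i(x,t)}(x)$ for the tautological chart-flow $\phi^i$ and a continuous local time $s_i$.

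The remaining step---and the one I expect to be the main obstacle---is to enlarge $\mathcal{D}$ to all of $M\times{\Bbb R}$, i.e.\ to arrange that every half-leaf $\approx{\Bbb R}_{\ge0}$ has infinite $T$-length, and symmetrically. A positively-escaping half-leaf of finite $T$-length must leave every compact set, so on a \emph{metric} manifold---where each component of $M$ is second countable, hence admits a proper exhaustion function (equivalently a complete metric)---one rescales the densities $\rho_i\,d\pi_i$ by a suitable positive continuous factor that forces the $T$-length of every such escaping ray to diverge, without disturbing anything above; this is Whitney's reparametrization trick and is precisely where the metric hypothesis is indispensable. (In the smooth category one may instead average the chart fields $\partial/\partial t$ against $\{\rho_i\}$ into a genuine nowhere-zero leafwise vector field and rescale it to completeness by the usual bounded-norm-with-respect-to-a-complete-metric argument.) The resulting $\Phi\colon M\times{\Bbb R}\to M$ is then the desired compatible flow.
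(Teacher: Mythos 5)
The paper does not actually prove this theorem: its ``proof'' is a pointer to Kerékjártó 1925 and Whitney 1933. Your proposal reconstructs the substance of Whitney's argument---a partition-of-unity leafwise clock built from the chart heights, the flow defined as motion at unit clock-speed, and a final reparametrization to force completeness---and this is indeed the standard route, so in that sense you have supplied what the paper delegates to the literature. The outline is correct, but two steps carry essentially all of the difficulty and are passed over a little quickly. First, transverse continuity of $\Phi$: the historically hard point (and the reason Whitney's paper is nontrivial) is that when a leaf returns close to itself, e.g.\ the irrational winding of the torus, the time needed to traverse a long leaf segment must depend continuously on the transverse parameter; this does follow from your construction because $T$ of a compact leaf segment is a finite sum of chart contributions each continuous in the local product coordinates, but it is worth saying so explicitly rather than only ``checked inside a single chart''. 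Second, the completeness rescaling: your reduction ``finite $T$-length implies escape from every compactum'' is right, but the fix is subtler than multiplying by a factor built from a proper exhaustion $f$, since an escaping ray crosses each shell $f^{-1}([n,n+1])$ with $T$-length that may be arbitrarily small, so a rescaling depending on $f$ alone need not make the total diverge; in the smooth category one uses the bounded-norm-for-a-complete-metric argument you mention, and in the topological category Whitney's actual reparametrization works with the chart structure (forcing each plaque-crossing in a suitable locally finite refinement to cost a definite amount of time). With those two points expanded, your sketch is a faithful proof of the theorem the paper merely cites.
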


\begin{proof} The 2D-case is due to
\Kerekjarto{ }1925 \cite{Kerekjarto_1925}, and the general one
to Whitney 1933 \cite{Whitney33}.
\end{proof}

\begin{cor}\label{disc-cannot-be-foliated}
The $2$-disc cannot be foliated (tangentially).
\end{cor}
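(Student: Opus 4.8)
The plan is to argue by contradiction, converting a putative tangential foliation of $D^2$ into a nonsingular flow and then invoking a classical index count. Suppose then that $D^2$ carries a $1$-foliation $\mathcal F$ tangent to the boundary, meaning that $\partial D^2$ is a leaf. Since $D^2$ is simply-connected, $\mathcal F$ is orientable by (\ref{orienting:2-fold-covering}). As $D^2$ is metric, the \Kerekjarto--Whitney theorem (\ref{Kerek-Whitney:thm}) supplies a compatible flow $\Phi$ on $D^2$ whose trajectories are exactly the leaves of $\mathcal F$. In particular $\Phi$ is \emph{nonsingular} --- a fixed point would be a $0$-dimensional trajectory, whereas the leaves of a $1$-foliation are genuine $1$-manifolds --- and the boundary circle $C:=\partial D^2$, being a single leaf, is a \emph{periodic orbit}.

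Now I would extract the contradiction from the existence of a nonsingular flow on the closed disc having $C=\partial D^2$ as a periodic orbit. The point is that $C$ bounds the disc $D^2$ itself, so the velocity field of $\Phi$ is a nonvanishing vector field on $D^2$ tangent to the boundary, and its Poincar\'e rotation index along $C$, traversed once, equals $\pm1$ (the field being everywhere tangent to $C$). By the classical Poincar\'e index theorem --- equivalently the Poincar\'e--Hopf theorem for the bordered surface $D^2$, whose Euler characteristic $\chi(D^2)=1$ is nonzero --- the region $\operatorname{int}D^2$ enclosed by $C$ must then contain a zero of this vector field, i.e. a fixed point of $\Phi$: contradiction.

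A shortcut worth recording is that one may bypass \Kerekjarto--Whitney altogether: once $\mathcal F$ is orientable, its tangent line field is already promoted to a nonvanishing vector field on $D^2$ tangent to $\partial D^2$, and the Poincar\'e--Hopf count ($\chi=1\neq0$) closes the argument directly; the flow formulation is merely the one fitting the placement of this statement right after (\ref{Kerek-Whitney:thm}). I expect no genuine difficulty. The only points meriting a word of care are: (i) the reading of ``tangentially'' --- it means precisely that $\partial D^2$ is a leaf, and this hypothesis is indispensable, since $D^2$ \emph{does} admit foliations \emph{transverse} to the boundary, e.g.\ the foliation by chords; and (ii) that (\ref{Kerek-Whitney:thm}) is being invoked in a form valid for bordered surfaces, which is exactly \Kerekjarto's $1925$ setting --- and should one prefer to sidestep this, the boundary periodic orbit (together with a one-sided foliated collar of it) already furnishes all that the index count consumes.
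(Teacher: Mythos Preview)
Your argument is correct and shares the paper's opening moves---orient the foliation via (\ref{orienting:2-fold-covering}), then invoke \Kerekjarto--Whitney (\ref{Kerek-Whitney:thm}) to obtain a compatible nonsingular flow---but diverges at the endgame. The paper extracts the contradiction \emph{via Brouwer}: the time-$1/2^n$ maps of the flow each have a fixed point in $D^2$, their fixed-point sets form a nested sequence of nonempty compacta, and the intersection yields a rest point for all times. You instead appeal to the Poincar\'e--Hopf index count on the velocity field (or, in your shortcut, directly on the orienting section of the tangent line field). Both routes are classical and of comparable depth; yours is perhaps slightly more direct once one is willing to quote Poincar\'e--Hopf for bordered surfaces, while the paper's Brouwer argument is more self-contained from first principles of fixed-point theory. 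The paper also records a second, independent proof \emph{via Kneser}: double the foliated disc to obtain a foliated $S^2$ and invoke the combinatorial Euler obstruction (\ref{Kneser:Poincare-Dyck:Euler obstruction}), bypassing flows entirely. Your remark (i) on the meaning of ``tangentially'' is well taken and matches the paper's intent.
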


\begin{proof} Recall two classical arguments:

$\bullet$ {\it Via Brouwer.} Assuming it could, then as the
disc is 1-connected the foliation is orientable
(\ref{orienting:2-fold-covering}), hence admits a compatible
flow (\ref{Kerek-Whitney:thm}). Passing to dyadic times
$t_n=1/2^n$ of the flow, gives a nested sequence of non-empty
closed sets (Brouwer's fixed point theorem) whose common
intersection is non-void by compactness. Thus a rest-point for
all times is created, violating the flow compatibility  with
the foliation.

$\bullet$ {\it Via H. Kneser.} Double the foliated disc to get
a foliated sphere with $\chi=2$, violating Kneser's
combinatorial proof of the Euler obstruction
(\ref{Kneser:Poincare-Dyck:Euler obstruction}) below.
\end{proof}

\begin{cor}\label{Euler:classical-obstruction}
More generally, a closed topological manifold
%(finite-dimensionality)
foliated by curves has zero Euler characteristic.
\end{cor}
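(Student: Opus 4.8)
The plan is to bootstrap from the two mechanisms already used for the disc in (\ref{disc-cannot-be-foliated}): the orienting double cover of a $1$-foliation and the \Kerekjarto--Whitney compatible flow, concluding with a Lefschetz-type fixed-point argument valid in \TOP.

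First I would reduce to the case where the $1$-foliation is \emph{orientable}. If it is not, pass to the double cover $\widehat{M}\to M$ of (\ref{orienting:2-fold-covering}), on which the lifted $1$-foliation is orientable. A finite cover of a closed manifold (compact Hausdorff, no boundary) is again a closed manifold, and the Euler characteristic is multiplicative under finite coverings, so $\chi(\widehat{M})=2\chi(M)$. Hence it suffices to prove $\chi=0$ under the extra hypothesis that the foliation is oriented.

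So suppose the oriented $1$-foliation lives on the closed manifold $M^{n}$. Being compact, $M$ is metrizable (a finite atlas gives second-countability, then invoke Urysohn), so the \Kerekjarto--Whitney theorem (\ref{Kerek-Whitney:thm}) applies and furnishes a compatible flow $\varphi\colon{\Bbb R}\times M\to M$ whose orbits are exactly the leaves. Since the leaves are $1$-dimensional manifolds, this flow is \emph{rest-point free}.

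It then remains to observe that a closed manifold admitting a rest-point-free continuous flow has vanishing Euler characteristic; this is precisely the ``Via Brouwer'' argument of (\ref{disc-cannot-be-foliated}) promoted to the Lefschetz--Hopf fixed-point theorem. Assume $\chi(M)\neq 0$. Each time-$t$ map $\varphi_{t}$ is homotopic to the identity, and $M$, being a compact topological manifold, is a compact ANR, so the Lefschetz number $\Lambda(\varphi_{t})=\chi(M)\neq 0$, whence $\mathrm{Fix}(\varphi_{t})\neq\varnothing$ for every $t$. The closed sets $F_{n}:=\mathrm{Fix}(\varphi_{1/2^{n}})$ are non-empty and nested (from $\varphi_{1/2^{n+1}}(p)=p$ one gets $\varphi_{1/2^{n}}(p)=\varphi_{1/2^{n+1}}\varphi_{1/2^{n+1}}(p)=p$), so $\bigcap_{n}F_{n}\neq\varnothing$ by compactness; a point of this intersection is fixed by $\varphi_{t}$ for all dyadic $t$, hence for all $t$ by continuity, i.e.\ a rest point of $\varphi$ --- contradicting flow-compatibility with a (nowhere-degenerate) $1$-foliation. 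Therefore $\chi(M)=0$. The only delicate point is this last paragraph: one must make sure the Lefschetz--Hopf theorem is genuinely available for continuous self-maps of compact topological manifolds (it is, through the compact-ANR property) and that the nested-fixed-point trick really manufactures a point at rest for \emph{all} times and not merely for a sequence of them.
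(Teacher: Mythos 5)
Your proof is correct and follows essentially the same route as the paper's: pass to the orienting double cover of (\ref{orienting:2-fold-covering}), invoke the \Kerekjarto--Whitney compatible flow (\ref{Kerek-Whitney:thm}), and derive a contradiction from the Lefschetz fixed-point theorem for compact ANRs. You are in fact slightly more careful than the paper, which merely says the flow ``has a fixed point, an absurdity''; your dyadic nesting of $\mathrm{Fix}(\varphi_{1/2^{n}})$ correctly upgrades a fixed point of some time-$t$ map to a genuine rest point of the flow, which is the step actually needed to contradict compatibility with the foliation.
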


\begin{proof} If not, then passing to the orienting cover
(\ref{orienting:2-fold-covering}), we may assume the foliation
oriented. Consider a compatible flow
(\ref{Kerek-Whitney:thm}), which by Lefschetz's fixed point
theorem \cite{Lefschetz_1937} (version for ANR's) has a fixed
point, an absurdity. In the surface case one can also argue
elementary \`a la Kneser via (\ref{Kneser:Poincare-Dyck:Euler
obstruction}).
\end{proof}

\subsection{Beck's technique (plasticity of flows)}
\label{Beck's_technique:section}

Albeit we are primarily interested in foliations, some  facts
concerning  flows will be useful in the sequel. A basic
desideratum, when dealing with flows, is a two-fold yoga of
``restriction'' and ``extension'':

(1) {\it Given a flow on a space $X$ and an open subset
$U\subset X$, find a flow on $U$ whose phase-portrait is the
trace of the original one}; and conversely:

(2) {\it Given a flow on $U$, find a flow on $X\supset U$
whose phase-portrait restricts to the given one.}

\smallskip
Thus, one expects that any open set of a
brushing\footnote{That is a space admitting a fixed-point free
flow.} is a brushing, and that any separable super-space of a
transitive space is
%likewise
transitive, provided the sub-space is dense (or becomes so,
after a suitable inflation).

Problem (1) is solved in Beck~\cite{Beck_1958}, when $X$ is
metric (via passage to the induced foliation this also derives
from \Kerekjarto-Whitney (\ref{Kerek-Whitney:thm})). (An
example in \cite{GabGa_2011} indicates a non-metric
disruption.) The same technique of Beck (clever time-changes
afforded by suitable integrations), solves Problem~(2) in the
metric case (compare \cite[Lemma 2.3]{Jimenez_2004}):

\begin{lemma}\label{Beck's_technique:extension:lemma}
Let $X$ be a locally compact metric
space and $U$ and open set of $X$. Given a flow $f$ on $U$,
there is a new flow $f^{\star}$ on $X$ whose orbits in $U$ are
identic to the one under $f$.
\end{lemma}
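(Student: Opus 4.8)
\smallskip\noindent\textbf{Proof strategy.} The plan is to run Beck's reparametrization machine \cite{Beck_1958}. Since only the \emph{orbits} of $f^{\star}$ inside $U$ are prescribed, we are entitled to change the time-parametrization of $f$ on $U$ and then extend by declaring each point of $X\setminus U$ a rest point. So first I would fix a continuous function $\lambda\colon U\to(0,1]$ tending to $0$ along $\partial U$ (equivalently, extending continuously to all of $X$ by the value $0$ on $X\setminus U$), and reparametrize $f$ on $U$ to a new flow $g$ by the time-change $\sigma$ implicitly defined through $t=\int_{0}^{\sigma(x,t)}\lambda\bigl(f(x,u)\bigr)^{-1}\,du$, putting $g(x,t):=f\bigl(x,\sigma(x,t)\bigr)$. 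As $\lambda\le 1$ the integrand is $\ge 1$, so $s\mapsto\int_{0}^{s}\lambda(f(x,u))^{-1}\,du$ is an increasing homeomorphism of $\mathbb R$; hence $\sigma(x,\cdot)$ is well defined on all of $\mathbb R$ and $g$ is a \emph{complete} flow on $U$ with exactly the same orbits as $f$ (the flow identity for $g$ being the usual cocycle bookkeeping for reparametrizations, and $\sigma$ jointly continuous because $\lambda>0$ is continuous). Then set $f^{\star}:=g$ on $U$ and $f^{\star}(x,t):=x$ on $X\setminus U$.

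\smallskip Everything then hinges on a single point, which is the heart of the argument: \emph{continuity of $f^{\star}$ at the boundary $\partial U$}. Off $(X\setminus U)\times\mathbb R$ and on $\operatorname{int}(X\setminus U)\times\mathbb R$ continuity is immediate; the work is to show, for $x_{0}\in\partial U$ and $(x_{n},t_{n})\to(x_{0},t_{0})$ with $x_{n}\in U$, that $f(x_{n},\sigma(x_{n},t_{n}))\to x_{0}$. One does get $|\sigma(x_{n},t_{n})|\le|t_{n}|$ for free, but this is not enough: $f$ need not be continuous ``from outside'' $U$, so an orbit skirting $\partial U$ could a priori run a definite spatial distance within a bounded stretch of \emph{old} time. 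Beck's device is to choose $\lambda$ decaying near $X\setminus U$ faster than any function of $\operatorname{dist}(\cdot,X\setminus U)$ alone --- $\lambda(x)$ being made small whenever the orbit through $x$ \emph{lingers} (for a long interval of old time) near $X\setminus U$ --- so that any $g$-orbit arc of bounded new-time length issued near $x_{0}$ is forced to stay near $x_{0}$. Producing such a $\lambda$ and proving the resulting equicontinuity at $\partial U$ is, to my mind, the only genuine obstacle; the remainder is routine.

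\smallskip Local compactness of $X$ is exactly what makes the last step go through: small closed balls about $x_{0}$ being compact is what converts proximity to $\partial U$ into uniform control of the rescaled orbits (and, when $X$ is noncompact, one may further damp $\lambda$ against a continuous exhaustion so that orbits also slow towards the ends of $X$, though completeness of $g$ already forbids escape in finite new time). This is precisely Beck's technique; for the extension statement in this generality compare \cite[Lemma~2.3]{Jimenez_2004}. Note the hypotheses are not cosmetic: as recalled after Problem~(1), even the restriction counterpart of this yoga breaks down non-metrically \cite{GabGa_2011}.
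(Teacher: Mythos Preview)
The paper does not actually supply a proof of this lemma: it is stated immediately after the sentence ``The same technique of Beck (clever time-changes afforded by suitable integrations), solves Problem~(2) in the metric case (compare \cite[Lemma 2.3]{Jimenez_2004})'' and left at that. Your proposal is therefore \emph{more} detailed than the paper, and it follows exactly the route the paper gestures at --- Beck's time-change plus rest-point extension --- citing the same reference \cite[Lemma~2.3]{Jimenez_2004}. So you are aligned with the paper's (referenced) approach, and your identification of continuity at $\partial U$ as the only substantive obstacle is correct.
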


\subsection{Foliated triangulations (H. Kneser)}

It is hard to resist recalling Hellmuth Kneser's combinatorial
approach to the Euler obstruction. We admit the following
referring for clean proofs to Kneser 1924 \cite{Kneser24} or
Hector-Hirsch \cite{HectorHirschA}.

\begin{lemma}\label{Kneser:triangulation-compatible-foliation}
(Kneser 1924) Any metric foliated surface has a
%compatible
``generic''  triangulation where each $2$-simplex is
%canonically
transversely foliated as depicted on Fig.\,\ref{Kneser:fig}.b.

\iffalse (no edge lyes in a leaf), say as the standard simplex
$\Delta=\{(x_1,x_2)\in {\Bbb R}^2: x_1+x_2\le 1, x_i\ge 0 \}$
foliated by horizontal lines $x_1-x_2=\text{constant}$. \fi
\end{lemma}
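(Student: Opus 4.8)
\medskip\noindent\textbf{Proof proposal.} The plan is to manufacture the triangulation from a locally finite atlas of foliated charts. Since the surface $M$ is metric it is paracompact, so the $1$-foliation equips $M$ with a \emph{locally finite} cover by foliated charts $\varphi_i\colon U_i\to (0,1)^2$ carrying the leaves to the horizontal segments $(0,1)\times\{s\}$. By Rad\'o (\ref{Rado:triangulation}) the surface admits a triangulation, and passing to a sufficiently fine subdivision $K$ of it (refining the cover $\{U_i\}$ in the sense that the closed star of each vertex of $K$ lies in some $U_i$) we may assume every closed simplex of $K$ sits inside a single chart, where it reads off as a topological triangle drawn in the horizontally foliated square.

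Next I would put $K$ in general position with respect to the foliation. Perturbing the vertices of $K$ by tiny amounts and re-routing the edges accordingly (always staying inside the ambient charts, and keeping the $1$-skeleton an embedded complex so that $|K|\cong M$ is preserved), I would arrange that (i) no edge of $K$ is ever tangent to a leaf, i.e.\ read in flow-box coordinates the vertical coordinate is strictly monotone along each edge, and (ii) no two vertices of a common simplex lie on a common plaque of a foliated chart. Granting (i)--(ii), a short Jordan-curve-and-monotonicity count shows that every horizontal line of the relevant square meets a given $2$-simplex $\sigma$ in a single (possibly empty) segment: the circle $\partial\sigma$ is met by such a line in at most two points because the vertical coordinate is strictly monotone along each of the three edges and the three vertices sit at distinct heights. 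Hence the foliation restricted to $\sigma$ is the trivial product foliation of Fig.\,\ref{Kneser:fig}.b, with the leaves running monotonically across $\sigma$ and no edge contained in a leaf. Performing these perturbations coherently along the ordered locally finite family $\{U_i\}$ (only adjusting, inside $U_i$, what has not yet been frozen in the earlier charts) delivers the sought global triangulation.

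The main obstacle is that \emph{no differentiability is assumed}: a topological $1$-foliation provides only the continuous flow-box atlas, so the ``transversality'' of step (i) and the ``small perturbation'' cannot be extracted from a jet-transversality theorem but must be carried out by hand inside merely continuous charts; one must check that the perturbed $1$-skeleton stays embedded and, more delicately, that the monotonicity gained in one chart is not destroyed upon re-entering an overlapping chart, whose transition map is only a homeomorphism. (If one is willing to orient the foliation, possibly after passing to the orienting double cover (\ref{orienting:2-fold-covering}) and carrying the construction out equivariantly, the compatible flow of Ker\'ekj\'art\'o--Whitney (\ref{Kerek-Whitney:thm}) can be used to replace the leaves by genuine flow orbits, which streamlines the normalization; but the local flow-box structure already suffices.) Once this $C^0$ transversality step is secured, the remaining points---local finiteness of the atlas, compatibility of the perturbations over overlaps, and preservation of the simplicial structure---are routine bookkeeping, so I expect step (i) to be the only place where genuine care is needed. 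For clean executions of this circle of ideas one may consult Kneser 1924 \cite{Kneser24} or Hector--Hirsch \cite{HectorHirschA}.
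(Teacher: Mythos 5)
Your proposal is correct in outline, but it takes a genuinely different route from the paper's. The paper does not start from a pre-existing triangulation at all: it tessellates the surface directly by foliated boxes (squares whose horizontal sides are plaques and whose vertical sides are transversals), adds a diagonal to each square, and then, wherever two resulting triangles meet along an edge lying in a leaf, performs ``Kneser's flip'' (Fig.\,\ref{Kneser:fig}.e), i.e.\ exchanges the diagonal of the quadrilateral they form, so that the offending tangential edge is replaced by a transverse one. The advantage of that construction is that it sidesteps precisely the point you single out as delicate: no $C^0$ general-position or perturbation argument is ever needed, because every edge is \emph{born} either tangent (a known, finite-per-box list: the horizontal sides) or transverse, and the tangent ones are eliminated combinatorially by the flip rather than analytically by a small isotopy. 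Your approach --- refine a Rad\'o triangulation (\ref{Rado:triangulation}) until each simplex sits in a flow box, then wiggle the $1$-skeleton until every edge is height-monotone and the three vertices of each triangle sit on distinct plaques --- is the standard differential-topology reflex, and it does work, but the transversality step must indeed be done by hand in the topological category; note in your favour that height-monotonicity of an edge is intrinsic (foliated transition maps carry plaques to plaques, so the transverse coordinate changes only by a homeomorphism of the height axis), so the coherence worry across overlapping charts is less serious than you fear. Both arguments are honest outlines (the paper labels its own a ``dirty outline'' and defers to Kneser 1924 and Hector--Hirsch for clean proofs); yours trades the paper's combinatorial flip for a $C^0$ perturbation, which is more work to make rigorous but requires no special tessellation adapted to the foliation.
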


\begin{proof} (Dirty outline)
By definition of a foliated structure it is rather clear that
we have a tessellation by foliated boxes, which are squares.
Then we add diagonals to get triangles and whenever two of
them
%have a common face which is leaved
are adjacent along a piece of leaf, we perform Kneser's flip
depicted on Fig.\,\ref{Kneser:fig}.e (gaining transversality).
\end{proof}

%\vskip-10pt\penalty0
\begin{figure}[h]
\centering
    \epsfig{figure=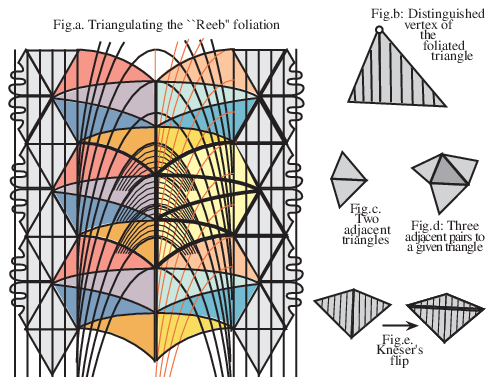,width=122mm}
\vskip-105pt\penalty0
  \caption{\label{Kneser:fig}
  %Artist view of
  Kneser's proof of the Euler-Poincar\'e-Dyck obstruction}

\end{figure}

Since any open metric surface can be foliated
(\ref{Morse-Thom:surfaces}) (=Morse theoretical trick), this
%implies
suggests another proof of Rad\'o's triangulation theorem
(\ref{Rado:triangulation}) at least for open surfaces. (Of
course Rad\'o was well aware of
%this
Kneser's paper, cf. his article \cite{Rado_1925}, but probably
not of the Morse theoretical trick.)

\begin{cor}\label{Kneser:Poincare-Dyck:Euler obstruction}
(Poincar\'e 1885, Dyck 1888, Kneser 1924) A closed surface
which is foliated has vanishing Euler characteristic $\chi=0$.
\end{cor}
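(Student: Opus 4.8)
The plan is to run Hellmuth Kneser's combinatorial ``middle-vertex'' count, which is exactly what Lemma~\ref{Kneser:triangulation-compatible-foliation} was recorded for. Let $\Sigma$ be a closed (hence metrizable) surface carrying a $1$-foliation. First I would fix, via Lemma~\ref{Kneser:triangulation-compatible-foliation}, a generic triangulation in which every $2$-simplex lies in a foliated box and is transverse to the foliation as on Fig.\,\ref{Kneser:fig}.b; I will moreover assume the triangulation fine and generic enough that the star of each vertex lies in a single foliated box and that on each $2$-simplex the (locally defined) transverse coordinate $\ell$ takes three \emph{distinct} values at the three vertices (a harmless strengthening, cf.\ \cite{Kneser24}). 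Writing $V,E,F$ for the numbers of vertices, edges and faces and using $2E=3F$ (all faces triangles), one gets $\chi(\Sigma)=V-E+F=V-\tfrac12 F$, so it suffices to prove $F=2V$.

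Next I would attach to each $2$-simplex $T$ a distinguished vertex. In a foliated box containing $T$ the foliation cuts $T$ along the level sets of a transverse coordinate $\ell$, and by genericity the three vertices of $T$ have three distinct $\ell$-values; call the vertex realizing the \emph{middle} value the \emph{mediator} of $T$. It is the unique vertex of $T$ whose leaf actually penetrates the interior of $T$ (separating the other two vertices inside $T$), so each $2$-simplex has exactly one mediator. Counting incidences $\{(T,v):v\text{ is the mediator of }T\}$ in two ways gives $F=\sum_{v}m(v)$, where $m(v)$ denotes the number of $2$-simplices whose mediator is $v$.

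The heart of the argument is then to show $m(v)=2$ for every vertex $v$ (all vertices being interior since $\Sigma$ is closed). I would pass to a foliated box around the star of $v$: the star is a disc $D$ with $v$ at an interior point, the leaf $L$ through $v$ is a transverse arc through $v$ cutting $D$ into two half-discs, the link of $v$ is a circle, and $L$ meets this link circle in exactly two points, each interior to a link-edge by our genericity. A $2$-simplex $vab$ of the star has $v$ as mediator precisely when $a$ and $b$ lie on opposite sides of $L$, i.e.\ when the link-edge $ab$ opposite to $v$ is one of the two link-edges crossed by $L$; hence $m(v)=2$. Therefore $F=\sum_{v}m(v)=2V$, whence $\chi(\Sigma)=V-\tfrac12 F=0$.

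The delicate point I expect is purely the genericity/fineness bookkeeping in the local step: one must know that the star of a vertex can be placed inside a single foliated box, that $\ell$ separates three distinct levels at the vertices of each simplex, and — crucially for $m(v)=2$ rather than some larger even number — that the leaf through $v$ meets the link of $v$ transversally in exactly two interior points of link-edges; this is precisely what is cleanly arranged in Kneser~\cite{Kneser24} or Hector--Hirsch~\cite{HectorHirschA}, to which I would refer for the verifications. I would also remark that, unlike the flow-theoretic route of~(\ref{Euler:classical-obstruction}) through \Kerekjarto--Whitney~(\ref{Kerek-Whitney:thm}) which first passes to an orienting cover, this count uses no orientability hypothesis whatsoever, and that the same idea shows a generic foliated triangulation of a closed surface cannot exist when $\chi\neq 0$ — recovering, e.g., that the sphere and the projective plane admit no foliation.
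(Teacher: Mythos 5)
Your proposal is correct and is essentially the paper's own proof: the same generic Kneser triangulation, the same relation $2e_1=3e_2$, and the same assignment of a distinguished ``middle'' vertex to each $2$-simplex, with your claim $m(v)=2$ being precisely the paper's assertion that the resulting map $\sigma_2\to\sigma_0$ is onto and $2$-to-$1$. Your local link argument merely spells out in more detail the paper's one-line justification ``as the leaf extends in 2 directions.''
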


\begin{proof} (Kneser). By
(\ref{Kneser:triangulation-compatible-foliation}) there is a
triangulation transverse  to the foliated structure, which
 is finite by compactness. So we
may compute the characteristic as the alternating sum of the
cardinalities $e_i$ of the set $\sigma_i$ of simplices of
dimensionality $i=0,1,2$:
\begin{equation}\label{Kneser:char}
\chi=e_0-e_1+e_2.
\end{equation}
First ignoring the foliation, recall
%that in any triangulation
%of a surface
the relation $2e_1=3e_2$, cf.
(\ref{Kneser-Descartes-etc:lemma}) right  below. Besides, any
(transversely foliated) $2$-simplex has a distinguished vertex
through which a piece of leaf
%is going
traverses the 2-simplex (Fig.\,\ref{Kneser:fig}.b). So we have
a map $\sigma_2\to \sigma_0$ which is onto and
%%%indeed
2-to-1 as the leaf extends in 2 directions. Hence $e_2=2e_0$.
Plugging those relations in \eqref{Kneser:char} gives:
%%%the conclusion:
%\begin{equation}\label{Kneser:char:detail}\notag
$
\chi=e_0-e_1+e_2=\textstyle\frac{1}{2}e_2-\frac{3}{2}e_2+e_2=0.
$
%\end{equation}
\end{proof}

\begin{lemma}\label{Kneser-Descartes-etc:lemma} (Descartes, Euler 1750, L'Huilier 1811, who else?)
In any finite triangulation of a closed (compact non-bordered)
surface the relation $2e_1=3e_2$ holds true between the
numbers
 $e_1$, $e_2$ of edges, respectively  triangles.
\end{lemma}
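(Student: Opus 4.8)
The plan is a one-line double-counting (handshake) argument on the incidence between triangles and edges of the triangulation; the only thing that needs care is why each edge meets exactly two triangles, and that is precisely where the hypothesis \emph{closed surface} is used. Concretely, let $\mathcal I$ be the set of flags $(\sigma,e)$ where $\sigma$ is a $2$-simplex of the triangulation and $e$ is one of its three sides. I will count $\lvert\mathcal I\rvert$ in two ways. Compactness of the surface makes the triangulation finite, so $e_0,e_1,e_2$ and $\lvert\mathcal I\rvert$ are all finite and no convergence subtleties intervene.

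First, fix the triangle $\sigma$: every $2$-simplex has exactly three $1$-faces, so summing over the $e_2$ triangles gives $\lvert\mathcal I\rvert=3e_2$.

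Second, fix the edge $e$: I claim each $1$-simplex lies on the boundary of \emph{exactly two} triangles. Pick a point $x$ in the relative interior of $e$. Since the surface is a manifold without boundary, $x$ has a neighbourhood homeomorphic to ${\Bbb R}^2$, in which a subarc of $e$ through $x$ is a properly embedded arc cutting a small disc about $x$ into two half-discs. Each half-disc is covered by the closed stars of simplices of the triangulation, and being two-dimensional it forces precisely one $2$-simplex to abut $e$ from that side; so exactly two triangles contain $e$. Summing over the $e_1$ edges gives $\lvert\mathcal I\rvert=2e_1$.

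Equating the two counts yields $3e_2=2e_1$, which is the asserted relation. The step I would write most carefully is the ``exactly two triangles per edge'' claim: it is exactly the local Euclidean, no-boundary property of the surface, and it is precisely the point that fails on a bordered surface, where boundary edges lie in a single triangle and the identity breaks. (If one instead sets up the triangulation purely combinatorially, the same statement is the defining local condition of a combinatorial closed surface, and the argument is then immediate.)
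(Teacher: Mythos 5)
Your proof is correct and is essentially the paper's own argument: the paper double-counts the set of \emph{ordered pairs of adjacent triangles} (projection to the common edge is $2$-to-$1$, projection to the first triangle is $3$-to-$1$), whereas you double-count (triangle, edge) flags, but both reduce to the same key fact that every edge lies on exactly two triangles. You in fact justify that key point (via the local Euclidean, boundaryless structure) more explicitly than the paper does.
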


\begin{proof}
Let
%again
$\sigma_i$ be the set of simplices of dimension $i$ and
consider the incidence relation {\it two triangles have a
common edge} (adjacent triangles, Fig.\,\ref{Kneser:fig}.c):
$$
I=\{(\Delta_1,\Delta_2)\in \sigma_2 \times \sigma_2: \Delta_1
\cap \Delta_2=\text{one edge} \}
$$
Mapping such a pair to its common edge yields a map $I\to
\sigma_1$ which is onto (the surface being non-bordered) and
2-to-1 as the pair-order  is permutable. Thus the cardinality
of $I$ is $\# I = 2 e_1$.
%On the other hand,
Besides, projecting
%$I$
on the first factor (say) gives a map
$I\to \sigma_2$ which is onto and 3-to-1
(Fig.\,\ref{Kneser:fig}.d), whence
%Thus
$\# I = 3 e_2$.
\end{proof}

\subsection{Open metric surfaces fibrates (Morse, Thom, etc.)}

As well-known, metric differentiable manifolds admit {\it
Morse functions}, which as a reaction to the complicated
%underlying
topology (or rather the compactness)
%present
deliver generally critical points. (In the late 60's, Morse as
well as Kirby-Siebenmann explained how to get rid off the
differentiable proviso
%proving the existence of
using so-called {\it topological} Morse functions.)
When the manifold is open, one can (in principle)
%find a critical point free
%Morse function:
eliminate critical points by rejection to $\infty$:

\begin{theorem}\label{Morse-Thom-Whitehead-Hirsch}
Any open metric (topological) manifold has a critical point
free Morse function. The latter, being a submersion,  defines
a codimension-one foliation, whose transverse ``line-field''
gives a $1$-foliation whose leaves are lines.
\end{theorem}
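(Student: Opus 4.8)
The plan is to split the statement into one substantive existence step and a soft deduction. Note first that a critical-point-free (topological) Morse function on $M$ is exactly a \emph{submersion} $f\colon M\to{\Bbb R}$; so I would first produce such an $f$, and only then extract the two foliations. Existence of a submersion of an open manifold onto the line is the $h$-principle of Phillips (1967) --- alternatively Gromov's convex integration, or, in the $2$-dimensional case that alone concerns us later, a hands-on construction building $f$ outward along a compact exhaustion of the surface. The only \emph{formal} input it requires is a bundle epimorphism $TM\twoheadrightarrow\underline{{\Bbb R}}$, i.e.\ a nowhere-vanishing $1$-form $\omega$, and this always exists on an open manifold: an open $n$-manifold is homotopy equivalent to a complex of dimension $\le n-1$ (handle theory after discarding the top-index handles; for surfaces this is already Whitehead's spine (\ref{Whitehead-spine})), so the rank-$n$ bundle $T^{*}M$ admits a nowhere-zero section. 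Phillips' theorem then upgrades $\omega$ to an honest submersion $f$. (In \TOP\ one invokes the topological form of the $h$-principle, or, for surfaces, passes to the essentially unique smooth structure.)

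Given $f\colon M\to{\Bbb R}$, the codimension-one foliation ${\cal F}$ is immediate: the fibres $f^{-1}(c)$ are embedded hypersurfaces, pairwise disjoint, tangent to $\ker df$ and covering $M$, so $\ker df$ is a (trivially integrable) codimension-one distribution whose leaves are the connected components of the level sets. For the transverse line field, choose by a partition of unity subordinate to submersion charts a vector field $v$ on $M$ with $df(v)\equiv 1$ (equivalently, put a Riemannian metric on $M$ --- metrizability grants paracompactness --- and take $v={\rm grad}\,f$). Then $v$ is nowhere zero and everywhere transverse to ${\cal F}$, so its integral curves assemble into a $1$-foliation ${\cal G}$ transverse to ${\cal F}$.

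Finally, every leaf $\ell$ of ${\cal G}$ is a connected $1$-manifold \emph{without} boundary, hence homeomorphic to $S^{1}$ or to ${\Bbb R}$; the circle is excluded because along any integral curve $\gamma$ of $v$ one has $\tfrac{d}{dt}f(\gamma(t))=df(v)=1>0$, so $f|_{\ell}$ is injective. Hence $\ell\approx{\Bbb R}$, and the theorem follows.

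The entire difficulty lies in the first step --- that an open manifold submerses onto the line --- everything afterwards being routine. For the $h$-principle route the delicate features are the openness hypothesis (precisely what fails in the closed case, where the Euler obstruction (\ref{Euler:classical-obstruction}) genuinely bites) and the \TOP-category technicalities; for the elementary surface route the delicate point is organizing the construction along a non-compact exhaustion without accumulation phenomena. I would therefore spell out the surface case in detail and quote Phillips--Gromov for the general statement.
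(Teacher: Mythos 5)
Your proposal is correct in outline, but it takes a genuinely different route from the paper's. The paper starts from an arbitrary (topological) Morse function and eliminates its countably many isolated critical points one at a time by slitting the manifold along ``ventilators''---closed rays $A\approx[0,\infty)$ with $M-A\approx M$---emanating from each critical point, i.e.\ the classical Morse--Whitehead rejection-to-infinity trick; the transverse line foliation is then obtained by aping the gradient flow (Siebenmann's technique in \TOP, an auxiliary Riemannian metric in the smooth case), with a PL variant credited to Hirsch via Whitehead's spine. You instead reduce the existence of a critical-point-free function to the existence of a submersion $M\to{\Bbb R}$ and invoke Phillips' $h$-principle for submersions of open manifolds, so that the only obstruction is the formal one---a nowhere-vanishing $1$-form---which you kill by noting that an open $n$-manifold retracts onto an $(n-1)$-complex. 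Your route buys a conceptually clean explanation of why openness is exactly the needed hypothesis (the formal obstruction is visibly the Euler obstruction of (\ref{Euler:classical-obstruction})), and it sidesteps the paper's admittedly ``hazardous'' infinite repetition of ventilator excisions; what it costs is reliance on a heavy black box whose \TOP-category version is delicate, whereas the ventilator argument is phrased directly for topological Morse functions. For the two-dimensional applications actually used in the paper both difficulties evaporate, since surfaces are smoothable. Your concluding steps---level sets give the codimension-one foliation, a field $v$ with $df(v)=1$ gives the transverse $1$-foliation, and strict monotonicity of $f$ along its integral curves excludes circle leaves---coincide with the paper's addendum and are fine.
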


\begin{proof} {\it (Heuristic outline).} Choose any Morse
function $f$, i.e.  locally resembling a quadratic
non-degenerate form $x_1^2+\dots+x_p^2-x_{p+1}^2-\dots-x_n^2$.
In particular critical points are isolated. In every open
metric manifolds $M$, one can---starting from any
point---trace a {\it ventilator} (jargon borrowed from L.
Siebenmann), i.e., an arc $A$ homeomorphic to a semi-line
$[0,\infty)$ such that $M$ slitted along $A$ is homeomorphic
to $M$, i.e., $M-A\approx M$.  Since $M$ is metric, the
critical-set of $f$, being discrete,  is countable. Thus by a
%appropriate
(hazardous?)
%induction,
infinite-repetition, we may remove inductively ventilators
emanating from the critical points, to reach the first claim.
The addendum follows by aping ``the'' gradient flow of the
Morse function via a technique of
Siebenmann~\cite{Siebenmann72}. (In the smooth case just
integrate the transverse line-field, w.r.t. an auxiliary
Riemannian metric.)

{\it (Variant of proof in the PL-case)} Compare Hirsch
1961~\cite{Hirsch_1961}{}, using the theory of Whitehead's
spine.
\end{proof}

\begin{prop}\label{Morse-Thom:surfaces} Any metric open
surface
%has a foliation.
foliates. More is true it can be foliated by lines, probably
even in the following hygienical way:

{\it (Hypothetical addendum).}---Any such surface can  be
regarded as the total space of a fibration by lines whose base
is a (non-Hausdorff) $1$-manifold.
\end{prop}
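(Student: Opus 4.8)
The plan is to read the first two assertions straight off Theorem~\ref{Morse-Thom-Whitehead-Hirsch} specialised to $n=2$. An open metric surface $M$ carries a critical-point-free (topological) Morse function $f\colon M\to{\Bbb R}$, which is then a submersion; its fibres are the leaves of a codimension-one foliation $\mathcal{F}$, and since $2-1=1$ this $\mathcal{F}$ is already a $1$-foliation of $M$, so $M$ foliates. Better still, the transverse line-field supplied by the same theorem integrates to a $1$-foliation $\mathcal{G}$ all of whose leaves are lines, so $M$ is foliated by lines. (One can also see directly that a leaf $L$ of $\mathcal{G}$ is transverse to every fibre of $f$, hence $f|_L$ is a submersion, so a local homeomorphism, of the connected $1$-manifold $L$ into ${\Bbb R}$; this rules out $L\approx S^1$ and is consistent with $L\approx{\Bbb R}$.)

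For the hypothetical addendum one passes to the leaf space. Set $B:=M/\mathcal{G}$ with the quotient topology and let $\pi\colon M\to B$ be the projection. Since $\mathcal{G}$ is locally the product foliation of a plane box by its vertical lines, $\pi$ is an open map and every point of $B$ has a neighbourhood homeomorphic to ${\Bbb R}$; thus $B$ is a $1$-manifold, generically non-Hausdorff (two leaves not separable by saturated open sets become inseparable points of $B$, as for the branching line or the line with two origins). The fibres of $\pi$ are the leaves of $\mathcal{G}$, i.e. lines, so at the level of point-sets $\pi$ already looks like a fibration by lines over a curve.

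The step I expect to be the real obstacle --- and the reason the addendum is flagged conjectural --- is upgrading $\pi$ from an open surjection with line-fibres onto a $1$-manifold to an honest locally trivial fibration. Ehresmann's criterion is unavailable because $\pi$ is not proper (the fibres are non-compact). What one wants is a product structure over charts of $B$: concretely, replace the line-field generating $\mathcal{G}$ --- after orienting it on the double cover of \ref{orienting:2-fold-covering} and descending, or by a direct rescaling, or by Beck's extension technique (\ref{Beck's_technique:extension:lemma}) --- by a \emph{complete} flow with the same orbits, and trivialise $\pi$ above each ${\Bbb R}$-chart of $B$ via its time-$t$ maps. The delicate point is to exclude non-proper leaves that accumulate onto themselves or escape to an end of the surface --- exactly the pathology that degenerates the vertical foliation of ${\Bbb R}^2\setminus\{0\}$ into a foliation by \emph{half}-lines rather than lines --- which is where a careful choice of $f$ (steering the ventilators of \ref{Morse-Thom-Whitehead-Hirsch} compatibly with the transverse field) must enter.

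Once this is arranged, the exact homotopy sequence \eqref{homotopy-sequence} of the fibration $\pi$ collapses to an isomorphism $\pi_1(M)\to\pi_1(B)$, so $B$ is a (non-Hausdorff) curve with the same fundamental group as $M$ --- precisely the ``Morse-theoretic reverse engineering'' invoked around Lemma~\ref{twistor}, and the natural metric counterpart of the twistor construction there. So the honest content of the proof is the citation of \ref{Morse-Thom-Whitehead-Hirsch}; everything beyond ``foliated by lines'' rests on the transverse-completeness/local-triviality issue just isolated, which I would leave as the one genuinely open link.
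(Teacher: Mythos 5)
Your proposal matches the paper's proof: the main assertions are obtained exactly as you do, by specializing Theorem~\ref{Morse-Thom-Whitehead-Hirsch} (with the optional detour through smoothability via Rad\'o and Stoilow--Heins), and the addendum is likewise handled through the leaf-space of the transverse foliation. The ``one genuinely open link'' you isolate --- replacing the line-field by a complete flow and trivializing $\pi$ over charts of the base via flow-boxes $\varphi({\Bbb R}\times V)$ --- is precisely the content of the paper's outlined addendum (speed-one integration w.r.t.\ a complete Riemannian metric, monotonicity of $f$ along orbits killing recurrence), which the paper itself flags as hypothetical, so your caution is warranted and consonant with the source.
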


\begin{proof} {\it (High-brow proof)} This follows by
specializing  the above (\ref{Morse-Thom-Whitehead-Hirsch}),
taking (optionally) advantage of the smoothability of metric
surfaces. Smoothing(s) can be deduced from Rad\'o
(\ref{Rado:triangulation}) using
%optionally
eventually the trick of Stoilow-Heins to introduce a
(stronger) Riemann surface (${\Bbb C}$-analytic) structure in
the orientable case (and by adapting a Klein(=di-analytic)
surface structure) in the non-orientable case.

{\it (Elementary proof?)} Start from a triangulation given by
Rad\'o (\ref{Rado:triangulation}) and try to find some clever
combinatorial procedure to
%construct
%
propagate a
%compatible
foliated texture \`a la Kneser.
(Details left to the imaginative
%of the
readers.)

---{\it Outlined addendum.} Integrating the vector
field orthogonal to the level curves of a critical-point-free
Morse function $f$, and having speed-one w.r.t. a complete
Riemannian metric), we obtain a (fixed-point-free) flow
$\varphi\colon {\Bbb R}\times M \to M$ without
``recurrences''. Let $\cal F$ be the underlying foliation.
%Thus the underlying foliation $\cal F$ (trajectories of
%$\varphi$) has a leaf-space which is a (non-Hausdorff)
%1-manifold (because each leaf appears at most once in a
%foliated-box). Moreover
The projection on the leaf-space $p\colon M \to M/{\cal F}$ is
a fibration (by lines). Indeed, given
%an orthogonal
a trajectory of $\varphi$ (say that of the point $x$), one can
let evolve in time a small 1D-chart $V$ (selected) in the
$f$-level-curve,  $f^{-1}(f(x))$, through $x$
 to manufacture a
2-cell $U:=\varphi({\Bbb R}\times V)\approx {\Bbb R}\times V$
(via $\varphi(t,v)\leftarrowtail(t,v)$) which is (trivially)
fibred by lines (the trajectories). The projection of $U$ in
the leaf-space is open (its inverse image being $U$ which is
open) and
%\iffalse $h\colon p^{-1}(p(U))=U\approx V \times
%{\Bbb R}$ is fibre-wise a product with trivialisation $h$
%afforded by the flow, set for $u\in U$, $h(u)=(s,t)\in V\times
%{\Bbb R}$ uniquely defined by $\varphi(t,s)=u$. \fi
homeomorphic to $V\approx {\Bbb R}$ (restrict $p$ to $V$).
Hence the leaf-space is a $1$-manifold, and $p$ is a fibration
(trivial over $p(V)$).
\end{proof}

This addendum looks
%to be a reverse procedure
somewhat dual to the engineering of Haefliger (\ref{twistor})
permitting to
%see
conceive any (non-Hausdorff) 1-manifold as the base of a
fibration by lines of a Hausdorff surface.

---{\it Baby example.} Consider a ``Y''-shaped surface
resembling a tree in usual 3-space ${\Bbb R}^3$ with three
trunks going to infinity (Fig.\,\ref{ababy:fig}.a). The height
function ``$z$'' (third coordinate) has a critical point where
the two branches of the tree ``Y'' bifurcate. The latter can
be eliminated just by
%resorbing
deforming one of the branch horizontally and letting it
disappear to $\infty$ like a ``cusp''
(Fig.\,\ref{ababy:fig}.b).  Those surfaces are just
diffeomorphic to a punctured cylinder that can be imagined
endowed with a complete Riemannian
%metric of growing intensity as the
%puncture is approached. i.e.
whose line-elements diminish in size (w.r.t. to the Euclidean
element) as the puncture is approached
(Fig.\,\ref{ababy:fig}.c). The height-function is
critical-point-free and the orthogonal trajectories are
vertical lines on the cylinder-model (with a sole interruption
at the puncture), yet drastically slowed-down (w.r.t. the
Euclidean perception) as we approach the dark-matter
concentrated near the puncture. The leaf-space (of the
transverse foliation) is a circle-with two origins (just
identify two transverse circles lying above resp. below the
puncture, whenever they are intercepted by a same leaf)
(Fig.\,\ref{ababy:fig}.d).

\begin{figure}[h]
\centering
    \epsfig{figure=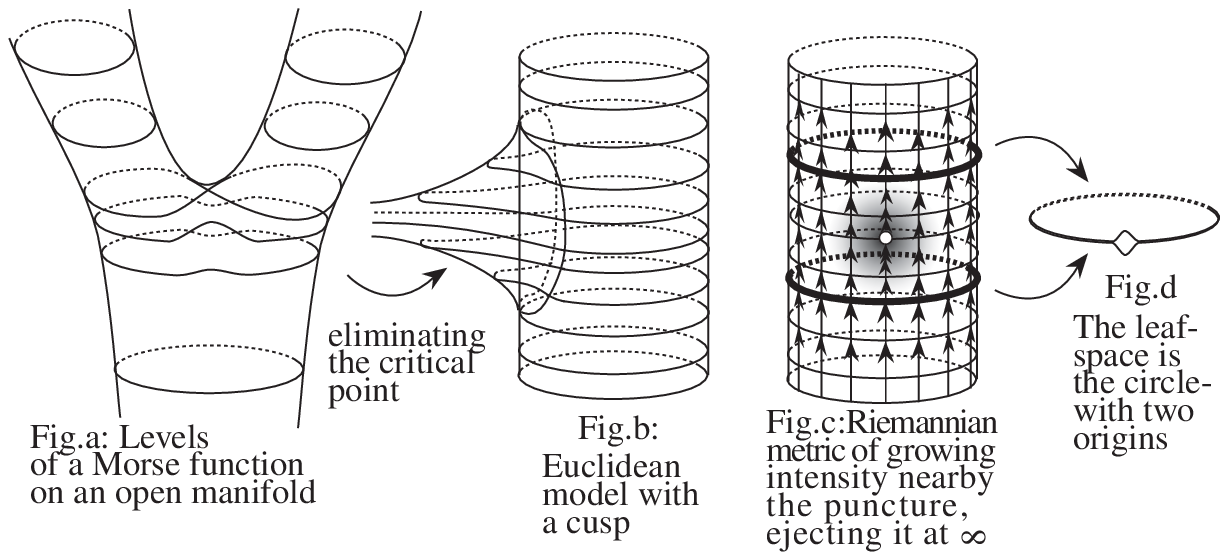,width=108mm}
\vskip-10pt\penalty0
  \caption{\label{ababy:fig}
  %Artist view of
  Foliating open surfaces by lines}
\vskip-5pt\penalty0
\end{figure}

%%%Non-metric
\section{Haefliger-Reeb theory for non-metric
simply-connected
%foliated
surfaces}

The starting point for this section (and actually of the whole
paper) was the observation by D. Gauld that a foliated avatar
of the ``closing lemma'' (Figure~\ref{David's_trick}, Case~1)
shows that a simply-connected surface (even when non-metric)
cannot be transitively foliated. Exploiting this remark allows
one to extend the Haefliger-Reeb theory to all 1-connected
surfaces, pushing its validity outside any metrical
predisposition,  evidencing a rather robust character of their
theory.

\subsection{Haefliger-Reeb derives from Schoenflies (Gauld)}

The game in this section is to ape
%as closely as possible
non-metrically the Haefliger-Reeb theory 1957
\cite{Haefliger_Reeb_1957} describing
%the qualitative (and to some extent the
%quantitative) features of
foliations on the plane ${\Bbb R}^2$.
(This was also studied earlier in 1940 by W. Kaplan.)
%[a student
%of Whitney, and visitor of Kerekjarto]).
Replacing the plane by an arbitrary (non-metric)
simply-connected surface, we passively observe that most of
the classical theory remains valid in this broader context.
(The only minor divergence is that the projection on the
leaf-space can now cease to be a fibration, an issue only
emphasised in subsequent papers, e.g. of Godbillon and Reeb.)
The {\it raison d'\^etre} for this extension is
%simply
%%%%that what lies at the hearth of the matter is the
%validity
the
%pivotal r\^ole and
non-metric availability of the Schoenflies theorem which is
implied by, and indeed equivalent to, simple-connectivity (see
Gabard-Gauld 2010 \cite{GaGa2010} or
(\ref{Schoenflies-Baer})).

\begin{prop} \label{Alex_separation} A
foliated simply-connected surface satisfies:

{\rm (a)} Any leaf is open as a manifold (i.e., no compact
circle leaf).

{\rm (b)} A leaf
%can
appears at most once in any foliated chart. More precisely if
a leaf intersects a foliated chart then this intersection
reduces to a single line
%, in other
%words a single
(plaque).

{\rm (c)} From {\rm (b)}, it follows that the leaf-space is a
$1$-manifold, in particular any leaf is closed as a point-set.
Also leaves are proper, i.e. the leaf topology matches with
the relative
%/induced
topology. Still from {\rm (b)} leaves cannot be dense.
%that any foliated chart can
%meet any given leaf only along along a single plaque. In other
%other words

{\rm (d)} By {\rm (a)} any leaf has two ends and runs to
infinity in both directions while dividing the surface in two
components (called halves).
\end{prop}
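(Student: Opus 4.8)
The plan is to mimic Haefliger–Reeb's original proof, replacing their appeals to the planar Schoenflies theorem by the non-metric homotopic Schoenflies (\ref{Schoenflies-Baer}), which is available precisely because the surface is simply-connected. The logical skeleton is: first establish (a) (no compact leaf), then use (a) plus Schoenflies to get the key separation/uniqueness statement (b), then deduce (c) and (d) as essentially formal consequences.

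\medskip

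\textbf{Step (a).} Suppose a leaf $L$ is a compact circle. Being a Jordan curve in a simply-connected surface, it bounds a disc $D$ by \ref{Schoenflies-Baer}. The foliation restricts to $D$, so $D$ is tangentially foliated; but the closed disc cannot be foliated (\ref{disc-cannot-be-foliated}), a contradiction. Hence every leaf is an open $1$-manifold, i.e.\ a copy of $\mathbb{R}$.

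\medskip

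\textbf{Step (b) — the main obstacle.} This is the heart of the matter and where the Gauld/closing-lemma idea enters. Suppose some leaf $L$ meets a foliated box $B\cong\mathbb{R}^2$ (with plaques the horizontal lines) in two distinct plaques $P_0,P_1$. Pick an arc $\alpha$ transverse to the foliation inside $B$ running from a point of $P_0$ to a point of $P_1$; together with a sub-arc of $L$ from the endpoint on $P_1$ back to the endpoint on $P_0$ (travelling along the leaf outside $B$) this produces a ``returning'' configuration. Perturbing the foliation in a small neighbourhood of $\alpha$ along the closing-lemma paradigm (Figure~\ref{David's_trick}, Case~1) one produces a genuine \emph{closed} leaf in the perturbed foliation, which by Step (a) / Schoenflies is impossible. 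The delicate points I expect to have to argue carefully are: (i) that a leaf re-entering a box does so ``coherently'' so that the transverse arc and the leaf-arc really close up into a Jordan curve, and (ii) that the localized perturbation genuinely creates a closed leaf without destroying the foliated structure elsewhere — this is exactly the technical content one must import from the closing-lemma picture. Once two-plaque intersections are excluded it follows that $L\cap B$ is a single plaque.

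\medskip

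\textbf{Steps (c) and (d).} From (b), the quotient map to the leaf-space is, in each foliated box, the projection $\mathbb{R}^2\to\mathbb{R}$ followed by an injection on the plaque-space, so the leaf-space is locally Euclidean of dimension $1$: a (possibly non-Hausdorff) $1$-manifold. Since points of a $1$-manifold are closed and plaques are closed in boxes, each leaf is closed as a point-set; matching of leaf-topology with the subspace topology (properness) is then the standard argument that a leaf meeting each box in one plaque inherits the box topology on that piece. Non-density of leaves is immediate: a leaf closed as a point-set and not all of the (connected) surface cannot be dense. Finally, by (a) each leaf is a line, hence has two ends; being closed and properly embedded it is a divisor in the sense of the paper, and since the surface is simply-connected it separates into two components by Riemann's polarized-cover separation theorem (\ref{Rieman-polarized-cover}/\ref{Riemann-separation}) — or, more directly for a locally flat line, by the same Schoenflies-based tubular-neighbourhood reasoning — giving the two ``halves''. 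This closes the proof.
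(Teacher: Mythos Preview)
Your approach is essentially the paper's, and steps (c), (d) are fine. Two points need fixing.

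First, a slip in Step~(a): in this non-metric setting an open $1$-manifold need \emph{not} be a copy of $\mathbb{R}$---it may be the long ray $\mathbb{L}_+$ or the long line $\mathbb{L}$ (cf.\ the H.~Kneser--M.~Kneser classification). Fortunately nothing downstream depends on the leaf being $\mathbb{R}$; the divisor/separation argument in (d) only uses that the leaf is a locally flat closed hypersurface, which follows from (b) and (c). So just delete the clause ``i.e.\ a copy of $\mathbb{R}$'' and your (d) still goes through.

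Second, and more substantively, in Step~(b) you correctly identify the delicate point~(i)---that the returning plaque meets the transversal ``coherently'' so that the closing surgery of Case~1 actually produces a Jordan curve---but you do not resolve it. If the return reverses orientation (Case~2 of Figure~\ref{David's_trick}) the naive closing does not yield a circle leaf, and one would need a different, more involved argument. The paper's resolution is to observe \emph{first} that any $1$-foliation of a simply-connected manifold is orientable (\ref{orienting:2-fold-covering}); once a global orientation is fixed, every return to the box is automatically orientation-coherent, so only Case~1 occurs and your perturbation produces a genuine closed leaf, contradicting~(a). Insert this one-line appeal to (\ref{orienting:2-fold-covering}) at the start of Step~(b) and the gap closes.
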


\begin{rem} {\rm This statement
may well be empty when the surface lacks any foliation. This
is the case of the 2-sphere, but can also occur to non-compact
surfaces, e.g. the long glass ${\Bbb S}^1\times {\Bbb L}_{\ge
0}$ capped off by a $2$-disc (compare \cite{BGG1}). Point (b)
is exactly Th\'eor\`eme 1 in Haefliger-Reeb
\cite[p.\,120]{Haefliger_Reeb_1957}, and  a direct consequence
is that the leaf-space is a (generally non-Hausdorff)
1-manifold. }
\end{rem}

\begin{proof} (a) is obvious, for a circle leaf would bound a
foliated disc by Schoenflies (see \cite{GaGa2010} or
(\ref{Schoenflies-Baer})), which is an absurdity
(\ref{disc-cannot-be-foliated}).
%Recall indeed that by Whitney 1933 \cite{Whitney33} there
%would be a compatible (fixed-point-free) flow violating the
%Brouwer fixed point theorem.

The proof of (b) is a similar Schoenflies obstruction modulo
some tricks reminiscent of the Poincar\'e-Bendixson trapping
argument or rather the closing lemma (for dynamical flows).
%Indeed if a certain leaf would  not be closed, then it
%has to accumulate over itself in the sense that
%there is an appropriate foliated box about a point of the leaf $L$ which
%will be revisited by $L$.
Assume that a leaf returns to a foliated chart. Orient the
foliated box as well as the leaf. Then one distinguishes two
cases depending on whether the first-return to the box matches
or reverses the orientation (cf. Figure~\ref{David's_trick}).
In fact
%by Remark~\ref{simplification:rem} below,
since the foliation is orientable
(\ref{orienting:2-fold-covering}), only the first case needs
attention.

\begin{figure}[h]
\centering
    \epsfig{figure=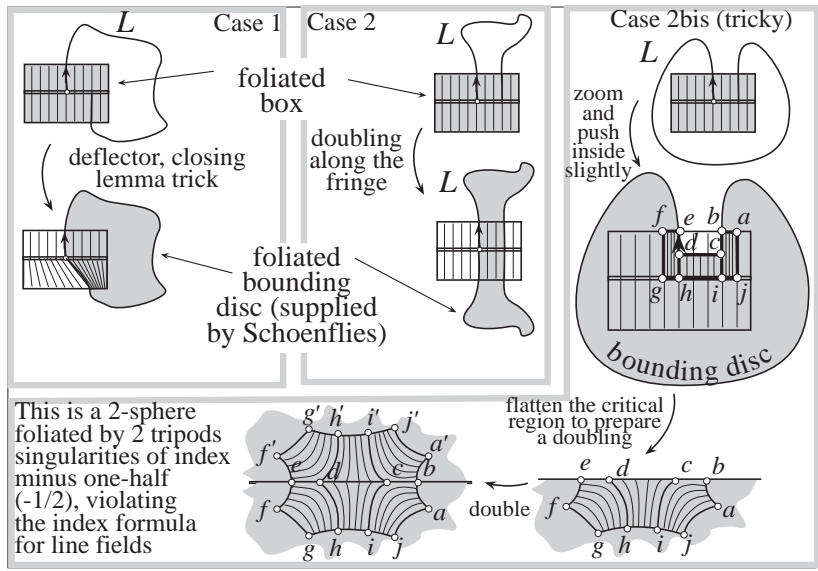,width=108mm}
  \caption{\label{David's_trick}
  %Artist view of
  Absence of recurrences for a leaf in a simply-connected surface}
\vskip-5pt\penalty0
\end{figure}

In the first case one can perturb the foliation within the box
(e.g., piecewise linearly) while creating a circle leaf (impossible
by Schoenflies).

\smallskip
{\footnotesize {\bf Very optional over-exhaustive case
distinctions.} In the other case one has a ``tongue shape''
whose double produces a foliated disc.
In fact
%it should be noticed that
in the second case there is
%also
a tricky subcase (Case 2bis on
Figure~\ref{David's_trick}) corresponding to the situation,
where
%%%%%%%%%%%
\iffalse the bounding disc for the Jordan curve starting from
$h$, say, and extended until its first impact, $i$, on a
cross-section of the foliated-box and closed-up by the unique
arc in the cross-section form $i$ back to $h$ \fi
%%%%%%%%%%%%%%%%
the bounding disc for the Jordan curve starting from the first
escapement $e$, say, of the oriented leaf $L$ from the
foliated-box $B$ and extended until its first impact, say $b$,
on the box $B$ and closed-up by the unique arc in $\partial B$
%in the cross-section
from $b$ back to $e$ (transverse to ${\cal F}$) contains the
foliated-box $B$. In this case we start by pushing the side
$\overline{eb}$ into the box up to position $\overline{cd}$.
Then we flatten the boundary of the bounding disc $D$ for the
Jordan curve $J$ (through $e,b,c,d,e$) near the critical
region (compare the figure), and finally we double $D$ with a
replica $D'$ yielding a foliated $2$-sphere having as unique
singularities two ``tripods'' singularities located at the
points $c$ and $d$. Such tripods singularities have an index
$j=-\frac{1}{2}$ each, yielding a total sum of $-1$,
disagreeing with the Euler characteristic of $S^2$. This
violates the Poincar\'e-{\Kerekjarto}-Hopf index formula for
line fields (cf. e.g., H. Hopf \cite[p.\,109 and Theorem II,
p.\,113]{Hopf_1946_1956}).

}

\iffalse
\begin{rem}\label{simplification:rem} {\rm This proof can be
%much
simplified, if one notices first that since the ambient
surface $S$ is simply-connected, any of its foliation is
orientable,
%and therefore
hence Cases 2 and 2bis need no examination.
%However
 Thus the above cases distinction is in
reality over-exhaustive, yet it
%may have the
%(very little)
%advantage of
avoids
%to
defining orientable foliations and the allied orienting
covers.}
\end{rem}
\fi

(c) The properness of leaves is clear in view of (b). That
leaves are closed sets can be derived from the fact that the
leaf-space is a (non-Hausdorff) \hbox{1-manifold}, which
follows directly from (b), as we shall recall later
(\ref{leaf-space-one-manifold}). Of course closedness can be
deduced also directly from (b): given a point $p$ not on the
leaf $L$, choose a foliated chart $U$ about $p$. If $U\cap L$
is empty we are done. If not then by (b) we see only a single
plaque of $L$ in $U$ so that we easily find an open set
$V\subset U$ containing $p$ but not intersecting $L$.

(d) As we shall not really need it, we leave as an exercise
the task of clarifying the meaning of running to infinity
(probably in terms of evasion from any compactum).
%Yet we maybe do not need it formally. Let us wait.
%
The last claim of (d) is somewhat harder to establish
(especially if one tries to delineate the broadest generality
in which such a separation holds true). Thus we reserve the
next section to a detailed discussion.
\end{proof}

\smallskip
{\small {\bf Optional semi-historical digression.} The sequel
may  lead to an interpretation of the following prose of
Haefliger-Reeb~\cite[p.\,120]{Haefliger_Reeb_1957}: {\it ``Le
th\'eor\`eme 1 qui suit est classique; sa d\'emonstration
repose sur le th\'eor\`eme de Jordan (dans une version
particuli\`erement facile \`a \'etablir); ...''} Of course
Jordan is here somehow blended with Schoenflies. Recall
incidentally that the nomenclature ``Schoenflies theorem'' for
the bounding disc property is a rather recent coinage (perhaps
first appearing in Wilder 1949, as noticed in Siebenmann 2005
\cite[p.\,651]{Siebenmann_2005}). At any rate what is relevant
to the sequel is that point (b) of Prop.~\ref{Alex_separation}
provides a local flatness allowing one to prove a version of
Jordan separation using only covering space theory. Thus we
match slightly with the version particularly easy to establish
mentioned in Haefliger-Reeb, albeit they probably rather had
in mind a mod 2 homology argument, as shows the sequel of
their text {\it ``...elle utilise donc essentiellement le fait
que le plan ${\Bbb R}^2$ est simplement connexe (ou plus
pr\'ecis\'ement que son premier nombre de Betti modulo 2 est
nul).''} However their sketched proof of their  Th\'eor\`eme 1
uses in fact Schoenflies and not merely Jordan separation
(recall Dubois-Violette's example).

}

\subsection{Polarized covering
%%trick
\`a la Riemann
and Jordan separation in the large}

%{\bf (D) Riemann electrical surgery}.

Given a hypersurface $H$ in a simply-connected manifold $M$,
it is intuitively clear that $H$ divides $M$, provided the
hypersurface is closed as a point-set. A possible strategy is
that any such hypersurface in a manifold
%(one is almost tempted to use the ``divisor'' jargon
%of algebraic geometry)
induces naturally a
%two-sheeted
double cover of $M$. When $H$ does not divides $M$ this
covering is connected, violating the simple-connectivity of
$M$.
This section details the above idea. First a:

\begin{defn}\label{hypersurface} {\rm A {\it (locally flat)
hypersurface} in a manifold is a (non-empty) subset $H$ such
that for any point $p\in H$ there is an open neighbourhood $U$
in $M$ and a homeomorphism of triad $h:(U, U\cap H,
p)\approx({\Bbb R}^n, {\Bbb R}^{n-1}\times\{0\},0)$. Since
$U\cap H$ divides $U$, we call $U$ a polarised chart. One has
a splitting $U=U_+\cup U_{-}$ in two local halves defined as
the closures in $U$ of the components of $U-(U\cap H)$. In the
sequel we shall refer to $U_{\pm}$ as being semi-charts.}
\end{defn}

For instance the ``open'' straight line $H=]-1,+1[\times\{0\}$
in ${\Bbb R}^2$ is a hypersurface,
%This set $H$ is however not
%closed, and incidentally
but does not separate the plane. This is why we restrict
attention to hypersurfaces, which are closed as point-sets. As
the terminology ``closed hypersurfaces''
%would be quite
%unfortunate
conflicts with the classical nomenclature ``closed manifolds''
%as meaning
(referring to compact borderless manifolds), some {\it ad hoc}
jargon is coined to disambiguate the double usage of
``closed'' in point-set vs. combinatorial topology:
%of
%manifolds:

\begin{defn}\label{divisor:def} {\rm A {\it divisor} in a manifold
is a hypersurface in the sense of (\ref{hypersurface}), whose
underlying set is closed as a point-set.}
\end{defn}

Now ``our'' polarization trick is the following mechanism:

\begin{prop}\label{Rieman-polarized-cover}
Given a divisor $H$ in a manifold $M$, there is a naturally
defined
%two-fold covering
double cover $M_H \to M$ (called the polarization of $M$ along
$H$), with the distinctive property that $M_H$ is disconnected
if and only if $H$ divides $M$.
\end{prop}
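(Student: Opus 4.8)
The plan is to build $M_H$ by a purely local cut-and-reglue construction, and then identify its connectedness with the (non-)separation of $M$ by $H$. First I would describe the total space. Consider the disjoint union of two copies $M^+ \sqcup M^-$ of $M$, thinking of the superscript as an electric charge. On the open set $M \setminus H$ (which occurs in both layers) the covering map is the obvious two-to-one projection. To glue the two layers across $H$, I work locally: around each $p \in H$ pick a polarised chart $U$ with its splitting $U = U_+ \cup U_-$ into semi-charts meeting along $U \cap H$ (Definition \ref{hypersurface}). Over $U$, I declare the fibre to consist, on $U\setminus H$, of the two preimages, and I \emph{redefine the topology} so that a basic neighbourhood of a point of $H$ in $M_H$ is obtained by taking the ``$+$'' side $U_+$ in one layer glued to the ``$-$'' side $U_-$ in the \emph{other} layer (and symmetrically); pictorially, a charged particle flips its sign each time it crosses $H$. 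One checks this prescription is independent of the chosen polarised chart — the only ambiguity is the labelling of the two local halves, and swapping both labels simultaneously yields the same identification — so the gluings are compatible on overlaps and produce a well-defined space $M_H$ with a local homeomorphism $\pi\colon M_H \to M$ which is a genuine two-sheeted covering (every point of $M$, whether on $H$ or not, has an evenly-covered neighbourhood by construction). Since the construction is entirely local it is insensitive to metrisability or to any global pathology of $M$.

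Next I would treat the two directions of the equivalence. Suppose first that $H$ separates $M$, so $M \setminus H = A \sqcup B$ with $A, B$ nonempty open. Consider the function on $M_H$ which over $A$ records the charge and over $B$ records the opposite charge; I claim this is well-defined and continuous, hence $M_H$ is disconnected. Continuity across $H$ is exactly the point of the charge-flip: a point crossing $H$ passes from $A$ to $B$ (or back) while its charge flips, so the recorded value is locally constant. This exhibits a surjection $M_H \to \{+,-\}$ with both fibres open, giving a disconnection. Conversely suppose $M_H$ is disconnected; since $\pi$ is a two-sheeted covering and $M$ is connected, $M_H$ must be the disjoint union of two copies of $M$, i.e. the covering is trivial, so it admits a (continuous) section $s\colon M \to M_H$. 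Following the charge along $s$ gives a locally constant function $M \setminus H \to \{+,-\}$; because the two local halves of a polarised chart lie in opposite layers under any section, the two sides of $H$ receive opposite charges, so this function is \emph{non-constant} near every point of $H$ yet locally constant on $M \setminus H$. Its level sets are two nonempty disjoint open subsets covering $M \setminus H$, i.e. $H$ divides $M$.

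Finally I would note that feeding this into the simply-connected case is immediate: if $M$ is simply-connected it has no nontrivial connected double cover, so $M_H$ is disconnected, hence by the Proposition $H$ divides $M$ — which is precisely the separation statement needed in Proposition \ref{Alex_separation}(d) and recorded in \ref{Riemann-separation}. The main obstacle I anticipate is purely bookkeeping: verifying that the local gluings genuinely patch to a Hausdorff-or-not manifold topology in which $\pi$ is a covering, i.e. that the ``flip'' is consistent on triple overlaps and that points of $H$ really are evenly covered. This is the same kind of elementary-but-fiddly verification as in Riemann's original surface constructions (and the one-dimensional analogue in Haefliger--Reeb), and the key simplification is that the only choice involved — the $\pm$ labelling of the two halves of a polarised chart — enters the identification only through the relation ``opposite halves, opposite layers,'' which is manifestly symmetric under the simultaneous relabelling, so no cocycle obstruction can arise.
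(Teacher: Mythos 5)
Your construction of $M_H$ and your proof of the implication ``$M_H$ disconnected $\Rightarrow$ $H$ divides $M$'' are correct and essentially coincide with the paper's: your two-layer charge-flip description is the paper's ``second viewpoint,'' and your observation that the only chart-dependent choice --- the labelling of the two semi-charts --- cancels out under simultaneous relabelling is an adequate substitute for the paper's more laborious formalization via filters of semi-charts; your section argument for the converse is if anything cleaner than the one printed. Since only this implication is used in Corollary \ref{Riemann-separation}, and hence in Proposition \ref{Alex_separation}(d), the part of the proposition that matters downstream is secure.

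The other implication, ``$H$ divides $M$ $\Rightarrow$ $M_H$ disconnected,'' has a genuine gap as you argue it. Your function (charge over $A$, opposite charge over $B$) is locally constant at a point $p\in H$ only if the two local sides of $p$ land in \emph{different} pieces of the chosen decomposition $M\setminus H=A\sqcup B$, and nothing forces this: with $H$ two parallel lines in ${\Bbb R}^2$ and $A$ the leftmost of the three strips, both local sides of the right-hand line lie in $B$ and your function jumps there. Worse, the implication itself is false for disconnected divisors: on the torus take $H=C_1\sqcup C_2$ with $C_1$ non-separating and $C_2$ a small null-homotopic circle disjoint from it; then $H$ divides $T^2$ (it cuts off the disc bounded by $C_2$), but the monodromy of $M_H$ is mod~2 intersection with $[C_1]+[C_2]=[C_1]\neq 0$, so $M_H$ is connected. (The printed proof of this direction suffers from essentially the same defect, tacitly assuming that every boundary identification joins distinct components of the cut manifold $W$.) To salvage the statement you must either restrict to connected $H$ --- in which case a clopen argument on $H$ shows that $M\setminus H$ has exactly two components and that each point of $H$ has exactly one local side in each, after which your function does work --- or weaken the ``if and only if'' to the single implication actually needed in the sequel.
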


\begin{proof}
%We first describe the intuitive idea, and then
%describe the formal construction.
%
{\bf (1) Intuitive idea.}
%The intuitive idea is also
%two-folds.
First we can imagine that we cut $M$ along $H$ to obtain a
bordered manifolds $W$ with an involution $\sigma$ on the
boundary $\partial W$ telling one how to reglue the points to
remanufacture the manifold $M$ out of $W$. (We use here the
magic scissor of combinatorial topology, which instead of
%annihilating
deleting points rather duplicate them!) In particular one has
an {\it assembly} map $\alpha \colon W \to M$, which is
one-to-one except over $H$ where the fibers are two points
exchanged by $\sigma$. (Call $\sigma p$ the opposite of $p$.)
Then take $W'$ a replica of $W$, and denote by $p'\in W'$ the
twin copy of the point $p\in W$. In the disjoint union
$W\sqcup W'$ identify the point $p\in
\partial W$ with the opposite of its twin, i.e. $\sigma p'$
(where for simplicity we still denote by $\sigma$ the
involution on $\partial W'$). We define $M_H$ as the resulting
quotient space. It is not hard to show that the assembly maps
$\alpha  \cup \alpha' \colon W\sqcup W' \to M$ induce a map
$M_H \to M$ which is a covering projection.

\begin{figure}[h]
\centering
    \epsfig{figure=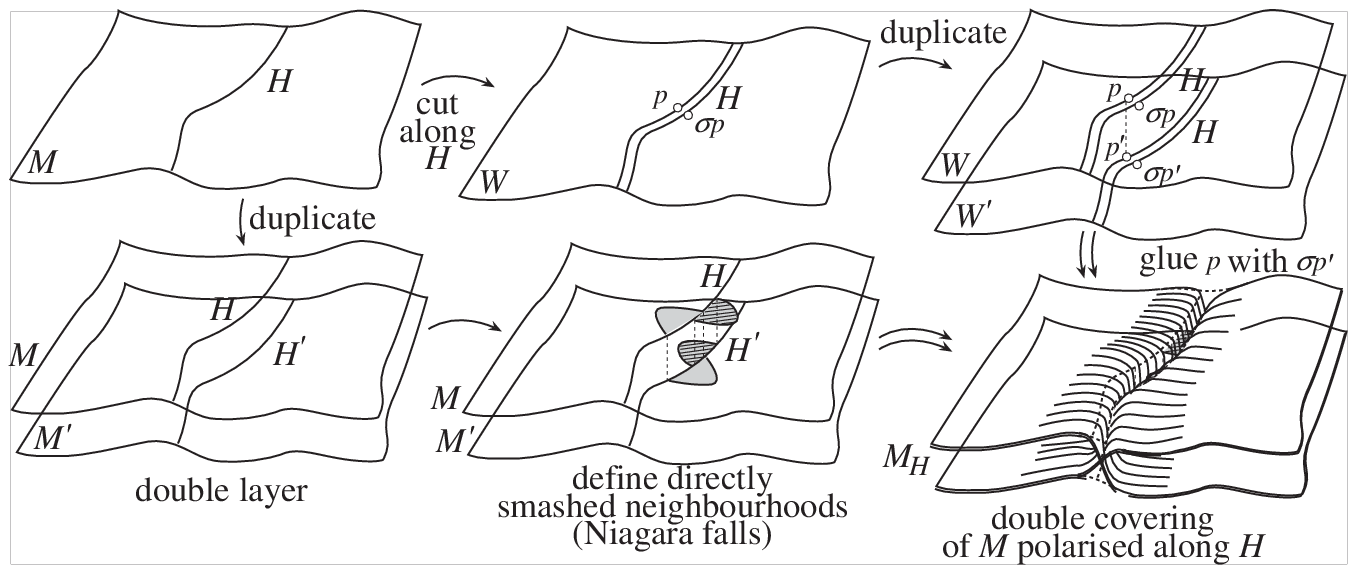,width=122mm}
  \caption{\label{Riemann's_trick}
  %Artist view of
  %Construction of
  The
  %2-sheeted covering
  double cover of a manifold $M$
  polarized along a hypersurface $H$}
\vskip-5pt\penalty0
\end{figure}

%{\bf (1) Intuitive idea.}

{\bf (2) Another viewpoint.} The above construction requires a
cleaner description of the cutting process. We can take a
slightly different approach. First take a copy $M'$ of $M$,
and define a new topology by splicing any polarized chart
$U=U_+ \cup U_{-}$ (cf. Def.~\ref{hypersurface})  into the two
``spliced'' sets $U_+\sqcup U_{-}'$ and $U_+'\sqcup U_{-}$.
The ``primes'' indicates that we push alternatively one of the
two halves of $U$ into the second layer $M'$. Further we would
like to identify the points $p$ in $U\cap H=U_+\cap U_{-}$
with their twins $p'$ so as to restore the locally Euclidean
character. Note that we are not merely redefining a new
topology on the (static) point-set $M\sqcup M'$, but really
doing a gluing on the two spliced charts which is easy
locally, yet maybe
%%difficult to do globally
problematic (at the non-metric scale). \iffalse ..Do we need
here generalised spaces in the sense of Grothendieck (yet then
unlikely to stay within the realm of classical manifolds...)
So we are in front of a little or maybe an insurmontable
difficulty. \fi

{\bf (3) Finding an issue.} Maybe the trick is as follows,
closer to the approach (1). We would like to formalise the
idea of cutting along a hypersurface. Thus we need first to
enrich $M$ by creating a replica for each point lying on $H$.
We try to think of such a point as a pair $(p,U_{\pm})$
consisting of a (classical) point $p$ of $H$ plus a preferred
half $U_{\pm}$ of a polarized chart $U$ about $p$. Since $H$
is closed (as a point-set), we may fix an atlas for $M$ such
that any chart meeting $H$ is a polarised chart (first cover
the hypersurface by polarized charts and then
%add
aggregate charts of the manifold $M-H$). Say that such an
atlas is polarised w.r.t. $H$. Given a polarised atlas $\cal
A$ (say a maximal one to kill any dependence upon anodyne
choice from the beginning) we define a new point-set $W$ as
consisting of all {\it filters}, in the following two senses:

\begin{defn}\label{filters} A filter $F$ of charts
(resp. of semi-chart) is a  nested sequences of charts
$U_i\supset U_{i+1}$ of $\cal A$ (resp. semi-charts, i.e.
halves of polarized charts of $\cal A$) whose common
intersection $\cap_{0\le i\le \omega} U_i$ is a unique point
(called the center of the filter).
\end{defn}

Declare two filters $F_1, F_2$ as {\it equivalent} if for any
member of the first $U_i\in F_1$ there is an element of the
second $V_j\in F_2$ such that $V_j\subset U_i$. It is easy to
check that this is an equivalence relation. Now define $W$ as
the set of equivalence classes of filters. Notice that there
are two equivalence classes of filters converging to a point
$p\in H$, whereas there is a unique class converging to a
point not on $H$. We have a map $\alpha\colon W \to M$
assigning to a filter its center and we endow $W$ with the
most economical topology making $\alpha$ continuous. Then it
%would only remain
looks easy to check that $W$ is a bordered manifold.
%, which looks
%rather obvious by construction.
As the map $\alpha$ is two-to-one above $H$, it gives  a
mapping $\sigma\colon
\partial W \to \partial W$ exchanging these two points. Now we
have all the necessary ingredients to conclude as in the first
step (1).

\smallskip
{\bf Proof of the distinctive property in
(\ref{Rieman-polarized-cover}).} [$\Rightarrow $] (NB: this is
the sense really needed for the corollary below). If $H$ does
not divide, then the bordered manifold $W$ is connected, and
so is a fortiori $M_H$ which is obtained by identifying $W$
with a replica $W'$. (Here and below, we use implicitly that
the interior of $W$ is naturally homeomorphic to $M-H$, and
the general fact that a bordered manifold is connected iff its
interior is.)

[$\Leftarrow$] Assume that $H$ divides $M$. Then $W$ is
disconnected, and then $M_H$ is disconnected as follows from
the construction. Indeed assume for (psychological) simplicity
that $W$ has two components $W_+$, $W_-$. Then $M_H$ results
from $W\sqcup W'=(W_+\sqcup W_-)\sqcup(W_+'\sqcup W_-')$ by
attaching $W_+$ with $W_-'$ and $W_-$ with $W_+'$ and
therefore $M_H$ has two components.
\end{proof}

This gives our sought-for:

\begin{cor}\label{Riemann-separation} A closed hypersurface $H$ (as a point-set!) in a
simply-connected manifold $M$ divides the manifold $M$.
\end{cor}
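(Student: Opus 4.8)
The plan is to obtain the statement as an immediate corollary of the polarization construction in Proposition~\ref{Rieman-polarized-cover}, via the classical fact that a simply-connected (locally path-connected) space admits no connected covering of degree $>1$. So the real substance has already been carried out; what remains is a three-line covering-space argument.

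First I would argue by contradiction: suppose the closed hypersurface $H$ does \emph{not} divide $M$, that is, the complement $M-H$ is connected. Since $H$ is by hypothesis a locally flat hypersurface closed as a point-set, it is a divisor in the sense of Definition~\ref{divisor:def}, so Proposition~\ref{Rieman-polarized-cover} applies and furnishes the polarized double cover $p\colon M_H\to M$. By the distinctive property recorded in that proposition ($M_H$ disconnected $\iff$ $H$ divides $M$), the assumption that $H$ fails to divide $M$ forces $M_H$ to be connected.

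Next I would note that a connected covering of degree $2$ is necessarily nontrivial, since the trivial degree-$2$ covering of a (connected) base is the disjoint union of two copies of the base, hence disconnected. Thus $p\colon M_H\to M$ is a nontrivial connected covering of $M$. But $M$ is simply-connected, hence it is its own universal cover and every connected covering $M'\to M$ is a homeomorphism; in particular $M$ has no connected covering of degree $2$. This contradiction shows that $H$ must divide $M$.

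The only place where anything could go wrong — and the place where the genuine work sits — is inside Proposition~\ref{Rieman-polarized-cover}, namely the construction of $M_H$ in the non-metric setting: formalizing ``cutting $M$ along $H$'' and re-gluing a replica, for which one uses the filter/semi-chart formalism of Definition~\ref{filters}. For the corollary itself no further difficulty arises; the hypotheses actually used are merely that $M$ is a connected manifold with $\pi_1(M)=0$ and that $H$ is a divisor. (One may append the warning already flagged in the text: because the divisor condition includes local flatness, this argument does \emph{not} reprove the classical Jordan curve theorem for wild curves, only its locally flat version.)
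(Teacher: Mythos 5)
Your argument is correct and is exactly the paper's own proof: assume $H$ does not divide $M$, invoke the distinctive property of Proposition~\ref{Rieman-polarized-cover} to get a connected double cover $M_H\to M$, and contradict simple-connectivity. You merely spell out the final covering-space step in slightly more detail than the paper does.
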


\begin{proof} If $H$ would not divide $M$, then the polarized
covering $M_H\to M$ is connected, violating the assumption
that $\pi_1(M)=0$.
\end{proof}

In particular this implies what we really wanted in
Prop.~\ref{Alex_separation}(d)

\begin{cor} Any leaf of a simply-connected
surface divides.
\end{cor}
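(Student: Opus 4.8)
The plan is to recognise any leaf $L$ of a foliated simply-connected surface $M$ as a \emph{divisor} in the sense of Definition~\ref{divisor:def}, and then to invoke the polarised-covering separation result~(\ref{Riemann-separation}). Three things have to be verified: that $L$ is a locally flat hypersurface in the sense of Definition~\ref{hypersurface}, that $L$ is closed as a point-set, and that $M$ is simply-connected --- the last being our standing hypothesis.

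For the local flatness, fix a point $p\in L$ and a foliated chart $U$ about it. By the very definition of a one-dimensional foliation of a surface, $U$ may be taken homeomorphic to ${\Bbb R}^2$ with the leaves of the foliation corresponding to the horizontal lines ${\Bbb R}\times\{t\}$, so that $U\cap L$ is a union of plaques of this form. By Proposition~\ref{Alex_separation}(b) this intersection reduces to a single plaque, say ${\Bbb R}\times\{t_0\}$; writing $h(p)=(x_0,t_0)$ for the chart homeomorphism $h$ and post-composing $h$ with the translation $(x,y)\mapsto(x-x_0,\,y-t_0)$ produces a homeomorphism of triads $(U,U\cap L,p)\approx({\Bbb R}^2,{\Bbb R}^1\times\{0\},0)$, which is exactly what Definition~\ref{hypersurface} demands. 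Hence $L$ is a locally flat hypersurface. That $L$ is closed as a point-set is part of Proposition~\ref{Alex_separation}(c); alternatively it follows straight from~(b), since a point $q\notin L$ lies in a foliated chart meeting $L$ in at most one plaque, from which one easily carves an open neighbourhood of $q$ disjoint from $L$. Thus $L$ is a divisor in $M$.

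It now suffices to apply Corollary~\ref{Riemann-separation} to the divisor $L$ in the simply-connected manifold $M$: its polarised double cover $M_L\to M$ would be connected were $L$ non-separating, contradicting $\pi_1(M)=0$. Hence $L$ divides $M$. The one genuinely delicate ingredient is the first step --- that the leaf sits in each foliated chart as a \emph{single} straight plaque rather than as several parallel ones --- but this is precisely Proposition~\ref{Alex_separation}(b), a Schoenflies obstruction already disposed of; everything else is the bookkeeping of matching the normal forms of Definitions~\ref{hypersurface} and~\ref{divisor:def} and feeding the outcome into~\ref{Riemann-separation}.
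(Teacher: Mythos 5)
Your proposal is correct and follows exactly the paper's own route: Proposition~\ref{Alex_separation}(b) gives the local flatness of the leaf as a hypersurface in the sense of Definition~\ref{hypersurface}, point~(c) gives closedness as a point-set, and Corollary~\ref{Riemann-separation} (the polarized double cover) concludes. You merely spell out the chart-normalization bookkeeping that the paper leaves implicit.
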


\begin{proof} Point (b) of (\ref{Alex_separation}) implies
that the leaf is a hypersurface in the sense of
(\ref{hypersurface}), whereas point (c) of the same
(\ref{Alex_separation}) ensures that the leaf is a closed as a
point-set. Thus we conclude with (\ref{Riemann-separation}).
\end{proof}

%%%%%%%%%%%%%%%%%%%%%END OF THE CENSURE

%%%Optional
\subsection{More analogies and divergences
from Haefliger-Reeb}

Albeit we shall not use it, we can push-forward the analogy
with Haefliger-Reeb's theory. If $\cal F$ is a foliation on a
simply-connected surface $S$, then

(A) the leaf-space $S/ \cal F$
%of the foliation
is still a 1-manifold (generally non-Hausdorff), cf.
(\ref{leaf-space-one-manifold}) below. In our setting the
leaf-space needs not to be second-countable (equivalently
Lindel\"of, as manifolds are locally second-countable). So the
leaf-space is a non-Hausdorff 1-manifold with possibly long
``branches'' or also with possibly uncountably many branches
(consider e.g., the leaf-space of ${\Bbb L}^2$ slitted along
the closed set ${\Bbb L}_{\ge 0} \times  {\omega_1}$ and
foliated vertically or the Pr\"ufer type example depicted on
Figure~\ref{Train:fig}).

(B) However there is a little divergence with the metric case,
for now the projection $S\to S/ \cal F$ needs not to be a
(locally trivial) fibration. Indeed it is enough to consider
slitted long planes ${\Bbb L}^2-(\{0\}\times {\Bbb L}_{\ge
0})$ foliated vertically to see that the leaf-type can jump
erratically between the three open 1-manifolds (real-line,
long ray and long line). \iffalse Thus the issue that
%about
any leaf of a foliation of the plane admits a saturated
neighbourhood with a product structure breaks down in our
context. \fi
Another
%even more
perverse example is provided by the vertical foliation of the
Moore surface (cf. Figure~\ref{Train:fig}), where there is no
jump in the topological type of the leaves, yet the projection
$M\to M/{\cal F}$ is not a fibration. If it would then since
the base is ${\Bbb R}$ which is contractible the fibration
would be trivial (Feldbau-Ehresmann-Steenrod), and so the
total space would be ${\Bbb R}^2$ violating the non-metric
nature of the Moore surface $M$.

%%%%%%%%%%%%%%%%%%%%
\iffalse
Claim (A) requires a little justification based
by the following copied version of Th\'eor\`eme 1 in
Haefliger-Reeb \cite[p.\,120]{Haefliger_Reeb_1957}  attributed
to Poincar\'e-Bendixson and justified there (quite loosely)
via Jordan rather than Schoenflies(!), which is somewhat
abrupt in view of the Dubois-Violette example):

\begin{prop} \label{Haefli_Reeb_Thm1} ($\approx$ Haefliger-Reeb, Th\'eor\`eme~1) Given any foliated chart of a foliation of a
simply-connected surface the intersection with any leaf
reduces to the empty set or to a line.
\end{prop}

\begin{proof} The proof is the same as the one provided by
Figure~\ref{David's_trick} (plus the neighbouring
phraseology). Indeed if the leaf appears twice in the foliated
chart we can always produce a foliated disc.
\end{proof}

\fi

As in Haefliger-Reeb \cite[p.\,122]{Haefliger_Reeb_1957} the
fact that a leaf appears at most once in a foliated chart
(Prop.~\ref{Alex_separation}(b)) implies the:

\begin{cor}\label{leaf-space-one-manifold} The
leaf-space $V=S/ \cal F$ of a foliated simply-connected
surface $S$ is a one-dimensional manifold (generally
%ically
non-Hausdorff), which
%moreover
is simply-connected (i.e.,  $\pi_1(V)$ is trivial, or
equivalently $V$ is divided by any puncture).
\end{cor}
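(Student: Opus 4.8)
The plan is to treat the two assertions separately. The input for "$V$ is a $1$-manifold" is Prop.~\ref{Alex_separation}(b); the input for "$V$ is simply-connected" is the separation clause of Prop.~\ref{Alex_separation}(d) (every leaf divides $S$), together with the elementary observation that the quotient projection $p\colon S\to V$ is open. First I would record that openness: the saturation $\mathrm{sat}(W)=\bigcup_{x\in W}L_x$ of an open set $W\subseteq S$ is open (a purely local check inside foliated charts), so $p^{-1}(p(W))=\mathrm{sat}(W)$ is open and hence $p(W)$ is open in the quotient topology.

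For the manifold structure, fix a foliated chart $U\cong{\Bbb R}\times{\Bbb R}$ with plaques the horizontals ${\Bbb R}\times\{t\}$, and let $\tau\colon U\to{\Bbb R}$ be the second projection (an open quotient map). Points of $U$ with the same $\tau$-value lie on a common plaque, hence a common leaf, so $p|_U$ factors as $p|_U=\bar\tau\circ\tau$ for a map $\bar\tau\colon{\Bbb R}\to V$. By Prop.~\ref{Alex_separation}(b) a leaf meets $U$ in at most one plaque, so $\bar\tau$ is injective; it is continuous since $\tau$ is a quotient map and $p|_U$ is continuous; and it is open onto $p(U)$ because for $I\subseteq{\Bbb R}$ open one has $\bar\tau(I)=p(\tau^{-1}(I))$, which is open since $p$ is open. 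Thus $\bar\tau$ is a homeomorphism onto the open subset $p(U)\subseteq V$, and letting $U$ range over a foliated atlas exhibits $V$ as locally Euclidean of dimension $1$; it is connected as the continuous image of the connected $S$. (No Hausdorff axiom is claimed, and indeed it generally fails, per the examples in the text.)

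For simple-connectivity I would use the characterisation recalled in the statement: for a connected $1$-manifold, $\pi_1=0$ iff deleting any point disconnects it (for the four Hausdorff types this is the Kneser classification, and in the present non-Hausdorff generality it is the operative notion, as in the statement). So fix $v\in V$ and set $L:=p^{-1}(v)$, the corresponding leaf. By Prop.~\ref{Alex_separation}(d) (equivalently the Corollary that any leaf of a simply-connected surface divides, resting on \ref{hypersurface} and \ref{Riemann-separation}), $S-L$ splits into two non-empty open ``halves'' $A$ and $B$. Since every leaf of $S-L$ is connected and disjoint from $L$, it lies wholly in $A$ or wholly in $B$; hence $A$ and $B$ are \emph{saturated}. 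Therefore $p(A)$ and $p(B)$ are open (openness of $p$), non-empty, cover $p(S-L)=V\setminus\{v\}$, and are disjoint (if $p(a)=p(b)$ with $a\in A,\ b\in B$, the common leaf would lie in $A\cap B=\varnothing$). So $V\setminus\{v\}$ is disconnected for every $v$, i.e.\ $V$ is divided by any puncture, which is the asserted triviality of $\pi_1(V)$.

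The only genuinely delicate ingredient is that last equivalence "$\pi_1=0\Leftrightarrow$ every point separates" for a $1$-manifold which may be neither Hausdorff nor Lindel\"of; but because the Corollary itself offers "divided by any puncture" as the working meaning of simple-connectivity, what actually has to be proved is the separation statement, and that drops out cleanly from the fact that leaves divide. Everything else---openness of $p$, the transversal-chart computation, connectedness, and the saturatedness of the two halves---is routine bookkeeping.
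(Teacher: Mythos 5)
Your proposal is correct and follows essentially the same route as the paper: the chart computation $\pi(O_i)\cong O_i/\rho_i\approx{\Bbb R}$ resting on Prop.~\ref{Alex_separation}(b) and the openness of the quotient projection, and then simple-connectivity from the fact that each leaf divides $S$ so that the complement of any point of $V$ is disconnected. The extra bookkeeping you supply (saturatedness of the two halves, injectivity and openness of $\bar\tau$) merely makes explicit what the paper leaves implicit.
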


\begin{proof} (Just a translation of
Haefliger-Reeb's argument.) To show that $V=S/ \cal F$ is a
$1$-manifold, it is enough to check that any point $z\in V$
admits an open neighborhood homeomorphic to the
%numerical line
number-line ${\Bbb R}$. Let $\pi\colon S \to V$ the canonical
projection (associated to the equivalence relation $\rho$ of
appurtenance to the same leaf); the leaf $\pi^{-1}(z)$ meets
at least one foliated chart $O_i$. The equivalence relation
induced by $\rho$ on $O_i$ is, by
Prop.~\ref{Alex_separation}(b), the relation $\rho_i$
corresponding to the partition in parallel lines. Thus
$\pi(O_i)$ which is an open neighbourhood of $z$ (since $\rho$
is an open equivalence relation\footnote{This means that the
saturation of any open set is open, or what amounts to the
same that the canonical projection is open.}), is homeomorphic
to $O_i/\rho_i$, that is to the numerical line ${\Bbb R}$.

Regarding the second assertion (simple-connectivity of the
leaf-space) we  again follow Haefliger-Reeb. The complement of
each leaf $L$ (a closed subset of $S$) has two components
(Prop.~\ref{Alex_separation}(c)(d)); hence the complement of
any point of $V$ has also two components.
%(Exercise: Why
%exactly?)
This
%%property is equivalent
is equivalent to the simple-connectivity of $V$ (compare lemma
p.\,113 in Haefliger-Reeb \cite{Haefliger_Reeb_1957} which is
a special case of (\ref{Rieman-polarized-cover}), or formulate
an appropriate exercise in algebraic topology using
Seifert-van Kampen, or Mayer-Vietoris).
\end{proof}

%\section{Infinite cyclic groups}

\subsection{Hausdorffness of the leaf-space in the $\omega$-bounded case}

By the preceding section,
%we know that
the leaf-space of a foliated simply-connected surface is
%always
a 1-manifold. In the metric case, the
%one of the main catalysator responsible of the
non-Hausdorff\-ness of the quotient
%is
is mostly catalyzed
%the presence of
by Reeb components. Heuristically it is rather evident that
there is no long Reeb components. More precisely if one
assumes that there is a long transversal, then it is easy to
deduce a continuous map from the long ray ${\Bbb L}_+$ to the
reals ${\Bbb R}$ which is not eventually constant (by looking
how the leaves emanating from a point on the transversal
intercept a cross-section of a foliated chart). This gives
some weight to the:
\begin{conj}
The leaf-space of any foliated simply-connected
$\omega$-bounded surface is  Hausdorff (which is probably
always the long-line).
\end{conj}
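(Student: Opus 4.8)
The plan is to combine a structural reduction via the soul theorem with the elementary fact that every continuous real-valued function on the long ray is eventually constant, exploiting the ``long transversal'' heuristic already flagged above.

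\emph{Reductions and preliminary observations.} If $M$ is compact it is simply-connected, hence $M\approx S^2$ (\ref{Moebius-Klein-classification}, \ref{uniformization}), which admits no $1$-foliation since $\chi(S^2)=2\neq 0$ (\ref{Euler:classical-obstruction}); so the statement is vacuous and we may assume $M$ non-compact. Then a soul of $M$ is a metric subregion with trivial $\pi_1$, hence $\approx{\Bbb R}^2$ (the option $S^2$ would be clopen in $M$, forcing compactness); by (\ref{soul:uniqueness}) the surface $M$ has exactly one end. By Corollary~\ref{leaf-space-one-manifold} the leaf-space $V:=M/{\cal F}$ is a simply-connected (possibly non-Hausdorff) $1$-manifold; we must show it is Hausdorff. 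Observe that $V$ is $\omega$-bounded: a countable subset of $V$ lifts point-by-point to a countable subset of $M$, whose compact closure projects onto a compactum of $V$ containing the given set. Two facts will be used. First, every leaf is homeomorphic to the long line ${\Bbb L}$: a leaf is a boundaryless manifold with no circle component (\ref{Alex_separation}(a)), is properly embedded and closed as a point-set (\ref{Alex_separation}(c)), hence is a closed subspace of the $\omega$-bounded manifold $M$, thus $\omega$-bounded, thus a countably compact connected boundaryless $1$-manifold, which (not being $S^1$) is ${\Bbb L}$. Secondly, the folklore lemma that \emph{any continuous map $g\colon{\Bbb L}_{\ge 0}\to{\Bbb R}$ is eventually constant}: for each $n$, the set of $\alpha$ admitting some $\beta>\alpha$ with $|g(\beta)-g(\alpha)|\geq 1/n$ is bounded in $\omega_1$ (else pick a strictly increasing such sequence $\alpha_1<\beta_1<\alpha_2<\beta_2<\cdots$ and note that at $\delta:=\sup\alpha_k<\omega_1$ continuity would force $g(\beta_k)-g(\alpha_k)\to 0$); take the countable supremum over $n$.

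\emph{Main argument.} Suppose $V$ is not Hausdorff, so there are distinct points $a,b\in V$ with no disjoint neighbourhoods --- the branching-line germ. Hence there is an embedded arc $I\subset V$ with closure $\overline I=I\cup\{a,b\}$, the two ``accesses'' of $I$ towards the doubled point terminating at $a$ respectively $b$; equivalently, a family of leaves $L_t$, $t\in I$, interpolating between $L_a$ and $L_b$. The crucial claim (discussed below) is that, because $M$ is $\omega$-bounded, the access interval $I$ is \emph{long}, i.e.\ $I$ with its branch-side end adjoined is the closed long ray ${\Bbb L}_{\ge 0}$: intuitively the leaves $L_t$ must escape the single $\omega$-bounded end of $M$ as $t$ approaches the doubled point, and that escape is $\omega_1$-cofinal. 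Granting it, assemble from the short transversals inside the foliated charts covering the $L_t$ a \emph{long transversal} $\tau\cong{\Bbb L}_{\ge 0}$ of ${\cal F}$, embedding into $V$ as the open subset $I$. Fix a foliated chart $O$ meeting $L_a$, with a small cross-section $\kappa\cong(-1,1)\subset{\Bbb R}$ through a point of $L_a$; since $L_t\to L_a$ along the branch-side end of $I$, for $t$ in a tail of $\tau$ (itself a copy of ${\Bbb L}_{\ge 0}$) the leaf $L_t$ meets $\kappa$, yielding a map $h$ of that tail into $\kappa\cong{\Bbb R}$. By local triviality of ${\cal F}$ the plaque of $L_t$ in $O$, hence $h(t)$, varies continuously with $t$; and $h$ is injective, since distinct points of the transversal $\tau$ lie on distinct leaves and so meet $\kappa$ at distinct points (using \ref{Alex_separation}(b)). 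An injective continuous map of a copy of ${\Bbb L}_{\ge 0}$ into ${\Bbb R}$ contradicts the folklore lemma. Hence $V$ is Hausdorff; being a simply-connected, $\omega$-bounded (thus not ${\Bbb R}$), boundaryless $1$-manifold other than $S^1$, it is the long line ${\Bbb L}$, settling also the parenthetical.

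\emph{The main obstacle.} All of the above is soft except the claim that $I$ is long, and this step \emph{must} invoke $\omega$-boundedness rather than mere finite-connectivity: in the metric world, branching (hence non-Hausdorff) leaf-spaces do occur for foliations of ${\Bbb R}^2$ --- this is precisely the non-triviality of Haefliger--Reeb --- so some global largeness of $M$ has to be used. Concretely one must exclude that the pocket $\{L_t\}_{t\in I}$ is swallowed by a Lindel\"of saturated subregion, in which a branch could persist harmlessly. I expect this to require the bagpipe anatomy of the unique $\omega$-bounded end of $M$ (Nyikos): a non-compact leaf, being closed and non-compact, must be cofinal in that end; the $L_t$ sweep the end out as $t\to\{a,b\}$; and the transverse ``width'' of that sweep inherits the $\omega_1$-cofinality of the end. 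Secondary points needing care in the non-metric setting are that the constructed $\tau$ genuinely embeds (rather than merely immerses) into $V$, and that the return of all the $L_t$ to one \emph{fixed} cross-section near $L_a$ is forced by the non-separation --- plausible, but no longer automatic once one is deprived of metric Poincar\'e--Bendixson. I anticipate the bagpipe bookkeeping near the end, not any single decisive trick, to be the principal difficulty.
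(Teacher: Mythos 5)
First, be aware that the statement you were asked to prove appears in the paper as a \emph{conjecture}: the paper offers no proof, only (i) the very heuristic you reproduce --- a long transversal whose leaves recur to a fixed cross-section would yield a continuous, non-eventually-constant map from the long ray to ${\Bbb R}$ --- (ii) an explicit ``outline of the difficulty'' (for two given leaves one would like to find a third leaf separating them), and (iii) a ``theological'' supporting argument that itself rests on the further open Conjecture~\ref{Euler-obstruction}. So there is no proof in the paper to match, and your honest flagging of the unproven ``crucial claim'' correctly locates where all of the content lies.

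The difficulty is worse than ``bagpipe bookkeeping'', however: the crucial claim, in the form your contradiction requires, cannot hold. You need the branch point $a$ to be approached by a \emph{cofinal tail} of a long access interval $I\cong{\Bbb L}_{\ge 0}$, so that the holonomy map $h$ into the cross-section $\kappa\cong{\Bbb R}$ is defined on a non-Lindel\"of tail and the eventually-constant lemma bites. But $V$ is a $1$-manifold (\ref{leaf-space-one-manifold}), so $a$ has a chart $U\cong{\Bbb R}$ in $V$; then $U\cap I$ is a subspace of ${\Bbb R}$, hence Lindel\"of, hence a \emph{bounded} subset of $I\cong{\Bbb L}_{\ge 0}$ (every cofinal subset of $[0,\omega_1)$ fails to be Lindel\"of). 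The same applies to $I\cap\pi(O)$ for any foliated chart $O$ at $L_a$, since $\pi(O)\cong{\Bbb R}$ by Prop.~\ref{Alex_separation}(b). Thus the parameters $t$ whose leaves $L_t$ accumulate on $L_a$ always form a bounded, metrizable piece of $I$: the long direction of any transversal necessarily points \emph{away} from the branch point, and your $h$ is defined on a Lindel\"of set, so the folklore lemma gives no contradiction. This is precisely why branching with short access survives in metric Haefliger--Reeb theory, and why the paper's heuristic only excludes long Reeb components (recurrence through a long transversal to one chart), not non-Hausdorffness itself. The soft parts of your argument --- the reduction to the non-compact case, leaves being long lines because they are closed in an $\omega$-bounded surface, the eventually-constant lemma --- are fine, but the bridge from non-separation of $a$ and $b$ to any long recurrent configuration is missing and, as set up, unavailable. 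A route with better prospects is the one the paper sketches: exhibit a leaf separating $L_a$ from $L_b$; or trap the compact connecting arcs of countably many interpolating leaves $L_{t_n}$ (with $t_n\to a$ and $t_n\to b$) inside a compactum via $\omega$-boundedness and contradict the closedness and non-compactness of $L_a$ and $L_b$. Neither is carried out in the paper either.
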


Here is an outline of the difficulty appearing in an attempt
of proof. Given two leaves $L_1, L_2$ one would like to
separate them. Of course if there is a leaf $L$ which divides
$L_1$ from $L_2$ in the sense of ``Jordan'' that is the $L_i$
belong to two distinct components of $M-L$, then those
components (projected in the
%quotient
leaf-space) will
separate  $L_1$ from $L_2$ in the sense of Hausdorff, and we
are finished. Now we would like to show that under the
$\omega$-boundedness condition, there is such a leaf $L$.

%%%%%%%%%%%%%%%%
%%%%%ARGUMENT OF BAILLIF (censured as not completely convincing)
%%%%%%%%think with an amoeba
\iffalse
Such a leaf should exist by David's result (appearing
in BGG2) that in a Type I manifold, a doubly long leaf has a
foliated neighborhood which is a long tube $D \times L$, where
$D$ is a
%disk.
$1$-disc (interval). An $\omega$-bounded surface being Type I
and sequentially compact, if you take out one of the
neighboring long lines, then it should cut the surface in two,
using the fact that the surface is 1-connected (I guess ?).
OKAY this is true by Prop.~\ref{Alex_separation}(d), but this
does not imply that both leaves lye in different components of
the removed one. \fi

Probably more is true. Recall that a divisor is a (locally
flat) hypersurface which is closed as a point set. In a
simply-connected foliated surface any leaf is a divisor which
is not a circle (\ref{Alex_separation}). Let us call {\it
pseudo-line}
%(or just line)
a connected divisor in a simply-connected surface (say an {\it
absolute}, for short) which is not the circle. (It can be the
real-line, the long ray or the long line). Since any divisor
in a simply-connected $M^2$ divides
(\ref{Riemann-separation}), given 3 pseudo-lines in an
absolute, either one of them divides the two others or no
lines separates the remaining two. Call
%the three lines
the first configuration {\it parallel}, and the second  an
{\it amoeba}. In the latter case the 3 pseudo-lines bound a
bordered subregion namely the triple intersection of those
halves of the $L_i$ containing the remaining two pseudo-lines
$L_j, L_k$ ($j,k \neq i$). Then we have the following
strengthening of the conjecture:
\begin{conj}
Any $3$ leaves of an $\omega$-bounded foliated absolute are
parallel.
\end{conj}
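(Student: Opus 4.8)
The plan is to prove the equivalent assertion that a pair of leaves cannot be \emph{inseparable} in the leaf-space, and to extract from that the non-existence of amoebas. First I would recall that the leaf-space $V=S/{\cal F}$ of a foliated simply-connected surface is itself a simply-connected $1$-manifold (\ref{leaf-space-one-manifold}), and that a \emph{Hausdorff} simply-connected $1$-manifold is, by Kneser's classification of Hausdorff $1$-manifolds, an ``interval'' — it is linearly ordered, the circle $S^1$ being the only non-orderable species — so the middle one of any three of its points separates the other two; in particular it has no amoeba. Hence if three leaves $L_1,L_2,L_3$ of an $\omega$-bounded foliated simply-connected surface $S$ formed an amoeba, $V$ would be non-Hausdorff, i.e.\ some two leaves $L',L''$ would be inseparable in $V$. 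It therefore suffices to rule this out (which in passing would also settle the preceding conjecture on Hausdorffness of the leaf-space).

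Next I would pin down the shape of the leaves. Each leaf of ${\cal F}$ is closed as a point-set (\ref{Alex_separation}(c)), hence a closed subset of the $\omega$-bounded $S$, hence itself $\omega$-bounded; it is not a circle (\ref{Alex_separation}(a)), and it is not a copy of ${\Bbb R}$ since ${\Bbb R}$ is not $\omega$-bounded. So every leaf is a long ray $({\Bbb L}_{\ge 0})$ or a long line $({\Bbb L})$. Here I would invoke the structure results of \cite{BGG2} (already used for (\ref{long-semi-leaf})): a long end of a leaf is properly embedded and carries a product half-tube neighbourhood $\approx D\times{\Bbb L}_{\ge 0}$, with the leaf as core, and a doubly-long leaf $\approx{\Bbb L}$ carries a product tube $\approx D\times{\Bbb L}$ over its whole length. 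Inside a product tube nothing is inseparable from the core, so a long line is inseparable from no leaf; thus $L'$ and $L''$ are both long rays, with long ends sitting in product half-tubes $N'\approx D'\times{\Bbb L}_{\ge 0}$, $N''\approx D''\times{\Bbb L}_{\ge 0}$. If $N'$ and $N''$ overlap, then $L'$ and $L''$ are distinct parallel copies inside the half-tube over the overlap, whose leaf-space is a genuine interval separating $[L']$ from $[L'']$ — contradiction; so assume $N'\cap N''=\varnothing$, meaning the inseparability is concentrated at the two unique (metric) short ends.

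Now the core step. Inseparability of $[L'],[L'']$ in the $1$-manifold $V$ yields a small arc of leaf-classes $\{[L_t]:0<t<\delta\}$ accumulating simultaneously onto $[L']$ and onto $[L'']$ as $t\to 0$; hence for small $t$ the leaf $L_t$ meets $N'$ and $N''$, say $L_t\supseteq\{c_t\}\times{\Bbb L}_{\ge 0}$ in $N'$ and $L_t\supseteq\{c'_t\}\times{\Bbb L}_{\ge 0}$ in $N''$, with $c_t,c'_t\to 0$ and $c_t,c'_t\ne 0$. These are two \emph{disjoint} long sub-rays of $L_t$ (disjoint because $N'\cap N''=\varnothing$), so $L_t\approx{\Bbb L}$ is doubly long, and hence by \cite{BGG2} carries a global product tube $T_t\approx D_t\times{\Bbb L}$, inside which $L'$ and $L''$ reappear along their long ends as parallel copies at depths $c',c''\in D_t$ with $c'\ne c''$ (since $L'\ne L''$). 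The image of the open set $T_t$ under the open projection $S\to V$ is then a neighbourhood of both $[L']$ and $[L'']$; by the absence of recurrence (\ref{Alex_separation}) no two fibres of $T_t$ lie on a common leaf, so this image is just the interval $D_t$, which is Hausdorff and separates $[L']$ from $[L'']$ — contradicting their inseparability, and finishing the proof.

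The hard part, and the reason this stays a conjecture, is the reliance on the fine structure of long leaves — the properly embedded half-tube over a long end and the global product tube over a doubly-long leaf, attributed to Baillif in \cite{BGG2}, whose verification rests on Type-I (bagpipe) technology together with (\ref{long-semi-leaf}) — and on the step that the image of an ${\Bbb L}$-long product region in the leaf-space is Hausdorff, which needs the no-recurrence of (\ref{Alex_separation}) upgraded from single foliated charts to long product tubes; one should also lay down carefully, in the non-metric $1$-manifold $V$, the parametrization ``$[L_t]\to[L']$ and $\to[L'']$'' of the overlap arc. A more economical route, matching the heuristic in the text, would avoid tubes altogether: extract from the branching of $V$ a genuinely \emph{long} transversal emanating from the metric core of $S$ and play the monotone leaf-parameter along it against the principle that every continuous map ${\Bbb L}_{\ge 0}\to{\Bbb R}$ is eventually constant — but producing such a long transversal looks no easier than the tube argument above.
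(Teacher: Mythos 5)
First, a point of context: this statement is a \emph{Conjecture} in the paper, and the paper itself offers no proof --- only a ``theological'' reduction: if three leaves formed an amoeba, doubling the amoeba region would yield an $\omega$-bounded surface of Euler characteristic $-1$, contradicting the (equally conjectural) Euler obstruction (\ref{Euler-obstruction}). Your route is genuinely different: you aim at Hausdorffness of the leaf-space and at the tube structure of long leaves. Your opening reduction is sound and worth keeping: by (\ref{leaf-space-one-manifold}) the leaf-space is a simply-connected $1$-manifold; a Hausdorff such manifold is linearly ordered, so the middle of any three points separates the other two; and separation in the leaf-space pulls back to separation in the surface under the open saturated projection. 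Hence an amoeba forces two inseparable leaf-classes, and it suffices to exclude inseparability.

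The remainder, however, has gaps beyond the ones you flag. (i) Your leaf classification is off: ${\Bbb L}_{\ge 0}$ is a \emph{bordered} $1$-manifold, so no leaf is a closed long ray; more to the point, since leaves are closed and proper (\ref{Alex_separation}(c)), they inherit $\omega$-boundedness from the ambient surface, and the only non-compact $\omega$-bounded connected $1$-manifold is the long line. So \emph{every} leaf is $\approx{\Bbb L}$; there are no ``short ends'' at all, and your entire case analysis around metric ends collapses (this actually helps you, but you should use it). (ii) The principle ``inside a product tube nothing is inseparable from the core'' is false as stated: the line with two origins shows that a point of a non-Hausdorff $1$-manifold can have a neighbourhood homeomorphic to ${\Bbb R}$ and still be inseparable from a point outside that neighbourhood. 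A Hausdorff saturated neighbourhood of $[L']$ separates $[L']$ from $[L'']$ only if it \emph{also contains} $[L'']$. (iii) That is precisely what your core step fails to establish. You assert that $L'$ and $L''$ ``reappear as parallel copies at depths $c',c''$'' inside the saturated tube $T_t$ around $L_t$, i.e.\ that $L',L''\subset T_t$. Inseparability gives you that $[L_t]$ lies in every neighbourhood of $[L']$ and of $[L'']$, not the converse; nothing forces the tube around $L_t$ to swallow the leaves $L'$ and $L''$ themselves. This is exactly where the conjecture resists, and it is the same difficulty the paper points at just before stating it (a leaf dividing the surface need not place $L'$ and $L''$ in different components). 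Finally, the product-tube structure theorems you import from \cite{BGG2} are themselves unpublished claims, so even the scaffolding is conditional. In short: a valid and interesting reduction to non-Hausdorffness of the leaf-space, followed by an argument whose decisive step is unjustified; the statement remains, as in the paper, a conjecture.
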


%For the same purpose (of showing that between two leaves in an
%$\omega$-bounded 1-connected surface there is always another
%one separating the two others) I found this morning
Here is a somewhat more theological argument supporting this
conjecture, which is perhaps not the most elementary, yet
%suggesting
adumbrating a broader perspective. If not, then the three
lines $L_1,L_2,L_3$ are in the configuration of an amoeba.
%(i.e. two parallel lines $L_1,L_2$
%and the third $L_3$ in between like a U-shape, that is without
%dividing $L_1$ from $L_2$).
Then one can double the ``amoeba'' domain bounding the three
curves $L_i$ to get a sort of long pant. It is easy to show
that the latter pant is $\omega$-bounded and of Euler
characteristic $-1$ (for instance with Mayer-Vietoris or by
using the fact that the characteristic of a bagpipe is equal
to that of the bag, cf. \cite[Lemma 4.4]{Gab_2011_Hairiness})
Then conclude with the following conjecture
(\ref{Euler-obstruction}) which has probably some independent
interest (to be compared to the hairiness
%arXiv
%note by Alex, 2011, Hairiness of omega-bded...)
note \cite{Gab_2011_Hairiness} for flows).

\subsection{Missing Euler obstruction}

\begin{conj}\label{Euler-obstruction}
An $\omega$-bounded surface with negative Euler characteristic
$\chi<0$ cannot be foliated.
\end{conj}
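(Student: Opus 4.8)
The plan is to amputate the long pipes by a surgery and reduce to the compact Euler--Poincar\'e obstruction (\ref{Euler:classical-obstruction}). Since an $\omega$-bounded surface has finite connectivity, Theorem~\ref{soul:uniqueness} --- more precisely the weak bagpipe decomposition produced in Step~1 of its proof (equivalently Nyikos' bagpipe theorem) --- writes $M$ as $B\cup P_1\cup\dots\cup P_k$ with $B$ a \emph{compact} bordered surface (the bag), each $P_i$ closed in $M$ (hence itself $\omega$-bounded), a long pipe with a single contour $\partial P_i$ glued to $B$ and simply connected once capped off; moreover $\chi(M)=\chi(B)+\sum_i\chi(P_i)=\chi(B)$ since each $\chi(P_i)=0$ (cf.\ \eqref{Mayer-Vietoris:char}). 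Hence $\chi(B)=\chi(M)<0$. If $k=0$ then $M=B$ is a closed surface and already cannot be foliated by (\ref{Euler:classical-obstruction}); so assume $k\ge 1$ and, towards a contradiction, fix a $1$-foliation ${\cal F}$ of $M$.

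The crux is the following \emph{Pipe Lemma}: each $P_i$ contains an essential separating circle $K_i$ (isotopic in $P_i$ to $\partial P_i$), chosen far enough out that the piece $A_i\subset P_i$ between $\partial P_i$ and $K_i$ is a compact annulus, and such that $K_i$ is \emph{either a leaf of ${\cal F}$ or everywhere transverse to ${\cal F}$}. I would derive this from Baillif's gravitational-clumping phenomenology at the long end of a pipe (\ref{Baillif:nano-black-holes} and the cylindrical model of \cite{BGG1}, \cite{BGG2}): run along the canonical $\omega_1$-sequence $\{C_\alpha\}$ of separating circles of $P_i$; by sequential compactness a leaf cannot oscillate transversally at cofinally many scales, so on a club of $\alpha$ the circle $C_\alpha$ is pinned to ${\cal F}$ --- \emph{either} circle leaves occur cofinally (and any circle leaf is automatically essential, a null-homotopic one bounding a foliated disc by (\ref{Schoenflies-Baer}) and (\ref{disc-cannot-be-foliated})), in which case take $K_i$ to be one of them; \emph{or} every leaf eventually escapes monotonically toward the long end, in which case $C_\alpha$ is transverse to ${\cal F}$ for all large $\alpha$ and we take $K_i=C_\alpha$ (a small collar isotopy removing isolated tangencies if need be).

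Granting this, set $M^{\star}:=B\cup\bigcup_{i=1}^k A_i$, a \emph{compact} bordered surface with $\chi(M^{\star})=\chi(B)=\chi(M)<0$ and $\partial M^{\star}=\bigsqcup_i K_i$, each $K_i$ a leaf of, or everywhere transverse to, ${\cal F}|_{M^{\star}}$. Form the topological double $DM^{\star}$, a closed surface with $\chi(DM^{\star})=2\chi(M^{\star})<0$. Along each $K_i$, the restricted foliation and its mirror glue to a $1$-foliation of $DM^{\star}$: over a tangent component one re-obtains the leaf $K_i$ with leaves approaching it symmetrically from both sides; over a transverse component the collar $K_i\times(-\epsilon,\epsilon)$ is simply foliated transversally to the slices. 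Thus $DM^{\star}$ is a closed foliated surface of negative Euler characteristic, contradicting Corollary~\ref{Euler:classical-obstruction} (equivalently Kneser's combinatorial obstruction \ref{Kneser:Poincare-Dyck:Euler obstruction}). This contradiction would prove $M$ is not foliable.

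The genuine obstacle --- and the reason the statement is only conjectured --- is the Pipe Lemma \emph{uniformly over all long pipes}, including Nyikos' exotic ones that are nowhere of the product form ${\Bbb S}^1\times{\Bbb L}_{\ge 0}$: one must show that no foliation can simultaneously avoid essential circle leaves at cofinally many scales and fail to be eventually transverse to the canonical circles, independently of the pipe's internal geometry. The black-hole heuristic of Baillif is strong evidence, but a proof beyond the cylindrical case of \cite{BGG1} still has to be written, and it is plausible that this is precisely where a genuinely new, non-metric idea is needed rather than a transfer from the metric world.
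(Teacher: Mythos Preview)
The paper does not prove this statement; it is explicitly labelled a \emph{conjecture} and the surrounding text says only that ``we think by experience that it must be true, yet the proof looks more involved than in the flow case.'' So there is no paper proof to compare against, and your proposal should be read as a strategy for attacking an open problem rather than as a reconstruction of something the author did.

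That said, your strategy is exactly the natural one and matches the heuristics scattered through the paper: amputate the pipes via Nyikos' bagpipe decomposition, find on each pipe a circle that is either a leaf or a transversal, double the resulting compact bordered piece, and invoke the classical Euler obstruction (\ref{Euler:classical-obstruction}). The paper uses precisely this surgery in the special case of standard long-cylinder pipes $S^1\times{\Bbb L}_{\ge 0}$ (cf.\ the introduction and \cite{BGG1}), where the dichotomy ``eventually vertical or club-many horizontal circle leaves'' is a theorem. You are also right that the genuine obstacle is your Pipe Lemma for \emph{arbitrary} long pipes: Nyikos' pipes need not be products, need not even contain a single copy of the long ray, and the canonical exhaustion by annuli carries no a priori transversality information. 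The black-hole phenomenology of \cite{BGG1} is established only for the product pipe, and the paper offers no mechanism to extend it. So your assessment of where the difficulty lies is accurate, and your write-up is an honest account of why the statement remains conjectural.
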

This
%looks to be a rather
is an intriguing version of the Euler-Poincar\'e obstruction.
We think by experience that it must be true, yet the proof
looks more involved than in the flow case (where the
hypothesis was slightly different, namely non-zero $\chi$, cf.
\cite{Gab_2011_Hairiness}). The example of ${\Bbb L}^2$ with
$\chi=1$ shows that the condition $\chi\neq 0$ is not enough
to obstruct foliability.
%Of course let me know if you have a counter-example
%or if you find a proof.

\section{Poincar\'e-Bendixson arguments}

In this section, we derive from Poincar\'e-Bendixson's
trapping argument under dichotomy (alias Jordan separation),
several {\it universal} obstructions to transitivity
%in the sense that they are
not confined to the metric case. The complexity (of the
proofs) raises with the topology quantified by the rank of the
$\pi_1$. The method is basically a reduction to the dichotomic
case by passing to
%suitable
double covers, with Poincar\'e-Bendixson's method acquiring
more punch when combined with Riemann's branched covers.
%permitting to control the dichotomy of the total
%space.
When the total space fails to be dichotomic, some deeper
versions of Poincar\'e-Bendixson (like those of Kneser,
Markley, etc.) describing the dynamics on the Klein bottle
enter into the arena.
%(our first subsection is devoted to
%refresh our memories on those classical theorems).
Ultimately we derive an almost complete classification of
%%%those
finitely-connected metric surfaces
%of
%finite-connectivity
which are transitively-foliated. Besides, intransitivity
transfers non-metrically, being conserved to any non-metric
degeneracy of a finitely-connected metric surface provided its
invariants (Euler character, ends-number and indicatrix) are
kept unaltered.
 The
 %concept
 %jargon of the
soul concept formalizes this idea while unifying all
%individual
results under a single perspective.
%yet we did not recanted
%everything under this view as it looks desirable to give
%elementary arguments in special cases.

\subsection{Dynamics on the bottle (Kneser, Peixoto, Markley,
Aranson, Guti\'errez)}

Beside the basic Poincar\'e-Bendixson obstruction, we require
several other classic theorems describing
%for instance
the
dynamics on the Klein bottle. Those
%%requires
rely on  some
%clever
magic arguments
%%%%%(often hard to formalize)
%essentially
%extensions of
close to the Poincar\'e-Bendixson trapping, yet deviating from
it inasmuch as they exploit a
%the existence of a
global cross-section.
%We recommend to
%%%Maybe skip this section on a first reading and refer to it
%%%when really required.

\begin{lemma} (Kneser 1924 {\rm \cite{Kneser24}})\label{Kneser}
Any foliated Klein bottle has a circle leaf.
\end{lemma}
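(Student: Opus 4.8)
The plan is to reduce the Klein bottle to its torus double cover and then exploit the classical (Kneser, Poincar\'e--Bendixson) picture of foliations on $T^2$ together with an equivariance/linear-algebra contradiction. Suppose for contradiction that a foliated Klein bottle $\Klein$ has \emph{no} circle leaf. Let $\pi\colon T^2\to\Klein$ be the orientation double cover, with deck involution $\tau$, which is free and orientation-reversing, and let $\widetilde{\mathcal F}=\pi^{*}{\mathcal F}$ be the pulled-back (hence $\tau$-invariant) foliation of $T^2$. The first step is the \emph{push-down lemma}: a compact leaf $\widetilde L\cong S^1$ of $\widetilde{\mathcal F}$ would descend to a circle leaf of $\mathcal F$ --- if $\tau\widetilde L=\widetilde L$ then $\tau|_{\widetilde L}$ is a free involution of the circle $\widetilde L$ and $\pi(\widetilde L)=\widetilde L/\tau$ is again a circle and a leaf; if $\tau\widetilde L\neq\widetilde L$ the two leaves are disjoint and $\pi(\widetilde L)$ is itself a circle leaf. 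Hence $\widetilde{\mathcal F}$ has no compact leaf either.

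Next I would invoke the structure of compact-leaf-free foliations of the torus, due to Kneser~\cite{Kneser24} (a two-dimensional incarnation of Poincar\'e--Bendixson): a foliation of $T^2$ either possesses a compact leaf, or is ``Kronecker-like'' with a well-defined \emph{irrational} asymptotic homological direction $\ell\subset H_1(T^2;{\Bbb R})$, i.e. a line not spanned by any integer vector. If one prefers a more self-contained route one may first orient $\widetilde{\mathcal F}$ by a further double cover of $T^2$ (still a torus), apply \Kerekjarto--Whitney (\ref{Kerek-Whitney:thm}) to obtain a compatible flow with no rest point and --- since there is no circle leaf --- no closed orbit, and then quote the classical rotation-number theory of Poincar\'e (a closed transversal exists, its return map has irrational rotation number, whence the irrational slope). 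Either way, absence of a compact leaf pins $\ell$ down to an irrational line.

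The contradiction now comes from $\tau$-equivariance. Since $\tau$ maps $\widetilde{\mathcal F}$ to itself it maps leaves to leaves, so $\tau_{*}$ preserves the asymptotic direction: $\tau_{*}\ell=\ell$. But $\tau$ is a free involution of $T^2$ whose quotient $\Klein$ is non-orientable, so $\tau_{*}\in GL_2({\Bbb Z})$ satisfies $\tau_{*}^{2}=\mathrm{id}$ (whence $\tau_{*}$ is diagonalisable with eigenvalues in $\{+1,-1\}$) and $\det\tau_{*}=-1$ (whence the eigenvalues are exactly $+1$ and $-1$). Thus $\tau_{*}$ has two rational --- indeed primitive integral --- eigenlines, and any $\tau_{*}$-invariant line is one of them; so $\ell$ is rational, contradicting its irrationality. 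This contradiction shows $\mathcal F$ must carry a circle leaf. (As a consistency check: a nullhomotopic circle leaf would bound a foliated disc, impossible by (\ref{disc-cannot-be-foliated}), so the circle leaf produced is automatically essential; and this is all compatible with $\chi(\Klein)=0$, which is what allows $\Klein$ to be foliated at all, cf. (\ref{orienting:2-fold-covering}) and the Euler obstruction.)

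The main obstacle is the torus input: one must be willing to quote that a compact-leaf-free foliation of $T^2$ has a well-defined irrational slope (Kneser 1924~\cite{Kneser24}, or, after orienting, \Kerekjarto--Whitney (\ref{Kerek-Whitney:thm}) plus Poincar\'e's rotation number). Morally this is the torus-analogue of the very lemma being proved, so it is fair simply to cite Kneser for that piece. The remaining work --- the push-down lemma, and keeping the free orientation-reversing involution alive through the covering reductions --- is routine, and the final step is nothing more than linear algebra over ${\Bbb Z}$.
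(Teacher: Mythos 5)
Your argument is sound, but note that the paper itself offers \emph{no} proof of this lemma at all: the proof environment is commented out and the statement is simply admitted on the authority of Kneser's 1924 paper \cite{Kneser24}. So the comparison is between a bare citation and your genuine (and correct) reduction. The push-down step is fine --- a free involution of a circle leaf is conjugate to the antipodal map, so the quotient is again a circle, and in the non-invariant case $\pi$ restricted to the leaf is injective --- and the endgame is airtight: $\tau_{*}^{2}=\mathrm{id}$ together with $\det\tau_{*}=-1$ (forced by orientation-reversal and the intersection form on $H_1(T^2;{\Bbb Z})$) pins the eigenvalues to $+1,-1$, so the only $\tau_{*}$-invariant lines are the two rational eigenlines, contradicting irrationality of $\ell$. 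The entire weight therefore rests on the torus input: that a foliation of $T^2$ without compact leaves has a well-defined irrational asymptotic direction (equivalently, a closed transversal with irrational rotation number after orienting via \ref{Kerek-Whitney:thm}). That statement is precisely the substance of Kneser's paper, so you have traded the paper's citation for a citation of comparable depth --- but the reduction you interpose is the standard modern route to the Klein-bottle theorem, and it is more informative than the bare reference. If you write it up, make two small points explicit: (i) when you orient $\widetilde{\mathcal F}$ by a further double cover $T'\to T^2$, the asymptotic direction transfers down and stays irrational because $H_1(T';{\Bbb Z})$ sits in $H_1(T^2;{\Bbb Z})$ with finite index, so the two lattices are commensurable; (ii) the closing ``consistency check'' about nullhomotopic circle leaves and (\ref{disc-cannot-be-foliated}) is harmless but not needed, since the contradiction is already complete.
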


\iffalse
\begin{proof}
Compare Kn
\end{proof}
\fi

\begin{cor}\label{Klein-foliated-intransitive}
The Klein bottle $\Klein$ is foliated-intransitive.
\end{cor}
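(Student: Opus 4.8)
The plan is to deduce Corollary~\ref{Klein-foliated-intransitive} directly from Kneser's Lemma~\ref{Kneser}, which guarantees a circle leaf in any foliated Klein bottle. The key observation is that a circle leaf is an obstruction to transitivity: if some leaf $L_0$ is a compact circle and the foliation were transitive (some leaf $L$ dense), then $L$ would have to accumulate on every point of $L_0$, so $L$ meets a foliated chart $U$ around a point of $L_0$ in infinitely many plaques accumulating on the plaque $L_0\cap U$. But I would instead argue more cleanly: a circle leaf $L_0$, being a closed leaf, is a closed subset of the Klein bottle, so $\Klein - L_0$ is an open saturated set; no leaf in $\Klein - L_0$ can be dense in $\Klein$ (it misses $L_0$ entirely), and $L_0$ itself is manifestly not dense. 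Hence no leaf is dense, i.e. the foliation is intransitive.

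First I would invoke Lemma~\ref{Kneser} to produce a circle leaf $L_0$ in the given foliated Klein bottle $\Klein$. Next I would note that leaves are always closed as point-sets when they are compact (a compact subset of a Hausdorff space is closed), so $L_0$ is closed; consequently every other leaf, being contained in the open saturated set $\Klein - L_0$, has closure contained in $\Klein - L_0 \neq \Klein$, and therefore fails to be dense. Since $L_0$ is a proper closed subset of $\Klein$ it is not dense either. Thus $\Klein$ admits no dense leaf under any foliation, which is exactly foliated-intransitivity.

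The main (and really only) obstacle is that the whole argument rests on Lemma~\ref{Kneser}, whose proof is not given in the excerpt (it is attributed to Kneser 1924); but since we are permitted to assume results stated earlier, this is not an obstacle for the proof of the corollary itself. A minor subtlety to handle carefully is making sure the notion of ``dense leaf'' used in the definition of transitivity is the topological one, so that the presence of any proper closed saturated subset (the complement of the circle leaf) immediately precludes density — this is routine. One should also remark that Lemma~\ref{Kneser} applies regardless of orientability or metrizability concerns here, since the Klein bottle is compact and metric, placing us squarely in the classical setting where Kneser's combinatorial triangulation argument (cf. \ref{Kneser:triangulation-compatible-foliation}) is available.

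\begin{proof}
By Kneser's Lemma~\ref{Kneser}, any foliation of $\Klein$ possesses a circle leaf $L_0$. Being compact in a Hausdorff space, $L_0$ is closed as a point-set, so its complement $\Klein - L_0$ is a non-empty open saturated subset. Any leaf other than $L_0$ is contained in $\Klein - L_0$, hence its closure avoids $L_0$ and cannot be all of $\Klein$; and $L_0$ itself, being a proper closed subset, is not dense. Therefore no leaf is dense, i.e. $\Klein$ is foliated-intransitive.
\end{proof}
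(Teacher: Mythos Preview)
Your argument contains a genuine gap at the step ``any leaf other than $L_0$ is contained in $\Klein - L_0$, hence its closure avoids $L_0$''. This inference is false: a leaf contained in the open set $\Klein - L_0$ may very well accumulate on $L_0$, so its closure in $\Klein$ can meet (indeed contain) $L_0$. Think of a Reeb component, where non-compact leaves spiral toward the boundary circle leaf and have that circle in their closure. The fact that $\Klein - L_0$ is open and saturated does \emph{not} imply it is closed under taking closures of leaves. So the mere existence of a circle leaf does not, by itself, preclude some other leaf from being dense.

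The paper's proof supplies exactly the missing work: after finding the Kneser circle $K$, one distinguishes whether $K$ divides $\Klein$ (then no leaf can cross between the two pieces, so none is dense) or not. In the non-dividing case one cuts along $K$ to obtain a connected compact bordered surface with $\chi=0$ and one or two contours, hence a M\"obius band or an annulus by classification; deleting the boundary yields an open surface with $\pi_1\approx{\Bbb Z}$, and one then invokes the earlier result (\ref{infinite-cyclic-group}) that such surfaces are foliated-intransitive. Your proof can be repaired along these lines, but as written the key implication fails.
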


\begin{proof} By  Kneser (\ref{Kneser})
there is a circle leaf $K$. If it divides the bottle ${\Bbb
K}$ we are finished. Else cut the surface along $K$ to get a
{\it connected} compact bordered surface with $\chi=0$ and
either one or two contours. By classification
(\ref{Moebius-Klein-classification}) these are resp. a
(compact) M\"obius band or an annulus. Deleting the boundary
gives in both cases surfaces with $\pi_1\approx {\Bbb Z}$, and
conclude with (\ref{infinite-cyclic-group}) below.
\end{proof}

\begin{lemma}\label{Markley}
(Markley 1969, Aranson 1969, Guti\'errez 1977) The Klein
bottle $\Klein$ is flow-intransitive.
\end{lemma}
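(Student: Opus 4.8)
Since the Klein bottle is \emph{not} dichotomic, the elementary Poincar\'e--Bendixson obstruction (\ref{Poinc-Bendix:flows}) does not apply, so the plan is to combine the foliated verdict (\ref{Klein-foliated-intransitive}) for the non-singular part of the flow with the structure theory of recurrent orbits on surfaces of vanishing Euler characteristic for the rest. Suppose then that a flow $\phi$ on $\Klein$ has a dense orbit $\Gamma$. Being dense, $\Gamma$ is neither a rest point nor a periodic orbit (points and embedded circles are nowhere dense in a surface), so $\Gamma=\{\phi_t(x_0):t\in{\Bbb R}\}$ is an injectively immersed line. First I would observe that $\Gamma$ is then \emph{non-trivially recurrent}: for $x\in\Gamma$, $T>0$ and $\varepsilon>0$, the half-orbit $\{\phi_t(x):t>T\}$ is $\Gamma$ with a compact (hence nowhere dense) arc removed, so it is still dense and meets the $\varepsilon$-ball about $x$; thus $x\in\omega(x)\cap\alpha(x)$, while $\Gamma$ itself is not closed.

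Next I would dispose of the case in which $\phi$ has \emph{no} rest point. Then the tubular-flow theorem turns the decomposition of $\Klein$ into orbits into a topological $1$-foliation (oriented, by the flow direction), and the dense orbit $\Gamma$ becomes a dense leaf, contradicting Corollary~\ref{Klein-foliated-intransitive}. (Equivalently one could pass to the orienting double cover (\ref{orienting:2-fold-covering}) and run the Poincar\'e--Bendixson trapping there, using \Kerekjarto--Whitney (\ref{Kerek-Whitney:thm}).) Hence we may assume that the rest-point set $R:=\mathrm{Fix}(\phi)$ is non-empty (and of course $R\neq\Klein$).

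The remaining case---$\phi$ with rest points together with the dense, hence non-trivially recurrent, orbit $\Gamma$---is the genuine obstacle, and here I would quote the cited theorems rather than reprove them. On the open surface $N:=\Klein\setminus R$ the flow is non-singular, so it carries an oriented $1$-foliation with a dense leaf; but a priori $N$ may be of infinite topological type (nothing evidently forbids $R$ from being, say, a Cantor set), so the rank-of-$\pi_1$ obstructions of the present paper no longer bite. What Markley 1969 and Aranson 1969 establish (and what Guti\'errez 1977 transfers from the smooth to the merely continuous category via his smoothing theorem) is exactly that the Klein bottle admits \emph{no} non-trivially recurrent orbit---every recurrent orbit of a flow on $\Klein$ is a rest point or a closed orbit---which directly contradicts the first paragraph. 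The proof of that fact is Kneser-flavoured (\ref{Kneser}): from the recurrent orbit one extracts a global cross-section and reduces the return dynamics to a homeomorphism of a circle whose monodromy is pinched by the non-orientable gluing, excluding the irrational-rotation behaviour a dense orbit would require. A clarifying heuristic is the passage to the orientation double cover $T^2\to\Klein$, whose deck transformation $\tau$ is a free, orientation-reversing involution: the lifted flow commutes with $\tau$, so a dense orbit below would impose, via \ref{Kerek-Whitney:thm}, a Kronecker-type oriented foliation of $T^2$ invariant under $\tau$, whereas such an involution must tilt the Kronecker direction---a contradiction. Making this argument survive across the rest-point set $R$ is precisely the point where the work of Markley--Aranson--Guti\'errez is indispensable, so at that spot I would simply cite it.
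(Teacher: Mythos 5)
Your plan is half a proof and half a citation, and the half you cite is exactly the half the paper proves. The rest-point-free reduction to Corollary~\ref{Klein-foliated-intransitive} is a legitimate alternative the paper does not use (and it is non-circular, since that corollary is established independently of the present lemma via Kneser's circle leaf and the infinite-cyclic-group obstruction). But your case split is unnecessary, and the singular case --- where you write that you would ``simply cite'' Markley--Aranson--Guti\'errez --- is precisely where the paper reproduces Guti\'errez's short, self-contained argument. That argument runs uniformly, rest points or not: a dense orbit is non-trivially recurrent, so the standard flow-box-plus-first-return construction yields a closed transversal $C$ through one of its points (this is purely local near the recurrent point, so your worry that $\Klein\setminus\mathrm{Fix}(\phi)$ might have infinite topological type never arises). $C$ is two-sided (its tube is oriented by the flow-lines) and non-separating (a separating transversal would already trap the orbit), so cutting $\Klein$ along $C$ gives a connected bordered surface with $\chi=0$ and two contours, i.e.\ an annulus $W$, with $\Klein$ recovered by an identification $h\colon C_1\to C_2$ that encodes the non-orientable gluing as a reflection followed by the radial map. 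Tracking the successive intersections $0,1,2,\dots$ of the dense orbit with $C$ together with their $h$-images, one produces a Jordan circuit in $W$ bounding a disc that traps the entire future of the orbit (Figure~\ref{Gutierrez:fig}), contradicting density. This is about ten lines, so outsourcing it is the one genuine shortfall of your proposal relative to the paper.

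A secondary caution: your orientation-double-cover heuristic does not close as stated. Once rest points are permitted, a transitive lifted flow on $T^2$ need not be conjugate to a Kronecker linear flow, so ``tilting the Kronecker direction'' has no direct meaning; and the Klein--Weichold fixed-point argument the paper uses in related situations (\ref{Klein-Weichold}) is unavailable here because the covering involution is free. You correctly flag this as heuristic, but it cannot be promoted to a proof without essentially redoing the transversal argument above.
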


\begin{proof} The intransitivity of ${\Bbb K}$
was first established by Markley 1969 \cite{Markley_1969}
(independently Aranson 1969), yet the argument of Guti\'errez
1978~\cite[Thm~2, p.\,314--5]{Gutierrez_1978_TAMS} seems to be
%inter\-continentally
%universally
%%%%%recognized as
the ultimate simplification. We recall it for
completeness.

By a lemma of Peixoto there is a global cross-section $C$ to
the flow (transverse circle). This circle is two-sided (its
tubular \nbhd{ }being oriented by the flow-lines is an annulus
not a M\"obius band). Also $C$ is not dividing (a separation
impeding transitivity). Cutting $\Klein$ along $C$ yields a
connected bordered surface $W$ with 2 contours with $\chi$
unchanged equal to $0$. By classification
(\ref{Moebius-Klein-classification}), $W$ is an annulus.
Orient its 2 contours $C_1, C_2$ as the boundary of $W$, and
the original surface is recovered by an orientation-preserving
homeomorphism $h\colon C_1\to C_2$ (which we may assume, in
reference to a planar model say $W=\{z\in {\Bbb C}: 1 \le
\vert z \vert \le 2 \}$, to be a reflection about the vertical
axis on $C_1$ followed by a radial map $C_1\to C_2$). We
denote $h(p)=p'$, just by a prime.

\begin{figure}[h]
\centering
    \epsfig{figure=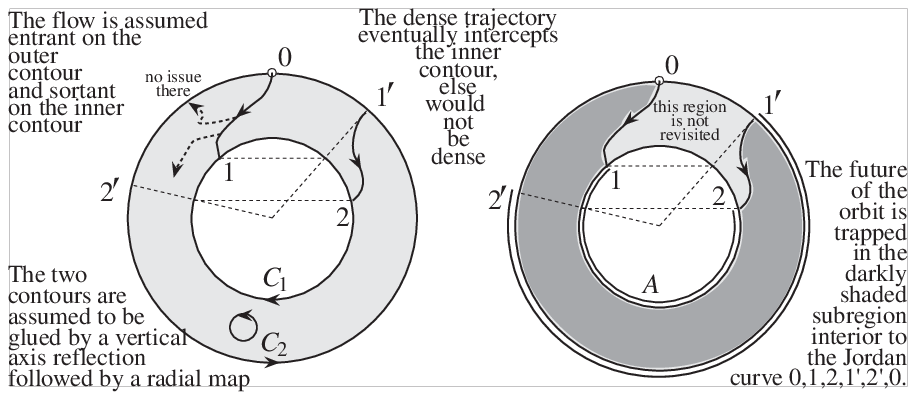,width=122mm}
\vskip-10pt\penalty0

  \caption{\label{Gutierrez:fig}
  Flow intransitivity of the Klein bottle (Guti\'errez's proof)}
\vskip-5pt\penalty0
\end{figure}

Assume the flow entrant on the outer contour $C_2$ and sortant
on the inner contour $C_1$. A dense orbit must cross $C$, and
w.l.o.g. we may suppose that the forward-orbit
%%%$F$
is dense. Let $0$ be a point on $C_2$ whose forward-orbit is
dense in $\Klein$. Since the inner contour $C_1$ has a
foliated collar where the flow is sortant,
%by denseness
the dense orbit must eventually reach this collar and so
intercepts $C_1$ at some point, say $1$. Then reflect
vertically $1$ and map it radially to get $1'\in C_2$. As
before (denseness) the subsequent trajectory must again
intercept $C_1$, at some position say $2$. Consider $h(2)=2'$,
and notice that the subsequent orbit is trapped inside the
%shaded
dark subregion of Figure~\ref{Gutierrez:fig}, violating
%its
%presupposed
denseness. Indeed, the future of $2'$ will be an interception
with the arc  $A=\overline{1,2}\subset C_1$ determined such
that the Jordan circuit $0,1,A,2,1',2',0$ [=flowing forwardly
from $0$ to $1$ , then following the arc $A$, next flowing
backwardly from 2 to $1'$ and finally moving injectively on
the circle along the orientation specified by the triple
$1',2',0$] is null-homotopic in the annulus $W$, so bounds a
disc $D$ in $W$, which is the required trapping region, since
$h(A)\subset D$.
\end{proof}

\subsection{Dichotomy obstructs oriented transitivity}

%We start by noticing
The cornerstone
%of the sequel
is an oriented foliated
%version
avatar of Poincar\'e-Bendixson:

\begin{lemma}\label{Poinc-Bendixson_many} An oriented
%(or orientable)
foliation on a dichotomic surface has no dense leaf. Further
an
%obvious
addendum is that no finite collection of leaves can
be dense.
\end{lemma}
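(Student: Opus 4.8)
The plan is to transcribe the classical Poincar\'e--Bendixson trapping argument, the only twist being that Jordan separation is here invoked as the defining property of a dichotomic surface rather than proved, and that the whole argument is local (a flow box plus a transversal) together with one global input (a Jordan curve separates), so no metricity is needed. First I would dispense with circle leaves: a closed leaf $K$ satisfies $\overline K=K$, a proper closed subset of the $2$-dimensional surface $S$, hence is never dense; so every leaf that could conceivably be dense is a copy of the line $\mathbb R$.

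Next, fix a foliated chart $B\approx\mathbb R^2$ with plaques $\mathbb R\times\{c\}$ and let $\Sigma$ be a full transversal of $B$ (say $\{0\}\times(-1,1)$), meeting every plaque in exactly one point. The heart of the proof is the \emph{orderliness of returns} for a single line-leaf $L$: parametrising $L\cong\mathbb R$ so that its successive crossings of $\Sigma$ occur at parameters $\dots<t_{-1}<t_0<t_1<\dots$ with $p_n:=L(t_n)\in\Sigma$, the assignment $n\mapsto p_n$ is monotone for the linear order of $\Sigma$. I would prove this by the trapping Jordan curve: for two consecutive crossings $p_0,p_1$, the concatenation $J$ of the compact leaf-arc $L([t_0,t_1])$ with the subarc $\gamma\subset\Sigma$ running from $p_1$ back to $p_0$ is an embedded circle (by consecutiveness the open leaf-arc misses $\Sigma$, hence misses $\gamma$), so $J$ separates $S$ by dichotomy. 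The forward continuation $L|_{[t_1,\infty)}$ is a connected set meeting $J$ only at $p_1$, hence confined to the closure $T$ of one of the two components of $S\setminus J$; \emph{which} side it enters at $p_1$ is dictated by the coherent transverse crossing direction supplied by the orientation of the foliation, and the standard planar bookkeeping of Poincar\'e--Bendixson then pins $p_2$ to the side of $p_1$ that makes $p_0,p_1,p_2$ monotone. Iterating, and running the mirror argument backwards, yields monotonicity in both directions; for the routine details I would cite the classical sources.

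Granting orderliness, the two assertions drop out by a cardinality count. A monotone sequence in the interval $\Sigma$ has at most two accumulation points in $S$, so $\overline{L\cap\Sigma}$ is countable. But if $L$ were dense in $S$ then $L\cap\Sigma$ would be dense in $\Sigma$ --- every interior point of $\Sigma$ is approached by plaques of $L$, which hit $\Sigma$ nearby --- forcing $\overline{L\cap\Sigma}\supseteq\Sigma$, an uncountable set: contradiction. For the addendum, if $L_1\cup\dots\cup L_k$ were dense, then $\bigcup_i(L_i\cap\Sigma)=(L_1\cup\dots\cup L_k)\cap\Sigma$ would be dense in $\Sigma$; since finite unions commute with closure, $\Sigma\subseteq\bigcup_i\overline{L_i\cap\Sigma}$ would be covered by finitely many countable sets, again impossible.

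The one genuine obstacle is the orderliness lemma: one must check carefully that the trapped forward semi-leaf cannot leak out through the transversal arc $\gamma$, and that the resulting confinement forces $p_2$ to land on the correct side of $p_1$. This is exactly where the hypothesis that the foliation is \emph{oriented} is indispensable --- it provides a globally consistent crossing direction along $\Sigma$; dropping it leaves only the ``moderate recurrence'' noted in the overview, as the M\"obius band shows. Everything else (eliminating circle leaves, the density $\Rightarrow$ dense-trace step, and the counting) is routine.
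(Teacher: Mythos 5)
Your proof is correct and follows essentially the same route as the paper: the Poincar\'e--Bendixson trapping argument, with the Jordan curve assembled from a compact leaf-arc and a transversal arc, dichotomy supplying the separation, and the orientation of the foliation supplying the coherent crossing direction that forces monotone returns. Your cardinality bookkeeping (countable closure of a monotone trace versus uncountable $\Sigma$) is just a slightly more explicit packaging of the paper's ``returns occur in an order-preserving fashion, hence no density,'' and it handles the finite-union addendum at least as carefully as the paper's one-line remark about superposing trajectories.
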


\begin{proof}
This is the
%standard
trapping argument of Poincar\'e-Bendixson, best understood by
drawing a figure:

\begin{figure}[h]
\centering
    \epsfig{figure=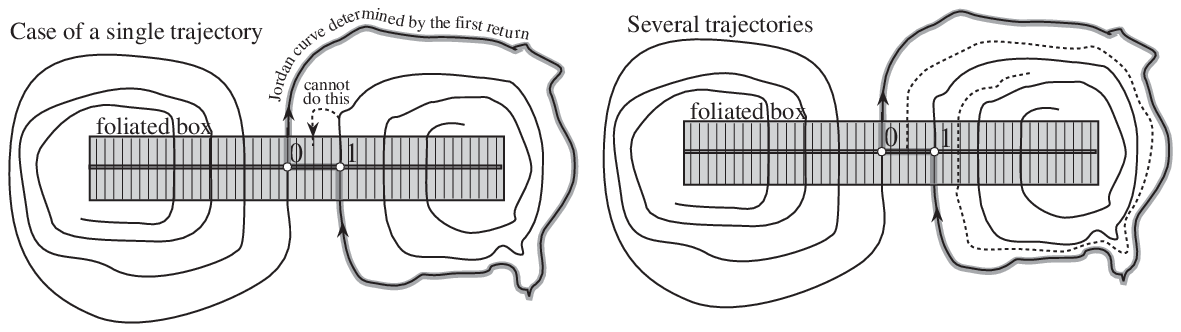,width=122mm}
\vskip-15pt\penalty0

  \caption{\label{poinben:fig}
  Foliated Poincar\'e-Bendixson argument}
\vskip-5pt\penalty0
\end{figure}

Assume that $L$ is a dense leaf. Choose on it a point (called
$0$) and about $0$ a foliated box $B$. Since $L$ is dense it
must reappear in the box $B$. W.l.o.g. assume this to be a
forward interception w.r.t. the orientation (else reverse it).
Call the first return to the cross-section $1$. The piece of
leaf from 0 to 1 closed by the cross-sectional arc from $1$
back to $0$ is a Jordan curve $J$. By  dichotomy $J$ divides
the surface, trapping the future of the trajectory. In
particular the next return to the cross-section, call it $2$,
occurs to the right of $1$. By induction it follows that the
successive returns occur in a order preserving fashion. On
that
%classical
picture on can safely superpose several trajectories
(=oriented leaves) so as to deduce the addendum.
\end{proof}

\begin{rem} {\rm Lemma~\ref{Poinc-Bendixson_many}
%provides another proof
reproves that a 1-connected surface lacks dense leaves (a
foliation on a 1-connected manifold being automatically
orientable (\ref{orienting:2-fold-covering})).

%Application: f
\subsection{Foliated surfaces with infinite
cyclic group}

Our initial intention was to apply the Haefliger-Reeb theory
to the following proposition, by passing to the universal
cover while arguing that the lifts of the dense leaf cannot be
dense. (We were not able to present a decent proof and crudely
put, met some difficulties in showing that the generating
deck-translation acts as translations or  gliding reflections
of ${\Bbb R}^2$.)

\begin{prop} \label{infinite-cyclic-group} A (non-metric) surface
whose fundamental group is infinite cyclic
%cannot support
lacks a transitive foliation (i.e., with at least one dense
leaf).
\end{prop}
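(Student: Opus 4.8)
The plan is to reduce to the dichotomic case and then invoke the already-established Poincar\'e-Bendixson obstruction (\ref{Poinc-Bendixson_many}). First I would dispose of the orientable case: if $M$ is orientable with $\pi_1(M)\cong\mathbb Z$, then by Lemma~\ref{dichotomy:orient_plus_inf_cycl} the surface $M$ is dichotomic, and any foliation on it is automatically orientable since foliations of... well, not quite --- orientability of the foliation is \emph{not} automatic here because $\pi_1=\mathbb Z$ is nontrivial. So I would instead pass to the orienting double cover of the foliation (\ref{orienting:2-fold-covering}): the lifted foliation $\widetilde{\mathcal F}$ on the double cover $\widetilde M$ is orientable. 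Now $\widetilde M$ is a connected double cover of a dichotomic surface with $\pi_1=\mathbb Z$; its fundamental group is a subgroup of index $\le 2$ in $\mathbb Z$, hence again infinite cyclic (or trivial, if the cover happens to be the orientation cover inducing the index-2 subgroup). In either case $\widetilde M$ is orientable (it covers an orientable surface, using heredity \ref{orientability:heredity} plus the fact that $\pi_1(\widetilde M)$ being $\mathbb Z$ or trivial leaves no room for a nontrivial indicatrix), with infinite cyclic or trivial $\pi_1$, so by Lemma~\ref{dichotomy:orient_plus_inf_cycl} (or \ref{Schoenflies-Baer} in the trivial case) it is dichotomic. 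Then the oriented Poincar\'e-Bendixson lemma (\ref{Poinc-Bendixson_many}) forbids a dense leaf for $\widetilde{\mathcal F}$. Since a dense leaf of $\mathcal F$ on $M$ would lift to a leaf of $\widetilde{\mathcal F}$ whose closure surjects onto $M$ --- and the covering map is open, so the image of a closed saturated set is closed, forcing some lifted leaf (or a finite union of the two lifts) to be dense in $\widetilde M$ --- we contradict the addendum of (\ref{Poinc-Bendixson_many}) about finite unions of leaves.

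For the non-orientable case (which the remark after the statement flags via the M\"obius band counterexample --- but note the M\"obius band \emph{is} transitively foliated, so the proposition must really be using orientability implicitly, OR the argument must handle non-orientability differently), I would proceed as follows. Since the M\"obius band shows the bare hypothesis $\pi_1=\mathbb Z$ is \emph{not} enough, the correct reading is that the proposition is about surfaces with $\pi_1=\mathbb Z$ that are \emph{not} the M\"obius band (or equivalently, one restricts to orientable ones, or one notes the open M\"obius band is the sole exception). I would argue: a non-orientable surface with $\pi_1=\mathbb Z$ contains a one-sided Jordan curve, and by the classification of Kneser-type (\ref{Kerekjarto:non-orient} gives the metric models; here with $b_1=1$ one gets $\mathbb{RP}^2$ punctured $=$ open M\"obius band), so the only metric non-orientable surface with $\pi_1=\mathbb Z$ is the open M\"obius band, which \emph{does} admit a transitive (indeed minimal) foliation. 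Hence the statement is only true for orientable $M$, and the remark's parenthetical ``This fails without orientability'' confirms this. So I would state and prove the proposition under the orientable hypothesis (or treat the M\"obius band as an explicit exception), using the double-cover-plus-dichotomy route above.

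The main obstacle I anticipate is the descent argument: controlling how a dense leaf upstairs relates to a dense leaf downstairs through the orienting double cover. Concretely, I must verify that if $L\subset M$ is a dense leaf, then in $\widetilde M$ the preimage $\pi^{-1}(L)$ is a leaf or a disjoint pair of leaves of $\widetilde{\mathcal F}$, and that at least one component (or the pair together) has dense closure --- this uses that $\pi$ is a covering hence open, that $\pi^{-1}(\overline L)=\overline{\pi^{-1}(L)}$ for such nice sets, and that $\overline L=M$ forces $\overline{\pi^{-1}(L)}=\widetilde M$. Then $\pi^{-1}(L)$ being a finite union ($\le 2$) of leaves that is dense contradicts the addendum in (\ref{Poinc-Bendixson_many}). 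A secondary subtlety: one must ensure the orienting cover $\widetilde M$ is connected (otherwise $\mathcal F$ was already orientable and we apply (\ref{Poinc-Bendixson_many}) directly on $M$, which is dichotomic by \ref{dichotomy:orient_plus_inf_cycl}); and one must confirm $\pi_1(\widetilde M)$ remains infinite cyclic or trivial, which follows since index-$\le 2$ subgroups of $\mathbb Z$ are $\{0\}$ or $\cong\mathbb Z$, both of which give dichotomic orientable surfaces by \ref{dichotomy:orient_plus_inf_cycl} and \ref{Schoenflies-Baer}. Everything else is routine given the machinery already assembled in the excerpt.
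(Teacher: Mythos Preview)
Your orientable case is correct and matches the paper's argument: pass to the orienting double cover of the foliation (\ref{orienting:2-fold-covering}), observe that the cover is still orientable with $\pi_1$ infinite cyclic (or trivial), hence dichotomic by (\ref{dichotomy:orient_plus_inf_cycl}) (resp.\ Schoenflies), and invoke the addendum of (\ref{Poinc-Bendixson_many}) on the finite union of lifted leaves.

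The genuine gap is in the non-orientable case. You assert that the open M\"obius band admits a transitive (even minimal) foliation, and on that basis you decide the proposition must secretly carry an orientability hypothesis. This is false: the M\"obius band is \emph{intransitive}, and the proposition holds for \emph{all} surfaces with $\pi_1\cong\mathbb Z$, orientable or not. You have misread the parenthetical ``(This fails without orientability as shown by the M\"obius band)'': in the paper that remark is attached to Lemma~\ref{dichotomy:orient_plus_inf_cycl} (dichotomy of orientable $\pi_1=\mathbb Z$ surfaces), not to Proposition~\ref{infinite-cyclic-group}. The M\"obius band fails to be dichotomic, but it is still foliated-intransitive.

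The correct treatment of the non-orientable case---which is what the paper does---is simply to pass to the orientation double cover of the \emph{surface} (\ref{indicatrix-orient-covering}). The cover is orientable, and its fundamental group is the kernel of the indicatrix, an index-$2$ subgroup of $\mathbb Z$, hence again $\mathbb Z$. You are now in the orientable case already settled. A dense leaf downstairs lifts to at most two leaves whose union is dense (by the openness-of-covering argument you yourself sketch), and after a further orienting cover of the foliation you have at most four leaves with dense union in a dichotomic surface carrying an oriented foliation, contradicting the addendum of (\ref{Poinc-Bendixson_many}). No exception for the M\"obius band is needed.
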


\begin{proof} If orientable, the surface is
dichotomic (\ref{dichotomy:orient_plus_inf_cycl}) and we
conclude with (\ref{Poinc-Bendixson_many}) after orienting the
foliation up to passing to the
%2-fold
double cover (\ref{orienting:2-fold-covering}). As the group
is ${\Bbb Z}$, it stays so under finite covering, which also
preserves orientability. If  not orientable, the surface
constructs its orientation double cover
(\ref{indicatrix-orient-covering}) and we are reduced to the
previous
%discussion.
case.
\end{proof}

\iffalse
\begin{rem} {\rm Lemma~\ref{Poinc-Bendixson_many}
%provides another proof
reproves that a simply-connected surface cannot have a dense
leaf (a foliation on a simply-connected
%surface
manifold is automatically orientable).\fi \iffalse And then we
can either:

(1) use Jordan separation and the above Poincar\'e-Bendixson
lemma.

(2) or alternatively apply the closing argument (of BGG2 \cite{BGG2}, cf.
also the simple case $1$ of Figure~\ref{David's_trick})
without having to worry about the more complicated Cases~2 and
2bis, which are impossible for an oriented foliation. [So at
this stage I understood that David's proof in BGG2 is indeed
complete, yet one has to insist that the foliation can be
globally oriented. Thus with this preliminary we can simplify
somewhat the proof surrounding Figure~\ref{David's_trick},
while in particular avoiding the index formula for line
fields.]
\fi
}
\end{rem}

\subsection{Free groups of rank two under dichotomy}

Recall as a motivation Dubois-Violette's transitive
(indeed minimal) foliation on the thrice punctured plane with
 fundamental group  $F_3$ (free
 %group
 on three letters).
Also the Kronecker torus punctured once shows a minimal foliation
on a surface with $\pi_1\approx F_2$ (free of rank 2). The
following result
%(which is
(surely well-known in the metric case, albeit we did not
 checked
 %if it occurs in
 the literature carefully)
 shows the sharpness of  those examples as
 %materializing
 having the minimum complexity for the
fundamental group permitting a
%``labyrinthic''
dense leaf.
%trajectories.
In particular 3 is the minimal number of punctures required to
the plane to manufacture a {\it labyrinth}
%(i.e.
(=foliation
with a dense leaf).

\begin{prop}\label{dichotomic-free-of-rank-2:prop}
A dichotomic surface
%whose $\pi_1$ is free of rank $2$
with fundamental group $\pi_1$  free of rank $2$ is
foliated-intransitive.
%(under
%a foliation).
\end{prop}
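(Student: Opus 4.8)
The plan is to reduce to the oriented Poincar\'e-Bendixson obstruction (\ref{Poinc-Bendixson_many}) by passing to a foliation-orienting double cover, exploiting the crucial fact that such a cover of a dichotomic surface with $\pi_1=F_2$ remains dichotomic (Lemma~\ref{dicho-covering-is-dicho}).

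In detail, suppose for contradiction that the dichotomic surface $M$ with $\pi_1(M)$ free of rank $2$ carries a foliation ${\cal F}$ possessing a dense leaf $L$. If ${\cal F}$ is orientable one concludes immediately: by (\ref{Poinc-Bendixson_many}) an oriented foliation on a dichotomic surface has no dense leaf. So assume ${\cal F}$ is non-orientable. Then (\ref{orienting:2-fold-covering}) furnishes a double cover $p\colon\Sigma\to M$ (connected, precisely because ${\cal F}$ was not orientable) along which the lifted foliation $\widetilde{\cal F}=p^{-1}({\cal F})$ becomes orientable. Here is where the hypothesis bites: since $M$ is dichotomic with $\pi_1(M)=F_2$, Lemma~\ref{dicho-covering-is-dicho} guarantees that $\Sigma$ is again dichotomic. (This is sharp --- Dubois-Violette's labyrinth on ${\Bbb R}^2_{3*}$ shows the proposition already fails for $\pi_1$ free of rank $3$.)

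It remains to transport the density. The preimage $p^{-1}(L)$ is a union of leaves of $\widetilde{\cal F}$, and since $p$ is $2$-sheeted this union involves at most two leaves. Moreover $p^{-1}(L)$ is dense in $\Sigma$: a covering projection is an open map, so for any non-empty open $U\subseteq\Sigma$ the image $p(U)$ is open and non-empty, hence meets the dense set $L$, whence $U$ meets $p^{-1}(L)$. Thus a finite union of leaves of the oriented foliation $\widetilde{\cal F}$ on the dichotomic surface $\Sigma$ is dense, in direct violation of the addendum in (\ref{Poinc-Bendixson_many}). This contradiction establishes the proposition.

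The argument is insensitive to metrizability: both inputs are already available non-metrically, Lemma~\ref{dicho-covering-is-dicho} via the calibrated Lindel\"of exhaustion (\ref{calibrated-exhaustions:lemma}) and (\ref{Poinc-Bendixson_many}) being a purely topological trapping argument resting only on Jordan separation. The single genuinely delicate ingredient --- keeping dichotomy alive through the orienting double cover --- has thus been discharged in advance, and it is exactly this step that would collapse for a free group of rank $\ge 3$.
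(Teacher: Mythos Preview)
Your proof is correct and follows essentially the same route as the paper's: split on orientability of the foliation, and in the non-orientable case pass to the orienting double cover, which stays dichotomic by Lemma~\ref{dicho-covering-is-dicho}, then invoke (\ref{Poinc-Bendixson_many}). Your version is slightly more explicit in noting that $p^{-1}(L)$ may split into two leaves and that one therefore needs the addendum of (\ref{Poinc-Bendixson_many}) on finite unions, a detail the paper leaves implicit.
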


This applies
%for instance
to the twice-punctured Moore
surface, and more generally to any
%two-points deletion in a
twice-punctured  simply-connected surface,
%%%%$S$,
in view of
(\ref{puncturing}).

\smallskip

\begin{proof}
If the foliation is orientable, then  the foliated version of
Poincar\'e-Bendixson (\ref{Poinc-Bendixson_many}) concludes.
Otherwise, pass to the 2-fold orienting cover
%ing rendering the foliation orientable
(\ref{orienting:2-fold-covering}), which by the branched
covering argument of (\ref{dicho-covering-is-dicho}) is still
dichotomic, reducing again to the Poincar\'e-Bendixson
obstruction (\ref{Poinc-Bendixson_many}).
\end{proof}

\subsection{Free groups of rank two (non-orientable cases)}

The above (\ref{dichotomic-free-of-rank-2:prop}) does not
apply to $\Moe_*$ (punctured M\"obius band), which is ${\Bbb
R}P^2_{**}$ (twice punctured projective plane), which has
$\pi_1\approx F_2$ (\ref{puncturing}), but
%%is
not dichotomic
%. Notice indeed the:
by (\ref{dicho-implies-orientable}).
Yet the
%same
method of branched covers still applies:

\begin{prop}\label{Klein-Weichold} The twice-punctured
projective plane
${\Bbb R}P^2_{**}=\Moe_*$ is foliated-intransitive.
\end{prop}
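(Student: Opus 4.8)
The strategy is to climb a tower of ordinary and branched covers until we reach a dichotomic surface bearing an \emph{oriented} foliation, and then invoke the foliated Poincar\'e--Bendixson lemma \ref{Poinc-Bendixson_many}, exactly as in \ref{dicho-covering-is-dicho}--\ref{dichotomic-free-of-rank-2:prop}. First record the covering-theoretic input: since $\proj_{**}=\Moe_*$ is non-orientable with $\pi_1=F_2$, its orientation double cover $p\colon\widetilde M\to M$ is, by \Kerekjarto{ }(\ref{Kerkjarto:end}) together with the fact that a puncture-loop of $\proj$ is orientation-preserving (so each puncture lifts to two), homeomorphic to the four-punctured sphere $S^2_{4*}$, which is dichotomic; moreover an Euler-characteristic-and-ends count shows that every \emph{non}-orientable double cover of $M$ is a twice-punctured Klein bottle $\Klein_{2*}$. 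Now suppose $\mathcal F$ is a foliation of $M$ carrying a dense leaf $L$; lifting it to $\widetilde{\mathcal F}$ on $\widetilde M$, the preimage $p^{-1}(L)$ is a union of at most two leaves of $\widetilde{\mathcal F}$ whose union is dense in $\widetilde M$ (the covering being open).

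Distinguish cases by the tangential class $w_1(T\mathcal F)\in H^1(M;{\Bbb Z}/2)\cong({\Bbb Z}/2)^2$. If $\mathcal F$ is orientable or merely co-orientable, i.e. $w_1(T\mathcal F)\in\{0,w_1(M)\}$, then $w_1(T\widetilde{\mathcal F})=p^*w_1(T\mathcal F)=0$ since $p^*w_1(M)=0$ on the orientation cover; so $\widetilde{\mathcal F}$ is oriented on the dichotomic surface $S^2_{4*}$, and \ref{Poinc-Bendixson_many} (no finite union of leaves of an oriented foliation is dense on a dichotomic surface) gives the contradiction. Otherwise $\widetilde{\mathcal F}$ is non-orientable, and we pass to its orienting double cover $q\colon\Sigma\to S^2_{4*}$ (\ref{orienting:2-fold-covering}): it is orientable with $\chi(\Sigma)=-4$ and carries an oriented foliation $\mathcal H=q^*\widetilde{\mathcal F}$ with a dense union of at most four leaves. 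By Riemann's trick (\ref{Riemann:branched-cover}) fill $S^2_{4*}$ up to $S^2$ and extend $q$ to a branched double cover $\Sigma^*\to S^2$; Riemann--Hurwitz reads $\chi(\Sigma^*)=2\chi(S^2)-\deg R=4-\deg R$, where $\deg R$ counts those among the four filled points around which $T\widetilde{\mathcal F}$ is non-orientable. If $\deg R\le 2$ then $\chi(\Sigma^*)\ge 2$, so $\Sigma^*=S^2$, hence $\Sigma$ is dichotomic by heredity (\ref{dicho:hered-transfer}) and \ref{Poinc-Bendixson_many} applies to $\mathcal H$; contradiction.

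The hard part is the residual subcase $\deg R=4$, which is forced exactly when $w_1(T\mathcal F)$ is nonzero on both puncture-loops of $M$ (note $[\gamma_{p_1}]\equiv[\gamma_{p_2}]\bmod 2$ in $H_1(M)$, since $\gamma_{p_1}\gamma_{p_2}$ co-bounds a M\"obius band in $\proj$ and hence represents twice a class): then $\Sigma^*=T^2$ and $\Sigma=T^2\smallsetminus\{4\text{ pts}\}$ --- the Dubois--Violette configuration, which does support oriented transitive foliations and so cannot be excluded on $\Sigma$ in isolation. Here one must use the symmetry: $\Sigma\to S^2_{4*}\to M$ is Galois with group $({\Bbb Z}/2)^2$, and one of its orientation-reversing involutions $g_0$ preserves the leaf-orientation of $\mathcal H$, so $\mathcal H$ descends through $g_0$ to an \emph{oriented} foliation $\check{\mathcal F}$ on $\check M=\Sigma/\langle g_0\rangle$, which the count above identifies as a twice-punctured Klein bottle $\Klein_{2*}$ carrying a dense union of at most two leaves. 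Choosing a compatible flow on the metric surface $\Klein_{2*}$ (\Kerekjarto--Whitney \ref{Kerek-Whitney:thm}) and extending it by Beck's technique (\ref{Beck's_technique:extension:lemma}) to $\Klein$ yields a flow on the Klein bottle with a dense union of at most two orbits; the contradiction should then come from the flow-intransitivity of $\Klein$ (Markley--Aranson--Guti\'errez \ref{Markley}). This last step is the main obstacle and the place where the plan is not yet airtight: \ref{Markley} is phrased for a \emph{single} dense orbit, so one must either upgrade Guti\'errez's global-cross-section argument to forbid a dense finite union of orbits, or peel off an orbit of the pair whose closure is two-dimensional and thereby pass to a dense orbit on a proper open subregion of $\Klein$ and treat that surface in turn; in the worst branches this bottoms out precisely at the twice-punctured Klein bottle, the case the introduction confesses to leaving open.
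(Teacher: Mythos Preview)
Your overall strategy---climb through double covers, compactify via Riemann's branched-cover trick, and invoke Poincar\'e--Bendixson or Markley---is exactly the paper's, but you take one detour too many and it costs you the proof. The paper does \emph{not} first pass to the orientation cover of the surface; it goes directly to the orienting double cover of the foliation, $\Sigma\to M=\proj_{**}$, and compactifies to a branched double cover $\Sigma^*\to\proj$. With only two punctures downstairs, $\deg R\le 2$ and Riemann--Hurwitz gives $\chi(\Sigma^*)=2-\deg R\ge 0$. When $\chi(\Sigma^*)=0$ and $\Sigma^*=T^2$, the sheet-exchange involution on $T^2$ has quotient $\proj$, hence is orientation-reversing with exactly two (isolated) fixed points; but the Klein--Weichold principle (an orientation-reversing involution of a closed orientable surface linearises near a fixed point as a reflection, so its fixed set is one-dimensional) rules this out. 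That single observation kills the torus case outright.

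Your route---first to the orientation cover $S^2_{4*}$, then to the foliation-orienting cover, then compactify over $S^2$---throws this leverage away: the deck involution of $\Sigma^*\to S^2$ is orientation-preserving, so when $\Sigma^*=T^2$ you land on the genuine hyperelliptic cover branched at four points, which is perfectly consistent and indeed supports Dubois--Violette dynamics on the punctured total space. Your subsequent descent to $\Klein_{2*}$ and appeal to Markley is, as you rightly flag, not airtight (Markley~(\ref{Markley}) forbids a single dense orbit, not a dense finite union), and the paper itself confesses elsewhere that $\Klein_{2*}$ is the one case it cannot settle. So the gap you name is real and is an artefact of the order of your covers; reorganising to compactify over $\proj$ rather than $S^2$ closes it via Klein--Weichold without ever meeting the Dubois--Violette configuration.
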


\begin{proof} {\it Orientable case.} If the foliation
is orientable, take a compatible flow on ${\Bbb R}P^2_{**}$
(\ref{Kerek-Whitney:thm}). By a standard method \`a la Beck
(\ref{Beck's_technique:extension:lemma}) the flow extends to
${\Bbb R}P^2$. Passing to the universal cover $S^2$,
Poincar\'e-Bendixson obstructs transitivity.

{\it Non-orientable case.} If not, the foliation determines a
%2-fold
double orienting cover $p\colon \Sigma \to {\Bbb R}P^2_{**}$
(\ref{orienting:2-fold-covering}). Looking around the
punctures we can by Riemann's trick
(\ref{Riemann:branched-cover}) compactify this map to a
branched covering $\Sigma^*\to {\Bbb R}P^2$ (ramified at $R$ a
sublocus of the punctures). Thus
$$
\chi(\Sigma^*)=2\chi({\Bbb R}P^2)-\deg(R).
$$
Since  $\deg(R)\le 2$, $\chi(\Sigma^*)\ge 0$. Since $\Sigma^*$
is connected we have also $\chi(\Sigma^*)\le 2$.

If $\chi(\Sigma^*)=2$, then $\Sigma^*\approx S^2$ and
%we have
%the
Poincar\'e-Bendixson
%obstruction
%prompted by dichotomy.
concludes.

If $\chi(\Sigma^*)=1$, we
%are concerned  with
have ${\Bbb R}P^2$ and argue as above (orientable case).

If $\chi(\Sigma^*)=0$, then we have either the Klein bottle
${\Bbb K}$ or the torus ${\Bbb T}^2$. In the first case, lift
the foliation to $\Sigma$, take a compatible flow
(\ref{Kerek-Whitney:thm}) and ``extend'' it to $\Sigma^{*}$
(\ref{Beck's_technique:extension:lemma}), violating  the
flow-intransitivity of Klein ${\Bbb K}$
%(Markley (1968),
%ref. as in \cite{GabGa_2011}).
(\ref{Markley}).

The toric case requires a separate argument.
%ation.
We have the
branched covering ${\Bbb T}^2=\Sigma^*\to \proj$ ramified at
%at most
two places, as $\deg(R)= 2$.
%(the value $0$ being trivially
%ruled out).
Exchanging sheets induces an
%allied
%``hyperelliptic''
involution  $\sigma$ of the torus which is orientation
reversing (the quotient being non-orientable) with
%at most
two fixed points.
%Such a bird has never been observed
%in nature, and can be ruled out by varied ways.
%%%
One obstruction to this is geometric
%and amounts
amounting essentially to the Klein-Weichold (1876--1883)
classification of orientation reversing involutions on
oriented closed surfaces (relevant to the real algebraic
geometry of curves). In fact we merely need the very
%%foundational
basic fact,
%%%of that theory,
asserting that the linearization in the small of such an
involution near a fixed point is a symmetry about a line,
violating the isolated nature of the above fixed points.
(Formal proof of this in the topological case came slightly
later with the era of Schoenflies, Brouwer, \Kerekjarto.)
%
%
%%%%%%%%%%%%%%%%CENSURED AS LOOKS STUPID
\iffalse
{\footnotesize Alternatively there is
%a somewhat here artificial
a clumsy algebraic way using the Lefschetz trace formula:
$$
2=\chi({\rm Fix}(\sigma))=\sum_{i} (-1)^i Tr (H_i(\sigma)).
$$
The second trace is $-1$, as $\sigma$ reverses orientation.
Writing the matrix of $H_1(\sigma)$ w.r.t. the canonical basis
$e_1, e_2$ of $H_1(T^2)$ as $M=\begin{pmatrix} a &b \cr c & d
\end{pmatrix}$, with coefficients in ${\Bbb Z}$. The Lefschetz
relation writes $a+d=-2$ (L). Since $\sigma $ is involutive
$M^2$ is the identity; hence (1) $a^2+bc=1$, (2)
$ab+bd=0=b(a+d)$, (3) $ac+cd=0=c(a+d)$, (4) $cb+d^2=1$.

From (2),(3) and (L) it follows that $b=c=0$. So by (1),(4)
$a^2=1$ and $d^2=1$. By (L) it follows $a=d=-1$

But that is not all. Using the Lie group structure of the
torus we have the so-called Pontrjagin product. The latter can
be used to show that what $\sigma$ does on the $H_1$
determines its action on the $H_2$. In our situation we have
$$
H_2(\sigma)(e_1\star e_2)=H_1(\sigma) (e_1) \star H_1(\sigma)
(e_2)=(-e_1)\star(-e_2)=e_1\star e_2,
$$
by bi-linearity. Since $e_1\star e_2=[T^2]$ is the fundamental
class, this violates the orientation reversion. } \fi
\end{proof}

Beside ${\Bbb R}P^2_{**}=\Moe_*$, another
%%%noteworthy
specimen
with $\pi_1=F_2$ is $\Klein_*$ (punctured {\it Klein bottle},
i.e. non-orientable closed surface with $\chi=0$, so ${\Bbb
K}=S_{2c}$ is also the sphere with $2$ cross-caps). Again we
expect a similar result:

\begin{lemma}
The punctured Klein bottle $\Klein_*$ is
foliated-intransitive.
\end{lemma}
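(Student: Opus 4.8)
The plan is to imitate the proof of Proposition~\ref{Klein-Weichold} for $\Moe_*={\Bbb R}P^2_{**}$, bifurcating according to whether a hypothetical foliation $\cal F$ with a dense leaf on $\Klein_*$ is leafwise-orientable or not. As a preamble, note that puncturing preserves non-orientability (\ref{orientability-in-terms-of-Jordan-curves}) and adds a free generator (\ref{puncturing}), so $\Klein_*$ is non-orientable with $\pi_1\approx F_2$. In the orientable case I would take a compatible flow on $\Klein_*$ (\ref{Kerek-Whitney:thm}), extend it across the single puncture by Beck's technique (\ref{Beck's_technique:extension:lemma}) to a flow on the Klein bottle $\Klein$, and remark that the dense leaf, being already dense in the dense subregion $\Klein_*$, survives as a dense orbit of $\Klein$; this contradicts the flow-intransitivity of $\Klein$ (\ref{Markley}).

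In the non-orientable case I would pass to the (connected) leafwise-orienting double cover $p\colon\Sigma\to\Klein_*$ (\ref{orienting:2-fold-covering}). The decisive elementary observation is that a small loop around the puncture is conjugate to a defining relator of $\pi_1(\Klein)$, in which each generator occurs with even total exponent, hence maps to $+1$ under the orienting monodromy $\pi_1(\Klein_*)\to\{\pm1\}$; therefore $p$ is unramified over the puncture. Filling in over the puncture \`a la Riemann (\ref{Riemann:branched-cover}) then produces an honest unbranched double cover $p^{\ast}\colon\Sigma^{\ast}\to\Klein$ with $\Sigma=\Sigma^{\ast}$ minus two points, and Riemann--Hurwitz gives $\chi(\Sigma^{\ast})=2\chi(\Klein)=0$; so by the classification of surfaces (\ref{Moebius-Klein-classification}) $\Sigma^{\ast}$ is the torus $\torus$ or the Klein bottle $\Klein$. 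On $\Sigma$ the lifted foliation is orientable, and the two leaves covering the dense leaf (leaves being simply-connected, the cover splits over each) have dense union.

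To close, I would equip $\Sigma$ with a compatible flow (\ref{Kerek-Whitney:thm}) and Beck-extend it to $\Sigma^{\ast}$, obtaining a flow with a dense finite union of orbits. If $\Sigma^{\ast}=\Klein$ this contradicts (\ref{Markley}) — its global cross-section proof excludes dense finite unions of orbits just as the addendum of \ref{Poinc-Bendixson_many} does. The delicate case, and the main obstacle, is $\Sigma^{\ast}=\torus$: here, unlike in \ref{Klein-Weichold}, the deck involution $\tau$ is a \emph{free} orientation-reversing involution with quotient $\Klein$, so there are no ramification points to feed into the Klein--Weichold linearization trick. I would dispatch it by noting that Beck's time-reparametrization freezes the two filled points into rest points, whence $\torus$ would carry a flow having rest points together with a dense orbit union — impossible, since a topologically transitive flow on a closed surface is rest-point free, forcing the surface to be $\torus$ with a Kronecker-type (hence non-singular) flow (the same Markley--Aranson--Guti\'errez circle underlying \ref{Markley}); equivalently, an orientation-reversing $\tau$ plays the r\^ole of the reflection obstructing transitivity on the Klein bottle in the Poincar\'e--Bendixson trapping. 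In every branch we reach a contradiction, so $\Klein_*$ is foliated-intransitive.
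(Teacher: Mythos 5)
Your orientable branch, and your observation that the orienting cover is unramified over the puncture (the boundary loop being conjugate to the relator of $\pi_1(\Klein)$, in which each generator has even total exponent), are correct --- indeed sharper than the paper, which only records $0\le\deg(R)\le 1$ and therefore also worries about $N_3$. The gap sits exactly where you flag ``the main obstacle'': the case $\Sigma^{\ast}=\torus$. Your dispatch rests on the claim that a topologically transitive flow on a closed surface is rest-point free. That is false: multiply the generating vector field of a Kronecker flow on $\torus$ by a non-negative function vanishing at a single point; the resulting flow has a rest point while every orbit missing that point is still dense. (More drastically, any transitive flow on a genus-$2$ surface must have rest points, by Poincar\'e--Hopf.) Nor is there a trapping argument to fall back on upstairs: $\Sigma\approx\torus_{2*}$ is not dichotomic (it carries non-separating Jordan curves and in fact admits minimal foliations), so (\ref{Poinc-Bendixson_many}) is unavailable, and since the deck involution is free the Klein--Weichold fixed-point linearization that saved the toric case of (\ref{Klein-Weichold}) has nothing to bite on. Your closing remark that ``$\tau$ plays the r\^ole of the reflection'' is a hope, not an argument; making it precise would amount to reproving Markley's theorem for a singular, merely $\tau$-equivariant phase portrait rather than quoting it.

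The paper concedes that this covering route is sterile at precisely this point and changes method entirely. By the Poincar\'e--\Kerekjarto--Hopf index formula for line fields, the unique singularity of ${\cal F}$ at the puncture of $\Klein$ must have index $\chi(\Klein)=0$; hence (apart from a degenerate sector configuration which visibly impedes denseness on its own) the foliation extends across the puncture to a foliation of the closed Klein bottle, and Kneser's circle-leaf theorem (\ref{Kneser}) together with its corollary (\ref{Klein-foliated-intransitive}) concludes. You should either adopt that extension argument or supply an independent proof for the equivariant torus case; as written, the torus branch of your proof does not close.
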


\begin{proof} We break the argument in two parts; the first
being a repetition of the method employed so far, which
in the present case seems sterile:

\smallskip
{\footnotesize

{\bf The usual method foils.} If the foliation is orientable
we are done by the flow-intransitivity of Klein $\Klein$
%(due to Markley).
(\ref{Markley}). If not the foliation defines its oriented
2-fold covering $\Sigma \to \Klein_*$
(\ref{orienting:2-fold-covering}), which we compactify into a
branched covering $\Sigma^* \to \Klein$. In particular:
$$
\chi(\Sigma^*)=2\chi(\Klein)-\deg(R).
$$
As $0\le \deg(R)\le 1$, it follows $-1\le \chi(\Sigma^*)\le
0$. Hence $\Sigma^*$ is either $\Klein$ or $T^2$ if $\chi=0$
or $N_3$ the sphere with $3$ cross-caps when $\chi=-1$.

The case of $\Klein$ is easily ruled out by Markley's
intransitivity (\ref{Markley}).

The other cases are harder. In the toric case $\deg(R)=0$, so
that the compactifying covering is unramified. This means that
about the puncture the foliation is oriented, etc.

}

\smallskip
{\bf A new trick is required.} Maybe a more efficient argument
is to use the index formula for line-fields (Poincar\'e,
Bendixson, \Kerekjarto, Hopf, etc.). Since there is a unique
singularity, the index at the puncture is zero. Thus the
foliation extends to $\Klein$. (If the singularity looks like
a letter ``X'' with opposite hyperbolic sectors and opposite
focus-type sectors with leaves converging to the puncture,
then there is no such extension! Yet such a scenario impedes
transitivity due to the focusing sectors.) Once the foliation
is extended, conclude with Kneser 1924 (\ref{Kneser}) or
rather its corollary (\ref{Klein-foliated-intransitive}).
\end{proof}

Uniting
%Unifying
%together
the forces of the two previous propositions we
%may
deduce:

\begin{lemma}\label{rank_two_non-orientable:intransitive}
A non-orientable metric surface with $\pi_1=F_2$ is
intransitive.
\end{lemma}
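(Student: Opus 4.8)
The plan is to simply assemble the two preceding propositions via the classification of the surfaces in question. By Lemma~\ref{Kerekjarto:non-orient}, a non-orientable metric surface $M$ with $\pi_1(M)=F_2$ is homeomorphic to exactly one of two models: the twice-punctured projective plane $\proj_{**}=\Moe_*$, or the once-punctured Klein bottle $\Klein_*$. Since foliated-transitivity is a topological invariant, it suffices to rule out a dense leaf in each of these two models.

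For the first model, I would quote Proposition~\ref{Klein-Weichold}, which already establishes that $\proj_{**}$ is foliated-intransitive (there the argument splits according to whether the foliation is orientable---handled by Beck's extension to $\proj$ and passage to $S^2$ with Poincar\'e-Bendixson---or not, in which case one passes to the orienting double cover, compactifies \`a la Riemann to a branched cover $\Sigma^*\to\proj$ with $\chi(\Sigma^*)\ge 0$, and disposes of the sphere, $\proj$, Klein-bottle and torus cases in turn). For the second model, I would quote the immediately preceding lemma asserting the foliated-intransitivity of $\Klein_*$ (proved there by the line-field index argument: the unique puncture carries index zero, so the foliation extends across it to $\Klein$, whence Kneser's circle-leaf result (\ref{Kneser}), i.e. (\ref{Klein-foliated-intransitive}), applies; the only loophole, a mixed hyperbolic/focus singularity, is itself transitivity-killing through its focusing sectors).

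Concretely the write-up is one line: ``By (\ref{Kerekjarto:non-orient}), $M$ is $\proj_{**}$ or $\Klein_*$, both foliated-intransitive by (\ref{Klein-Weichold}), resp. by the preceding lemma.'' There is essentially no obstacle here, since all the real work has been discharged in the two cited results; the only thing to be slightly careful about is that the classification (\ref{Kerekjarto:non-orient}) genuinely is exhaustive, which it is because it rests on \Kerekjarto's cylindrical ends theorem (\ref{Kerkjarto:end}) together with the Euler-characteristic bookkeeping $\chi=1-b_1=2-g-n$ with $g\ge 1$, $n\ge 1$ and $b_1=2$, forcing $(g,n)\in\{(1,2),(2,1)\}$.

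\begin{proof}
Let $M$ be a non-orientable metric surface with $\pi_1(M)\approx F_2$. By \Kerekjarto~(\ref{Kerekjarto:non-orient}) the surface $M$ is homeomorphic to $\proj_{**}$ or to $\Klein_*$. In the first case $M$ is foliated-intransitive by (\ref{Klein-Weichold}), and in the second case by the preceding lemma. Since transitivity is a topological invariant, $M$ is foliated-intransitive.
\end{proof}
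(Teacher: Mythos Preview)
Your proposal is correct and follows exactly the paper's own approach: invoke the classification (\ref{Kerekjarto:non-orient}) to reduce to the two models $\proj_{**}$ and $\Klein_*$, then cite (\ref{Klein-Weichold}) and the preceding lemma respectively. The paper's proof is in fact even terser than yours, stating only the classification step and leaving the two intransitivity citations implicit.
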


\begin{proof} Such a surface is homeomorphic
either to $\proj_{**}$ or $\Klein_*$ by
(\ref{Kerekjarto:non-orient}).
\end{proof}

The ultimate generality is to
%drop
relax the metric proviso:

\begin{theorem}\label{intransitivity:non-orient_rank_2}
A non-orientable surface with $\pi_1=F_2$ is intransitive.
\end{theorem}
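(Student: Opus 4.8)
The strategy is to bootstrap the metric Lemma~\ref{rank_two_non-orientable:intransitive} --- which via Kerékjártó~\ref{Kerekjarto:non-orient} already pins the soul of such an $M$ to be $\proj_{**}$ or $\Klein_*$ --- to the non-metric world, but not by the naive FLAT recipe. Since $F_2$ is not a closed-surface group, $M$ is open, $b_2=0$, $\chi(M)=-1$, and by soul-uniqueness~\ref{soul:uniqueness} a soul of $M$ is $\proj_{**}$ (when $\varepsilon=2$) or $\Klein_*$ (when $\varepsilon=1$). Suppose $\mathcal F$ carries a dense leaf $L$. One could pick a calibrated exhaustion $M=\bigcup_{\alpha<\omega_1}M_\alpha$ (Lemma~\ref{calibrated-exhaustions:lemma}) with $\pi_1(M_\alpha)\cong F_2$, and note (by heredity/transfer of orientability~\ref{orientability:heredity} and monotonicity of the exhaustion) that $M_\alpha$ is non-orientable and metric for all large $\alpha$, hence $\cong\proj_{**}$ or $\Klein_*$; but the obvious continuation --- restrict $\mathcal F$ and quote~\ref{rank_two_non-orientable:intransitive} --- fails, because the trace $L\cap M_\alpha$ is only a dense union of \emph{countably} many leaves of $\mathcal F|_{M_\alpha}$, never a single dense one, and such a dense countable union is not intrinsically forbidden on $\proj_{**}$ or $\Klein_*$ (already a product foliation of an annular subregion produces one, like $\mathbb Q\times S^1\subset[0,1]\times S^1$). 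So the metric intransitivity has to be routed through \emph{covers}, not subregions: the preimage of a single leaf under a finite cover is still \emph{finitely} many leaves, and there the ``no finite union of dense leaves'' addendum of the foliated Poincaré--Bendixson Lemma~\ref{Poinc-Bendixson_many} becomes usable.

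Concretely, let $\mu_M$ be the indicatrix~\ref{indicatrix-orient-covering} and $\mu_{\mathcal F}$ the orientation class of $\mathcal F$~\ref{orienting:2-fold-covering}, two homomorphisms $F_2=\pi_1(M)\to\{\pm1\}$, and let $\widetilde M\to M$ be the cover of degree $\le 4$ killing both. On $\widetilde M$ both the surface and $\widetilde{\mathcal F}=q^*\mathcal F$ are orientable, $\pi_1(\widetilde M)$ is free (index $\le 4$ in $F_2$), and the preimage of $L$ is a union of at most four leaves, dense in $\widetilde M$. When $\mathrm{soul}(M)=\proj_{**}$ this $\widetilde M$ factors through the orientation double cover $M_{\circlearrowleft}$, whose soul is the \emph{planar} surface $\sphere_{4*}$ (each puncture of $\proj_{**}$ lifting to two punctures of $\sphere$); and a surface with a planar soul is dichotomic --- any Lindelöf subregion sits, after calibration, inside an incompressible one, which is a soul $\cong\sphere_{4*}$, hence planar, so conclude with the transfer of dichotomy~\ref{dicho:hered-transfer}. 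If $\widetilde M$ is again dichotomic one is done: an oriented foliation on a dichotomic surface with a dense finite union of leaves contradicts~\ref{Poinc-Bendixson_many}. The arithmetical accident to watch for is that, unlike Lemma~\ref{dicho-covering-is-dicho} (stated for $\pi_1=F_2$), a double cover of $\sphere_{4*}$ can be the non-planar $T^2_{4*}$ (Riemann--Hurwitz: $\chi=2\cdot 2-\deg R$ vanishes at $\deg R=4$, the hyperelliptic $T^2\to\sphere$); in the metric world one then compactifies over the punctures by Riemann's trick~\ref{Riemann:branched-cover} and kills this by Klein--Weichold (an orientation-reversing involution of $T^2$ has no isolated fixed point), exactly as in the proof of Proposition~\ref{Klein-Weichold}.

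The genuinely hard case is $\mathrm{soul}(M)=\Klein_*$. Because $\chi(\Klein)=0$, every finite cover again has $\chi=0$ (indeed $\Klein_*$ admits no planar finite cover at all: an orientable finite cover factors through the orientation cover $T^2_{k*}$, whose compactification is a branched cover of $T^2$, hence of genus $\ge1$), so the Poincaré--Bendixson route collapses and one is thrown onto the intrinsically metric Klein-bottle technology --- a compatible flow (\ref{Kerek-Whitney:thm}), its Beck extension over the filled soul (\ref{Beck's_technique:extension:lemma}), Markley--Aranson--Gutiérrez flow-intransitivity (\ref{Markley}), Kneser's circle leaf (\ref{Kneser}), and the reduction to $\pi_1=\mathbb Z$ (\ref{infinite-cyclic-group}) --- all of which presuppose metrisability (as, for that matter, does the compactification used against the torus sub-case above). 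The plan here is to \emph{localise}: by~\ref{long-semi-leaf} the recurrence of $L$ is carried by a metrical ``short'' end, so one would try to trap a dense sub-ray of $L$ inside a metric $\Klein_*$-shaped neighbourhood of a soul, fill its end to a $\Klein$, extend the (orientable, after the double cover~\ref{orienting:2-fold-covering}) foliation by a Beck-type argument, and finish with~\ref{Markley}/\ref{Kneser}. Making this localisation honest --- reconciling ``$L$ dense in the non-metric $M$'', so that $\overline L=M$ cannot lie in any Lindelöf piece, with ``the essential recurrence is metrically confined'' --- is the main obstacle, and is of a piece with the gap the paper already confesses for the twice-punctured Klein bottle; absent a new non-metric substitute for the Kerékjártó--Whitney/Markley machinery, this is where the argument genuinely has to work.
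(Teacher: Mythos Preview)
Your dismissal of the ``naive FLAT recipe'' rests on a false premise, and in fact the paper's proof \emph{is} exactly that recipe, with one preparatory move you overlooked. You object that $L\cap M_\alpha$ is only a countable union of plaques, never a single leaf. But this breaking-up only happens if $L$ is not Lindel\"of. The paper first arranges that $L$ \emph{is} Lindel\"of: by Proposition~\ref{separability} the dense leaf $L$ is either ${\Bbb R}$ or the long ray ${\Bbb L}_+$; in the second case one deletes the long semi-leaf $L_{\ge p}\approx{\Bbb L}_{\ge 0}$ (properly embedded and closed by~\ref{long-semi-leaf}), which by Lemma~\ref{deleting:a_closed_long_ray} leaves both $\pi_1\cong F_2$ and non-orientability intact, and whose remaining short side $L_{<p}\approx{\Bbb R}$ is still dense (again by the argument in~\ref{separability}) and is a genuine leaf of the restricted foliation on $M':=M-L_{\ge p}$. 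Now $L\approx{\Bbb R}$ is Lindel\"of, so there is a $\beta$ with $L\subset M_\beta$ \emph{entirely}; the trace $L\cap M_\beta=L$ is a single leaf of ${\cal F}\vert_{M_\beta}$, dense in $M_\beta$. Choosing $\beta$ large enough that $M_\beta$ also swallows a M\"obius band witnessing non-orientability, one has a metric non-orientable surface with $\pi_1\cong F_2$ and a dense leaf, contradicting Lemma~\ref{rank_two_non-orientable:intransitive}.

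So the whole covering-theoretic detour, and in particular the $\Klein_*$ impasse you describe, is unnecessary: the reduction to the metric case is direct once the leaf is made short. Your cover-based route is not wrong in spirit, but it is strictly harder and, as you yourself diagnose, incomplete precisely over the $\Klein_*$ soul---whereas the paper's argument treats $\proj_{**}$ and $\Klein_*$ uniformly by simply landing inside a metric copy and invoking~\ref{rank_two_non-orientable:intransitive}.
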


\begin{proof} The idea is
%again
to reduce to the metric case via an appropriate exhaustion. If
$L$ is a dense leaf then $L$ is either ${\Bbb R}$ or the
long-ray ${\Bbb L}_+$ (cf. (\ref{separability}) below). Up to
deleting the long-side of $L$ (which does not affect the
assumption made on our surface $M$ by
(\ref{deleting:a_closed_long_ray})) we may assume that $L$ is
the real-line. Choose now a $\pi_1$-calibrated exhaustion
(\ref{calibrated-exhaustions:lemma})
$M=\bigcup_{\alpha<\omega_1} M_{\alpha}$ by Lindel\"of
subregions  with $\pi_1(M_{\alpha})\approx\pi_1(M)=F_2$. Since
$L$ is Lindel\"of, there is $\beta<\omega_1$ such that
$M_{\beta}\supset L$; and $L$ being dense in $M$ it is a
fortiori so in $M_{\beta}$. Since $M$ is non-orientable, it
contains  a one-sided Jordan curve $J$
(\ref{orientability-in-terms-of-Jordan-curves}) (whose tubular
neighbourhood $T$ is a M\"obius band). Since $T\supset J$ is
Lindel\"of we may assume that $M_{\beta}\supset T$ as well,
violating the metric case
(\ref{rank_two_non-orientable:intransitive}) of the theorem.
\end{proof}

This applies to the Moore surface $M$ with two cross caps,
denoted $M_{2c}$, as well as to $M_{*,c}$ the Moore surface
punctured once and cross-capped once (apply
(\ref{cross-capping}) and (\ref{puncturing})).
%
%
%In the next section we study up to which extent those results
%are sharp, and how they can sometimes be sharpened.
The next section studies the sharpness of those results, while
giving some sporadic extensions to groups of rank 3.

\iffalse It is easy to check that the above results are sharp
by modifying the Dubois-Violette example. More precisely one
can inflate the punctured point of the thorn-singularity to a
punctured disc and then cross-cap the border (cf.
Figure~\ref{cross-cap:fig} below). (Note that a tripod
singularity is generated, which has a saddle connection.) Thus
we see that the following non-orientable surfaces with
$\pi_1=F_3$ are transitive: $S^2_{3*,1c}$ (3 punctures, 1
cross-cap), $S^2_{2*,2c}$ (2 punctures, 2 cross-caps),
$S^2_{1*,3c}$ (1 punctures, 3 cross-caps).
%Loosely speaking, the moral seems to
%be that
Intuitively, a cross-cap has exactly the same dynamical effect
as a puncture.
%Finally
Doing more punctures or cross-caps only improves the
transitivity issue (just puncture outside a dense leaf, which
cannot fill the whole manifold).

\begin{figure}[h]
\centering
    \epsfig{figure=crosscap.eps,width=82mm}
  \caption{\label{cross-cap:fig}
  Non-orientable avatars of Dubois-Violette}
\vskip-5pt\penalty0
\end{figure}

\fi

%(In)transitivity
\subsection{The monolith of finitely-connected metric surfaces}

This section aims to classify
%completely
those
finitely-connected metric surfaces which are transitive (and
those which are not). In the subsequent section we deduce
non-metrical transfers of intransitivity.

In view of \Kerekjarto{ }(\ref{Kerkjarto:end}) any
finitely-connected metric surface is homeomorphic to a
finitely-punctured closed surface. Hence by the M\"obius
%-Jordan-Klein-von Dyck-Heegaard-Dehn
{\it et al.} classification
(\ref{Moebius-Klein-classification}) any such surface derives
from the sphere $S:=S^2$ through iteration of the three
operations (1) handle surgery, (2) cross-capping (3)
puncturing. Thus we can tabulate a ``monolith'' for all such
surfaces  (Figure~\ref{monolith:fig} below), where
right-arrows are {\it puncturing}
%puncturation
(denoted by a stared subscripts ``$_{*}$'', e.g. $S_*={\Bbb
R}^2$), up double-arrows are {\it handle attachments}
($\Sigma_g$ denoting the orientable closed surface of genus
$g$), and left-squig-arrows are {\it cross-caps} (denoted by
subscripts ``$_{c}$''). Boldface fonts denote the rank of the
(fundamental) group when it is free. (Given a rank there are
only finitely many
%(finitely-connected
metric surfaces with the prescribed group.) The exotic arrows
(not fitting with the hexagonal lattice) arise from the
well-known relations in the monoid of closed surfaces
(under-connected sum) inherent to the classification theorem
(\ref{Moebius-Klein-classification}) in term of $\chi$, and
the orientability character. (For instance attaching a handle
to a non-orientable surface amounts to 2 cross-caps, both
decreasing $\chi$ by 2 units.)

\begin{figure}[h]
\vskip-35pt\penalty0 \centering
    \epsfig{figure=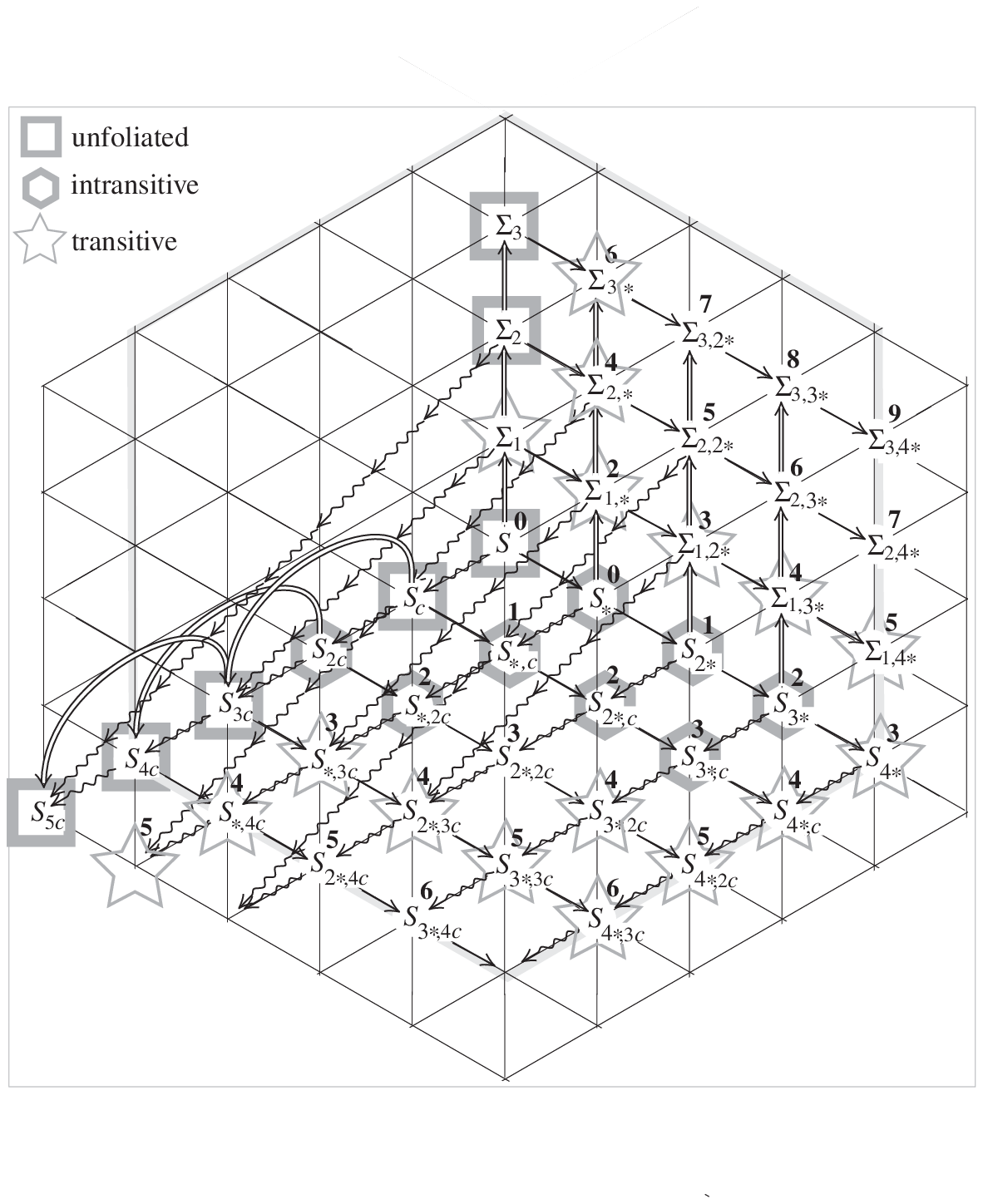,width=132mm}
\vskip-65pt\penalty0
  \caption{\label{monolith:fig}
  The monolith of finitely-connected metric surfaces}
\end{figure}

{\it Squares} indicate those surfaces which cannot be foliated
(Euler-Poincar\'e obstruction $\chi\neq 0$). {\it Hexagons}
show surfaces which are foliated-intransitive in view of
previously listed obstructions (\ref{Poinc-Bendixson_many}),
(\ref{infinite-cyclic-group}),
(\ref{dichotomic-free-of-rank-2:prop}),
(\ref{rank_two_non-orientable:intransitive}) and
(\ref{intransitivity:new-obstruction}) below. {\it Stars} show
surfaces which are foliated-transitive
%(by varying the
%Dubois-Violette construction, as in the previous section).
(as discussed below).

\def\wood{woodpecker}

\begin{lemma} If a surface is transitive, then so is its punctured
version (just puncture outside a dense leaf).
\end{lemma}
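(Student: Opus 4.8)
The plan is to take the parenthetical hint at face value. Suppose the surface $M$ carries a $1$-foliation $\mathcal F$ with a dense leaf $L$. Since $L$ is a $1$-dimensional leaf of $\mathcal F$, it has empty interior in the $2$-manifold $M$ (locally it is a union of plaques inside a foliated chart $\approx {\Bbb R}^2$), so $M-L$ is a non-empty (indeed dense) open set. Pick a puncture-site $p\in M-L$. Because the self-homeomorphism group of a manifold acts transitively on points (the same homogeneity already invoked in the proof of \ref{Kerkjarto:end}), the homeomorphism type of $M$ with one point deleted is independent of the chosen point, so it suffices to treat this particular $p$; i.e. ``the punctured version'' may be taken to be $M-\{p\}$.

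The first step is the routine observation that a foliation restricts to any open subset: $\mathcal F|_{M-\{p\}}$ is again a non-singular $1$-foliation, whose leaves are the connected components of $\ell\cap(M-\{p\})$ as $\ell$ runs over the leaves of $\mathcal F$ (refine the foliated atlas of $M$ by foliated balls avoiding $p$). Since $p\notin L$ we have $L\cap(M-\{p\})=L$, and $L$ is connected, so $L$ persists as a single leaf of $\mathcal F|_{M-\{p\}}$. The second step is to check that this leaf is still dense: closure in a subspace is the intersection of the ambient closure with the subspace, whence
$$
\overline{L}^{\,M-\{p\}}=\overline{L}^{\,M}\cap(M-\{p\})=M\cap(M-\{p\})=M-\{p\},
$$
using that $L$ was dense in $M$. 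Thus $\mathcal F|_{M-\{p\}}$ is a transitive foliation of the punctured surface, which is what is claimed.

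There is essentially no obstacle here; the only two points meriting a word of care are (i) that $L\neq M$, guaranteed by the dimension count above, so that a legitimate puncture-site \emph{outside} $L$ exists; and (ii) that one must indeed puncture off the dense leaf, since deleting a point lying on $L$ could sever $L$ into semi-leaves whose density is not a priori ensured. Finally, iterating the lemma (at each stage choosing the new puncture outside the still-dense leaf of the previous foliation) shows more generally that puncturing a transitive surface at any finite set of points preserves transitivity, which is the form in which the statement feeds into the monolith (Figure~\ref{monolith:fig}).
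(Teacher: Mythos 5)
Your proof is correct and is exactly the argument the paper intends: the paper's entire proof is the parenthetical ``just puncture outside a dense leaf,'' and you have simply written out in full the three routine verifications (a puncture-site off the leaf exists, the restricted foliation is still a foliation with $L$ as a leaf, and density persists in the open subspace) together with the homogeneity remark justifying ``the'' punctured version.
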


Thus we need only to establish the transitivity of ``minimal''
models with respect to puncturing. For instance $\Sigma_1$ the
torus is transitive (Kronecker foliation) and this propagates
right-down (on Figure~\ref{monolith:fig}).

\subsection{Transitive examples via surgery (Peixoto, Blohin)}

To construct transitive foliations, we can use a surgical
device (due e.g., to Peixoto 1962 \cite{Peixoto_1962}, Blohin
1972 \cite{Blohin_1972}):%
%which we term the \wood-surgery.
%Using this it is immediate to show:

\begin{lemma}\label{Peixoto-Blohin:lemma} The following surfaces are transitively foliated:

{\rm (1)} $\Sigma_{g,*}$ the once punctured orientable surface
of genus $g\ge 1$;

{\rm (2)} $S_{*,gc}$ the once punctured
%non-orientable surface
sphere with $g$ cross-caps $g\ge 3$.
\end{lemma}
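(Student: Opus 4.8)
The plan is to produce all the required foliations by surgery, with seed the Kronecker (irrational linear) foliation $\mathcal K_\theta$ on the torus $\torus={\Bbb R}^2/{\Bbb Z}^2$, which is minimal (every leaf dense). This already gives (1) for $g=1$: $\torus$ is then transitively foliated, hence so is $\Sigma_{1,*}=\torus_{*}$ by the preceding lemma. The remaining cases follow by iterating, in the spirit of Peixoto 1962 \cite{Peixoto_1962} and Blohin 1972 \cite{Blohin_1972}, a surgery turning a transitively foliated metric surface into one carrying an extra handle (resp.\ cross-cap) and an extra puncture, still with a dense leaf.

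First I would set up the surgery. Starting from $(X,\mathcal F)$ metric and transitively foliated with a distinguished dense leaf $L$, pick a foliated box $B\approx(-1,1)^2$ (horizontal-line foliation) centred on a point of $L$, excise a small round disc $D\subset B$, and glue in along $\partial D$ either the boundary of a second such punctured box centred on another point of $L$ (matching the two horizontal tangency points), producing a handle, or a M\"obius band fibred by arcs transverse to its boundary circle, producing a cross-cap. In both cases the leaves match up across $\partial D$ into a foliation of the new surface $X'$ (which is $X$ with one handle, resp.\ cross-cap, added), nonsingular except for a single tripod-type point created where the tangencies are identified; puncturing $X'$ there makes the foliation nonsingular. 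The decisive point is \emph{transparency}: a leaf of the new foliation, traced through the surgered region, merely short-circuits the excised piece of the single dense leaf $L$, so its closure still contains $X$ minus a tiny disc, i.e.\ everything, and the foliation stays transitive. It is cleanest to perform each surgery inside a foliated-punctured-disc neighbourhood of the puncture already present (which the dense leaf enters), so that one and the same puncture is dragged along through the whole induction instead of accumulating.

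Iterating from $(\torus,\mathcal K_\theta)$ then yields, after $g-1$ handle-surgeries, a transitive nonsingular foliation on $\Sigma_{g,*}$ for every $g\ge1$ --- this is (1); and, after $g-2$ cross-cap-surgeries (note that attaching one cross-cap to $\torus$ gives the sphere with three cross-caps $N_3$, so $k$ such surgeries give $N_{k+2}$), a transitive nonsingular foliation on $S_{*,gc}$ for every $g\ge3$ --- this is (2). The restriction $g\ge3$ in (2) is forced anyway, since $S_{*,1c}=\Moe$ and $S_{*,2c}=\Klein_{*}$ have $\pi_1=F_1$ and $F_2$ and are foliated-intransitive by \ref{infinite-cyclic-group} and \ref{rank_two_non-orientable:intransitive}.

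The main obstacle is precisely the transparency claim: one must verify honestly that, once the disc is excised, the pieces of $L$ reassemble \emph{through} the handle (resp.\ cross-cap) into a leaf of $X'$ whose closure is all of $X'$ --- so that the surgery neither closes the dense leaf into a circle nor traps it by a Jordan obstruction --- while keeping track of the index of the created singularity so that one puncture really removes it. A secondary bookkeeping point is fixing a normal form for the foliation near the end of $\Sigma_{g,*}$ (resp.\ $S_{*,gc}$) so that the next surgery can be done there and the surface stays once-punctured. Both become clear once one draws the picture, and are exactly the content of the Peixoto--Blohin device; for oriented foliations one may instead pass to the compatible flow (\Kerekjarto--Whitney \ref{Kerek-Whitney:thm}) and quote the flow versions of \cite{Peixoto_1962,Blohin_1972} directly, while an entirely different route would invoke the vertical flow on a generic translation surface in the stratum $\mathcal H(2g-2)$ (Masur--Veech), minimal off its single cone point.
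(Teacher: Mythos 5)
Your proposal is correct and follows essentially the same route as the paper: seed with the Kronecker irrational foliation of the torus and perform the Peixoto--Blohin ``woodpecker'' surgery on foliated boxes along the dense leaf, adding handles (resp.\ cross-capping the excised contours) and disposing of the created saddle connections by deleting a single connected closed arc, which amounts to one puncture. The only cosmetic difference is organizational --- you iterate the surgery one handle/cross-cap at a time and drag the puncture along, whereas the paper aligns all the flow boxes at once so that the deleted saddle connections form one connected ``thick arc'' --- and both treatments leave the transparency of the surgery (that the dense leaf stays dense) at the same picture-based level of verification.
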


\begin{proof} Start  with a (Kronecker) irrational
foliation of the torus $\torus$. Pick two foliated boxes and
apply the \wood-surgery (Fig.\,\ref{wood:fig}.a). Connecting
by a handle the two contours, and deleting the arc (saddle
connection) shows that $\Sigma_{2,*}$ is transitive (the arc
deletion amounts to a single puncturing as the handle is
thought of as infinitesimal so that the two depicted arcs are
in reality just one). For higher genuses,
%we merely need to
consider an alignment of such flow-boxes (cf.
Fig.\,\ref{wood:fig}.b), and delete the thick arc (3 pieces,
but connected!)  proving (1). Regarding (2) we cross-cap the
contours (cf. Fig.\,\ref{wood:fig}.c) and delete the thick
arc. Since the torus with one cross-cap is $\approx$ $S_{3c}$
this proves (2).
\begin{figure}[h]
\centering
    \epsfig{figure=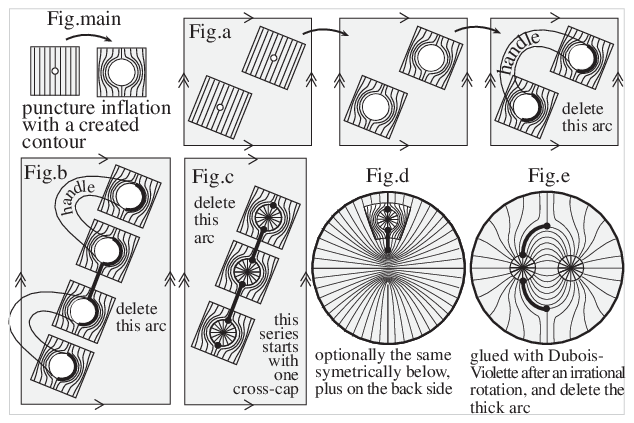,width=122mm}
  \caption{\label{wood:fig}
  The \wood-surgery of Peixoto-Blohin}
\vskip-5pt\penalty0
\end{figure}
\end{proof}

%Thus all surfaces on the ground floor of the monolith are
%intransitive, i.e.:

Doing the same surgery in the Dubois-Violette foliation
(Fig.\,\ref{wood:fig}.d), shows:

\begin{lemma} The sphere $S_{n*,k c }$ with $n$ punctures and
$k$ cross-caps is transitive for $n= 4$ and $0\le k\le 4$.
\end{lemma}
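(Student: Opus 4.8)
The plan is to mimic, for the Dubois-Violette picture, exactly the \wood-surgery argument just used for the torus. Recall that Dubois-Violette's example gives a transitive (indeed minimal) foliation of the plane with four punctures $S_{4*}$, realised concretely (Rosenberg's version, Fig.~\ref{Dubois:fig}) as a foliated disc carrying two thorn singularities glued to a replica after an irrational rotation; equivalently the quotient of Kronecker's winding of $T^2$ by the hyperelliptic involution. The key point for the surgery is that this foliation possesses genuine foliated boxes away from the singularities (the dense leaf passes through any such box), so the \wood-surgery of Peixoto-Blohin (\ref{Peixoto-Blohin:lemma}) can be performed there without destroying the existing dense leaf, only rerouting it through the newly attached bridge.

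First I would fix $k$ foliated boxes in the Dubois-Violette foliation on $S_{4*}$, disjoint from each other and from the four punctures and the two thorns. In each box perform the \wood-surgery (Fig.~\ref{wood:fig}.d): connect the two contours created in a box by an (infinitesimal) cross-cap handle as in part (2) of (\ref{Peixoto-Blohin:lemma}), and delete the resulting saddle-connection arc, which amounts topologically to one extra puncture per box. As in the proof of (\ref{Peixoto-Blohin:lemma}), deleting the thick arc leaves the leaf through the box still dense (the arc-removal is a single puncturing, the handle being thought of as infinitesimal so that the two depicted arcs are really one), and the rerouted leaf now threads through the cross-capped bridge, hence remains dense in the enlarged surface. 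After $k$ such surgeries the underlying surface is $S^2$ with $4$ original punctures, $k$ cross-caps, and $k$ further punctures coming from the deleted arcs; but by the relations in the monoid of closed surfaces this coincides with $S_{(4+k)*,kc}$ only after one checks the puncture count — more carefully, doing the surgery in one box converts $S_{4*}$ into $S_{5*,1c}$, and iterating gives, for $0\le k\le 4$, a transitively foliated $S_{n*,kc}$ with $n$ between $4$ and $8$. To land exactly on $n=4$ for all $0\le k\le 4$ one does the surgery \emph{in place of} punctures: start instead from the foliated disc underlying Dubois-Violette before some of the punctures are taken, cross-cap there, and observe (as in \ref{Peixoto-Blohin:lemma}(2), where the torus-with-one-cross-cap is $\approx S_{3c}$) that a cross-cap plays exactly the dynamical role of a puncture.

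The cleanest route, and the one I would actually write, is therefore: take the foliated \emph{disc} $D$ with two thorns that builds Dubois-Violette, together with its mirror copy $D'$; the glued surface with the four thorn-arcs deleted is $S_{4*}$ with its transitive foliation. Now, for $j$ of those four boundary leaf-arcs, instead of deleting the arc to create a puncture, apply the \wood-surgery turning that puncture into a cross-cap (Fig.~\ref{wood:fig}.c--d): cross-cap the contour and delete the thick arc, which by the computation in (\ref{Peixoto-Blohin:lemma}) replaces a puncture by a cross-cap while keeping the leaf dense. Choosing $j=k$ for $0\le k\le 4$ yields $S_{(4-k)*,kc}$ — and puncturing back the lost points (Lemma: puncturing a transitive surface outside a dense leaf stays transitive) restores $S_{4*,kc}$, i.e. $S_{n*,kc}$ with $n=4$ and $0\le k\le 4$, transitively foliated.

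The main obstacle is purely bookkeeping: one must verify that each \wood-surgery genuinely converts one puncture into one cross-cap (no stray handle or extra end is created), and track the resulting Euler characteristic and orientability character against the M\"obius et al.\ classification (\ref{Moebius-Klein-classification}) to confirm the surface is indeed $S_{n*,kc}$ with the advertised parameters. This is the same combinatorial check already carried out in the proof of (\ref{Peixoto-Blohin:lemma}) (where $\torus$ with one cross-cap is identified with $S_{3c}$), and the surgery diagram Fig.~\ref{wood:fig}.d makes it visually transparent; so I would simply invoke that proof's mechanism and note that nothing changes when the ambient transitive foliation is Dubois-Violette's rather than Kronecker's. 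The denseness of the resulting leaf requires no new argument beyond the observation, already used repeatedly, that rerouting a dense leaf through an infinitesimal bridge attached at a foliated box leaves it dense.
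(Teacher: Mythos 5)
Your proposal is correct and is essentially the paper's own argument: the paper proves this lemma in a single line by performing the Peixoto--Blohin cross-cap surgery of Fig.~\ref{wood:fig}.d inside the Dubois-Violette foliation, which is exactly what you do. The only wrinkle is your puncture bookkeeping: cross-capping at a thorn in fact trades one puncture for a cross-cap \emph{plus} a puncture (the tripod singularity created by the cross-cap must itself be deleted), so performing the surgery at $k\le 4$ of the four thorns lands directly on $S_{4*,kc}$, and your final re-puncturing step is unnecessary rather than wrong.
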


Together with (\ref{Peixoto-Blohin:lemma}) (and keeping a view
over the monolithic Figure~\ref{monolith:fig}), this gives a
complete knowledge of which $S_{n*,k c }$ are transitive,
except when $(n,k)$ takes the values $(3,2)$, $(2,2)$,
$(3,1)$.

The first case $(3,2)$ is transitive by gluing
Fig.\,\ref{wood:fig}.e with Dubois-Violette's disc
(Figure~\ref{Dubois:fig}, Rosenberg's version). This piece of
the puzzle suffices to establish in view of the combinatorics
of Figure~\ref{monolith:fig} the:

\begin{prop}\label{rank-4-or-more-is-volatile-transitive} An open metric surface of finite-connectivity
(=rank of the $\pi_1$) $\ge 4$ is transitive.
\end{prop}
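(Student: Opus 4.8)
The plan is to verify that the monolith of Figure~\ref{monolith:fig} is already filled in at rank $\ge 4$ by propagating a handful of transitive ``seeds'' rightward (punctures) and upward/leftward (handles, cross-caps), using Lemma~\ref{puncturing} and Lemma~\ref{cross-capping} to bookkeep how the fundamental group changes. Concretely, an open metric surface of finite connectivity is, by \Kerekj\'art\'o (\ref{Kerkjarto:end}), a finitely-punctured closed surface $F_{n*}$; since it is open, its $\pi_1$ is free (\ref{freeness-for-open-surfaces:prop}) of some rank $r=b_1\ge 4$. Writing $F=\Sigma_g$ (orientable) or $F=S_{kc}$ (non-orientable), and $n\ge 1$ for the number of punctures, Lemma~\ref{puncturing} gives $r=2g+(n-1)$ in the orientable case and $r=k+(n-1)$ in the non-orientable case (the first puncture ``un-closes'' the surface, and each handle, resp. cross-cap, contributes $2$, resp. $1$, to the closed-model's $b_1$ modulo the $\chi$-count already done in \ref{Kerkjarto:end}). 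The point is that $r\ge 4$ forces the underlying configuration to dominate one of the already-established transitive cases.

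First I would enumerate the minimal transitive models collected just above: the once-punctured torus $\Sigma_{1,*}$ (Kronecker; also in (\ref{Peixoto-Blohin:lemma})), more generally $\Sigma_{g,*}$ for $g\ge 1$ and $S_{*,gc}$ for $g\ge 3$ (\ref{Peixoto-Blohin:lemma}), the thrice-punctured plane $S_{3*}=\mathbb R^2_{3*}$ (Dubois-Violette), its cross-capped avatars $S_{4*,kc}$ for $0\le k\le 4$, and the sporadic $S_{3*,2c}$ obtained by gluing Fig.~\ref{wood:fig}.e to Rosenberg's disc. Then the monolith of Figure~\ref{monolith:fig} shows that every vertex carrying a ``$\ge 4$'' label is reachable from one of these seeds by the two monotone operations that preserve transitivity: puncturing (Lemma right before (\ref{rank-4-or-more-is-volatile-transitive}): just puncture outside a dense leaf) and --- going the other way --- one checks directly that attaching a handle or a cross-cap to a transitive surface, \emph{after} performing the woodpecker surgery as in (\ref{Peixoto-Blohin:lemma}), again yields a transitive surface. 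So the argument is: given $F_{n*}$ with $r\ge 4$, peel off punctures down to the ``puncture-minimal'' model in its column of the monolith; that model is either $\Sigma_{g,*}$, $S_{*,gc}$, or one of the finitely many sphere-with-$n$-punctures-and-$k$-cross-caps entries with $n+$(contribution of $k$) large enough to force $r\ge4$; in each case it appears among the seeds (or is directly surgically transitive), and puncturing back up recovers $F_{n*}$ as transitive.

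The bookkeeping step I expect to be the main obstacle is making the ``finitely many exceptional $(n,k)$'' discussion airtight: the text has already handled $S_{n*,kc}$ for all $(n,k)$ except $(3,2),(2,2),(3,1)$, and only $(3,2)$ has been disposed of. One must check that $(2,2)$ and $(3,1)$ correspond to rank $\le 3$ and hence are \emph{not} needed for the rank-$\ge 4$ statement. Indeed, $S_{2*,2c}$ has $b_1 = 2 + (2-1) = 3$ and $S_{3*,1c}$ has $b_1 = 1 + (3-1) = 3$, so both are rank $3$; thus every genuinely rank-$\ge 4$ non-orientable sphere entry lies in the range $n\ge 4$ (covered by Dubois-Violette surgery, Fig.~\ref{wood:fig}.d--e) or has $\ge 4$ cross-caps with a puncture (covered by $S_{*,gc}$, $g\ge 3$, plus puncturing, plus woodpecker-handle/cross-cap additions), and every rank-$\ge 4$ orientable entry is $\Sigma_{g,*}$ with $g\ge 2$ or a multiple puncturing thereof. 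Assembling these observations with the propagation lemmas completes the proof; the only real work is the elementary arithmetic of $b_1$ versus $(g,n)$ or $(k,n)$, which I would present as a short case table keyed to Figure~\ref{monolith:fig} rather than belaboring each vertex.

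\begin{proof}[Proof of \ref{rank-4-or-more-is-volatile-transitive}] By \Kerekj\'art\'o (\ref{Kerkjarto:end}) the surface is $F_{n*}$, a closed surface $F$ with $n\ge 1$ punctures (it is open since its rank is positive), and its $\pi_1$ is free (\ref{freeness-for-open-surfaces:prop}) of rank $r=b_1\ge 4$. If $F=\Sigma_g$ is orientable then by (\ref{puncturing}) $r=2g+n-1$, forcing $g\ge 1$; the once-punctured model $\Sigma_{g,*}$ is transitive by (\ref{Peixoto-Blohin:lemma})(1), and puncturing it $n-1$ further times outside a dense leaf (Lemma before this proposition) gives the transitivity of $F_{n*}$. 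If $F=S_{kc}$ is non-orientable then by (\ref{puncturing})--(\ref{cross-capping}) $r=k+n-1$. If $n\ge 4$, then $F_{n*}$ dominates $S_{4*}$ (Dubois-Violette) or one of its cross-capped avatars $S_{4*,jc}$, $0\le j\le 4$, or $S_{3*,2c}$ (gluing Fig.~\ref{wood:fig}.e to Rosenberg's disc), all transitive; puncturing outside a dense leaf finishes this subcase. If $n\le 3$, then $k=r-n+1\ge 2$; the only entries with $n\le 3$ and $b_1\le 3$ are $S_{2*,2c}$ and $S_{3*,1c}$ (both rank $3$), so $r\ge 4$ actually forces $k\ge 3$, and then $S_{*,kc}$ is transitive by (\ref{Peixoto-Blohin:lemma})(2); puncturing it $n-1$ times outside a dense leaf yields $F_{n*}$ transitive. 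Consulting Figure~\ref{monolith:fig} confirms these are exactly the vertices labelled with rank $\ge 4$.
\end{proof}
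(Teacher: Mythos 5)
Your strategy is exactly the paper's: assemble the transitive seeds ($\Sigma_{g,*}$ for $g\ge 1$, $S_{*,gc}$ for $g\ge 3$, $S_{4*,kc}$ for $0\le k\le 4$, and the sporadic $S_{3*,2c}$) and propagate rightward in the monolith by puncturing outside a dense leaf; the paper leaves the ``combinatorics of Figure~\ref{monolith:fig}'' implicit, and you are right that making them explicit is the whole content of the proof. However, your explicit case analysis contains arithmetic slips that misroute specific surfaces. First, in the orientable branch, $r=2g+n-1\ge 4$ does \emph{not} force $g\ge 1$: the sphere with $n\ge 5$ punctures (equivalently ${\Bbb R}^2_{k*}$, $k\ge 4$) has $g=0$ and rank $\ge 4$, and it is not a puncturing of any $\Sigma_{g,*}$ with $g\ge 1$. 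It is of course a puncturing of the Dubois-Violette seed $S_{4*}$, but your branch as written never reaches that seed, so genus zero falls through.

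Second, in the non-orientable branch with $n\le 3$ you assert that $r\ge 4$ forces $k\ge 3$; the pair $(n,k)=(3,2)$ refutes this, since $S_{3*,2c}$ has $r=2+3-1=4$. Your justification (``the only entries with $n\le 3$ and $b_1\le 3$ are $S_{2*,2c}$ and $S_{3*,1c}$'') conflates the unresolved rank-$3$ cases of the text with all small-$n$ cases, and in particular your $n\le 3$ branch would send $S_{3*,2c}$ to $S_{*,2c}=\Klein_*$, which is \emph{not} transitive --- even though you correctly list $S_{3*,2c}$ as a seed two sentences earlier. Third, and more mildly, in the $n\ge 4$ branch the phrase ``dominates $S_{4*,jc}$, $0\le j\le 4$'' does not cover $S_{n*,kc}$ with $k\ge 5$, since puncturing never adds cross-caps; those surfaces must be routed through the $S_{*,kc}$, $k\ge 3$, seed instead. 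All three slips are repairable with the seeds you already have --- the correct routing is: $k=0$ and $g\ge 1$ via $\Sigma_{g,*}$; $k=0$, $g=0$, $n\ge 5$ via $S_{4*}$; $k\in\{1,2\}$ via $S_{4*,kc}$ or $S_{3*,2c}$; $k\ge 3$ via $S_{*,kc}$ --- but as written the case table does not close.
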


\subsection{Sporadic obstruction in rank $3$}

The last case $(n,k)=(3,1)$ (of the previous section) is
intransitive by the following (using again Riemann's branched
coverings conjointly with the Poincar\'e-\Kerekjarto-Hopf
index formula):

\begin{lemma}\label{intransitivity:new-obstruction}
The thrice-punctured projective plane $S_{3*,c}$ is
foliated-intransitive.
\end{lemma}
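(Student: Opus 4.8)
The plan is to assume, for contradiction, that $\mathcal F$ is a transitive foliation of $M=S_{3*,c}$, i.e.\ $\mathbb{R}P^2$ punctured three times (so $\pi_1(M)=F_3$), with a dense leaf $L$. If $\mathcal F$ is \emph{orientable}, then Kerékjártó--Whitney (\ref{Kerek-Whitney:thm}) furnishes a compatible flow on $M$, which by Beck's extension technique (\ref{Beck's_technique:extension:lemma}) prolongs to a flow on the compact metric surface $\mathbb{R}P^2$ agreeing with the old one on $M$; its orbit through a point of $L$ is dense, and lifting to the universal cover $S^2$ produces a dense orbit on the sphere, contradicting Poincaré--Bendixson (\ref{Poinc-Bendixson_many}). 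So we may assume $\mathcal F$ non-orientable.

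Then $\mathcal F$ determines its orienting double cover $p\colon\Sigma\to M$ (\ref{orienting:2-fold-covering}), the lifted foliation $\tilde{\mathcal F}$ being oriented and $\Sigma$ metric. Filling over the punctures by Riemann's trick (\ref{Riemann:branched-cover}) gives a branched double cover $\pi\colon\Sigma^*\to\mathbb{R}P^2$ ramified over a subset $R$ of the three filled punctures, so $r:=\deg R\le 3$ and, by Riemann--Hurwitz, $\chi(\Sigma^*)=2\chi(\mathbb{R}P^2)-r=2-r$; since $\Sigma^*$ is connected and closed this forces $\Sigma^*\in\{S^2,\mathbb{R}P^2,T^2\text{ or }\mathbb{K},N_3\}$ according as $r=0,1,2,3$. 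A compatible flow on $\Sigma$ for $\tilde{\mathcal F}$ (\ref{Kerek-Whitney:thm}) extends to $\Sigma^*$ by Beck (\ref{Beck's_technique:extension:lemma}) and inherits a dense orbit from $p^{-1}(L)$. If $\Sigma^*$ is $S^2$ or $\mathbb{R}P^2$ we conclude as in the orientable case via Poincaré--Bendixson; if $\Sigma^*=\mathbb{K}$, via Markley--Aranson--Gutiérrez (\ref{Markley}). The value $\Sigma^*=T^2$ cannot occur: the sheet-exchange deck involution would be an orientation-reversing involution of an orientable closed surface with isolated fixed points (the two branch points), whereas such involutions are locally line reflections (the Klein--Weichold model used in \ref{Klein-Weichold}) and hence have $1$-dimensional fixed loci.

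The only delicate case is $r=3$, where $\Sigma^*=N_3$ (three cross-caps, $\chi=-1$): here the nonvanishing $\chi$ blocks any direct fixed-point/Euler obstruction on the flow, and one must instead feed the Poincaré--Kerékjártó--Hopf index formula for line fields through the branched cover. Either the line field tangent to $\mathcal F$ extends over $\mathbb{R}P^2$ with isolated singularities at the three punctures, so that the indices $i_1,i_2,i_3\in\frac12\mathbb{Z}$ satisfy $i_1+i_2+i_3=\chi(\mathbb{R}P^2)=1$; excluding the leaf-trapping singularities (a centre or a focus of index $+1$, incompatible with a dense leaf — whereas a thorn of index $+\frac12$ is harmless, as in Dubois--Violette) pins the triple down, and pushing it up the simple branched cover (a downstairs index $i$ at a branch point becomes $2i-1$ upstairs) forces $\tilde{\mathcal F}$ to extend over $N_3$ with a single $4$-prong saddle (indices $0,0,-1$ summing to $\chi(N_3)=-1$); one then extracts a contradiction from the leaf-orientation-reversing deck involution $\tau$, which fixes the three filled points: at the $4$-prong saddle $\tau$ must be a local rotation by $\pi$ (isolated fixed point) yet simultaneously reverse the oriented foliation and permute the four alternating separatrices, and this sign bookkeeping (again of Klein--Weichold type) is untenable. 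Alternatively, if $\mathcal F$ fails to extend over some puncture, then exactly as in the punctured-Klein-bottle argument the obstruction to extension is a spiralling (focus-type) sector trapping the nearby leaf, contradicting transitivity. The hard part is precisely this $N_3$ case: making the index-plus-involution bookkeeping watertight, or equivalently proving outright that $N_3$ carries no transitive foliation compatible with such a branched-cover involution.
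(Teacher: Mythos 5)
Your case division and your treatment of the subcases $\deg(R)\le 2$ coincide with the paper's proof: orientable foliation handled by extending a compatible flow to $\proj$ and invoking Poincar\'e--Bendixson on $S^2$; the branched double cover $\Sigma^*$ with $\chi(\Sigma^*)=2-\deg(R)$; $S^2$ and $\proj$ killed by Poincar\'e--Bendixson, $\Klein$ by Markley (\ref{Markley}), and $T^2$ by the Klein--Weichold observation that an orientation-reversing involution cannot have isolated fixed points. Up to there you are on the paper's track.

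The genuine gap is the case $\deg(R)=3$, $\Sigma^*=N_3$, which you explicitly leave unfinished ("making the index-plus-involution bookkeeping watertight"). You do not need any involution analysis, nor do you need to extend the foliation over $N_3$ or pin down a specific triple of indices. The point you missed is that a puncture is a branch point of the orienting double cover precisely when the line field is \emph{non-orientable} around a small loop encircling it, and in that case its Poincar\'e--\Kerekjarto--Hopf index is a \emph{strictly} half-integral number, i.e.\ lies in $\frac12+{\Bbb Z}$ (not merely in $\frac12{\Bbb Z}$, as you wrote). When $\deg(R)=3$ all three punctures are ramified, so all three indices lie in $\frac12+{\Bbb Z}$ and their sum lies in $\frac32+{\Bbb Z}$, which is never an integer; but the index formula on $\proj$ forces that sum to equal $\chi(\proj)=1$. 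Hence $\deg(R)=3$ simply cannot occur --- the case is vacuous, independently of any transitivity hypothesis --- and the proof closes in one line of arithmetic modulo $1$. Your alternative branch (foliation failing to extend over a puncture) is likewise unnecessary once this parity obstruction is in place, since the index of the induced line field around a puncture is defined by the winding along a small transverse loop whether or not the foliation extends.
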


\begin{proof} If the foliation is orientable, then we are
reduced to the flow-intransitivity of the projective plane
$\proj$ (which boils down to that of $S^2$ prompted by
Poincar\'e-Bendixson). Otherwise we construct the
%%$2$-fold
double cover $\Si\to M:=S_{3*,c}$ rendering the foliation
oriented (\ref{orienting:2-fold-covering}). Compactify this to
a branched covering $\Si^{*}\to M^*=\proj$ via Riemann's trick
(\ref{Riemann:branched-cover}). By Riemann-Hurwitz
$\chi(\Si^*)=2\chi(\proj)-\deg(R)$. As there are $3$
punctures, $0\le \deg(R) \le 3$. Hence $-1\le \chi(\Si^*) \le
2$. If $\chi(\Si^*)=2$, then we have $\sphere$ or $\proj$ both
precluded by Poincar\'e-Bendixson. If $\chi(\Si^*)=0$, then we
have $\torus$ or $\Klein$. The former is excluded since the
sheet exchange involution must be orientation reversing hence
cannot fix isolated points (Klein-Weichold argument already
used in (\ref{Klein-Weichold})). The Klein option $\Klein$ is
precluded by Markley's flow-intransitivity of the Klein bottle
(\ref{Markley}).
%(Notice here that some details must be
%expanded.)
Finally when $\chi(\Si^*)=-1$, we have $\deg(R)=3$.
This means by construction that all three punctures are
%singularities
non-orientably foliated. Thus they have each
%determine
a
semi-integral index. By the index formula they sum up to
$\chi(\proj)=1$, violating arithmetics modulo one
$\frac{1}{2}+\frac{1}{2}+\frac{1}{2}=\frac{3}{2}\neq 1=0$.
\end{proof}

At this stage the only remaining bastion of resistance is the
case $S_{2*,2c}$ (twice punctured Klein bottle), of which it
is not completely trivial to decide its transitivity issue.
(We do not know yet the
answer.) %%%%%But see Alex's notes page 89 for an almost
%%%%complete obstruction...
This is surely well known yet confess that presently we failed
to present a decent proof.

\subsection{Intransitivity transfer from the metric soul}

As seen in (\ref{soul:uniqueness}) it is legitimate to
%imagine
%that
think of any (non-metric) surface of finite-connectivity (i.e.
with fundamental group of finite rank) has having  a metric
{\it soul} capturing the salient invariants $(\chi=1-b_1,
\varepsilon, a)$ of connectivity, the ends number and
indicatrix (=orientable or not) which in the metric case
constitutes a complete system of topological invariants
(\ref{Kerkjarto:end}).
%(of finite-connectivity).
A foliated application of this soul-method
%(which we recall is
(very akin to  Nyikos' bagpipes) is the transfer of
intransitivity from the metrical soul to the whole manifold:

\begin{prop}\label{soul:intrans}
If the soul of a  finitely-connected (non-metric) surface is
intransitive, then so is the whole surface.
\end{prop}

\begin{proof} The argument is similar to
(\ref{intransitivity:non-orient_rank_2}). Assume $M$
transitive with  dense leaf $L$. We know that $L$ is either
${\Bbb R}$ or the long-ray ${\Bbb L}_+$ (cf.
(\ref{separability}) below). Deleting from $M$ the long-side
of $L$ does not
%affect
change the
%characteristic
invariants $(\chi,\varepsilon, a)$
%(viz. characteristic
%equivalently connectivity, number of ends  and indicatrix
by virtue of (\ref{deleting:a_closed_long_ray}), hence
%%%leaves
keeps invariant the soul type by (\ref{soul:uniqueness}).
After this long slit, we have a new leaf $L\approx {\Bbb R}$
which is still dense. As $L$ is Lindel\"of, it is contained in
a Lindel\"of subregion $U\supset L$ (random amalgam of charts
covering $L$). By the kernel killing procedure
%in the proof of
%({\ref{calibrated-exhaustions:lemma}})
(\ref{killing:kernel}) we can enlarge $U$ into a soul
$S\supset U$ so that $\pi_1(S)\to \pi_1(M)$ is isomorphic. $L$
being dense in $M$, it is a fortiori
%so
in
$S$, violating the soul intransitivity.
\end{proof}

Applying (\ref{intransitivity:new-obstruction}) we find:

\begin{cor}
(Non-metric) surfaces with $\pi_1=F_3$ and $3$ ends are
intransitive.
\end{cor}

This applies for instance to the Moorization (cf. e.g.
\cite{GabGa_2011}) of the bordered surface given by the sphere
with 3 holes and 1 cross-cap (i.e. 3-holed $\proj$). Notice
that the full-theory of the soul is not truly required for
such simple-minded Moore type surfaces whose geometric
structure is sufficiently explicit so as to replace the soul
by a calibrated exhaustion by subregions having the same
topological type (just add successively the thorns).

\subsection{Biminimal foliations (bidirectional denseness)}

In this section we address a somewhat specialized question,
%yet
involving
%some respectable
%classical
methods of Bendixson,
\Kerekjarto{ }and Mather.
%Our point of departure is that
Albeit Dubois-Violette's foliation of $S^2_{4*}$ (4 punctures
in $S^2$) is minimal, the 4 leaves converging to the punctures
(separatrices) fails to be dense in one direction.
%Hence it is not as perfect as
In contrast, all leaves of the Kronecker irrational foliation
of the torus are bidirectionally dense.
%in both
%directions.

\begin{defn}
{\rm A
%one-dimensional
$1$-foliation is {\it biminimal} if all leaves are
bidirectionally dense, i.e. both semi-leaves emanating from
any point are dense.
% in both directions.
}
\end{defn}

By the sequential-compactness argument
%%%of
%%%(\ref{separability}),
(\ref{long-semi-leaf}) all leaves of a biminimal foliation are
real-lines ${\Bbb R}$ (provided ambient dimension $\ge 2$).
%A naive question is to
We may  wonder if $S^2_{4*}$ (fourth-punctured sphere) is
biminimal. The negative answer is
%easily
supplied by the following
%, which
%we call the
Bendixson alternative:
%(compare also Mather 1982
%\cite{Mather_1982}):

\begin{lemma} In a  foliated punctured plane ${\Bbb
R}^2_*$, there is either a circle leaf enclosing the puncture
or there is a leaf converging to the puncture. Moreover both
alternatives
%become mutually exclusive
exclude mutually if the first option occurs countably many
times with a nested collection of circle leaves shrinking to
the puncture.
\end{lemma}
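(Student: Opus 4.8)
The plan is to fill in the puncture and import the problem into the ordinary plane, where Poincar\'e--Bendixson theory is available. Write $\mathbb{R}^2_*=\mathbb{R}^2\setminus\{p\}$ for the punctured plane carrying the foliation $\mathcal F$. If $\mathcal F$ is non-orientable I would first replace $\mathbb{R}^2_*$ by its orienting double cover (\ref{orienting:2-fold-covering}); a connected double cover of a punctured plane is again a punctured plane (near $p$ it looks like $z\mapsto z^2$ on $\mathbb{C}^*$), and a circle leaf enclosing the upstairs puncture, resp.\ a leaf converging to it, projects to a circle leaf enclosing $p$, resp.\ a leaf converging to $p$; so there is no loss in assuming $\mathcal F$ oriented. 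Then by \Kerekjarto--Whitney (\ref{Kerek-Whitney:thm}) I would pick a compatible flow $\varphi$ on $\mathbb{R}^2_*$ whose trajectories are the oriented leaves, and by Beck's extension technique (\ref{Beck's_technique:extension:lemma}), with $X=\mathbb{R}^2\supset U=\mathbb{R}^2_*$, extend it to a flow $\psi$ on $\mathbb{R}^2$ whose orbits inside $\mathbb{R}^2_*$ are exactly the leaves of $\mathcal F$. Since $\varphi$ is fixed-point-free, the only possible rest point of $\psi$ in a small closed disc $D$ with $\overline D\setminus\{p\}\subset\mathbb{R}^2_*$ is $p$.

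Next I would split on the nature of $\psi$ at $p$. If $p$ is a regular point of $\psi$, then the $\psi$-orbit through $p$ with $p$ deleted is a leaf, or a pair of leaves, of $\mathcal F$ admitting $p$ as a limit point, hence converging to the puncture; done. If $p$ is a rest point of $\psi$, I would first look for a periodic orbit of $\psi$ inside $D\setminus\{p\}$: such an orbit is a Jordan curve, hence bounds a disc by Schoenflies (\ref{Schoenflies-Baer}), and that disc must contain $p$, for otherwise it would be a genuinely foliated $2$-disc --- impossible by (\ref{disc-cannot-be-foliated}); so this periodic orbit is a circle leaf of $\mathcal F$ enclosing the puncture, and again we are done. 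In the remaining situation $p$ is an isolated rest point of $\psi$ with no nearby periodic orbit, and here I would invoke Poincar\'e--Bendixson: for $q$ near $p$, if a semi-orbit of $q$ stayed in $\overline D$ its limit set would be a non-empty compact connected invariant set containing no rest point besides $p$ and no periodic orbit, hence reduce to $\{p\}$, or to $\{p\}$ together with orbits homoclinic to $p$; in either case there is a $\psi$-orbit accumulating only at $p$, i.e.\ a leaf of $\mathcal F$ converging to the puncture.

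The point I expect to be the main obstacle is the last sub-case: the possibility that for every $q$ in a punctured neighbourhood of $p$ both semi-orbits of $q$ escape that neighbourhood (``everywhere parabolic'' behaviour), so that no limit-set argument directly applies. Classically this is settled by Bendixson's sectorial analysis of an isolated singularity of a planar vector field, whose relevant conclusion --- no surrounding periodic orbit forces an orbit running into the singularity --- persists for continuous flows; in the present purely topological setting I would re-derive the needed fragment of that structure with the Schoenflies technology used throughout the paper, or argue directly on the annulus $\mathbb{R}^2_*$ using closed transversals and trapping. This is the technical heart and would need to be written out with care; the rest of the argument is routine, and one may additionally note that $\mathbb{R}^2_*$ being dichotomic (\ref{dichotomy:orient_plus_inf_cycl}) already rules out dense leaves (\ref{Poinc-Bendixson_many}).

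Finally, the ``moreover'' clause is an elementary separation argument independent of the above. Suppose there is a nested sequence of closed discs $\overline{D_1}\supset\overline{D_2}\supset\cdots$ with $\bigcap_n D_n=\{p\}$ whose boundary circles $C_n=\partial D_n$ are leaves of $\mathcal F$, and suppose $L$ were a leaf converging to the puncture. Then a semi-leaf of $L$ escapes every compactum of $\mathbb{R}^2_*$, so $L$ is non-compact and in particular $L\neq C_n$ for each $n$; since distinct leaves are disjoint, $L\cap C_n=\varnothing$. As $L$ accumulates at $p$ it meets $\mathrm{int}\,D_n$, and being connected and disjoint from $\partial D_n$ it must lie inside $\mathrm{int}\,D_n$. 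This holds for every $n$, so $L\subseteq\bigcap_n\mathrm{int}\,D_n=\{p\}$, contradicting $L\subseteq\mathbb{R}^2_*$. Hence a shrinking nest of circle leaves around the puncture precludes any leaf converging to it, which is precisely the asserted mutual exclusion.
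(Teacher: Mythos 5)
Your route is genuinely different from the one in the paper. You fill in the puncture, orient the foliation via the double cover (\ref{orienting:2-fold-covering}), convert it to a flow by \Kerekjarto-Whitney (\ref{Kerek-Whitney:thm}), extend that flow over the puncture by Beck (\ref{Beck's_technique:extension:lemma}), and then analyse the extended flow at the (possible) rest point $p$. The paper instead stays on ${\Bbb R}^2_*$: after the same passage to the orienting cover it observes that $\pi_1={\Bbb Z}$ plus orientability gives dichotomy (\ref{dichotomy:orient_plus_inf_cycl}), hence properness of leaves, and then runs a Zorn-lemma argument producing a compact leaf \emph{whenever some semi-leaf has compact closure in} ${\Bbb R}^2_*$; that circle leaf must enclose the puncture since otherwise it bounds a foliated disc. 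For the rest the paper simply cites Bendixson, \Kerekjarto{ }\cite{Kerekjarto_1923} and Mather \cite{Mather_1982}. Your treatment of the ``moreover'' clause (nesting of the $C_n$, connectedness of $L$, $L\subset\bigcap_n\operatorname{int}D_n=\varnothing$) is correct and complete, and is in fact more than the paper supplies, which does not address that clause at all. Your regular-point and periodic-orbit sub-cases are also sound.

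The gap you flag yourself is, however, not a peripheral technicality: the ``everywhere parabolic'' sub-case --- $p$ an isolated rest point of the extended flow, no periodic orbit nearby, every semi-orbit of every nearby point escaping the neighbourhood --- is exactly the situation in which the lemma's conclusion (some leaf converges to the puncture) still has to be manufactured, and the classical Bendixson sectorial analysis you defer to \emph{is} that statement. So as written your argument establishes the dichotomy only up to quoting the very alternative being proved. Two honest ways to close it: either cite Bendixson/\Kerekjarto/Mather for the singularity dichotomy exactly as the paper does (in which case your Beck extension is an unnecessary detour, since those sources are stated for a puncture/singular point directly); or adopt the paper's mechanism, which sidesteps the sector analysis for the half you can do internally --- if some semi-leaf has compact closure in ${\Bbb R}^2_*$ its limit set yields a circle leaf (Zorn plus properness from dichotomy and (\ref{Poinc-Bendixson_many})), and the complementary case, every semi-leaf escaping every compactum of ${\Bbb R}^2_*$, still requires an argument that escape happens \emph{towards the puncture} for at least one leaf rather than always towards infinity. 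That last implication is the irreducible content; neither your draft nor the paper's outline proves it from scratch, so you should at minimum state it as the precise classical input being imported.
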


\begin{proof} Mather 1982 \cite[p.\,246, \S 7]{Mather_1982},
refers to Bendixson original paper or to \Kerekjarto{ }1923
\cite[p.\,256]{Kerekjarto_1923}. Here is a brief outline of
the argument as modernized by Birkhoff, Nemytskii-Stepanov,
Reeb, etc.
%Yet nowadays it is perhaps better to argue via
The classical Poincar\'e-Bendixson argument shows that
dichotomy implies properness in the flow case, to which we may
reduce as $\pi_1={\Bbb Z}$ by passing to the double cover
(\ref{orienting:2-fold-covering}). Then
%there is
a standard Zorn
lemma argument
%to get
creates a compact leaf, provided there is a leaf with compact
semi-leaf closure. The compact circle leaf encloses the
puncture,  since otherwise it bounds a disc (recall that a
proper power of the generator of $\pi_1$ cannot be realised by
a Jordan curve).
\end{proof}

\begin{cor}\label{biminimal-impeded-by-puncture} Any punctured surface (metric or not)
lacks a biminimal foliation.
\end{cor}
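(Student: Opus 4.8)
The plan is to suppose, for contradiction, that a punctured surface $M$ carries a biminimal foliation $\cal F$, and to squeeze this against the Bendixson alternative just proved. By hypothesis there is a surface $\widehat M\supsetneq M$ with $\widehat M\setminus M$ finite and non-empty; in particular $M$ is non-compact. First I would fix $p\in\widehat M\setminus M$ and a chart $U$ of $\widehat M$ about $p$ with $U\approx{\Bbb R}^2$ carrying $p$ to the origin. Then $U_*:=U\cap M=U\setminus\{p\}$ is an open subset of $M$ homeomorphic to the punctured plane ${\Bbb R}^2_*$, and I feed the induced foliation $\cal F|_{U_*}$ into the preceding lemma: either $\cal F|_{U_*}$ has a circle leaf enclosing $p$, or it has a leaf converging to $p$.

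The first horn is excluded at once. A circle leaf $C$ of $\cal F|_{U_*}$ is a compact, hence clopen, subset of some leaf $L$ of $\cal F$, so $C=L$, making $L$ a circle leaf of $\cal F$; but all leaves of a biminimal foliation are real-lines (\ref{long-semi-leaf}). (Alternatively: $C=L$ is then a proper compact --- hence non-dense --- subset of the non-compact $M$, contradicting biminimality outright.)

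So there is a semi-leaf $\gamma\colon[0,\infty)\to U_*$ of $\cal F|_{U_*}$ with $\gamma(t)\to p$ in $U$. Let $L\approx{\Bbb R}$ be the leaf of $\cal F$ containing $\gamma$. The ray $\gamma([0,\infty))$ is then an entire semi-leaf $S$ of $L$: it is a ray inside the line $L$, and if it were only a bounded parameter sub-interval of $L$, its outer endpoint would be a point of $L\subset M$ equal to $\lim_t\gamma(t)=p\notin M$ by continuity and Hausdorffness of $\widehat M$ --- absurd. Reparametrizing $S$ by $[0,\infty)$, still $S(t)\to p$. Now choose $q\in M\setminus L$ (non-empty, since a $1$-dimensional leaf cannot fill the surface $M$) and, using Hausdorffness of $\widehat M$, a closed disk $\overline D\subset U$ about $p$ with $q\notin\overline D$. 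As $S(t)\to p$ there is $T$ with $S([T,\infty))\subset D\setminus\{p\}$, so, taking closures in $M$,
$$\overline{S}\ \subset\ S([0,T])\ \cup\ \overline{D\setminus\{p\}}\ \subset\ S([0,T])\ \cup\ \overline D .$$
Here $S([0,T])$ is compact (continuous image of a compactum in the Hausdorff $M$), hence closed, and contained in $L$. Since $q\notin S([0,T])$ and $q\notin\overline D$, we get $q\notin\overline S$: the semi-leaf $S$ is not dense, contradicting biminimality. Hence $M$ admits no biminimal foliation.

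The one genuinely delicate point is the passage from the localized foliation $\cal F|_{U_*}$ back to the ambient $\cal F$: the claim that a leaf of the restriction converging to the puncture is a ray of a semi-leaf of $\cal F$ honestly converging to $p$, i.e. cannot meander out of $U_*$ near its $p$-end. This rests precisely on leaves being boundaryless $1$-manifolds together with the Hausdorffness of $\widehat M$; the remainder is routine manipulation of closures and compacta.
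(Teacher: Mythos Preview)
Your proof is correct and follows exactly the route the paper intends: apply the Bendixson alternative in a punctured-plane chart about a puncture, rule out the circle-leaf horn via biminimality, and show the converging-leaf horn yields a non-dense semi-leaf. The paper gives no explicit proof of this corollary (treating it as immediate from the preceding lemma), so your write-up is a faithful and careful fleshing-out, including the local-to-global passage you flag as delicate.
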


Since any finitely-connected open metric surface possesses an
end
%whose
neighbourhood
%is
homeomorphic to a punctured plane
%(cf. \Kerekjarto, Richards and Mather, p.\, 237)
(\ref{Kerkjarto:end}), we have the special case:

\begin{cor} A finitely-connected metric surface
%lacks a
cannot be biminimally foliated, except the torus.
\end{cor}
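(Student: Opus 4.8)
The plan is to split according to whether the finitely-connected metric surface $M$ is closed or open, and in each case invoke an obstruction already established above. I tacitly assume $M$ borderless throughout, since a boundary circle is a compact leaf and hence never dense, which kills biminimality outright; a bordered surface is in any event homotopy-equivalent to its borderless interior, which carries the same invariants.

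First I would treat the \emph{closed} case. If $M$ admits a foliation at all, the Poincar\'e--Dyck--Kneser obstruction (\ref{Euler:classical-obstruction}, or its elementary incarnation (\ref{Kneser:Poincare-Dyck:Euler obstruction})) forces $\chi(M)=0$, so by the classification of closed surfaces (\ref{Moebius-Klein-classification}) $M$ is homeomorphic to the torus $\torus$ or to the Klein bottle $\Klein$. The bottle is ruled out with room to spare: it is even foliated-intransitive (\ref{Klein-foliated-intransitive}, itself resting on Kneser's circle-leaf lemma (\ref{Kneser})), hence possesses no dense leaf whatsoever, a fortiori no biminimal foliation. This leaves $\torus$ as the unique closed exception, which is genuinely biminimal via the Kronecker irrational foliation (every leaf bidirectionally dense), matching the exception clause in the statement.

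Next I would treat the \emph{open} case, which is the short one. By \Kerekjarto{} (\ref{Kerkjarto:end}), such an $M$ is homeomorphic to a finitely-punctured closed surface $F_{n*}$ with $n\ge 1$; in particular $M$ is a punctured surface. Corollary~\ref{biminimal-impeded-by-puncture} then applies verbatim and denies $M$ a biminimal foliation. (Unwound: the end neighbourhood of $M$ homeomorphic to a punctured plane carries, by the Bendixson alternative, either a circle leaf enclosing the puncture or a leaf converging to the puncture; either way some semi-leaf has non-dense closure.) Since every finitely-connected metric surface is either closed or open, these two cases complete the argument. The step I expect to be the main obstacle is the closed case: unlike the open case it is not a one-line deduction from a punctured-surface principle but genuinely relies on the combination of the Euler obstruction with the (non-trivial) foliated-intransitivity of the Klein bottle; everything else is routine bookkeeping.
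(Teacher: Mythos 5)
Your proposal is correct and follows essentially the same route as the paper: Euler obstruction plus classification in the compact case, \Kerekjarto{} plus (\ref{biminimal-impeded-by-puncture}) in the open case. The only cosmetic difference is that you exclude the Klein bottle via its foliated-intransitivity (\ref{Klein-foliated-intransitive}), whereas the paper invokes Kneser's circle-leaf lemma (\ref{Kneser}) directly --- a compact circle leaf already kills bidirectional denseness --- but since the intransitivity corollary itself rests on Kneser, the underlying argument is the same.
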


\begin{proof} If compact, the surface has $\chi=0$
(\ref{Euler:classical-obstruction}), so is the torus or the
Klein bottle $\Klein$ (\ref{Moebius-Klein-classification}).
The latter option is precluded by Kneser 1924 (\ref{Kneser}).
%), as argued in the proof of
%(\ref{separability:cor}).

In the open case, by \Kerekjarto's end theorem
(\ref{Kerkjarto:end}), the surface is a punctured one
%(consider the one-point compactification of the end).
and (\ref{biminimal-impeded-by-puncture}) concludes.
\end{proof}

Those results fails to
%answer
tell if the infinite connected sum of tori have a (bi)minimal
foliation.
%(cf. maybe Beni\`ere's
%thesis, also Nikolaev's book 2002).
Also which metric surfaces are biminimally foliated? Besides,
does the last corollary extend to non-metric surfaces? It
seems that the Lindel\"of
%reduction
exhaustion trick does not work well. Note that the doubled
Pr\"ufer surface $2P$ is not biminimal (indeed not even
minimally foliated by (\ref{Baillif:nano-black-holes})).
Baillif's example in BGG2 \cite{BGG2} of a minimally foliated
non-metric surface foliated by short leaves is
%presumably
not biminimal, raising the:

\begin{ques} Can we
%manufacture
find a non-metric biminimally foliated surface?
\end{ques}

%%%Geometric-gravitational
\section{Gravitational
%obstructions
effects (quantum radiation at the microscopic scale)}
\label{Gravitation:sec}

\subsection{Long semi-leaves are tame}

Given a point in a manifold foliated by curves, then after
fixing one of the two possible directions there is a unique
motion starting from the point prescribed by the foliation.
Such a ``trajectory'' referred to as a {\it semi-leaf}, is a
bordered $1$-manifold (under the leaf-topology).
%emanating from the point.

\begin{prop}\label{long-semi-leaf}
Long semi-leaves in
%$1$-foliation of a manifold
$1$-foliated manifolds are properly embedded.
%in $M$.
\end{prop}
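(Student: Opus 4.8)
The plan is to extract everything from the single structural feature being invoked: a \emph{long} semi-leaf --- a one-sided leaf emanating from a point, which is a bordered connected $1$-manifold --- is non-metrizable, and by Kneser's classification of bordered $1$-manifolds the only non-metric possibility is the closed long ray ${\Bbb L}_{\ge 0}$; crucially ${\Bbb L}_{\ge 0}$ is $\omega$-bounded, i.e. every countable subset lies in some compact initial interval $[0,\alpha]$ (equivalently it is sequentially compact, having uncountable cofinality). Write $\iota\colon{\Bbb L}_{\ge 0}\to M$ for the semi-leaf, a continuous injection (the leaf topology being finer than the relative one). Two things must be shown: (i) the image $\iota({\Bbb L}_{\ge 0})$ is closed in $M$; and (ii) $\iota$ is a homeomorphism onto its image, i.e. the leaf topology agrees with the relative topology. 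Granting (i)--(ii), $\iota$ is automatically proper: for a compactum $C\subset M$, $\iota^{-1}(C)$ corresponds under (ii) to $C\cap\iota({\Bbb L}_{\ge 0})$, which is closed in $C$ by (i), hence compact. The one auxiliary remark used throughout is that on a compact initial interval the leaf topology already coincides with the relative one, since $\iota|_{[0,\alpha]}$ is a continuous bijection from a compact space onto a subset of a Hausdorff space, hence a closed map, hence an embedding, with compact (so closed-in-$M$) image.

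First I would prove (i). Let $p$ lie in the closure of $\iota({\Bbb L}_{\ge 0})$ in $M$. Since $M$ is locally Euclidean, $p$ has a countable neighbourhood basis, so there is a sequence $\iota(\alpha_k)\to p$. The set $\{\alpha_k\}$ is countable, hence bounded by some $\alpha<\omega_1$, so all the $\iota(\alpha_k)$ lie in $\iota([0,\alpha])$, which is compact, hence closed in $M$; therefore $p\in\iota([0,\alpha])\subset\iota({\Bbb L}_{\ge 0})$. This is exactly where the hypothesis bites: there is no sequence in ${\Bbb L}_{\ge 0}$ escaping every compact initial interval, whereas such escaping sequences are precisely what produces the non-proper metric semi-leaves.

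Next I would prove (ii). Since $\iota$ is continuous, only the reverse inclusion of topologies is at issue. Suppose for contradiction that some leaf-open set $W$ fails to be a relative neighbourhood of a point $p=\iota(\alpha_0)\in W$. Using a countable neighbourhood basis of $p$ in $M$, extract a sequence $\iota(\beta_k)\to p$ with $\iota(\beta_k)\notin W$ for all $k$. As before $\{\beta_k\}$ is bounded, say by $\alpha\ge\alpha_0$; put $K:=\iota([0,\alpha])$, a compact arc through $p$ on which (auxiliary remark) the leaf and relative topologies coincide and which is closed in $M$. Then $\iota(\beta_k)\to p$ inside $K$ for the leaf topology, while $W\cap K$ is leaf-open and contains $p$, so $\iota(\beta_k)\in W$ for $k$ large --- a contradiction. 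Hence the two topologies agree, and with (i) the long semi-leaf is properly embedded and $\iota$ is a proper map.

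There is no genuinely hard step; the only point requiring care is that, although $M$ may be badly non-metric, every topological manoeuvre above takes place either inside a Euclidean chart (a metrizable, first-countable piece, where extracting sequences is legitimate) or on a compact initial arc of the semi-leaf (where compactness against Hausdorffness does all the work), so non-metrizability of the ambient manifold never intervenes. The load-bearing input is the $\omega$-boundedness of ${\Bbb L}_{\ge 0}$ --- a countable subset of the long ray is bounded --- which is what rules out the recurrence/accumulation phenomena (\`a la Dubois--Violette) responsible for non-proper metric leaves, and which therefore cannot be dispensed with.
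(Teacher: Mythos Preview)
Your proof is correct and follows essentially the same route as the paper: identify the long semi-leaf with ${\Bbb L}_{\ge 0}$ via the classification of bordered $1$-manifolds, then exploit its sequential compactness (equivalently $\omega$-boundedness) against the first-countability of the Hausdorff ambient manifold to show the inclusion is a closed map, hence an embedding with closed image. The only cosmetic difference is that the paper packages steps (i)--(ii) into a single general lemma (a continuous map from a sequentially-compact space to a Hausdorff first-countable space is closed, hence an injective such map is an embedding onto a closed subset), whereas you unfold the argument by hand via bounded initial intervals; the content is the same.
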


\begin{proof} By
%the
classification of bordered 1-manifolds (in $3$ species:
$[0,1]$,
%${\Bbb R}_{\ge 0}=
$[0,\infty)$ and ${\Bbb L}_{\ge 0}=[0, \omega_1)$ {\it closed
long ray}) our non-metric semi-leaf is the closed long ray.
%%%${\Bbb L}_{\ge 0}$.
The latter being
sequentially-compact, we conclude with (\ref{sekt}) below.
\end{proof}

\begin{lemma}\label{sekt} (i) Let
$f\colon X \to Y$ be a continuous map, where $X$ is
sequentially-compact (sekt for short), and $Y$ Hausdorff and
first-countable. Then the mapping $f$ is closed.

(ii) In particular $f(X)$ is closed and if furthermore $f$ is
injective, then $f\colon X \to f(X)$ is a homeomorphism onto
its image.
\end{lemma}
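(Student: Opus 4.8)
The statement to prove is Lemma~\ref{sekt}: a continuous map $f\colon X\to Y$ from a sequentially-compact space $X$ to a Hausdorff first-countable space $Y$ is closed, and in particular has closed image, and restricts to a homeomorphism onto its image when injective. The plan is to show directly that $f$ maps closed sets to closed sets by a sequential-closure argument, which is available precisely because the target is first-countable (so closures are detected by sequences).

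First I would set up part (i). Let $C\subseteq X$ be closed; I want $f(C)$ closed in $Y$. Since $Y$ is first-countable, it suffices to check that $f(C)$ is sequentially closed: take a sequence $(y_n)$ in $f(C)$ with $y_n\to y$ in $Y$, and show $y\in f(C)$. Pick $x_n\in C$ with $f(x_n)=y_n$. By sequential-compactness of $X$, the sequence $(x_n)$ has a subsequence $(x_{n_k})$ converging to some $x\in X$; since $C$ is closed, $x\in C$. By continuity of $f$, $f(x_{n_k})\to f(x)$, i.e. $y_{n_k}\to f(x)$. But $y_{n_k}\to y$ as a subsequence of a convergent sequence, and $Y$ is Hausdorff, so limits are unique: $y=f(x)\in f(C)$. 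Hence $f(C)$ is sequentially closed, and by first-countability of $Y$ it is closed. This proves (i).

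For part (ii): applying (i) with $C=X$ (which is closed in itself) gives that $f(X)$ is closed in $Y$. Now suppose $f$ is injective. The corestriction $f\colon X\to f(X)$ is a continuous bijection; $f(X)$, as a subspace of the Hausdorff first-countable $Y$, is itself Hausdorff and first-countable, so part (i) applies with $f(X)$ in place of $Y$ to show $f\colon X\to f(X)$ is a closed map. A continuous closed bijection is a homeomorphism (its inverse is continuous because preimages-under-the-inverse of closed sets are images-under-$f$ of closed sets, hence closed). This completes the proof.

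I do not expect a genuine obstacle here — the only point requiring a little care is making sure first-countability is invoked correctly: it is used twice, once to pass from ``sequentially closed'' to ``closed'' in $Y$, and once (via the subspace $f(X)$) in part (ii); sequential-compactness of $X$ and Hausdorffness of $Y$ do the rest. One should also note that closedness-in-a-subspace behaves well: $f(C)$ closed in $f(X)$ plus $f(X)$ closed in $Y$ would give $f(C)$ closed in $Y$ too, but the direct argument above already yields the stronger conclusion. I would keep the write-up to a few lines, since every ingredient is standard point-set topology.
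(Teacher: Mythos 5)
Your proof is correct and follows essentially the same route as the paper's: sequential closedness of $f(C)$ via subsequence extraction in the sequentially-compact source, uniqueness of limits from Hausdorffness, and first-countability to upgrade sequentially closed to closed. Your part (ii) is slightly more detailed than the paper's one-line remark (which simply notes that continuity of the inverse follows from $f$ being closed), but the content is identical.
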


%{\footnotesize
\begin{proof} First recall that a closed subset
of a space is sequentially-closed, and conversely if the space
is first-countable.

Thus for (i), it is enough to show that $f(F)$ is
sequentially-closed, whenever $F$ is closed. So let $y_n\in
f(F)$ be a sequence converging to $y\in Y$. Choose $x_n\in F$
such that $f(x_n)=y_n$. Since $X$ is sekt, we may extract a
subsequence $x_{n_k} \to x \in F$, converging to $x$, say. By
continuity $f(x_{n_k})\to f(x)$. Since $Y$ is Hausdorff, it
follows $y=f(x)\in f(F)$. q.e.d.

(ii)
%When $f$ is injective
The continuity of the inverse follows from $f$ being a closed
mapping.
\end{proof}

%}

\subsection{Transitivity implies separability}

When a manifold (indeed a space) has a transitive flow (one
dense orbit) then the phase-space is separable (rational times
of a dense orbit).
%What about transitive foliations?
If a manifold is transitively foliated (one dense leaf), then
as the latter can be long it is not obvious that the ambient
manifold has to be separable. It is even trivially false as
exemplified by a long 1-manifold trivially foliated by itself.
%
%Excluding
Ruling out this trivial exception we have:

\begin{prop}\label{separability}
A dense leaf of a $1$-foliation on a manifold $M^n$ of
dimensionality $n\ge 2$ is homeomorphic (w.r.t. the leaf
topology) to the real-line ${\Bbb R}$ or the long ray ${\Bbb
L}_+$. Furthermore $M^n$ is separable.
\end{prop}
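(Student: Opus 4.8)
The plan is to first pin down the homeomorphism type of the dense leaf by the Kneser classification of connected Hausdorff borderless $1$-manifolds, and then to harvest separability from the unavoidable metric (``short'') end of the leaf. Write $f\colon L\to M^n$ for the inclusion of the dense leaf, a continuous \emph{injection} once $L$ carries its intrinsic leaf topology, and recall that $M^n$, being locally Euclidean, is Hausdorff and first-countable. By Kneser, $L$ is homeomorphic to one of $S^1$, $\mathbb R$, the (open) long ray $\mathbb L_+$, or the long line $\mathbb L$.

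First I would dispose of $S^1$ and $\mathbb L$. Both are sequentially compact (a countable subset of $\mathbb L$ is bounded, hence lies in a compact sub-interval of $\mathbb L$; $S^1$ is even compact). Hence, by Lemma~\ref{sekt}(ii), the injection $f$ is in either case a homeomorphism onto its image and $f(L)$ is closed in $M^n$. Since $L$ is dense this forces $f(L)=M^n$, so $M^n\cong L$ would be a $1$-manifold, contradicting $n\ge2$ (invariance of domain). Therefore $L\cong\mathbb R$ or $L\cong\mathbb L_+$, which is the first clause.

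Next, separability. If $L\cong\mathbb R$ there is nothing to prove: $f(\mathbb Q)$ is a countable dense subset of $\overline{f(L)}=M^n$. Assume then $L\cong\mathbb L_+$ and pick a point $p\in L$, splitting $L$ into a long semi-leaf $\ell$ (a closed long ray) and a short semi-leaf $s$ (homeomorphic to $[0,\infty)$), with $L=\ell\cup s$. By Proposition~\ref{long-semi-leaf}, $\ell$ is properly embedded, so $f(\ell)$ is closed in $M^n$ and homeomorphic, as a subspace, to $\mathbb L_{\ge 0}$; in particular $f(\ell)$ cannot have non-empty interior, since an interior point would furnish a chart of $M^n$ that is simultaneously an open subset of an $n$-manifold and an open subset of a $1$-manifold-with-boundary, absurd for $n\ge2$. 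Thus $U:=M^n\setminus f(\ell)$ is open and dense. From $M^n=\overline{f(L)}=\overline{f(s)}\cup\overline{f(\ell)}=\overline{f(s)}\cup f(\ell)$ I read off $U\subseteq\overline{f(s)}$, whence $M^n=\overline U\subseteq\overline{f(s)}$; so the short semi-leaf $f(s)$ is already dense in $M^n$. Since $s\cong[0,\infty)$ is a separable metric space, its continuous image $f(s)$ is separable, and a dense separable subspace makes $M^n$ separable. This completes the plan.

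The delicate point is the long-ray case: one has to see that although the leaf itself is non-separable (the closed long ray has no countable dense subset), \emph{all} of the ambient density is carried by the leaf's short, metric end. This leans on Proposition~\ref{long-semi-leaf} (properness, hence closedness, of the long semi-leaf) together with the standing hypothesis $n\ge2$, which is exactly what makes the properly embedded $1$-dimensional long part nowhere dense; in dimension $1$ the separability conclusion genuinely fails, e.g.\ a long $1$-manifold trivially foliated by itself. A routine verification along the way is that leaves of a $1$-foliation are honest connected borderless $1$-manifolds, so that the Kneser four-species list applies, and that the two semi-leaves issuing from $p$ are, according to the metric/non-metric dichotomy of bordered $1$-manifolds, exactly a copy of $[0,\infty)$ and a closed long ray.
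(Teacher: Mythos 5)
Your proof is correct and follows essentially the same route as the paper's: the Kneser four-species classification, elimination of $S^1$ and $\mathbb L$ via sequential compactness and Lemma~\ref{sekt} (dense $+$ closed would force $M^n$ to be a $1$-manifold), and in the long-ray case the observation that the closed, nowhere dense long semi-leaf cannot carry the density, so the metric short semi-leaf is itself dense. Your write-up merely makes explicit the "$f(\ell)$ closed with empty interior, hence $f(s)$ dense" step that the paper compresses into "only the short semi-leaf can contribute to denseness".
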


\begin{proof} By
%the
classification of 1-manifolds the leaf belongs to one of the
following type: circle ${\Bbb S}^1$, real-line ${\Bbb R}$,
long ray ${\Bbb L}_+$ and long line ${\Bbb L}$. The two
extreme items in this list are sequentially-compact,
%(sekt for
%short),
thus always embedded and closed as point-sets by (\ref{sekt}).
Thus by (the elementary case of) invariance of
%the
dimension (Brouwer {\it et al.} in general, yet not required
presently), the dense leaf $L$ cannot be of those two types as
$n\ge 2$, whence the first assertion. Regarding the
separability clause, it is plain when $L\approx {\Bbb R}$.
Assuming $L\approx {\Bbb L}_{+}$, we may at any point $p$ of
$L$ split the leaf in a short $L_{\le p}$ and a long $L_{\ge
p}$ semi-leaf, resp. homeomorphic to ${\Bbb R}_{\ge 0}$ and
${\Bbb L}_{\ge 0}$ (closed long ray). The latter being
sequentially-compact, it fails to be dense. Thus only the
short
%%%%side
semi-leaf
%has to be
can contribute to denseness,
% in $M$,
and separability of $M$ follows.
\end{proof}

\begin{rem} {\rm For foliations of higher dimensionality,
%than one,
the above (\ref{separability}) fails drastically. An example
of Martin Kneser shows how to foliate with a {\it unique}
%(hence dense)
surface-leaf a non-metric 3-manifold of the Pr\"ufer type (cf.
for a picture, \cite{BGG1}, arXiv version). Kneser's example
can easily be
%slightly modified
`stretched' so as to
%make
render it non-separable.}
\end{rem}

\begin{cor}\label{separability:cor} An
$\omega$-bounded surface is intransitive,
except if it is the torus.
\end{cor}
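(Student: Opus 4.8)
The plan is to argue by contraposition: suppose the $\omega$-bounded surface $M$ carries a foliation possessing a dense leaf $L$, and show that $M$ must be the torus. First I would invoke Proposition~\ref{separability}: since $M$ is a manifold of dimension $n=2\ge 2$ admitting a dense leaf, it is separable (and incidentally $L\approx{\Bbb R}$ or ${\Bbb L}_+$). This is the step where the $\omega$-boundedness becomes decisive: a separable $\omega$-bounded space is compact, because a countable dense subset $D\subset M$ has, by the very definition of $\omega$-boundedness, a compact closure $\overline{D}$, and density forces $\overline{D}=M$.

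Next, $M$ being compact is Lindel\"of, hence metrisable by Urysohn, so $M$ is a closed surface. Then the classical Euler--Poincar\'e obstruction~(\ref{Euler:classical-obstruction}) (or, elementarily, Kneser's combinatorial argument~(\ref{Kneser:Poincare-Dyck:Euler obstruction})) forces $\chi(M)=0$, so by the M\"obius--Klein classification~(\ref{Moebius-Klein-classification}) the surface $M$ is either the torus $\torus$ or the Klein bottle $\Klein$. The Klein bottle is excluded: by Kneser's lemma~(\ref{Kneser}) any foliated Klein bottle has a circle leaf, whence $\Klein$ is foliated-intransitive~(\ref{Klein-foliated-intransitive}). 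Therefore $M\approx\torus$; and the torus genuinely is transitive (Kronecker foliation), so the exception in the statement is sharp.

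I do not anticipate a serious obstacle, as every ingredient is already available. The only point meriting a word of justification is the implication ``separable and $\omega$-bounded $\Rightarrow$ compact'', but this is immediate from the definition of $\omega$-boundedness. It is worth stressing that this separability route handles the metric and non-metric cases uniformly: a non-metric $\omega$-bounded surface is never separable (else it would be compact, hence metric), so by~(\ref{separability}) it cannot be transitively foliated at all --- which is why one need not rely on the still-unproven Euler obstruction in the $\omega$-bounded case (Conjecture~\ref{Euler-obstruction}) to dispose of surfaces with $\chi<0$.
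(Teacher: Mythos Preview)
Your proof is correct and follows essentially the same route as the paper: separability from (\ref{separability}), compactness from $\omega$-boundedness, $\chi=0$ from (\ref{Euler:classical-obstruction}), classification (\ref{Moebius-Klein-classification}), and elimination of $\Klein$ via (\ref{Klein-foliated-intransitive}). Your additional justifications (why separable plus $\omega$-bounded gives compact, and the closing remark that Conjecture~\ref{Euler-obstruction} is not needed) are welcome elaborations but do not change the underlying argument.
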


\begin{proof} If transitively foliated, the surface is
separable (\ref{separability}). Being also $\omega$-bounded it
is compact. Since it is foliated, $\chi=0$
(\ref{Euler:classical-obstruction}). So by classification
(\ref{Moebius-Klein-classification}), it is either the torus
or the Klein bottle, which is intransitive
(\ref{Klein-foliated-intransitive}).
\end{proof}

\subsection{Miniature black holes (Pr\"ufer, R.\,L. Moore, Baillif)}

This and the subsequent section exemplify a
%typical
geometric obstruction to foliated-transitivity lying beyond
the algebraic obstruction encoded in the fundamental group or
better in the soul (\ref{soul:intrans}), as well as beyond the
point-set obstruction of non-separability
(\ref{separability}). Thus the present obstruction (due to M.
Baillif) is the first (within the
%modest
%realm
scope of this paper) being truly non-metrical, yet still of a
geometric nature prompted by the
%specific
granularity of particular non-metric manifolds, namely those
of the Pr\"ufer type (including the Moore and
Calabi-Rosenlicht surfaces, plus of course many other
specimens having a similar morphology). Using the
phase-transition metaphor, this amounts to a volatile-gaseous
(=transitive) configuration which embedded into the
non-metrical fridge becomes frozen-intransitive.

%%%%\subsection{Picturing}
%%%%add optionnally the nice picture on page 97 of AG's notes

We already know that Cantor's long ray is responsible of some
%macroscopic
black hole phenomenology at the
%global level
macroscopic scale  \cite{BGG1}. For instance the long cylinder
$S^1\times {\Bbb L}_{+}$ imposes to each
%of its
foliated structure to be either ultimately vertical (foliated
by straight long rays) or asymptotically horizontal (with
slices \hbox{$S^1\times \{\alpha\}$} occurring as leaf for a
{\it closed unbounded} ({\it club}) subset of $\alpha$'s
running in the long-ray factor).
%Thus
So one can essentially imagine
%that
a super-massive black hole
%is
hiddenly sitting  at the
%``ideal''
long end of the
%(long)
cylinder and dictating the destiny of any foliated structure,
thought of  as an {\it ether} ($\approx${\it substrat
physico-chimique} in R. Thom's jargon) evidencing the
gravitational features of the manifold.
%Of course
We are dealing here
%involved
with a purely naked-topological
%%%(topological-qualitative
form of
gravitation without metric (and the allied
%Riemann's
Riemann curvature tensor), yet
%that we still want
%which is
still reasonably
%to qualify
qualifiable as ``geometric''.

%Beside
Apart from Cantor's long ray (of dimension 1) the other
%key
%example
charismatic prototype of non-metric manifold (requiring two
dimensions)
%, at least under the Hausdorff proviso)
is the
Pr\"ufer construction. Especially
%important
we have, $P$, the bordered Pr\"ufer surface (constructed by a
aggregating rays to an open half-plane, very akin to
projective geometry, esp. the blow-up operation) which---by
folding the contours---produces the Moore surface. This can be
thought of as an upper half-plane with many (infinitesimal)
`teats' hanging down the boundary (horizontal line), compare
Figure~\ref{Train:fig} for a poor depiction.
%
%(The latter
%can be also thought of as given by rays reflecting along the
%boundary of the half-plane.)
Clearly something
%strange
%special (erotical)
`erotical' must happen near the `boundary'
%(precisely
(or rather what remains thereof---that is nothing!),
%and those
%Pr\"ufer and Moore surfaces realise so-to-speak a quantum
%fluctuation of the usual boundary of the closed upper-half
%plane,
and
%we shall see that
by analogy with Cantor's super-massive black hole scenario, we
now
%rather
%%have
imagine a `continuous' series of
%super-light
nano-black holes materialised by the folded contours of the
Pr\"ufer surface $P$, called the {\it thorns} of the
Moorization. (If you prefer imagine the little black holes
located at each teat's extremity.) The latter effects a
%sort of
quantum radiation at the microscopic scale as shown by the
following technique of M. Baillif ($\approx 2008-9$):

\begin{theorem}\label{Baillif:nano-black-holes}
In any foliation of the Moore surface, almost all (=all but
countably many exceptions) thorns are semi-leaves. Similarly,
in any foliation of the Calabi-Rosenlicht surface (=doubled
Pr\"ufer surface $2P$) almost all bridges are leaves. (Here
Bridges refer to  the contours of $\partial P$ viewed in
$2P$.)
\end{theorem}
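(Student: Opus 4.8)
The plan is to reduce the assertion to a counting statement about ``misbehaving'' thorns, and then to force that count down to countable by playing the second-countability of the smooth part off against the separability of the whole surface (the Niemytzki-plane-like architecture of the Prüfer construction). Recall first the morphology: $M = H \sqcup \bigsqcup_{r\in{\Bbb R}}T_r$, where $H$ is the open upper half-plane (hence second-countable, metrizable, Lindel\"of) and the $T_r$ are the pairwise-disjoint \emph{thorns} --- images of the boundary lines of the bordered Pr\"ufer surface $P$ under folding --- each homeomorphic to $[0,\infty)$ with tip $p_r$, closed as a point-set, with $\bigsqcup_r T_r = M\setminus H$ closed in $M$ and carrying (away from the tips) the subspace topology of a disjoint union of rays, so that near any non-tip point $M$ is locally ${\Bbb R}^2$ with $T_r$ a locally flat line (cf. Nyikos \cite{Nyikos84}, Moore 1942, \cite{Gabard_2008}). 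For $2P$ the analogous picture is $2P = H^{+}\sqcup H^{-}\sqcup\bigsqcup_r B_r$ with \emph{bridges} $B_r\cong{\Bbb R}$ (no folding, hence no tips, which is exactly why one gets leaves rather than semi-leaves there).

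Now the reduction. Fix a foliation $\cal F$ of $M$; by the non-metric Haefliger--Reeb theory (\ref{Alex_separation}) every leaf is an open $1$-manifold (no circle leaf), proper, and closed as a point-set. I claim ``$T_r$ is a semi-leaf'' $\iff$ ``$T_r$ is contained in a single leaf'' $\iff$ ``$T_r$ is everywhere tangent to $\cal F$''. The last equivalence holds because a connected set locally contained in leaves lies in one leaf; for the first, if $T_r\subseteq\ell$ then by properness $T_r$ is a closed copy of $[0,\infty)$ inside the $1$-manifold $\ell$, a short closed half-ray cannot live inside a long leaf ${\Bbb L}$ or ${\Bbb L}_{+}$, so $\ell\cong{\Bbb R}$ and $T_r$ is a ray $[a,\infty)\subset\ell$, i.e. a semi-leaf. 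Hence it suffices to show that $B:=\{\,r : T_r\text{ lies in no leaf of }{\cal F}\,\}$ is countable.

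The heart of the matter (the ``black hole''). For $r\in B$, since $T_r$ is not everywhere tangent, a local analysis of $\cal F$ near a non-tangency point --- carried out carefully in the topological category in \cite{BGG2} --- produces a point $z_r\in T_r$ and a foliated box $U_r\ni z_r$ in which $T_r$ is a genuine cross-section separating $U_r$ into halves $U_r^{\pm}\subset H$; in particular $z_r$ is an isolated point of $\ell^{(r)}\cap(M\setminus H)$, where $\ell^{(r)}$ is the leaf through $z_r$. The decisive fact is then: \emph{a single leaf $\ell$ crosses at most countably many thorns transversally}. Indeed $\ell\cap H$ is open in $\ell$ and lies in the second-countable $H$, so it is a second-countable $1$-manifold with only countably many components; each component has at most two frontier points in $\ell$; and each transverse crossing is isolated in $\ell\cap(M\setminus H)$, hence is such a frontier point --- so $\ell$ has countably many crossings and meets countably many thorns. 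If now $B$ were uncountable, each nonempty open $U_r^{+}\subseteq H$ meets a fixed countable dense $D\subset H$; pigeon-holing a choice $d(r)\in U_r^{+}\cap D$ yields a single $d\in D$ lying in $U_r^{+}$ for uncountably many $r$, and following the plaque of $U_r$ through $d$ to $T_r$ (after arranging, from the outset, $d(r)$ to sit on the crossing leaf $\ell^{(r)}$) exhibits one leaf crossing uncountably many thorns --- a contradiction. Thus $B$ is countable, which is the Moore-surface claim; the Calabi--Rosenlicht statement follows verbatim with $H^{+}\sqcup H^{-}$, bridges and ``leaf'' replacing $H$, thorns and ``semi-leaf''.

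The step I expect to be the real obstacle is this last pigeon-holing: ``$d(r)=d$ for uncountably many $r$'' forces a single leaf to meet uncountably many thorns only if $d$ can be chosen inside $H$ \emph{on} (or controllably near) the crossing leaf $\ell^{(r)}$, whereas the plaque of $U_r$ through an arbitrary point of $U_r^{+}$ need not be the plaque through $z_r$; one circumvents this by refining the half-boxes through a countable base of $H$, or by tracking the rational plaques of the boxes $U_r$ and the unique intercept of $T_r$ with each. Subsidiary nuisances --- the behaviour of $\cal F$ at the tips $p_r$, and the degenerate possibility that some leaf equals $T_r\setminus\{p_r\}$ --- are absorbed into a routine case analysis; the long-leaf case needs no special treatment, since the component count for $\ell\cap H$ used above is insensitive to whether $\ell$ is itself metric.
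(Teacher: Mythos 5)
Your overall architecture matches the paper's: assign to each ``white'' thorn a leaf passing through a fixed countable dense subset $D$ of the core, then show that a single leaf can meet at most countably many thorns, so that only countably many thorns can be white. The divergence --- and the gap --- is in how you prove that counting lemma. You count components of $\ell\cap H$ and their frontier points, and you claim the components are countable because $\ell\cap H$ ``lies in the second-countable $H$''. But second-countability of $H$ controls only the \emph{subspace} topology on $\ell\cap H$, whereas your frontier-point bound (``each component has at most two frontier points in $\ell$'') is a statement about the components of an open subset of the $1$-manifold $\ell$ in its \emph{leaf} topology. If $\ell$ is a long ray or long line, such an open subset can have uncountably many components (remove from ${\Bbb L}_{\ge 0}$ the closed set of points indexed by limit ordinals), and uncountably many leaf-components can fuse into countably many subspace-components, so passing to the subspace topology saves nothing. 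Your closing parenthesis --- ``the long-leaf case needs no special treatment, since the component count \dots is insensitive to whether $\ell$ is itself metric'' --- is therefore precisely where the argument breaks.

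The paper closes this hole with a fact specific to the Moore surface that your proof never invokes: $M$ is \emph{squat} (every continuous map ${\Bbb L}_{\ge 0}\to M$ is eventually constant), so there are no long leaves at all; and for the remaining short, hence separable metric, leaves it replaces the component count by a discreteness argument --- in the Pr\"ufer topology any subset of $M$ meeting each thorn in at most one point is discrete, and a separable metric leaf cannot contain an uncountable discrete subset. Your argument becomes correct once the squatness step is inserted, since an open subset of ${\Bbb R}$ does have only countably many components. The remaining soft spots you flag yourself --- producing a genuine cross-section box at a non-tangency point of $T_r$, and choosing $d(r)\in D$ on a plaque that actually crosses $T_r$ --- are treated in the paper at the same level of informality (pick $d(r)$ in the open set $U_r\cap H\cap D$, every plaque of $U_r$ meeting the cross-section $T_r$), so they are not where the real difficulty lies.
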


\begin{proof} Let $P\to M$ be the contour-folding projection
from Pr\"ufer-to-Moore. The boundary $\partial P$ decomposes
as a continuum of real-lines contours, whose respective image
are the thorns denoted $T_x$ (homeomorphic to a semi-line
${\Bbb R}_{\ge 0}$) indexed by $x\in{\Bbb R}$. The complement
of all thorns is called the {\it core} (of the Moore surface).
The core $U$ being a cell $\approx {\Bbb R}^2$, hence
separable, let us fix $D\subset U$ a countable dense set. Let
${\cal F}_D$ be the
%saturation
set of leaves of the foliation (denoted $\cal F$) passing
through the points of $D$.

If the thorn $T_x$ is not a semi-leaf of $\cal F$, then there
is a point $y\in T_x$ such that $L_y$ (=the leaf through $y$)
%%%%moves
deviates into the core. Then we can find nearby a leaf $L$ in
the collection ${\cal F}_D$ intercepting $T_x$
(Fig.\,\ref{Baillif:fig}.a).

\begin{figure}[h]
\centering
    \epsfig{figure=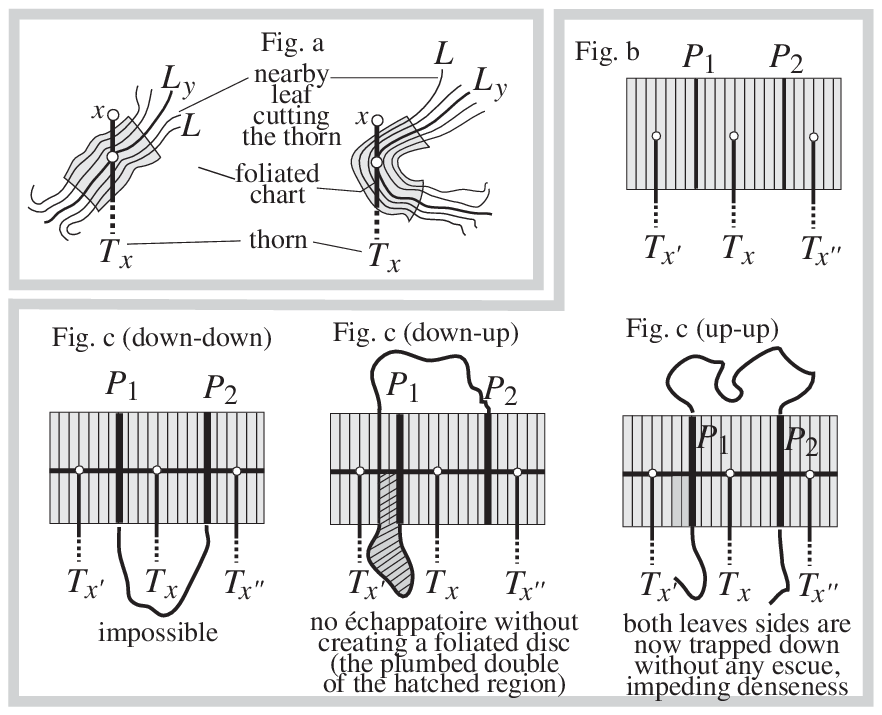,width=122mm}
  \caption{\label{Baillif:fig}
  %Artist view of
  Interception of thorns by leaves and black thorns sandwiches}
\vskip-5pt\penalty0
\end{figure}

Choosing for each such $x$
%such
an intercepting leaf defines a
map
$$
\varphi \colon \text{ White thorns}:=\{x\in {\Bbb R} : T_x
\text{ is not a semi-leaf}\} \longrightarrow {\cal F}_D .
$$
Now a leaf can intersect at most countably many thorns. This
follows from the squatness of the Moore surface $M$ (i.e. any
continuous map from the long ray to $M$ is eventually
constant) and the Pr\"ufer topology which shows that any
subset of the Moore surface having at most one point on each
thorn is discrete. Therefore long leaves are precluded and
short ones cannot contain an uncountable discrete subset.

Hence the map $\varphi$ is at most $\omega$-to-1 (denumerable
fibres), whence the countability of the set of ``White
thorns''.
The
%proof
argument for the doubled Pr\"ufer is
%very
nearly identical,  hence left as an
%mental
exercise.
\end{proof}

\subsection{Intransitivity of the thrice-punctured Moore surface}

The Moore surface $M$ itself, being simply-connected
(Seifert-van Kampen applied to the canonic open cover by
%%%%$\text{core}\cup \text{one thorn}$
individual thorns added to the core), is
%certainly
intransitive, as deduced either via Schoenflies
(\ref{Alex_separation}) or
%%by
Poincar\'e-Bendixson (\ref{Poinc-Bendixson_many}).

After one or two punctures there is an universal obstruction
dictated by the fundamental group
(\ref{infinite-cyclic-group}) resp.
(\ref{dichotomic-free-of-rank-2:prop}). In the Moore case, the
argument can be done by hand:

\medskip
{\footnotesize {\bf Optional easy  argument by hand.} One adds
successively
%or rather in block all
the thorns of $N=M_{n*}$ the Moore surface  with $0\le n\le 2$
punctures: i.e., $N=(\text{core } U) \cup \bigcup_{x\in {\Bbb
R}} T_x$ which is covered by the $M_x:=U\cup T_x$. Assuming
$M$ transitive with dense leaf $L$ we know that $L$ is short
(squatness of Moore). Thus $L$ is Lindel\"of and therefore
contained in $M_D:=\bigcup_{x\in D} M_x$ a countable union of
$M_x$ ($D\subset {\Bbb R}$ denumerable). Using either Morton
Brown's monotone union theorem (or the classification of
metric $1$-connected surfaces (\ref{uniformization}) or even
better a home-made homeomorphism) it is easy to see that $M_D$
is homeomorphic to the core $U$. Since $U$ contains $L$
densely, this violates the intransitivity of the core.

}
\medskip

Doing more than $\ge 3$ punctures in Moore there is no
universal algebraic obstruction (recall Dubois-Violette), but
a gravitational one (remind Baillif):

\begin{prop}\label{Baillif:adapted_by_Gabard}
The Moore surface with $n\ge 3$ (hence all $n$) punctures
$M_{n*}$ is foliated-intransitive.
\end{prop}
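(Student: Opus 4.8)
The plan is to combine Baillif's gravitational clumping result (\ref{Baillif:nano-black-holes}) with the Poincar\'e-Bendixson obstruction, exploiting the fact that almost all thorns are forced to be semi-leaves. First I would suppose, for contradiction, that $M_{n*}$ carries a foliation $\cal F$ with a dense leaf $L$. By squatness of the Moore surface (any map from the long ray is eventually constant), the dense leaf cannot be long, so by (\ref{separability}) it is homeomorphic to $\Bbb R$ and $M_{n*}$ is separable. Crucially, puncturing does not destroy the Pr\"ufer ``thorn'' morphology: the core $U$ is now an $n$-punctured plane, but the continuum of thorns $T_x$ is unaffected, and Baillif's counting argument (\ref{Baillif:nano-black-holes}) applies verbatim to $\cal F$ --- a leaf still meets at most countably many thorns (squatness plus the Pr\"ufer topology), so all but countably many thorns are semi-leaves of $\cal F$.

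Next I would derive the contradiction from the abundance of thorn-semi-leaves. Since only countably many thorns fail to be semi-leaves, pick any thorn $T_x$ which is a semi-leaf, so it is a (closed, properly embedded by (\ref{long-semi-leaf}), though here it is just $\Bbb R_{\ge 0}$) half of some leaf $\Lambda$. A dense leaf $L$ must accumulate onto $\Lambda$, hence must enter every foliated chart met by $T_x$. The key geometric point is that $T_x$, being a thorn, is ``boundary-like'': in the Pr\"ufer picture a small saturated neighbourhood of (a piece of) $T_x$ is a half-open foliated box whose plaques are arcs emanating from $\partial$-points, so once the dense leaf $L$ enters such a box and returns to it --- which density forces --- one gets the standard Poincar\'e-Bendixson/closing-lemma configuration: a Jordan curve $J$ built from an arc of $L$ closed by a cross-sectional arc. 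Here I would invoke that the Moore surface (even punctured) is dichotomic --- indeed $M$ is simply-connected (Seifert-van Kampen over the open cover by individual thorns plus core), so $M_{n*}$, being a finitely punctured simply-connected surface, is dichotomic by heredity after noting any Jordan curve in $M_{n*}$ either bounds a disc or not; more carefully, one passes to the orienting double cover (\ref{orienting:2-fold-covering}) and uses that a double cover of a rank-$n$ surface is still reasonable --- but in fact the cleanest route is: $L$ dense forces a return, the return is order-preserving by the trapping argument (\ref{Poinc-Bendixson_many}) once orientability is arranged, and the thorn-semi-leaf $\Lambda$ gets trapped on one side, contradicting that $L$ (accumulating on $\Lambda$) visits both sides.

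The main obstacle I anticipate is making the Poincar\'e-Bendixson trapping rigorous in the presence of the punctures: with $\pi_1(M_{n*})=F_n$ free of rank $n\ge 3$, the surface is \emph{not} dichotomic, so Jordan curves need not separate, and the bare trapping argument fails --- this is exactly why Dubois-Violette's $n=3$ example is transitive and why no purely algebraic obstruction exists. The resolution, and the real content, is that the \emph{specific geometry} of the thorns supplies the missing separation locally: a thorn-semi-leaf together with the ``teat'' structure around it behaves like a boundary, so the relevant Jordan curve is null-homotopic (it can be slid off into the core region bounded by the teat) and hence \emph{does} bound a disc by Schoenflies (\ref{Schoenflies-Baer}). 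Thus I would organize the proof as: (1) reduce to a short dense leaf $L\approx\Bbb R$; (2) apply Baillif (\ref{Baillif:nano-black-holes}) to get uncountably many thorn-semi-leaves; (3) fix one such $\Lambda\supset T_x$, and show a dense leaf accumulating on $\Lambda$ produces a null-homotopic Jordan curve enclosing a foliated disc --- contradicting (\ref{disc-cannot-be-foliated}) --- because the only way $L$ can approach the thorn is through the teat's foliated box, which is simply-connected. This last step, isolating a genuinely simply-connected (hence Schoenflies-governed) region around an arbitrary thorn-semi-leaf, is where all the work lies; the rest is bookkeeping already assembled in the cited lemmas. As the author notes, this mirrors the Baillif-adapted argument, so I would expect the final write-up to lean heavily on ``arguing as in (\ref{Baillif:nano-black-holes})'' and then closing with the simply-connected trapping.
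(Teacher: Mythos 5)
Your skeleton matches the paper's: reduce to a short dense leaf, invoke (\ref{Baillif:nano-black-holes}) to get uncountably many thorn-semi-leaves, then trap the dense leaf near a black thorn via Schoenflies. But two things go wrong in the execution. First, a factual slip: $M_{n*}$ \emph{is} dichotomic --- the Moore surface is simply-connected, hence dichotomic, and dichotomy is hereditary to subregions (\ref{dicho:hered-transfer}); so your diagnosis that ``Jordan curves need not separate'' mislocates the difficulty. The real obstruction to running (\ref{Poinc-Bendixson_many}) is that it only handles \emph{oriented} foliations, and for $n\ge 3$ the orienting double cover of a dichotomic surface need no longer be dichotomic (the Riemann--Hurwitz computation of (\ref{dicho-covering-is-dicho}) works only in rank $2$; recall that Dubois-Violette lives on the dichotomic $S^2_{4*}$ precisely because his foliation is non-orientable). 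This is why a geometric rather than algebraic argument is needed at all.

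Second, and more seriously, the closing trap is never built. Your mechanism --- one black thorn, plus the claim that the Jordan curve produced by a returning dense leaf is null-homotopic ``because the teat's foliated box is simply-connected'' --- does not close: the leaf arc completing that Jordan curve lives in the whole punctured surface, not in the box, and nothing you say prevents it from encircling punctures. The paper supplies two ingredients you omit. (a) It truncates $M_{n*}$ along a horizontal line lying \emph{below} all the punctures, obtaining a subregion $U$ homeomorphic to the honest, simply-connected Moore surface; Baillif is applied to $(U,{\cal F}_U)$ and all subsequent Schoenflies arguments take place in this $1$-connected $U$. (b) Since the white thorns are countable, the black thorns are \emph{dense} (Baire), so the two plaques $P_1,P_2$ of the dense leaf flanking a black thorn $T_x$ in a box $B$ at its tip can be sandwiched between two further black thorns $T_{x'},T_{x''}$. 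The case analysis (down-down / down-up / up-up connections of $P_1$ to $P_2$) then shows the first two options create a foliated disc, while the surviving up-up connection leaves the downward ends of the leaf squeezed among the three thorns with no way back up --- killing density. With a single thorn the up-up connection simply escapes, and your argument has nothing to stop it. So the proposal identifies correctly where the work lies but does not do that work, and the specific route it gestures at would fail without the truncation and the three-thorn sandwich.
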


\begin{proof} (with some little bluff, but hopefully
%true!
convincing enough!) Truncate the punctured surface $N:=M_{n*}$
(foliated by ${\cal F}$) by looking at the subregion $U$ lying
below a line above which all punctures are lying, and which
therefore is homeomorphic to the Moore surface $M$. By
(\ref{Baillif:nano-black-holes}) applied to $(U,{\cal F}_U)$
almost all thorns are ``black'', i.e. semi-leaves of the
foliation. Fix any black thorn $T_x$, and about its extremity
$\partial T_x$ a foliated chart $B$. Assuming that $L$ is a
dense leaf of ${\cal F}$ it will certainly appears on both
sides of $T_x$ regarded in the foliated-box $B$ as 2 plaques
$P_1, P_2$ separated by the plaque $P$ of $B$ containing
$T_x$. Since the set of white thorns is countable, its
complementary set of black thorns is dense (Baire). Thus we
can squeeze in sandwich the 2 plaques $P_1, P_2$ by 2 new
black thorns $T_{x'}$ and $T_{x''}$
(Fig.\,\ref{Baillif:fig}.b) such that within the box $B$,
$P_1$ separates $T_{x'}$ from $T_{x}$ and $P_2$ separates
$T_{x}$ from $T_{x''}$. As the two $P_i$ belong to the same
leaf they must be somehow connected.

%To avoid a picture, we
Agreeing that the box is vertically foliated, both plaques
$P_i$ will be either connected through their bottoms
(down-down), in a mixed fashion (down-up) or via their tops
(up-up).

Since $T_x$ goes down to infinity, a down-down connection is
precluded, except of course if the leaf re-traverse the box
$B$ but then a foliated disc is created, by doubling the first
return to the cross-section of $B$
(Fig.\,\ref{Baillif:fig}.c). We use here Schoenflies
(\ref{Schoenflies-Baer}) (or a simple form thereof) in the
1-connected Moore surface $U\approx M$ (taking advantage of
the canonical open cover
%of Moore
by thorns aggregation). The
same argument excludes the possibility of a down-up
connection. Thus we have an ``up-up'' connection and then
sides of the leaf $L$ are squeezed between the three thorns
$T_{x'},T_{x}, T_{x''}$ and this without the possibility of
coming up again (else a foliated disc is created by the same
Schoenflies mechanism). This triple squeezing clearly impedes
the denseness of $L$, proving our assertion.
\end{proof}

%%%%%Euler-Venn diagram of
\subsection{Experimental data:
%surfaces
%with prescribed
prescribing topology and foliated dynamics}

Now we try to draw a
%big
picture showing
%systematically
the
%%mutual interactions
interplay between the topology and the possible foliated
dynamics for surfaces. This involves a Venn diagram with the
following topological versus
%dynamical
foliated attributes:

(1)
%Algebro-geometric (combinatorial)
Combinatorial topology: simply-connected $\Rightarrow$
dichotomic;

(2) Point-set topology: metric $\Rightarrow$ separable;

(3) %Qualitative
Foliated dynamics:
%of foliation:
minimal $\Rightarrow$ transitive $\Rightarrow$ foliated.

Recall also some
%non-trivial
%interdisciplinary
`transverse' implications: transitive implies separable
(\ref{separability}), and the mutual exclusion of 1-connected
and transitive (\ref{Alex_separation}) or
(\ref{Poinc-Bendixson_many}). By way of examples the following
diagram (Figure~\ref{Venn}) shows that this is a
%%%n essentially
reasonably exhaustive list of obstructions. A closer look
%at the diagram
aids guessing new empirical obstructions, or more neutrally to
ask the right questions. For instance
%it is interesting to note that
it looks rather hard to exhibit a separable, 1-connected,
non-metric surface lacking a foliation.
%%%%%CENSURED AS PERHAPS FALSE
\iffalse
Thus the following conjecture is a rather
metaphysical existence question (i.e., potentially at the
borderline of the usual axiomatic ZFC
(Zermelo-Fraenkel-Choice):
%
\begin{conj} Any pseudo-Moore surface (i.e.,
separable, 1-connected, but non-metric) admits a foliation.
\end{conj}
%
\begin{rem} {\rm Recall however that it is possible
(yet not very easy) to locate separable non-metric surfaces
lacking foliations (cf. the mixed Pr\"ufer-Moore surfaces in
BGG2 \cite{BGG2})}
\end{rem}
\fi
%
Recall however that it is possible (yet not very easy) to
locate separable non-metric surfaces lacking foliations (cf.
the mixed Pr\"ufer-Moore surfaces in BGG2 \cite{BGG2}).

Let us
%briefly
describe the various regions of the diagram (the following
enumeration refers to the labels (1), (2), (3), etc. fixed on
Figure~\ref{Venn} in a spiral-like fashion):

(1) and (2) contains respectively only the plane and the
sphere which are the only simply-connected metric surfaces
(\ref{uniformization}).

(3) is a region where it is difficult to exhibit a specimen.
(For a vague candidate cf. Section~\ref{long-sun})

\begin{figure}[h]
%\centering
   \hskip-25pt \epsfig{figure=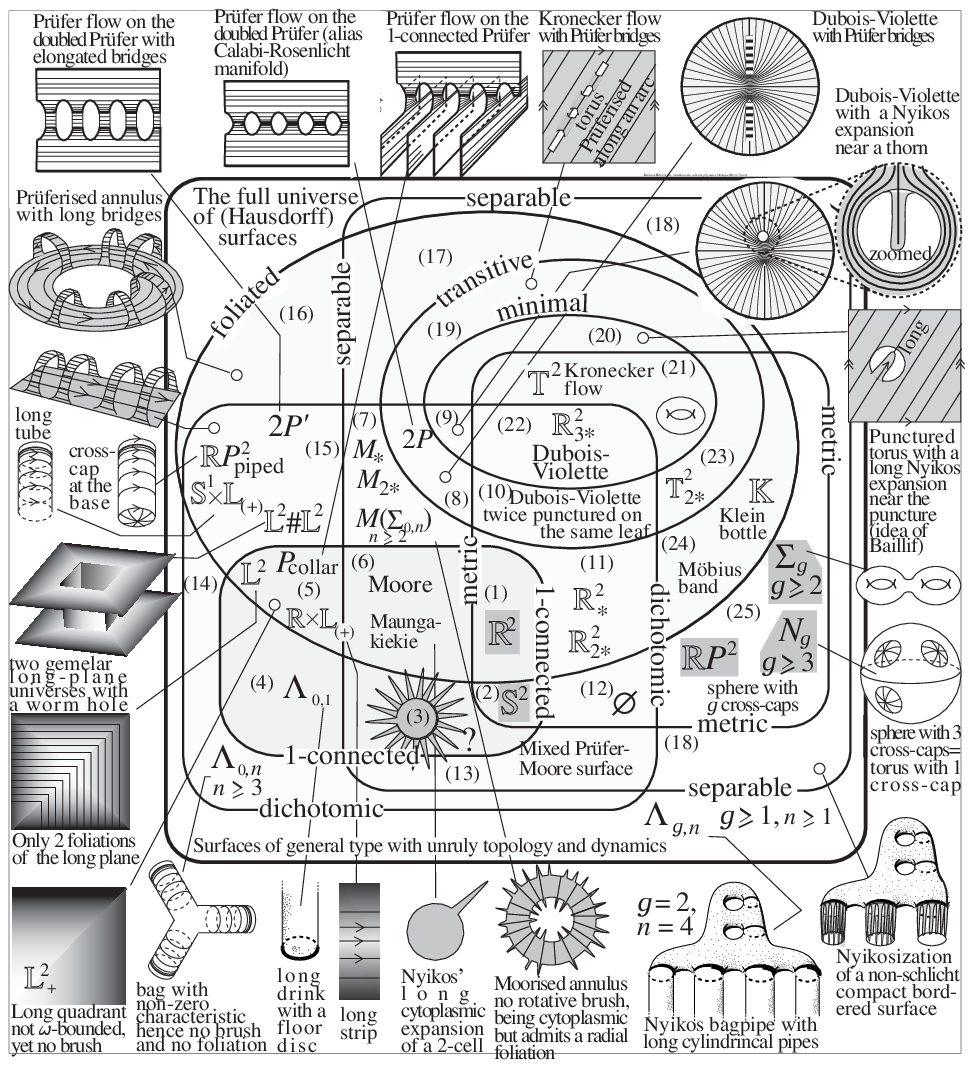,width=142mm}
\vskip-15pt\penalty0
  \caption{\label{Venn}
  %Artist view of
  Some foliated geography in the non-metric realm}
\vskip-5pt\penalty0
\end{figure}

(4) contains the long-glass $\Lambda_{0,1}$ which is the
long-cylinder ${\Bbb S}^1\times{\Bbb L}_{\ge 0}$ capped-off by
a 2-disc. This surface cannot be foliated by BGG1 \cite{BGG1}.

(5) has a plethora of examples including the long-plane ${\Bbb
L}^2$, the long-quadrant ${\Bbb L}^2_+$ and the original
1-connected Pr\"ufer surface $P_{\rm collar}$.

(6) contains the Moore surface $M$ which has a foliation
induced by the vertical foliation of the bordered Pr\"ufer
surface $P$. The latter has semi-saddle singularities
disappearing during the folding process $P\to M$. Also in (6)
we have the more exotic Maungakiekie surface, which is the
result of a long Nyikos expansion effected to an open 2-cell.

(7) contains naively speaking $2P$ the doubled Pr\"ufer (alias
Calabi-Rosenlicht manifold) which has a horizontal foliation.
This does not preclude the possibility that $2P$ endowed with
a more exotic foliation is transitive.  Thus here we ignore
the question of the sharp positioning of the manifold $2P$,
within the diagram. However, in view of (\ref{Dubois}) it
seems that $2P$ is transitive, but not minimal by
(\ref{Baillif:nano-black-holes}), so $2P$ belongs in reality
to (8). Yet, class (7) contains the Moorized annulus $M(A)$
with the radial foliation. Since $M(A)$ has a $\pi_1$
isomorphic to ${\Bbb Z}$, it is intransitive
%(modulo the truth of
by (\ref{infinite-cyclic-group}). For the same reason this
class also contains the Moore surface punctured once $M_{*}$.
The twice-punctured Moore surface $M_{2*}$ is also here, being
dichotomic with $\pi_1=F_2$, hence intransitive by
(\ref{dichotomic-free-of-rank-2:prop}).

(8) is  non-empty by Pr\"uferising Dubois-Violette's example
(\ref{Dubois}).
%This class
%also perhaps contains the thrice punctured Moore surface.
%%%%%%NO the latter is intransitive.

(9) contains  a Nyikos long expansion performed near one of
the 4 punctures of Dubois-Violette (elongate the separatrix).

(10) contains the Dubois-Violette foliation (on ${\Bbb
R}^2_{3*}$) punctured twice on the same leaf to generate
artificially a non-dense leaf. In reality the underlying
manifold ${\Bbb R}^2_{5*}$ is minimally foliated (puncture
twice on different leaves of Dubois-Violette).

(11) contains the punctured plane ${\Bbb R}^2_{*}$ (foliated
e.g. by concentric circles) which is intransitive by
(\ref{infinite-cyclic-group}). Likewise ${\Bbb R}^2_{2*}$ is
intransitive by (\ref{dichotomic-free-of-rank-2:prop}).
%%%%%%%%%%%%%%%%%%%%%%%%%%%%

(12) is an empty region, because any metric open surface has a
Morse function without critical points
(\ref{Morse-Thom:surfaces}), thus a foliation. So a surface in
(9) has to be compact, and dichotomy allows only the sphere
(by  classification (\ref{Moebius-Klein-classification})),
which has already been positioned in a deeper nest of the
diagram.
%%%%%%%%%%%%%%%%%%%%%%%%%%%%

(13) could contain the Moorization $M(G)$ of a
multiply-connected domain $G=\Sigma_{0,n}$ with at least $n\ge
3$ contours (starting with the ``pant''). A vague idea could
be that the Moorization forces a vertical behavior near the
cytoplasmic expansions present in the Moorization, thus
``shaving'' the ``hairs'' gives a compact subregion
(homeomorphic to $G$) along the boundary of which the
foliation is transverse, and then we are done by the
Euler-Poincar\'e obstruction. But this
 hasty intuition is wrong as shown in
 Section~\ref{Moorized-disc:sec}. In
particular this argument would imply that the Moorized disc
cannot be foliated,
%yet this might conflict with the dual
%intuition that the latter is homeomorphic to the Moorized
%half-plane... In fact
but  (\ref{Moorized-disc:prop}) below shows the contrary. The
same construction (cf. Fig.\,\ref{Moorized-disc}g) also shows
that the $M(\Sigma_{0,n})$ for $n\ge 3$ admit foliations, and
so belongs to region (7), but not to (8) in view of
(\ref{Baillif:adapted_by_Gabard}), the case $n=3$ following
also from (\ref{dichotomic-free-of-rank-2:prop}). So region
(13) looks rather deserted, except if we remind the
construction in BGG2 \cite[Sect.\,4.2]{BGG2} of mixed
Pr\"ufer-Moore surfaces which produces a bunch of separable,
non-simply-connected surfaces lacking foliations. Furthermore
if we accept the operation of {\it full} Nyikosization $N$,
which produces long hairs at all point of the boundary, then
$N(\Sigma_{0,n})$ for $n \ge 2$ would belong to (13), compare
Section~\ref{long-sun}.
%%%%%%%%%%%%%%%%%%%%%%%%%%%%%%%%%%%%%%%%

(14) has the surfaces $\Lambda_{0,n}$ of genus $0$ with $n\ge
3$ long cylinder-pipes, which lack foliations by BGG1
\cite{BGG1} (super-massive black hole scenario).

(15) admits  a plethora of examples with most of them arising
indeed from a non-singular flow (cf. \cite{GabGa_2011} and
recall optionally the theorem of Whitney \cite{Whitney33}
(building over Hausdorff) telling that non-singular 2D-flows
induce foliations). Of our pictured examples the only one
which is not induced by a flow is the ``wormhole'' double
long-plane, i.e. the connected sum ${\Bbb L}^2 \# {\Bbb L}^2$
which is however
%easily
foliated by circles.

(16) contains simple examples using variants of the Pr\"ufer
construction. For instance we can Pr\"uferize an annulus and
glue radially opposite boundaries by long bridges (cf.
figure).

(17) The same construction as in (16) with short bridges
yields a specimen.

(18) We lack a serious example.
%(do not thrust here the picture which
%are old stuff from the flow case!).
However with the operation of full Nyikosization we can take
$N(\Sigma_{g,n})$ where the genus $g\ge 1$ and with $n\ge 1$
contours.

(19)  Take a Kronecker torus Pr\"uferized along an arc.
%Note
%that
By the scenario of nano-black holes
(\ref{Baillif:nano-black-holes}) this surface is not minimal,
giving a sharp positioning.

(20) Puncture the Kronecker torus, and making a long Nyikos
expansion
%near the puncture in the direction of the foliation
of one of the 2 separatrices converging to the puncture gives
a minimal foliation on a non-metric surface (trick due to M.
Baillif, more details in BGG2 \cite{BGG2}).

(21) Take a Kronecker torus. This is the unique compact
specimen as follows from Kneser (\ref{Kneser}), but there is
of course a menagerie of non-compact examples (e.g., Kronecker
torus punctured once).

(22) Take the example of Dubois-Violette on $S^2_{4*}={\Bbb
R}^2_{3*}$.

(23) contains as fake specimen the Kronecker torus punctured
twice on the same leaf. (In reality the twice punctured torus
also carries a minimal foliation, so it is not a sharp
example.)

(24) Take the torus with the trivial foliation by circles. Of
course this is fake, since the torus really lives in (21). Yet
(24) contains the M\"obius band (=twisted ${\Bbb R}$-bundle
over $S^1$) which is intransitive by
(\ref{infinite-cyclic-group}). Doing one (or even 2) punctures
in M\"obius
%the $\pi_1$ grows to $F_2$, yet it is not dichotomic,
%so we cannot draw intransitivity.
the surface is still intransitive by
%%%(\ref{intransitivity:non-orient_rank_2})
(\ref{Klein-Weichold}) resp.
(\ref{intransitivity:new-obstruction}). For a compact example
we have
 the Klein
bottle ${\Bbb K}$, which by Kneser (\ref{Kneser}) is not
minimal, and in fact  foliated-intransitive
(\ref{Klein-foliated-intransitive}).

\iffalse We would like to show that Klein is intransitive.
This is well-known for flows (Markley, 1968). For foliations
one can cut the Klein bottle along the Kneser circle, which is
not null-homotopic (else it would bound  a disc) and we get a
bordered surface of characteristic zero which is either an
annulus or a M\"obius band. Both have $\pi_1={\Bbb Z}$ so
there is an obstruction to a dense leaf by
(\ref{infinite-cyclic-group}). \fi

(25) contains
%the bunch of
all closed surfaces except those having already been
positioned, namely the sphere, and the 2 surfaces with Euler
characteristic zero. These split into orientable surfaces of
genus $\ge 2$ and non-orientable surfaces (spheres with $g\ge
1$ cross-caps) for all values of $g$ except $g=2$ which is the
Klein bottle. Since open metric surfaces always foliate (Morse
function argument (\ref{Morse-Thom:surfaces})), this is a
complete
%classification in
tabulation of the birds in class (25) in view of the
classification (\ref{Moebius-Klein-classification}).

\subsection{Razor principle foiled}\label{Moorized-disc:sec}

Given the Moorized disc $M(D^2)$, which looks like a hairy
disc with hairs emanating transverse to the boundary
(Fig.\,\ref{Moorized-disc}a), one could expect that a foliated
structure has to be compatible with the hairs, and thus
`shaving' the hairs gives an impossible foliation of the
(compact) disc. This would imply that $M(D^2)$ lacks a
foliation. This naive principle is erroneous.

Basically, it is
%erroneous
faulty because the {\it semi-saddle} $xy$ (level curves of
that function) restricted to $y\ge 0$ is not the unique one
%%%descending to
inducing a regular foliation after Moorization. Indeed the
{\it half-saddle}  defined  by $x^2-y^2$ (cf.
Fig.\,\ref{Moorized-disc}c) behaves also well under folding.
This suggests how to foliate $M(D)$, compare
Figure~\ref{Moorized-disc}, which is commented upon in the
picture-assisted proof below.

\begin{figure}[h]
\centering
    \epsfig{figure=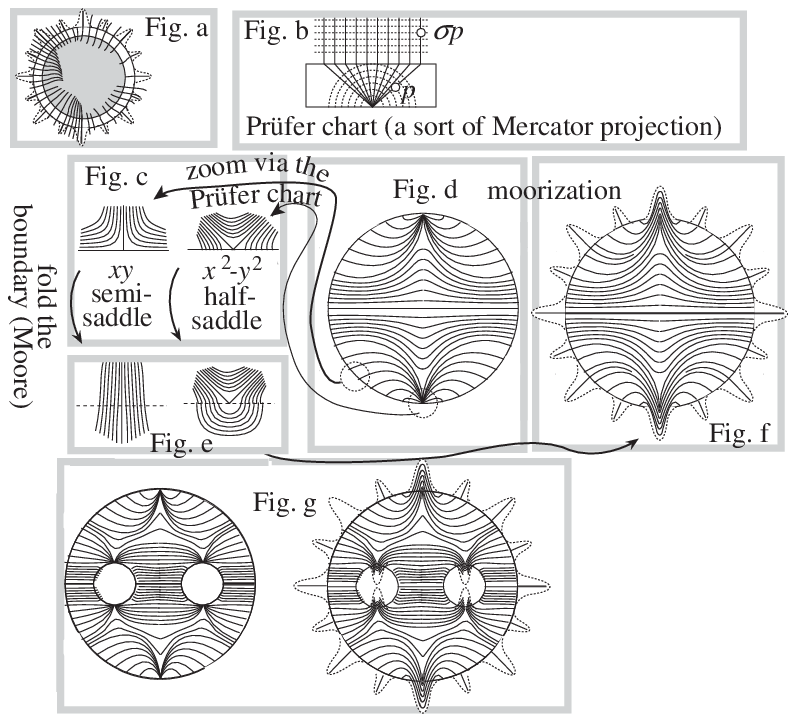,width=122mm}
%\vskip-30pt\penalty0
  \caption{\label{Moorized-disc}
  Foliating the Moorized disc}
\vskip-5pt\penalty0
\end{figure}

\begin{prop}\label{Moorized-disc:prop}
The Moorized disc can be foliated.
\end{prop}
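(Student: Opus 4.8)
The plan is to construct the foliation by hand, following the panels of Figure~\ref{Moorized-disc}. Recall that the Moorized disc $M(D^2)$ is the \emph{folded} Pr\"uferized disc: one first blows up every boundary point of $D^2$ into a real-line contour (obtaining a bordered surface $\wp(D^2)$ whose interior is the open cell ${\rm int}\,D^2\approx{\Bbb R}^2$ and whose boundary is a continuum of contour lines $L_x$, $x\in\partial D^2$), and then folds each $L_x$ by the involution $t\mapsto -t$, the fixed points giving the thorns (the ``hairs''). Hence it suffices to produce a foliation on $\wp(D^2)$ that is invariant under the folding involution and whose quotient foliation on $M(D^2)$ is \emph{regular} (non-singular); equivalently, a foliation of the cell ${\rm int}\,D^2$ that extends continuously across the Pr\"ufer blow-up of $\partial D^2$ and folds to a non-singular structure.

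The naive attempt --- a foliation transverse to the \emph{whole} boundary, so that every thorn becomes a semi-leaf (the behaviour of the vertical foliation on the bordered Pr\"ufer surface) --- is precisely the ``razor principle'' that fails: shaving such hairs would leave a non-singular foliation of the \emph{compact} disc, contradicting (\ref{disc-cannot-be-foliated}); in index language, a line field transverse to $\partial D^2$ must have total index $\chi(D^2)=1\neq0$. The remedy, flagged just before the statement, is that the ``semi-saddle'' $xy|_{y\ge0}$ is not the only local picture descending regularly through the fold: the ``half-saddle'' modelled on the level curves of $x^2-y^2$, cut \emph{transversally} to its separatrices rather than along one of them, also folds to a regular foliation --- but now with the thorn \emph{transverse} to the leaves and a $-\tfrac12$-index sitting at the (pre-fold) saddle, i.e. on the locus that is precisely the vanished boundary circle. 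So the plan is to build on $\wp(D^2)$ a line field that is semi-saddle-like along most of the contour circle and half-saddle-like near finitely many contour points; the accompanying $(-\tfrac12)$-singularities live in the blow-up region (not in $M(D^2)$) and are exactly what lets the line field close up over the cell, the count of half-saddle points being chosen so that the indices balance once the fold absorbs them.

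Concretely: start from a standard singular foliation of the closed disc with its singular set pushed onto $\partial D^2$ and organised into alternating semi-saddle arcs and half-saddle points; read the picture in suitable Pr\"ufer charts around each contour point; verify in each such chart that the folded local picture is one of the two admissible \emph{regular} models (thorn as a semi-leaf, or thorn transverse); and glue. Everything away from the contour circle --- existence of the foliation on the cell, continuity of the extension, regularity --- is routine once these local prototypes are fixed.

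The step I expect to be the main obstacle is exactly this local-to-global gluing and its index bookkeeping: one has to confirm that the two prototypes $xy|_{y\ge0}$ and $x^2-y^2$ (read in the Pr\"ufer topology near a contour) genuinely patch into a single globally coherent foliation whose quotient on $M(D^2)$ carries no residual singularity, and that the number of half-saddle insertions needed is consistent with a foliation of ${\rm int}\,D^2\approx{\Bbb R}^2$ that extends across the blow-up. This is the content of the picture-assisted verification, and it is where the argument, rather than the computations, has to be pinned down carefully.
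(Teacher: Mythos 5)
Your construction is essentially the paper's own: start from a singular foliation of the disc whose singularities sit on the boundary circle, use exactly the two local prototypes (semi-saddle from the transverse picture, half-saddle from the level curves of $x^2-y^2$), check both become regular after the Pr\"ufer blow-up and the fold, and glue. The only detail you leave open --- how many half-saddle insertions the index count requires --- the paper settles by exhibiting the explicit picture (Fig.~\ref{Moorized-disc}d): a foliation orthogonal to $\partial D^2$ except at two ``spiderman'' points, which is precisely the balance $2\cdot\frac12=\chi(D^2)$ your bookkeeping predicts.
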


\begin{proof} Consider the singular foliation of the disc
$D^2$ depicted on Fig.\,\ref{Moorized-disc}d: this is
everywhere orthogonal to the boundary except at 2 singular
points resembling a `spiderman'. Recall the
%definition
interpretation of Pr\"ufer's construction in terms of rays,
and the synthetic description of the (Pr\"ufer)-charts. This
can be thought of as the map
%$\varphi$
$\sigma$ pictured above (Fig.\,\ref{Moorized-disc}b). It is
easy to
%%%check and visualise
see that if the foliation is vertical then its image by the
Pr\"ufer chart is a semi-saddle, whereas the spiderman
singularity transforms to a half-saddle. Since both types of
singularities disappear during the folding process
(Fig.\,\ref{Moorized-disc}e), we get a genuine foliation on
the Moorization $M(D^2)$ (depicted on
Fig.\,\ref{Moorized-disc}f).
\end{proof}

The same method shows that the Moorization of the
multiply-connected domains (any number of contours) can be
foliated (Fig.\,\ref{Moorized-disc}g).
%%%%%%For slightly more details AG's notes page N.37 and N.39.

\subsection{Razor principle for long hairs, supernovas
and science fiction}\label{long-sun}

%As we noticed
It is rather hard to find  non-metric separable surfaces
lacking foliations. However, in \cite[Sect.\,4.2]{BGG2},
Baillif showed that suitable mixed Pr\"ufer-Moore surfaces
(which are separable) lack foliations. The idea is the
following. Start with $P$ the bordered Pr\"ufer surface. Each
contour (=boundary component) of $P$ can either be folded
(Moorization) or left unaltered. Since Moorization is well
behaved w.r.t. the vertical foliation
(Fig.\,\ref{Moorized-disc}c-e), but not against the horizontal
foliation developing thorn singularities when folded, Baillif
showed that a violent mixture of both processes (folding and
`nothing') produces  bordered surfaces whose double are
separable, yet without foliations. Being doubled such surfaces
fails to be 1-connected, and it would be interesting to answer
the:

%%%%%%CENSURED BECAUSE LOOSE
\iffalse if only countably many collars are required. However
it can be observed:

(1) the precise argument of BGG2 is somewhat involved, and

(2) it uses the double $2 P_{A,B}$ (notation of loc. cit.),
thereby not answering the question of the existence of a
simply-connected example lacking a foliation.

Thus it is natural (at least legitimate) to ask: \fi

\begin{ques}\label{corona-sun:question} Is there a simply-connected non-metric separable  surface without
foliations? If yes, then more subjectively, what is the
simplest
%non-metric separable surface
 %lacking foliations?
such example?
\end{ques}

In the second subjective question, ``simplest'' seems to have
a double fragrance, namely
%%the
simplest to construct or
%%%the
simplest to show its non-foliability. Along the second
interpretation, we have perhaps the following (very
hypothetical) answer, involving another black hole
scenario---this time at the `mesoscopic' scale! {\it Warning:}
the next paragraph is maybe only pure science fiction.

Recall Nyikos' long (cytoplasmic) expansion of a cell (cf.
\cite{BGG2}). \iffalse As explained in BGG2 \cite{BGG2}, we
can think of the plane trivially foliated by lines, and
elongated one leaf to get a surface $N$ which is separable,
$1$-connected and foliated. If we can show a sort of
black-hole behaviour, namely that foliations of this surface
must contain ultimately the long hair (only available as
Gauld's hand-written notes of the parc Bertrand, Geneva 2010).
\fi
%%%%%%%%%%%
Assume such long expansions can be performed as often as we
please, doing them in all directions of a disc yields a
$1$-connected separable surface with many long hairs emanating
in all directions (naively imagined as orthogonal to the
%boundary
circumference). Call this manifold the {\it supernova} (with
long hairs and complicated corona). Another more tangible
generating mode for the supernova is to do first a Moorization
(of the disc), and then make Nyikos' expansions to the thorns
(conceived now as independent processes). It is conceivable
that any foliation of the supernova contains all hairs as
semi-leaves, by a variant of (\ref{Baillif:nano-black-holes}).
(I~think that this was verified in David Gauld's hand-written
notes from the parc Bertrand, Geneva 2010, unfortunately
unpublished as yet.) Then one might (via a razor principle)
deduce a foliation of the compact disc (transverse to the
boundary). The supernova would thereby answer positively our
naive question (\ref{corona-sun:question}). Albeit it is not
the simplest to construct, it is perhaps the easiest to show
its lack of foliated structure.

%%%%%%CENSURABLE NOW EVENTUALLY

%%%%Some naive (open?) questions
\section{Miscellaneous}

This section collects miscellaneous topics not directly
relevant to the main text:

\subsection{Jordan separation}

Maybe first a question related to Jordan separation approached
via the Riemann polarization trick. Recall that we defined a
divisor in a manifold as a
%closed proper
locally-flat hypersurface
%%%(codimension-one submanifold)
which is closed as a point-set (\ref{divisor:def}). Taking for
granted the universality (i.e., non-metrical validity) of the
Riemann trick (\ref{Rieman-polarized-cover}) we deduced that
{\it in a simply-connected manifold any divisor is dividing}
(\ref{Riemann-separation}). The converse does not hold, for
instance the Poincar\'e (homology) sphere is divided by any
divisor, but not simply-connected. The former assertion
follows from the well-known:

\begin{lemma}\label{intersection-pairing} Given a closed (topological) manifold $M$ whose first homology
modulo 2, $H_1(M,{\Bbb Z}_2)$ is trivial (equivalently the
first Betti number $\beta_1$ with mod 2 coefficients, is
zero). Assume furthermore that the manifold as a reasonable
``intersection theory'', which is certainly the case whenever
$M$ is smoothable (Weyl 1923, Lefschetz 1930, de Rham 1931,
etc.). Then $M$
%splits (i.e.
is divided by any divisor.
\end{lemma}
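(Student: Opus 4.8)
[Proof sketch]
The plan is to argue by contraposition. Suppose a divisor $H\subset M$ does \emph{not} divide $M$ (which we may take connected, working componentwise otherwise); I want to exhibit a non-trivial class in $H_1(M,{\Bbb Z}_2)$, contradicting $\beta_1=0$. First some bookkeeping that I would settle at the outset: since $M$ is closed it is compact, and $H$, being closed as a point-set inside the compactum $M$, is itself a \emph{compact} $(n-1)$-manifold; every compact manifold being ${\Bbb Z}_2$-orientable, $H$ carries a ${\Bbb Z}_2$-fundamental class whose image under inclusion I denote $[H]\in H_{n-1}(M,{\Bbb Z}_2)$.

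Next I would construct a ``meridian'' loop. Pick $p\in H$ and a polarised chart $U\ni p$ with its two local halves $U=U_+\cup U_-$ (Def.~\ref{hypersurface}); choose $a\in U_+-H$ and $b\in U_--H$ near $p$, joined by a short arc $\tau\subset U$ meeting $U\cap H$ in exactly one (transverse, in the product picture of the chart) point. Since $M-H$ is connected, there is an arc $\beta$ from $a$ to $b$ lying wholly in $M-H$. The loop $\gamma:=\tau\cdot\bar\beta$ then meets $H$ in exactly one point. Using a bicollar of $H$ (furnished by local flatness via Brown's collaring theorem in the two-sided case; a twisted $I$-bundle neighbourhood in the one-sided case works equally), the mod $2$ count of crossings of a loop with $H$ is seen to depend only on its ${\Bbb Z}_2$-homology class; write it $\gamma\cdot[H]\in{\Bbb Z}_2$. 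By construction $\gamma\cdot[H]=1$.

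Finally I would invoke the ${\Bbb Z}_2$-intersection pairing $H_1(M,{\Bbb Z}_2)\otimes H_{n-1}(M,{\Bbb Z}_2)\to{\Bbb Z}_2$ — available for any closed topological manifold through ${\Bbb Z}_2$-Poincar\'e duality (the class Poincar\'e-dual to $[H]$ being the Thom/Euler class of the normal bicollar), and in particular, as the hedge in the statement anticipates, through transverse intersection of cycles when $M$ is smoothable. Pairing $[\gamma]$ against $[H]$ gives $1\neq 0$, so $[\gamma]\neq 0$ in $H_1(M,{\Bbb Z}_2)$, contradicting $\beta_1(M,{\Bbb Z}_2)=0$; hence $H$ divides $M$.

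The one genuinely delicate point I expect is precisely the well-definedness and homotopy-invariance of the mod $2$ intersection number in the purely topological category — i.e. legitimising the ``reasonable intersection theory'' clause; this is exactly what smoothing $M$ (or citing ${\Bbb Z}_2$-Poincar\'e duality for TOP-manifolds) is there to underwrite. As an aside, one can sidestep intersection theory altogether by routing the argument through the polarised double cover $M_H\to M$ of (\ref{Rieman-polarized-cover}): its monodromy is a homomorphism $\pi_1(M)\to{\Bbb Z}/2$, i.e. an element of $\mathrm{Hom}(H_1(M,{\Bbb Z}),{\Bbb Z}/2)=H^1(M,{\Bbb Z}_2)$; vanishing of $H_1(M,{\Bbb Z}_2)$ forces this cover to split, so $M_H$ is disconnected, whence $H$ divides $M$ by the distinctive property recorded in (\ref{Rieman-polarized-cover}).
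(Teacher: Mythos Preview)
Your argument is correct and essentially identical to the paper's: contrapose, note $H$ is compact hence carries a mod~2 fundamental class, build a loop from a short transverse arc plus a return path in $M-H$, and read off $[\gamma]\cdot[H]=1$ to force $\beta_1\neq 0$. Your closing aside via the polarised cover $M_H\to M$ is a genuine bonus not in the paper's proof and arguably cleaner, since it bypasses the ``reasonable intersection theory'' hedge entirely.
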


\begin{proof} By contradiction, let
$H$ be a non-dividing divisor. Since $H$ is closed as a
point-set in $M$ compact, it is compact. Thus it carries a
fundamental class mod 2, denoted $[H]$. Choose any point $p\in
H$ and a locally flat chart $U$ with $(U,U\cap
H,p)\approx({\Bbb R}^n,{\Bbb R}^{n-1}\times \{0 \},0)$. Fix a
little arc $a$ transverse to $H$ and meeting it in exactly one
point. Since $M-H$ is connected, choose a path $b$ in $M-H$
joining both extremities of $a$. Thus $a+b=:c$ defines a
1-cycle (mod 2) intersecting $H$ in exactly one point
transversally. Thus the intersection number $[c]\cdot [H]=1$.
This implies that $[c]\neq 0$, and so $\beta_1\neq 0$.
\end{proof}

Consequently the Poincar\'e sphere, $M^3$, splits because
$H_1(M^3, {\Bbb Z})=0$ implies $H_1(M^3, {\Bbb Z}_2)=0$ (true
in any space, for a 1-cycle mod 2 can always be oriented). We
do not know if the lemma holds in full generality:

\begin{ques}
Assume the manifold $M$ to have $\beta_1=0$, then is $M$
separated by any divisor? And what about the converse?
\end{ques}

\subsection{Gr\"otzsch-Teichm\"uller theory for
non-metric surfaces?}

Since Rad\'o 1925 \cite{Rado_1925}, it is known that a
complex-analytic structure on a $2$-manifold implies second
countability, hence metrizability (Urysohn). Now such a
structure also known as a Riemann surface structure is
essentially the same as a conformal structure allowing one to
measure angles. In fact this can be achieved in the more
general category di-analytic or Klein surfaces, where
non-orientable surfaces are permitted. Rad\'o's theorem
generalizes directly to Klein surfaces, by passing to the
orientation double cover which is a Riemann surface (with an
anti-holomorphic involution):

\begin{lemma} Any Klein surface is
%metrizable.
metric.
\end{lemma}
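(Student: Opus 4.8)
The statement to prove is that any Klein surface is metric. The plan is to reduce to the already-quoted fact (\ref{Rado:triangulation}, or rather its consequence via Weyl's lemma (\ref{Weyl:triangulation})) that a Riemann surface is metric, by passing to the orientation double cover. Recall that a \emph{Klein surface} is a surface equipped with a maximal atlas whose transition maps are di-analytic, i.e. either holomorphic or anti-holomorphic (as functions of $z$); bordered versions are allowed, but the essential point here is the conformal--analytic structure rather than the boundary.

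First I would invoke the orientation double cover $M_{\circlearrowleft}\to M$ supplied by (\ref{indicatrix-orient-covering}), whose total space is by construction orientable. The point is that the di-analytic structure on $M$ pulls back to a di-analytic structure on $M_{\circlearrowleft}$ which, being carried by an \emph{orientable} surface, can be upgraded to a genuine holomorphic (complex-analytic) atlas: on an orientable surface one may coherently discard the anti-holomorphic charts (replacing each anti-holomorphic chart by its complex conjugate), because orientability is exactly what guarantees the resulting transition maps are all holomorphic. Hence $M_{\circlearrowleft}$ is a Riemann surface.

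Next I would apply Rad\'o's theorem: a Riemann surface is second-countable, hence triangulable, hence metric --- concretely, $M_{\circlearrowleft}$ is metric by (\ref{Rado:triangulation}) together with (\ref{Weyl:triangulation}) (a triangulated surface is a countable union of compacta, so Lindel\"of, so metric by Urysohn). Finally I would descend: $M$ is the quotient of the metric space $M_{\circlearrowleft}$ by a free properly discontinuous action of $\mathbb{Z}/2$ (the deck involution), or more simply, $M$ is covered by $M_{\circlearrowleft}$ with countable (indeed finite) fibres, so by Poincar\'e--Volterra $M$ is itself Lindel\"of, hence metric. Alternatively one observes directly that the covering map is open and the image of a countable dense subset of $M_{\circlearrowleft}$ is a countable dense subset of $M$, giving second-countability of $M$.

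The only genuinely delicate point --- and the one I would spell out with a line or two of care --- is the passage from the di-analytic atlas on the orientable total space to an honest holomorphic atlas; everything else is a routine chaining of the cited results. There is also the trivial degenerate case to dispatch at the outset: if $M$ is already orientable then its di-analytic structure is (after conjugating the anti-holomorphic charts) directly a Riemann surface structure and Rad\'o applies without any covering.
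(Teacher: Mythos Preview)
Your proof is correct and follows essentially the same route as the paper: pass to the orientation double cover, observe that the lifted di-analytic structure becomes a genuine Riemann surface structure on the orientable total space, apply Rad\'o to get metrizability (equivalently Lindel\"ofness) upstairs, and then push Lindel\"ofness down to the base to conclude by Urysohn. The paper's proof is a one-liner stating exactly this chain, whereas you have spelled out the one point it leaves implicit (why the di-analytic atlas on the orientable cover can be refined to a holomorphic one) and offered alternative descent arguments, but there is no substantive difference in strategy.
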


\begin{proof} Above the Klein surface there is a Riemann
surface, which by Rad\'o \cite{Rado_1925} is metric, thus
Lindel\"of and the latter property pushes down to the Klein
surface, which being locally second countable is then second
countable, and  Urysohn concludes.
\end{proof}

Since conformal structures are lacking on non-metric surfaces,
and reminding the quasi-conformal trend (initiated in the now
classical works of Gr\"otzsch, 1928, Lavrentief 1929, Ahlfors
1935, Teichm\"uller 1938, etc.) it is rather natural to wonder
about quasi-conformal structures. This was addressed by R.\,J.
Cannon 1969 \cite{Cannon_1969}, who found rather surprising
answer(s). More on this soon, yet let us first dream a little.

If a diffeomorphism (between regions) of the plane (say of
class $(C^1)$) is not conformal, then it will distort
infinitesimal circle into ellipses, whose eccentricity $Q\ge
1$ (long axes divided by the short axes) provides a dilation
quotient measuring the deviation from conformality. The
diffeomorphism is {\it $K$-quasiconformal} if the distortion
$Q$ is bounded by a finite constant $1\le K<\infty$ throughout
its domain of definition.
% of the diffeomorphism.

A {\it $K$-quasiconformal structure} is defined by an atlas
with transition maps being $K$-qc. Of course a $1$-qc
structure is nothing else than a Klein structure, or a
conformal structure (non-orientable surfaces are welcome, at
least permitted).

Now the dream would be that given any (non-metric) surface
(say with a differentiable structure, albeit there is a way to
speak of quasi-conformality without regularity assumption),
then (following Gr\"otzsch's idea and phraseology) we could
look for the ``{\it m\"oglichst Konform}'' atlas minimizing
the angular distortion $Q$. Thus there is a way of assigning
to every surface $M$ a number $1\le Q(M)\le +\infty$ which is
the infimum of $K$ over all $K$-qc atlases. Of course we set
$Q(M)=+\infty$ if there is no such atlas, and $Q(M)=1$ holds
precisely when $M$ is metric (Rad\'o's theorem). This provides
a continuous numerical invariant quantifying how violently
non-metric the surface is. Unfortunately it seems that there
is no such fine quantification, for Cannon's main result is
that any $K$-quasi-conformal structure on a surface forces
metrizability so that $Q(M)$ can take only the values $1$ or
$+\infty$.

However Cannon also shows that some reasonable surfaces (of
the Pr\"ufer type) as well as surfaces deduced from the long
ray (Cantor type) allows quasi-conformal structures in the
weak sense that there is no uniform bound on the distortion
valid for all transition maps. This raises the question if
such  (weak) quasi-conformal structures exist for all
surfaces. (Maybe this would follow from the conjectural
smoothability of surfaces.)

\medskip
 {\small {\bf Acknowledgements.} The
 author
wishes to thank  Daniel Asimov,   Andr\'e Haefliger, Claude
Weber
%%%, Gerhard Wanner and Martin Gander
for conversations
 on topics lying in or outside the scope of the present
note. Last but not least, the
%non-metric
experts David Gauld and Mathieu Baillif are
%congratulated for
%their enthusiasms and
acknowledged for generous
%ideas
e-mails exchanges.

}

{\small
%%%%%%%%%%%%%%%%%%%SAXO%%%%%%%NEW BIBLIO

}

{
\hspace{+5mm} % To get a little bit of space between the figures
{\footnotesize
\begin{minipage}[b]{0.6\linewidth} Alexandre
Gabard

Universit\'e de Gen\`eve

Section de Math\'ematiques

2-4 rue du Li\`evre, CP 64

CH-1211 Gen\`eve 4

Switzerland

alexandregabard@hotmail.com
\end{minipage}
\hspace{-25mm} }


\begin{thebibliography}{30}


\bibitem{Ahlfors-Sario_1960}
L.\,V. Ahlfors, and L. Sario, \textsl{Riemann Surfaces},
Princeton Univ. Press, 1960 (Second Printing, 1965).


\bibitem{Baillif_2011}
M. Baillif, \textsl{Some steps on bridges: non-metric surfaces
and $CW$-complexes}, arXiv (2011).

\bibitem{BG_2008_PAMS}
M. Baillif, A. Gabard, \textsl{Manifolds: Hausdorffness versus
homogeneity}, Proc. Amer. Math. Soc. (2008), 1105--1111.


\bibitem{BGG1}
M. Baillif, A. Gabard and D. Gauld, \textsl{Foliations on
non-metrisable manifolds: absorption by a Cantor black hole},
arXiv (2009).

\bibitem{BGG2}
M. Baillif, A. Gabard and D. Gauld, \textsl{Foliations on
non-metrisable manifolds II: contrasted behaviours}, not yet
(but soon) available, arXiv (2011-12?).

\bibitem{Beck_1958}
A.~Beck, \textsl{On invariant sets}, Ann. of Math. (2) 67
(1958), 99--103.


\bibitem{Blohin_1972} A.\,A. Blohin, \textsl{Smooth
ergodic flows on surfaces}, Trans. Moscow Math. Soc. 27
(1972), 117--134.


\bibitem{Calabi-Rosenlicht_1953} E. Calabi and M. Rosenlicht,
\textsl{Complex analytic manifolds without countable base},
Proc. Amer. Math. Soc. 4 (1953), 335--340.


\bibitem{Cannon_1969} R.\,J. Cannon,
\textsl{Quasiconformal structures and the metrization of
$2$-manifolds}, Trans. Amer. Math. Soc. 135 (1969), 95--103.

\bibitem{Dubois-Violette_1949} Mme. Pierre-Louis Dubois-Violette,
\textsl{Sur les r\'eseaux de courbes couvrant une surface de
genre $p$}, C. R. Acad. Sci. Paris 228 (1949), 896--898.


\bibitem{Franks_1976} J.\,M. Franks,
\textsl{Two foliations in the plane}, Proc. Amer. Math. Soc.
58 (1976), 262--264.


\bibitem{Gabard_2008}
A. Gabard, \textsl{A separable manifold failing to have the
homotopy type of a $CW$-complex}, arXiv (2007) or Archiv. der
Math. 90 (2008), 267--274.



\bibitem{GaGa2010}
A. Gabard and D. Gauld, \textsl{Jordan and Schoenflies in
non-metrical analysis situs}, arXiv (2010) or New Zealand J.
Math. (2010).


\bibitem{GabGa_2011}
A. Gabard and D. Gauld, \textsl{Dynamics of non-metric
manifolds}, arXiv (2011).

\bibitem{Gab_2011_Hairiness}
A. Gabard, \textsl{Hairiness of $\omega$-bounded surfaces with
non-zero Euler characteristic}, arXiv (2011).


\bibitem{Gutierrez_1978_TAMS} C. Guti\'errez,
\textsl{Structural stability for flows on the torus with a
cross-cap}, Trans. Amer. Math. Soc. 241 (1978), 311--320.



\bibitem{Haefliger_1955}
A. Haefliger, \textsl{Sur les feuilletages des vari\'et\'es de
dimension $n$ par des feuilles ferm\'ees de dimension $n-1$},
Colloque de Topologie de Strasbourg (1955), 8 pages.

\bibitem{Haefliger_Reeb_1957}
A. Haefliger et G. Reeb, \textsl{Vari\'et\'es (non
s\'epar\'ees) \`a une dimension et structures feuillet\'ees du
plan}, L'Enseign. Math. (2) 3 (1957), 107--125.


\bibitem{Haefliger62}
A.~Haefliger, \textsl{Vari\'et\'es feuillet\'ees}, Ann. Scuola
Norm. Sup. Pisa  16 (1962), 367--397.

\bibitem{Hajek_1968}
O.~H\'ajek, \textsl{Dynamical Systems in the Plane}, Academic
Press, London and New York, 1968.


\bibitem{HectorHirschA}
G. Hector, U. Hirsch, {\em Introduction to the Geometry of
Foliations, Part A}, Aspects of Mathematics, Vieweg \& Sohn,
Braunschweig, 1981.

\bibitem{HectorHirschB}
G. Hector, U. Hirsch, {\em Introduction to the Geometry of
Foliations, Part B}, Aspects of Mathematics, Vieweg \& Sohn,
Braunschweig, 1983.


\bibitem{Hirsch_1961}
M.\,W.~Hirsch, \textsl{On imbedding differentiable manifolds
in Euclidean space}, Ann. of Math. (2)  73 (1961), 566--571.

\bibitem{Hopf_1946_1956}
H. Hopf, \textsl{Differential Geometry in the Large}, Seminar
Lectures New York University 1946 and Stanford Univ. 1956,
Lecture Notes in Math. 1000, Second edition, Springer-Verlag,
1989.


\bibitem{Jimenez_2004}
V.~Jim\'enez L\'opez and G. Soler L\'opez, \textsl{Transitive
flows on manifolds}, Rev. Mat. Iberoamericana 20 (2004),
107--130.


\bibitem{Kerekjarto_1923}
B.~von \Kerekjarto, \textsl{Vorlesungen \"uber Topologie, I,
Fl\"achen Topologie}, Die Grundlehren der mathematischen
Wissenschaften in Einzeldarstellungen 96, Band VIII, Verlag
von Julius Springer, Berlin, 1923.


\bibitem{Kerekjarto_1925}
B. de \Kerekjarto, {\em On a geometric theory of continuous
groups. I. Families of path-curves of continuous one-parameter
groups of the plane}, Ann. of Math. (2) 27 (1925), 105--117.

\bibitem{Kneser24}
H.~Kneser, \textsl{Regul\"are Kurvenscharen auf den
Ringfl\"achen}, Math. Ann. 91 (1924), 135--154.


\bibitem{Lefschetz_1937}
S. Lefschetz, \textsl{On the fixed point formula}, Ann. of
Math. (2) 38 (1937), 819--822.


\bibitem{Markley_1969} N.\,G. Markley, \textsl{The
Poincar\'e-Bendixson theorem for the Klein bottle}, Trans.
Amer. Math. Soc. 135 (1969), 159--165.



\bibitem{Massey_1967}
W.\,S. Massey,  \textsl{Algebraic Topology: An Introduction},
Graduate Texts in Mathematics~56, Springer-Verlag, New York,
Heidelberg, Berlin, 1967.


\bibitem{Mather_1982} J.\,N. Mather, \textsl{Foliations of
surfaces: I, an ideal boundary}, Ann. Inst. Fourier 32, 1
(1982), 235--261.


\bibitem{Moebius_1863}
A.\,F. M\"obius, \textsl{Theorie der elementaren
Verwandschaft}, Ber. Verhandl. K\"onigl. S\"achs. Gesell. d.
Wiss., mat.-phys. Klasse 15  (1863), 18--57. (M\"obius Werke
II).


\bibitem{Moise_1977}
E.\,E. Moise,  \textsl{Geometric Topology in Dimensions 2 and
3}, Graduate Texts in Mathematics~47, Springer-Verlag, New
York, Heidelberg, Berlin, 1977.


\bibitem{Nyikos84}
P.\,J.~Nyikos, \textsl{The theory of nonmetrisable manifolds},
In: {\em Handbook of Set-Theoretic Topology}, ed.
 Kunen and Vaughan, 633--684, North-Holland,
Amsterdam, 1984.


\bibitem{Peixoto_1962}
M.\,M.~Peixoto, \textsl{Structural stability on
two-dimensional manifolds}, Topology 1 (1962), 101--120.


\bibitem{Rado_1925}
T.~Rad\'o, \textsl{\"Uber den Begriff der Riemannschen
Fl\"ache}, Acta Szeged {2} (1925), 101--121.


\bibitem{Rosenberg_1983} H. Rosenberg, \textsl{Labyrinths
in the disc and surfaces}, Ann. of Math. (2) 117 (1983),
1--33.


\bibitem{Samelson_1965-homology} H. Samelson, \textsl{On Poincar\'e
duality}, J. Anal. Math.  14 (1965), 323--336.



\bibitem{Siebenmann72}
L.\,C.~Siebenmann, \textsl{Deformation of homeomorphisms on
stratified sets}, Comment. Math. Helv. 47 (1972), 123--163.



\bibitem{Siebenmann_2005} L. Siebenmann, \textsl{The
Osgood-Schoenflies theorem revisited}, Russian Math. Surveys {
60} (2005), 645--672. Cf. also the online version available in
the Hopf archive:
http://hopf.math.purdue.edu/cgi-bin/generate?/Siebenmann/Schoen-02Sept2005
(from which a number of misprints in the printed version have
been removed.)

\bibitem{Speiser_1927}
A.~Speiser, \textsl{Naturphilosophische Untersuchungen von
Euler und Riemann}, J. reine angew. Math. 157 (1927),
105--114.


\bibitem{Whitney33}
H. Whitney, \textsl{Regular families of curves}, Ann. of Math.
(2)  34 (1933), 244--270.


\end{thebibliography}
\end{document}